\newcommand{\dist}{{\rm dist}}
\def\Bbb{\mathbb}
\def\frak{\mathfrak}
\def\Cal{\mathcal}
\def\Dt{\partial_t}
\def\eb{\varepsilon}
\def\divv{\operatorname{div}}
\def\R {\mathbb{R}}
\def\ind{\operatorname{ind}}
\def\sgn{\operatorname{sgn}}
\def\<{\left<}
\def\>{\right>}
\def\Ree{\operatorname{Re}}
\def\Imm{\operatorname{Im}}
\def\Nx{\nabla_x}
\def\Dx{\Delta_x}
\def\({\left(}
\def\){\right)}
\def\Cal{\mathcal}
\def\Bbb{\mathbb}
\newtheorem{proposition}{Proposition}[section]
\newtheorem{theorem}[proposition]{Theorem}
\newtheorem{corollary}[proposition]{Corollary}
\newtheorem{lemma}[proposition]{Lemma}
\theoremstyle{definition}
\newtheorem{definition}[proposition]{Definition}
\newtheorem{remark}[proposition]{Remark}
\newtheorem{example}[proposition]{Example}
\numberwithin{equation}{section}
\def \no#1#2#3 {{\bf #1} (#3), #2.}
\def \eds#1#2#3 {#1, #2, #3.}
\title[Attractors. Then and now]{Attractors. Then and now}
\author[S. Zelik]{Sergey Zelik${}^{1,2,3}$}
\address{${}^1$ \phantom{e}School of Mathematics and Statistics, Lanzhou University,
Lanzhou  \\ 730000,
P.R. China}
\address{${}^2$
University of Surrey, Department of Mathematics,
Guildford, GU2 7XH, United Kingdom, s.zelik@surrey.ac.uk.}
 \address{${}^3$ Keldysh Institute of Applied Mathematics, Moscow, Russia}
\subjclass[2020]{35B40, 35B41, 37B02, 37B55, 37D10, 37H05, 37L25, 37L30}
\keywords{Dissipative PDEs, Attractors, Inertial manifolds, Finite-dimensional reduction, Determining functionals}
\begin{document}
\begin{abstract} This survey is dedicated to the 100th anniversary of Mark Iosifovich Vishik and is based on a number of mini-courses taught by the author at University of Surrey (UK) and Lanzhou University (China). It discusses the classical and modern results of the theory of attractors for dissipative PDEs including  attractors for autonomous and non-autonomous equations, dynamical systems in general topological spaces, various types of trajectory, pullback and random attractors, exponential attractors, determining functionals and inertial manifolds as well as the dimension theory for the above mentioned classes of attractors. The theoretical results are illustrated by a number of clarifying examples and counter-examples.
\end{abstract}
\thanks{This work is partially supported by  the grants  14-41-00044 and 14-21-00025 of RSF as well as  grants 14-01-00346  and 15-01-03587 of RFBR. The author also thanks V. Chepyzhov, A. Ilyin, V. Kalantarov, A. Kostianko and D. Turaev for many stimulating discussions.}
\maketitle
\scalebox{0.9}{\vbox{\tableofcontents}}
\section{Introduction}

One of the most surprising  lessons of dynamical systems theory is that even relatively simple and deterministic ordinary differential equations (ODEs) may demonstrate very complicated behaviour with strong random features (the so-called deterministic chaos). This phenomenon is intensively studied starting from the second part of 20th century and a lot of prominent results are obtained in this direction as well as a lot of methods for investigating it are developed (such as hyperbolic theory, the Lyapunov exponents, homoclinic bifurcation theory, strange attractors, etc., see \cites{ER85,GH83,KH95,Rue81,Shi01} and references therein). However, this phenomenon occurred to be much more complicated then it was thought from the very beginning, so,  despite many efforts made, the existing theory is mainly restricted to low dimensional model examples (even in these examples, it is established that, in some cases, the problem of a full description of the dynamics is not algorithmically solvable) and we still do not have really effective methods of studying the dynamical chaos in higher dimensional systems of ODEs.
\par
The situation becomes even more complicated when we  deal with dynamical systems generated by partial differential equations (PDEs). For such systems, the initial phase space is infinite-dimensional (e.g. $L^2(\Omega)$, where $\Omega$ is a domain in $\R^d$), which causes a lot of extra difficulties. In addition, together with a temporal variable $t$ and related temporal chaos, we now have spatial variables $x\in\Omega$, so the so-called spatial chaos may naturally appear. Also, we may have an interaction between spatially and temporal chaotic modes which forms the so-called spatio-temporal chaos. As a result, a new type of purely infinite-dimensional dynamics with principally new level of complexity (which is unreachable  in systems of ODEs studied in classical dynamics) may appear, see \cites{MZ07,TZ10,Zel04} and references therein.
\par
Nevertheless, there exists a wide class of PDEs, namely, the class of {\it dissipative} PDEs in bounded domains, where, despite the infinite-dimensionality of the initial phase space, the effective limit dynamics is in some sense finite-dimensional. In particular, this dynamics  can be described by finitely many parameters (the so-called "order parameters" in the terminology of I. Progogine, see \cite{Pri77}), whose evolution obeys a system of ODEs (which is called an inertial form (IF) of the initial dissipative system).
\par
As hinted from the denomination, dissipative systems consume energy (in contrast to
 conservative systems where the total energy is usually preserved), so, in order to
have the non-trivial dynamics, the energy income should be taken into  account (in other
words, the considered system should be {\it open} and should interact with the external world). On
the physical level of rigor, the rich and complicated dynamical structures (often referred as
{\it dissipative structures} following I. Prigogine) arise as the result of interaction of the
following three mechanisms:
\par
1) Energy decay, usually more essential in higher ”Fourier modes”;
\par
2) Energy income, usually through the lower ”Fourier modes”;
\par
3) Energy flow from lower to higher ”modes” provided by the nonlinearities.
\par
Moreover, it is typical  for the dissipative PDEs in {\it bounded} domains that the number
of lower ”modes” where the  energy income is possible is finite. So, it is natural to expect
that these modes are, in a sense, dominating  and the higher modes are slaved by the lower ones. This supports the hypothesis
that the effective dynamics in such systems is finite-dimensional up to some transient behavior
(where the unstable lower "Fourier modes" are treated as the order parameters) and somehow explains why the ideas and techniques of  classical finite-dimensional theory of dynamical systems (DS)
are also effective for describing the dissipative dynamics in such PDEs.
\par
However, the above arguments are very non-rigorous from the mathematical point of view and it is extremely difficult/impossible to make them meaningful. It is even unclear  what are the "modes" in the above statements. Indeed, they  are rigorously defined as Fourier/spectral modes in the {\it linear} theory and a priori make sense only for systems somehow close to the linear ones. Moreover,   such modes are natural for the linearization near  {\it equilibria} only and may not exist at all when the linear equations with time {\it periodic} coefficients are considered. Thus, despite the common usage of modes and related length scales for highly nonlinear systems in physical literature (e.g., in  turbulence, see \cite{firsh} and references therein), the precise meaning of them is usually unclear.
\par
That is the reason why the mathematical theory of dissipative PDEs is  based on related but different concepts which at least can be rigorously defined. Namely, let a dissipative system be given by a semigroup $S(t)$, $t\ge0$, acting in a Hilbert, Banach (or sometimes even Hausdorff topological) space $\Phi$. Usually, in applications, $\Phi$ is some infinite-dimensional functional space where the initial data lives (say, $\Phi=L^2(\Omega)$) and $S(t)$ is a solution operator of the PDE considered. Then the dissipativity is often understood as the validity of the following {\it dissipative} estimate:
\begin{equation}\label{0.dis}
\|S(t)u_0\|_H\le Q(\|u_0\|_H)e^{-\alpha t}+C_*
\end{equation}
for the appropriate positive constants $C_*$ and $\alpha$ and monotone increasing function $Q$ which are independent of $u_0$ and $t$. Roughly speaking, this estimate shows that there is a balance between the energy injection and dissipation, so the energy cannot grow to infinity. Moreover, for very high energy levels, the dissipation is dominating and the energy  decays. As we have already mentioned, the class of dissipative PDEs is rather wide and includes many physically relevant examples, such as Navier-Stokes equations, reaction-diffusion equations, damped wave equations, various pattern formation equations, etc. (see e.g. \cite{tem} and references therein for more examples).
\par
The central concept of the theory is a (global) {\it attractor} $\Cal A$. By definition, it is a {\it compact} subset of the phase space $\Phi$, which is invariant with respect to the semigroup $S(t)$ and attracts the images (under the map $S(t)$) of all bounded sets when time tends to infinity. Thus, on the one hand, the attractor $\Cal A$ consists of the full trajectories of the DS considered and contains all its non-trivial dynamics. On the other hand, it is essentially smaller than the initial phase space. In particular, the compactness assumption shows that the higher modes are indeed suppressed, so the existence of a global attractor already somehow supports the hypothesis on the finite-dimensionality.
\par
The ultimate goal of the present survey is to develop a unified approach to the theory of attractors for dissipative DS generated by PDEs and to clarify its role in rigorous justification of the above mentioned finite-dimensionality conjecture. We emphasize from the very beginning that, although the study of such systems is also strongly based on the Analysis of PDEs (for instance, the well-posedness of 3D Navier-Stokes problem is one of the most challenging open problems in the theory of PDEs and the absence of a reasonable answer whether and in what sense this system is well-posed clearly affects the attractors theory for Navier-Stokes equations), more or less detailed exposition of the modern methods of the Analysis of PDEs is far beyond the scope of this survey, where we will mainly concentrate on various dynamical concepts of the theory of attractors.
\par
The systematic study of the dynamical properties of dissipative PDEs started, to the best of our knowledge, from the pioneering papers of Foias and Prody  \cite{FP67} and Ladyzhenskaya \cite{Lad72} and has been motivated by the dream to understand the turbulence using the methods of DS theory for ODEs. In particular, the existence of an invariant measures for the Navier-Stokes system as well as finitely many determining modes was established in \cite{FP67} and the object which coincides with the modern attractor for the Navier-Stokes problem was implicitly introduced in \cite{Lad72}. We mention also \cite{BiLa71} where the global attractors were introduced with applications to delay differential equations. The theory of attractors has been intensively developing during the last 50 years and many interesting and promising results have been obtained, see \cites{BV92,ChVi02,7,Hal88,Lad22,MZ07,Rob,tem,Zel14} and references therein. In particular, one of the main results of the attractors theory claims that, under the mild assumptions on the dissipative system generated by an evolutionary  PDE in a bounded domain, the corresponding attractor $\Cal A$ has a finite fractal dimension:
$$
\dim_f(\Cal A,\Phi)<\infty.
$$
Combining this fact with the Man\'e projection theorem, we get a one-to-one homeomorphic projection of the attractor $\Cal A$ to the finite-dimensional set $\bar{\Cal A}\subset\R^N$. Moreover, the reduced dynamics on $\bar {\Cal A}$ is governed by a system of ODEs. Thus, the existence of a finite-dimensional attractor allows us to build up an IF of the considered dissipative PDE and somehow justify the finite-dimensionality conjecture, see \cites{Rob11,Zel14} and references therein for more details.
\par
However, this justification is not entirely satisfactory since the obtained IF is only H\"older continuous, no matter how smooth is the initial PDE, so, starting from the smooth DS, we end up with non-smooth equations where even the uniqueness of a solution may be lost. Any attempts to improve the regularity of this reduction lead to the extra restrictive assumptions on the initial PDE which is hard to check and which are violated in many interesting examples. Moreover, as recent examples show, this problem is far from being technical since, despite the finiteness of the fractal dimension of the global attractor and the existence of a H\"older continuous IF, the associated limit dynamics may remain, in a sense, infinite-dimensional and demonstrate features which are not observable in classical dynamics generated by smooth ODEs (like limit cycles with super-exponential rate of attraction, decaying traveling waves in Fourier space, etc.), see \cites{EKZ13, Zel14,KZ18} and references therein. These examples allow us to guess that the sharp borderline between the finite and infinite-dimensional limit dynamics is more related with {\it Lipschitz} continuous IFs and  inertial manifolds rather than with H\"older continuous IFs and the fractal dimension of the attractor.
\par
Thus, despite many efforts done, the rigorous interpretation and justification of the finite-di\-men\-sio\-nal reduction in dissipative systems remains a mystery and is one of the most challenging problems of the modern theory of attractors. We will discuss below in more details some of the currently known approaches  to tackle this and related problems.
\par
The survey is organized as follows.
\par
In section \ref{s1}, we give a flavor of the attractors theory by considering the simplest low dimensional examples, where the attractor can be found more or less explicitly and demonstrate the difference between various types of attractors, their dependence on parameters, their dimensions, etc. We hope that this section will help the reader with understanding more general and abstract theory presented in next sections.
\par
Section \ref{s2} is one of the central sections of the survey, where we are developing the attractors theory in general Hausdorff topological spaces, which, in turn, allows us to build up a unified approach to different types of attractors. Namely, to define an attractor, we first need to specify what objects will be attracted by this attractor. We refer to the collection of these objects as a {\it bornology} $\Bbb B$ on the phase space of the problem  keeping in mind that in the standard theory a (global) attractor usually attracts bounded sets of the phase space. Next, we should  specify in what sense (in what topology) the attraction will hold. In other words, we need to specify the topology on the phase space $\Phi$. Surprisingly, that the considerable attractors theory, which is similar to the standard one, can be constructed in a general situation where $\Phi$ is just a Hausdorff topological space under the minimal assumptions on the bornology $\Bbb B$.
\par
Although attractors in general topological spaces were studied before, see \cites{ChVi02, GMRS05,MarNe02} and references therein, many results of section \ref{s2} are hard/impossible  to find in the literature in the necessary generality, so we present more or less detailed proofs for most of them.
\par
In section \ref{s3}, we apply the unified approach presented above to the case where the considered PDE does not possess a unique solvability property or this uniqueness of a solution is not known yet. One of the most natural approaches to tackle this type of problems (suggested by Chepyzhov and Vishik in \cite{ChVi95}, see also \cite{ChVi02}, and Sell \cite{sell96}) is related with constructing the trajectory dynamical system associated with the  PDE under consideration and studying its attractors. Roughly speaking, we replace the initial phase space $\Phi$, where the problem is ill-posed and the corresponding solution semigroup can be defined as a multi-valued semigroup only, by the new phase space $\Cal K_+$ which consists of all positive semi-trajectories of our PDE satisfying some nice properties. Then, if $\Cal K_+$ is chosen in a proper way, the semigroup $T_h$, $h\in\R_+$, of temporal shifts ($(T_hu)(t):=u(t+h)$) acts on $\Cal K_+$ and $(T_h,\Cal K_+)$ is exactly the trajectory dynamical system associated with the considered problem.
\par
Note that, in the case where the uniqueness property holds, this semigroup is conjugated with the usual solution semigroup $S(t)$ acting in the initial phase space $\Phi$, so the theory is consistent. The advantage of this approach is that the construction of the trajectory dynamical system does not require the uniqueness property to be satisfied, so we may study its attractors  in a usual way avoiding the usage of multi-valued maps. The only problem is to define the bornology and topology on $\Phi$ in a proper way. In relatively simple situations, both topology and bornology can be lifted from the initial phase space, but in more complicated cases (like 3D Navier-Stokes equations) this does not work. Moreover, there are several alternative non-equivalent ways to do this, so the unified approach introduced in the previous section works in full strength here. The purpose of this section is, in particular, to give the comparison of known alternative approaches to the trajectory attractors on the example of the 3D Navier-Stokes equations. To the best of our knowledge this has never been done before.
\par
Section \ref{s4} extends the unified approach to attractors in the non-autonomous case. We consider the most general case, where phase space for the dynamical process $U(t,\tau)$, $t\ge\tau$, associated with the considered problem may depend on time, so $U(t,\tau):\Phi_\tau\to\Phi_t$ and $\{\Phi_t\}_{t\in\R}$ is a family of Hausdorff topological spaces and use the  pullback attraction property in order to define attractors. In this case, an attractor is understood as a time dependent set $\Cal A(t)\subset\Phi_t$, $t\in\R$, the bornology $\Bbb B$ (which is often referred as a universe in this theory) also consists of time dependent sets $B(t)\subset\Phi_t$, $t\in\R$, and the attraction property is pullback in time, i.e. if you fix $t\in\R$ and start from $B(\tau)\in\Bbb B$, $\tau<t$, then the image $U(t,\tau)B(\tau)$ will be close to $\Cal A(t)$ if $t-\tau$ is large enough, see section \ref{s4} for the details. The particular case where $\Phi_t$ are Banach spaces and $\Bbb B$ consists of uniformly (in time) bounded sets was considered in \cite{CPT13}, the suggested extension allows us also to treat from the unified point of view the case of the bornology of tempered sets (which is important for random attractors) as well as many other interesting examples.
\par
This general theory covers, in particular, the  case of cocycles (or skew-products). We recall that a family of maps $\Cal S_\xi(t):\Phi\to\Phi$, $\xi\in\Psi$, $t\ge0$, is a cocycle over the group $T(h):\Psi\to\Psi$, $h\in\R$ if
$$
\Cal S_\xi(0)=Id.\ \ \Cal S_\xi(t+h)=\Cal S_{T(h)\xi}(t)\circ\Cal S_\xi(h),\ \ t,h\ge0.
$$
These objects are natural for the theory of non-autonomous and random dynamical systems. Roughly speaking, the underlying group $T(h):\Psi\to\Psi$ describes the evolution of a time-dependent  symbol of the considered non-autonomous PDE under time shifts and $\Cal S_\xi(t)$ are solution operators (from zero time moment to time moment $t$) of the considered PDE with a given symbol $\xi\in\Psi$, see \cites{ChVi93,ChVi02,CLR12,Har91,KlL07,KlR11} and references therein. Recall that the dynamical process which corresponds to symbol $\xi$ can be recovered from the cocycle by a simple formula $U_\xi(t,\tau)=\Cal S_{T(\tau)\xi}(t-\tau)$ and exactly this relation allows us to extend the theory from dynamical processes to cocycles. Note also that, introducing a Borel probability measure on $\Psi$, which is invariant (and usually ergodic) with respect to the group $T(h)$, allows us to link the theory with the theory of random dynamical systems and their attractors, see \cites{A98,CF94,CF98,CLR12,D98,KlR11} and references therein.
\par
In  section \ref{s4}, we also discuss an alternative approach to attractors of non-autonomous dynamical systems, which is based on the reduction of the cocycle related with the considered PDE to the autonomous semigroup acting on the  extended phase space $\Phi\times\Psi$. This approach leads to an object which is {\it independent of time} and the rate of attraction to it is uniform in time as
well as with respect to $\xi\in\Psi$, for this reason, it is referred as a {\it uniform} attractor. We present here the classical results related with weak uniform attractors and strong ones (in the case of translation-compact external forces), see \cites{ChVi95a,ChVi02} and references therein, as well as more recent results concerning strong uniform attractors for the case of non translation-compact external forces, see \cite{Z15} and references therein.
\par
Section \ref{s5}, which is the second central section of the survey, discusses the dimensions of  attractors. As we have already mentioned, the fact that the attractor $\Cal A$ of the considered PDE has a finite fractal dimension allows us to build up an IF for this equation and this is one of possible ways to justify the finite-dimensionality conjecture for the associated limit dynamics, so getting realistic upper and lower bounds for this dimension is one of the most fashionable branches of the attractors theory. This activity was originated by the seminal paper of Mallet-Paret \cite{MP76} (see also \cite{Lad82}), where the method of estimating the dimension of the negatively invariant sets based on some smoothing/squeezing properties for differences of solutions has been proposed. In a modern interpretation, the main result can be formulated as follows: let $\Phi$ and $\Phi_1$ be two Banach spaces such that $\Phi_1$ is compactly embedded in $\Phi$ and let also $B$ be a bounded set in $\Phi_1$ which is negatively invariant with respect to some map $S$. Assume that the map $S$ enjoys the following kind of squeezing property:
\begin{equation}\label{0.sq}
\|S(u_1)-S(u_2)\|_{\Phi_1}\le\kappa\|u_1-u_2\|_{\Phi_1}+L\|u_1-u_2\|_\Phi, \ \ \forall u_1,u_2\in B
\end{equation}
for some $\kappa\in[0,1)$ and $L>0$. Then the fractal dimension of $\Cal A$ in $\Phi_1$ is finite, see e.g. \cite{PRSE05} or \cite{8}. This result and various its generalizations to non-autonomous and random cases are presented in section \ref{s5}. Note that this approach has a tremendous number of applications in the modern theory of attractors, see e.g. \cites{8,EMZ00,MN96,MaP02,Z00,KSZ21} and references therein, in particular, most  of the results concerning the  {\it exponential} attractors are strongly based on this method, see the discussion below.
\par
We also discuss there  the  volume contraction method for obtaining upper bounds for the Hausdorff and fractal dimension of the attractor suggested by Douady and Oesterle \cite{DO80} (see also \cite{Il83} and \cite{CF85} for generalizations to the infinite-dimensional case). Roughly speaking, the key result here is that if you have a negatively invariant (with respect to some smooth map $S$) compact set $B$ of a Hilbert space,  such that the infinitesimal $k$-dimensional volumes in it are contracted by the map $S$, then the dimension of $B$ is less than $k$. For the case of the Hausdorff dimension this result was established in \cite{DO80} and \cite{Il83} for the finite and infinite dimensional cases respectively. The case of fractal dimension is more delicate and for a long time only partial results with extra unnecessary assumptions were known (see \cites{CF85,ChVi02} and references therein). The breakthrough here was done by Hunt \cite{Hunt96} where exactly the same result has been established for the fractal dimension in the finite-dimensional case. This result was extended later to the infinite-dimensional case in \cite{BlI99} under some extra technical assumptions which has been finally removed in \cite{ChIl04}. The advantage of this method is that, combined with the usage of  Lieb-Thirring inequalities suggested by Lieb in \cite{Lieb84}, it gives the best known upper bounds for the fractal dimension of the attractor of 2D Navier-Stokes equations, see e.g. \cite{tem}, see also \cite{FH21} and \cite{ChIl04} for the best analytic bounds for this dimension via Lieb-Thirring inequalities. This makes the volume contraction method
 very popular in the attractor theory (despite the fact that it also has drawbacks and is not applicable in many cases, in particular, when the solution operators are not differentiable with respect to the initial data, see \cite{MZ08} and references therein).  We give an exposition of this method as well in section \ref{s5}.
\par
We also discuss the lower bounds for the dimension of the attractors in this section. Recall that the most widespread method to get such estimates is based on the fact that an unstable manifold of any (hyperbolic) equilibrium always belongs to the attractor, so its dimension cannot be smaller than the dimension of this unstable manifold. This gives us the lower bounds for the dimension of the attractor in terms of the instability indexes of equilibria, see \cites{BV92,tem} and references therein. However, this is not enough in some cases for obtaining sharp lower bounds and there are examples where the instability indexes of all equilibria remain bounded, but the dimension of the attractor tends to infinity when the physical parameter of the considered system tends to zero. For this reason, we discuss also more exotic but promising alternative method (suggested by Turaev and Zelik \cite{TZ03}), which is based on the homoclinic bifurcation theory. Roughly speaking, this method utilizes the fact that, under some natural assumptions, an invariant torus of very high dimension (which is restricted by the Lyapunov dimension of the corresponding equilibrium) may bifurcate from a homoclinic orbit under the proper choice of the perturbation. This allows us to relate the lower bounds with the Lyapunov dimension (similarly to the upper bounds obtained via the volume contraction method). This  is very important for weakly dissipative equations and gives us the sharp upper and lower bounds of the same order for some class of damped wave equations.
\par
In section \ref{s6}, we discuss the theory of inertial manifolds (IMs) for dissipative PDEs. Recall that, by definition, an IM is a smooth invariant finite-dimensional sub-manifold of the phase space of the considered problem which is globally stable and is normally hyperbolic. The existence of such a manifold gives a perfect justification of the finite-dimensionality conjecture for the limit dynamics. Indeed, the restriction of our system to the IM gives the desired smooth IF which is governed by the system of ODEs on the base of the manifold. On the other hand, due to the normal hyperbolicity, we have the exponential tracking (asymptotic phase) property which guarantees that any other trajectory of the considered PDE attracts exponentially to the corresponding trajectory on the IM, so we do not lose any important information about the limit dynamics when passing to the reduced IF on the manifold. To the best of our knowledge, such an object was first constructed by Man\'e in \cite{Man77} for the case of a reaction-diffusion equation and became popular  after \cite{FST88}, where this result was extended to more general class of equations and  was associated  with  the dream to understand turbulence (even the denomination "inertial" introduced there is motivated by the inertial scale in the conventional theory of turbulence, see e.g. \cite{firsh} and references therein).
\par
The classical construction of an IM requires the considered PDE to satisfy rather restrictive conditions, which allow us to present the associated DS as a slow-fast system and slave the fast modes to the slow ones using the standard methods of hyperbolic theory, see \cites{FST88,Mik91,kok98,RT96,SY02,tem, Ro94,Rob11,Zel14} and references therein. These conditions are usually formulated in terms of {\it spectral gap} conditions on the leading linear part of the  equation under consideration. On the one hand, these conditions are  not satisfied, for instance, for the case of 2D Navier-Stokes equations, so the existence or non-existence of an IM for Navier-Stokes equations is one of the most challenging open problems of the theory. On the other hand, it is also known that the spectral gap conditions are sharp at least on the level of abstract functional models (e.g., in the class of abstract semilinear parabolic equations), see \cites{EKZ13,Zel14}, so further progress in constructing IMs beyond the spectral gap conditions is possible only utilizing some special properties of the concrete classes of PDEs.
\par
The first result in this direction was obtained by Mallet-Paret and Sell in \cite{MPS88}, where  IMs have been constructed for the case of scalar reaction-diffusion equations in 3D with periodic boundary conditions, utilizing  the  {\it spatial averaging} method, see also \cite{MPSS93} where it was shown that this method does not work in space dimensions higher than three. Taking into  account  recent progress in this area (for instance, extending the spatial averaging methods to 3D Cahn-Hilliard equation, see \cite{KZ15}, various truncated or regularized versions of 3D Navier-Stokes equations, see \cites{GG18,K18,KLSZ}, developing the  spatio-temporal averaging method and applying it to 3D complex Ginzburg-Landau equation, see \cites{K21,KSZ22}), we give a brief exposition of this method in section~\ref{s6}.
\par
An alternative method for constructing IMs is based on  transforming the initial PDE or embedding it into a new system of PDEs in such a way that the obtained new system satisfies the spectral gap conditions. This method is originally related with the erroneous attempt  of Kwak \cite{kwak92} to prove the existence of an IM for 2D Navier-Stokes equation, see also \cites{kwak92a,TW93}, and also \cite{KZ221} for clarifying the nature of the error. For this reason, the whole method has been forgotten for a long time and was considered as suspicious and potentially erroneous. The situation is changed recently after the works \cites{K17,K18}, where the proper modification of this method has been applied to solve the long-standing open problem about the existence of IMs for general 1D reaction-diffusion-advection systems, see also \cites{Vuk1,Vuk2,Vuk3} for some preliminary results in this direction. We  include a brief exposition of this alternative method to section \ref{s6}.
\par
We also discuss the smoothness of IMs. It is known that, in general, even if the initial system is analytic, the IM related with this PDE is only $C^{1+\eb}$-smooth for some small $\eb>0$ and further regularity of IMs requires much stronger version of spectral gap conditions which are not satisfied even in the simplest examples, see \cites{CLS92,kok98}. This looks like a big drawback of the theory since the regularity of an IM is important from both a theoretical and an applied point of view. Indeed, even the analysis of simplest bifurcations requires more smoothness than $C^{1+\eb}$ (e.g. for the analysis of the Andronov-Hopf bifurcation, we need $C^3$, see, for instance, \cite{Shi01}). On the other hand, the low regularity of the IM and the related IF prevents us from the usage of higher order numerical methods. This problem looked unsolvable for a long time, but as shown very recently, see \cite{KZ22}, it nevertheless can be overcome. Namely, by increasing the dimension of the manifold and by using the clever cut-off procedure (based on the Whitney extension theorem), we may kill resonances and other  obstacles to the existence of smooth invariant manifolds and  get the $C^k$-smooth IMs for every finite $k$.
\par
Section \ref{s7} of the present survey is devoted to  {\it exponential} attractors. These objects have been introduced in \cite{EFNT94} as, in a sense, intermediate objects between usual attractors and IMs in order to overcome key drawbacks of the attractors theory. The main of these drawbacks is exactly the slow rate of attraction and, which is even more important, the fact that it is impossible, in a more or less general situation, to control this rate of attraction in terms of physical parameters of the considered PDE. This makes the attractor, in a sense, unobservable in experiments: no matter how long we wait, we never can be sure that we are close to the attractor. The absence of such a control also leads to the sensitivity of the attractor to perturbations.
\par
Roughly speaking, the idea of the construction of an exponential attractor, is to add some special points (e.g., metastable states) to the usual attractor in such a way that, on the one hand, the rate of attraction to the new object becomes exponential and controllable and, on the other hand, the size of this object does not grow too much, in particular, it should still have the finite fractal dimension, so the Man\'e projection theorem still allows us to construct an IF on it.
 \par
Following  \cite{EMZ00}, the modern theory of exponential attractors is based on the squeezing property \eqref{0.sq} and various its generalizations, see \cites{BN95,7,EMZ00,EMZ05,EZ07,FMGZ,GMPZ10,MZ08} and references therein. Thus, in contrast to IMs, the exponential attractors are as general as the usual finite-dimensional global attractors, see also \cites{EMZ04,EMZ03} for infinite-dimensional exponential attractors in the case where the dimension of a usual attractor is infinite.
\par
In the present survey, we do not present most general conditions of the form \eqref{0.sq} which guarantee the existence of  exponential attractors (we refer the interested reader to surveys \cite{EMZ04} and \cite{MZ08}). Instead, we discuss the impact of the theory of exponential attractors to non-autonomous and random attractors. Our exposition follows mainly the papers \cite{PRSE05} and \cite{ShZ13} and is based on the straightforward extensions of \eqref{0.sq} to the non-autonomous case.
\par
We recall that the theory of exponential attractors allows us to overcome the extra drawbacks of the theory of usual attractors which arise when  the non-autonomous case is considered. Namely, again because of the non-controllable and non-uniform rate of attraction to, say, pullback attractors, we lose in general the attraction forward in time. The situation is a bit better in the case of random attractors where we have attraction in probability forward in time, but the lack of the uniformity causes a lot of difficulties, in particular, it does not allow to get the robustness of random attractors in the random-deterministic limit, see e.g. \cites{CF98,Chu02} and also \cite{CDLM17} and references therein for the contemporary approach to this problem. In contrast to this, non-autonomous  exponential attractors can be constructed in such a way that the rate of attraction is uniform and exponential both forward and pullback in time and this remains true for random exponential attractors as well, see section \ref{s7} for the details.
\par
Since the sensitivity of attractors with respect to perturbations is closely related with the rate of attraction, we also include some elements of perturbation theory of attractors to the survey as well as a theory of  the so-called {\it regular} attractors in the terminology of Babin and Vishik, see \cite{BV83}. Such attractors are typical for the systems which possess a global Lyapunov function. Then, under the extra generic assumption that the number of equilibria $\Cal R$ is finite and all of them are hyperbolic, the attractor consists of a finite union of finite-dimensional unstable manifolds of these equilibria and every complete trajectory lying on the attractor is a heteroclinic orbit between these equilibria. These attractors possess a number of nice properties and they have many  similarities  with exponential attractors. In particular, the rate of attraction to them is exponential and they are H\"older continuous with respect to perturbations (including the non-autonomous ones), see \cites{BV83,BV92,CL09,Hal04,HR89,VZCh13} and references therein.
\par
In section \ref{s8}, we discuss an alternative approach to the problem of the finite-dimensional reduction (which was suggested in \cite{FP67}, see also \cite{Lad72} and was historically the first one) related with {\it determining} functionals. By definition, a system $\Cal F:=\{\Cal F_1,\cdots,\Cal F_N\}$ of continuous functionals $\Cal F_i:\Phi\to\R$ on the phase space $\Phi$ is (asymptotically) determining for the semigroup $S(t):\Phi\to\Phi$ if for every two trajectories $u_1(t)$ and $u_2(t)$ of this semigroup, the convergence
$$
\lim_{t\to\infty}(\Cal F_i(u_1(t))-\Cal F_i(u_2(t))=0,\ \ i=1,\cdots,N
$$
implies that $\lim_{t\to\infty}\|u_1(t)-u_2(t)\|_\Phi=0$. Thus, the asymptotic behaviour of trajectories of the considered system is determined by the behavior of finitely many quantities $\xi_i(t):=\Cal F_i(u(t))$, $i=1,\cdots, N$. Note, however, that the determining functionals do not give a true finite-dimensional reduction since the quantities $\xi_i(t)$ do not obey a finite system of ODEs. Moreover, typically they satisfy some kind of a system of ODEs with delay (and realize the  Lyapunov-Schmidt reduction), so the phase space of the reduced system of ODEs with delay remains infinite-dimensional, and not any finite-dimensional reduction can be actually constructed in this way. Nevertheless, this topic remains interesting and fashionable (see \cites{6,8,9,14,15,OT08}) since determining functionals and the related reduction to delay ODEs have many important applications, for instance, for establishing the controllability of an initially infinite dimensional system by finitely many modes (see e.g. \cite{AT14}), verifying the uniqueness of an invariant measure for
random/stochasitc PDEs (see e.g. \cite{KuSh12}),  data assimilation problems where the values of functionals $\Cal F_i((u(t))$ are interpreted as the results of observations and the theory of determining functionals allows us to build up new methods of restoring the trajectory $u(t)$ by the results of observations,
see \cites{AT14,AOT13,OT08} and references therein.
 \par
 In our exposition, we mainly follow the recent paper \cite{KAZ22} and try to clarify the nature of determining functionals as well as the nature of  the minimal number of such functionals (the so-called  {\it determining dimension} $\dim_{det}(S(t))$ of the considered system). The trivial lower bound for this dimension is related with the size of the set of equilibria $\Cal R$, namely its embedding dimension $\dim_{emb}(\Cal R)$, i.e. the minimal number $N$ such that there is a continuous injective map $F:\Cal R\to\R^N$. Surprisingly, this lower bound is sharp and we have two sided estimate
 $$
 \dim_{emb}(\Cal R)\le \dim_{det}(S(t))\le\dim_{emb}(\Cal R)+1,
 $$
  see \cite{KAZ22} for the details. The proof of this result is based on the famous Takens delay embedding theorem and its generalizations for H\"older continuous maps, see \cites{Rob11,SIC91,Tak81} and references therein.
 \par
 In particular, in the generic case where the set $\Cal R$ is finite, almost every continuous function $\Cal F:\Phi\to\R$ is a determining functional for the considered PDE and such a determining functional can be chosen in the class of polynomial maps of a sufficiently big degree. Thus, in contradiction to the widespread paradigm, the minimal number of determining functionals describes the structure/size of the set of equilibria $\Cal R$ of the considered system and is related neither with the complexity of the corresponding dynamics on the attractor, nor with its dimension, nor even with the dissipativity of the considered PDE. The corresponding examples, illustrating this statement, are also presented in section \ref{s8}.
 \par
 Finally, the most important for our purposes function spaces and their properties, which are used throughout of the survey, are collected in appendix \ref{a1}.
 \par
To conclude, we note that, because of the restricted size of this survey, it is not possible to pay the proper attention to all important works in the area of attractors, so the choice of the material is somehow subjective and reflects the personal preferences of the author, and many interesting areas (like PDEs with delay, attractors in unbounded domains, approximate inertial manifolds, infinite-dimensional center manifolds and the corresponding hyperbolic theory, etc.) are out of this survey. The author apologises for the inconvenience caused.

 \section{Attractors: basic theory and  model examples}\label{s1}
The aim of this section is to discuss  briefly  various concepts related with attractors and illustrate them by simple examples.   We assume here that we are given a phase space $\Phi$ which will be for a moment a normed space (more general situation where $\Phi$ is a Hausdorff topological space will be considered later)  and a semigroup $S(t):\Phi\to\Phi$ satisfying
\begin{equation}
S(t+h)=S(t)\circ S(h),\ \ t,h\ge0,\ \ S(0)=\operatorname{Id}.
\end{equation}
We refer this semigroup $S(t)$ acting on $\Phi$ as a {\it dynamical system} (DS) and denote it as $(S(t),\Phi)$.
Usually $S(t)$ are the solution operators of an ODE or an evolutionary PDE considered (which maps the initial data to the   corresponding solution at time moment $t$). Thus, we implicitly assume that the corresponding initial value problem is globally solvable and this solution is unique (although this concept may be used even in cases without uniqueness, see section \ref{s3} for more details). The phase space $\Phi$ is often either some  Sobolev space or its subspace endowed with an appropriate topology, see examples below.
\par
The key concept  behind the attractors theory is the concept of an $\omega$-limit set.
\begin{definition}\label{Def1.omega} Let $B\subset\Phi$ be an arbitrary non-empty subset of $\Phi$. Then the $\omega$-limit set of $B$ is defined via
\begin{equation}\label{1.omega}
\omega(B):=\cap_{T\ge0}[\cup_{t\ge T} S(t)B]_{\Phi},
\end{equation}
where $[V]_\Phi$ stands for the closure of a set $V$ in the topology of $\Phi$. In the case of metric spaces, we may give an equivalent, but sometimes more convenient sequential definition:
\begin{equation}\label{1.omega-seq}
\omega(B)=\big\{u_0\in\Phi,\ \exists u_n\in B,\ \exists t_n\to\infty,\ \ u_0=\lim_{t\to\infty}S(t_n)u_n\big\}.
\end{equation}
In general case when $\Phi$ is not metrizable, these two definitions may give different objects.
\end{definition}
It is well-known that, without further assumptions, an $\omega$-limit set can easily be empty or, even if it is occasionally not empty, it may not possess important  properties like invariance or/and attraction. In order to keep them, we need some kind of compactness which is the central assumption of the attractors theory. We summarize the properties of an $\omega$-limit set in the following lemma.
\begin{lemma}\label{Lem1.p-omega} Let the semigroup $S(t)$ be asymptotically compact on $B$, i.e. for any sequences $u_n\in B$ and $t_n\to\infty$ the closure $[S(t_n)u_n]_\Phi$ is a compact set in $\Phi$. Then,
\par
1) The set $\omega(B)$ is not empty.
\par
2) It attracts the images of the set $B$ in the following sense: for every neighbourhood $\Cal O(\omega(B))$, there exists a time moment $T=T(\Cal O)$ such that
\begin{equation}
S(t)B\subset\Cal O(\omega(B)) \ \ t\ge T.
\end{equation}
Let us assume in addition that the operators $S(t)$ are continuous for every fixed $t$. Then the $\omega$-limit set is strictly invariant:
\begin{equation}
S(t)\omega(B)=\omega(B),\ \ t\ge0.
\end{equation}
\end{lemma}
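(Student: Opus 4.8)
The plan is to work throughout with the sequential description \eqref{1.omega-seq} of $\omega(B)$, which is legitimate here since $\Phi$ is a normed (hence metric) space, and to read the asymptotic compactness hypothesis as the statement that every sequence $S(t_n)u_n$ with $u_n\in B$ and $t_n\to\infty$ admits a convergent subsequence (the closure of a sequence being compact in a metric space is equivalent to this). Granting this, part~1 is immediate: I would fix arbitrary $u_n\in B$ and $t_n\to\infty$, extract a convergent subsequence $S(t_{n_k})u_{n_k}\to u_0$, and read off $u_0\in\omega(B)$ directly from \eqref{1.omega-seq}.

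For the attraction property (part~2) I would argue by contradiction. If attraction failed, there would exist an open neighbourhood $\Cal O(\omega(B))$, together with a sequence $t_n\to\infty$ and points $u_n\in B$ such that $S(t_n)u_n\notin\Cal O(\omega(B))$ for all $n$. Asymptotic compactness yields a subsequence with $S(t_{n_k})u_{n_k}\to u_0$, whence $u_0\in\omega(B)$ by \eqref{1.omega-seq}; but every $S(t_{n_k})u_{n_k}$ lies in the closed set $\Phi\setminus\Cal O(\omega(B))$, so its limit $u_0$ lies there too, contradicting $u_0\in\omega(B)\subset\Cal O(\omega(B))$.

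Strict invariance (part~3) splits into two inclusions, and this is where the continuity of $S(t)$ and the semigroup law enter. The inclusion $S(t)\omega(B)\subset\omega(B)$ is the routine direction: for $u_0=\lim S(t_n)u_n\in\omega(B)$, continuity gives $S(t)u_0=\lim S(t)S(t_n)u_n=\lim S(t+t_n)u_n$, and since $t+t_n\to\infty$ with $u_n\in B$, this exhibits $S(t)u_0\in\omega(B)$. The main obstacle will be the reverse inclusion $\omega(B)\subset S(t)\omega(B)$, since it forces us to manufacture a preimage of $u_0$ under $S(t)$, and this is precisely where asymptotic compactness must be invoked a second time. Given such a $u_0=\lim S(t_n)u_n$ and a fixed $t\ge0$, I would pass to $n$ large enough that $t_n-t\ge0$ and consider the shifted sequence $S(t_n-t)u_n$; as $t_n-t\to\infty$, asymptotic compactness provides a convergent subsequence $S(t_{n_k}-t)u_{n_k}\to v_0$ with $v_0\in\omega(B)$. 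The semigroup identity $S(t)\circ S(t_{n_k}-t)=S(t_{n_k})$ combined with continuity of $S(t)$ then yields $S(t)v_0=\lim S(t_{n_k})u_{n_k}=u_0$, so that $u_0\in S(t)\omega(B)$, completing the argument.
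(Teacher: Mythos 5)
Your proof is correct, but it takes a genuinely different route from the paper's. You work entirely with the sequential characterization \eqref{1.omega-seq}, reading asymptotic compactness as subsequence extraction: non-emptiness and attraction follow by picking limits of $S(t_n)u_n$ (for attraction, one minor point is to replace the given neighbourhood by an open one so that its complement is closed, which is harmless since every neighbourhood of $\omega(B)$ contains an open one), and for the hard inclusion $\omega(B)\subset S(t)\omega(B)$ you manufacture a preimage directly by applying asymptotic compactness a second time to the shifted sequence $S(t_n-t)u_n$ and then using the semigroup law plus continuity. The paper instead defers the proof to Section \ref{s2}, where the lemma is subsumed in Theorem \ref{Th2.main} and Proposition \ref{Prop1.inv}: there the arguments are purely topological --- open coverings of a compact attracting set, separation of compact sets in Hausdorff spaces, closure manipulations of the form $S(t)[A]_\Phi\subset[S(t)A]_\Phi$, and, for the inclusion $\Cal A\subset S(t)\Cal A$, minimality of the attractor rather than an explicit preimage construction. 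Your approach buys brevity and transparency, and it is perfectly adequate in the normed-space setting of Section \ref{s1}, where the topological and sequential definitions of the $\omega$-limit set coincide; the paper's approach buys generality, since it works in arbitrary Hausdorff (possibly non-metrizable) phase spaces, where sequential arguments break down and the two definitions of $\omega(B)$ may even give different objects, as remarked after Definition \ref{Def1.omega}.
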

This lemma, which will be proved in the next section, is the main building block in the attractors theory and all versions of attractors (known to the author) use implictly or explicitly the formula for an $\omega$-limit set. We also emphasize here that the concept of an  $\omega$-limit set requires asymptotic compactness, so if we want to have an attractor in our phase space, we need to fix the topology in $\Phi$ in such a way that the asymptotic compactness assumption holds.
\par
The situation is much simpler in the finite-dimensional case $\Phi=\R^n$, where the  {\it dissipative estimate}:
\begin{equation}\label{1.dis}
\|S(t)u_0\|_{\Phi}\le Q(\|u_0\|_\Phi)e^{-\alpha t}+ C_*
\end{equation}
(where $\alpha, C_*>0$ and $Q$ is some monotone increasing function) is enough to get the asymptotic compactness. However, this estimate is not enough in infinite-dimensional case, so an extra work is required in order to verify this compactness.
\par
The next principal question is {\it what objects should be attracted by the attractor?}
\par
There are different answers to this question. First of all, there are {\it local} attractors which are very natural for the  modern theory of dynamical systems, see e.g.\cite{KH95}, and which attract the trajectories starting from some (small) neighbourhood of the attractor only. More advanced version is a Milnor attractor where the attraction property holds for the initial data up to a zero measure set, see \cite{M85}, see also \cites{Con78,GonT17,Rue81,HSZ01,Il13} for more delicate versions of attractors.
\par
However, some {\it global} versions of attractors are traditionally preferable in the theory of dissipative PDEs which is partially explained by the ultimate goal of this theory -- to justify the finite-dimensional reduction. The most widespread is a global attractor which attracts the images of all {\it bounded} sets, see \cites{BV92,Hal88,SY02,tem}.

\begin{definition}\label{Def1.attr} A set $\mathcal A$ is a global attractor for  the dynamical system $S(t):\Phi\to\Phi$ if
\par
1.\ The set $\mathcal A$ is compact in $\Phi$;
\par
2. It is strictly invariant, i.e., $S(t)\mathcal A=\mathcal A$ for all $t\ge0$;
\par
3. The set $\mathcal A$ is an attracting set for the semigroup $S(t)$, i.e., for any bounded $B\subset \Phi$ and any neighbourhood $\mathcal O(\mathcal A)$ of the set $\mathcal A$, there exists $T=T(\mathcal O,B)$ such that
$$
S(t)B\subset\mathcal O(\mathcal A)
$$
for all $t\ge T$.
\end{definition}

Thus, from the one hand, the global attractor consists of complete trajectories of a DS  and contains all non-trivial dynamics (due to the attraction property). On the other hand, due to the compactness property, it is essentially smaller than the initial (usually infinite-dimensional) phase space, so the existence of a global attractor already gives some kind of the reduction of degrees of freedom for the limit dynamics of the considered dissipative system.
\par
The second possibility is a {\it point} attractor which attracts single trajectories only (not bounded sets of trajectories). Other possibility is a $(\Psi,\Phi)$-attractor which has been introduced in \cite{BV92} and which attracts bounded sets in the space $\Psi$ in the topology of the space $\Phi$. It may be also natural to consider the attraction of only {\it compact} sets of initial data, etc. The unified approach to these attractors will be considered in the next section and here we state only the simplest and most popular version of the attractor's existence theorem, see \cites{BV92,tem} for more details.

\begin{theorem}\label{Th1.atr-sim} Let the operators  $S(t)$ be continuous for every fixed $t\ge0$ and let the semigroup $S(t)$ possess a compact attracting set $\Cal B$ in $\Phi$, i.e., for every bounded set $B$ and every neighbourhood $\Cal O(\Cal B)$ there exists $T=T(\Cal O,B)$ such that
$$
S(t)B\subset\Cal O(\Cal B),\ \ t\ge T.
$$
Then the semigroup $S(t)$ possesses a global attractor $\Cal A$ which is a subset of $\Cal B$.
\end{theorem}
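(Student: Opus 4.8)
The plan is to realise the attractor as the $\omega$-limit set of the compact attracting set itself, namely $\mathcal{A}:=\omega(\mathcal{B})$, and then to read off all the required properties from Lemma \ref{Lem1.p-omega}. The first task is to observe that the hypothesis of that lemma — asymptotic compactness — is supplied for free by the existence of the compact attracting set $\mathcal{B}$. Indeed, given any bounded $B$ and sequences $u_n\in B$, $t_n\to\infty$, the attraction property forces $S(t_n)u_n$ to enter every neighbourhood of $\mathcal{B}$ eventually; since $\mathcal{B}$ is compact (hence bounded) and $\Phi$ is metric, choosing nearest points in $\mathcal{B}$ and passing to a subsequence produces a convergent subsequence whose limit lies in $\mathcal{B}$. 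This simultaneously establishes asymptotic compactness on every bounded set and the inclusion $\omega(B)\subset\mathcal{B}$ for all bounded $B$.

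With asymptotic compactness in hand I would apply Lemma \ref{Lem1.p-omega} with $B=\mathcal{B}$. This immediately yields that $\mathcal{A}=\omega(\mathcal{B})$ is non-empty and, using the assumed continuity of each $S(t)$, strictly invariant, $S(t)\mathcal{A}=\mathcal{A}$. Compactness is then automatic: by its definition \eqref{1.omega} the set $\omega(\mathcal{B})$ is an intersection of closed sets, hence closed, and it is contained in the compact set $\mathcal{B}$, so it is compact and satisfies $\mathcal{A}\subset\mathcal{B}$ as claimed. This settles properties $1$ and $2$ of Definition \ref{Def1.attr}.

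The remaining and most delicate point is the global attraction property: that $\mathcal{A}$ attracts every bounded set $B$, not merely $\mathcal{B}$. I would reduce this to the inclusion $\omega(B)\subset\omega(\mathcal{B})=\mathcal{A}$ for every bounded $B$. Once this is known the attraction is immediate, since Lemma \ref{Lem1.p-omega} already guarantees that $\omega(B)$ attracts $B$, and any neighbourhood $\mathcal{O}(\mathcal{A})$ is in particular a neighbourhood of the smaller set $\omega(B)$, so the images $S(t)B$ fall into $\mathcal{O}(\mathcal{A})$ for all large $t$. To prove the inclusion I would exploit the strict invariance of $\omega(B)$, which again holds by Lemma \ref{Lem1.p-omega} thanks to continuity and the asymptotic compactness verified above: for any $u_0\in\omega(B)$ and any $t\ge0$ we may write $u_0=S(t)v_t$ with $v_t\in\omega(B)\subset\mathcal{B}$, whence $u_0\in S(t)\mathcal{B}$ for every $t$; consequently $u_0\in[\cup_{t\ge T}S(t)\mathcal{B}]_\Phi$ for every $T$, which is exactly $u_0\in\omega(\mathcal{B})$ by \eqref{1.omega}.

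I expect the main obstacle to be precisely this last inclusion. The naive idea of chaining the two attraction properties (of $\mathcal{B}$ to $B$, and of $\mathcal{A}$ to $\mathcal{B}$) does not compose literally, because being close to $\mathcal{B}$ is strictly weaker than lying in $\mathcal{B}$. The device that circumvents this is to avoid transitivity of attraction altogether and instead use the strict invariance of $\omega(B)$ to place each of its points exactly inside $S(t)\mathcal{B}$ for all $t\ge0$; everything else is a bookkeeping assembly of the conclusions of Lemma \ref{Lem1.p-omega}.
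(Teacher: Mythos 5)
Your proposal is correct and follows essentially the same route as the paper, which defines $\Cal A=\omega(\Cal B)$ and treats the theorem as an almost immediate corollary of Lemma \ref{Lem1.p-omega}. The details you supply — asymptotic compactness from the compact attracting set, and the inclusion $\omega(B)\subset\omega(\Cal B)$ via strict invariance of $\omega(B)$ — are exactly the content behind the paper's assertion that $\omega(\Cal B)$ and $\big[\bigcup_{B}\omega(B)\big]_{\Phi}$ give the same (attracting) set.
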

This theorem is an almost immediate corollary of the properties of $\omega$-limit sets formulated in Lemma \ref{Lem1.p-omega}. Indeed, the desired attractor can be found via the following two equivalent formulas:
$$
\Cal A=\omega(\Cal B)=\big[\bigcup_{B-\text{bounded}}\omega(B)\big]_{\Phi}.
$$
However, the second expression is more general and works also for other types of attractors, without the continuity assumption, etc.
The similar theorems also hold for other types of attractors, one just needs to understand the attracting set in a proper way, e.g. point-attracting, $(\Phi,\Psi)$-attracting, etc. For instance, the point attractor can be found via
$$
\Cal A_{point}=\big[\cup_{u_0\in\Phi}\omega(u_0)\big]_{\Phi}.
$$
We also mention an important property of global attractors, namely, the  representation formula:
\begin{equation}\label{1.rep}
\Cal A=\Cal K\big|_{t=0},
\end{equation}
where $\Cal K$ is a set of all complete bounded orbits of the semigroup $S(t)$:
$$
\Cal K:=\{u:\R\to\Phi,\ \ S(t)u(h)=u(t+h), \ \ t\in\R_+,\ \ h\in\R,\ \  \sup_{t\in\R}\|u(t)\|_{\Phi}\le C_u\}.
$$
Formula \eqref{1.rep} is actually one of the main technical tools to work with global attractors. In particular, as we will see below, it is extremely useful in perturbation theory, in estimates of Hausdorff and fractal dimensions, etc. However, it may be not true for different types of attractors, e.g. it fails in general for point attractors or in the case where the operators $S(t)$ are not continuous.
\par
We postpone the general theory of attractors till next sections and turn to the examples which illustrate their basic properties.

\begin{example}\label{Ex1.simple} Let us start with the first order scalar ODE
$$
\frac {dy}{dt}=-y((y-1)^2-\eb),\ \ y\big|_{t=0}=y_0,
$$
 where $\eb\in\R$ is a parameter. First of all, multiplying this equation by $y$ and using the Gronwall inequality, it is easy to see that for every value of $\eb$ there is a bounded attracting (and even absorbing) set for the associated semigroup. Thus, we have the global attractor $\Cal A_{gl}(\eb)$ as well as the point attractor $\Cal A_{point}(\eb)$ whose structure depends on $\eb$. For $\eb<0$ the zero equilibrium is globally exponentially stable, so we have
 $$
 \Cal A_{gl}(\eb)=\Cal A_{point}(\eb)=\{0\}.
 $$
 For $\eb\ge0$, two extra equilibria $z=1\pm\sqrt\eb$ appear and the global attractor will be the closed interval containing these points:
 $$
 \Cal A_{gl}(\eb)=[\min\{0,1-\sqrt\eb\},1+\sqrt\eb].
 $$
 However, since any trajectory still stabilizes to one of these equilibria, the point attractor will consist of these three equilibria:
 $$
 \Cal A_{point}(\eb)=\{0,1\pm\sqrt\eb\}.
 $$
 Even on this simplest example we can already see one of the major drawbacks of global attractors, namely, they are not robust with respect to perturbations. Indeed, at the bifurcation point $\eb=0$ we see a "jump" of the attractor from a single point $\{0\}$ to the whole interval $[0,1]$. As we will see below, in general, global attractors are only {\it upper semicontinuous} with respect to perturbations and the lower semicontinuity can be proved in exceptional cases only. In the considered example, the point attractor is also upper semicontinuous with respect to $\eb$, but as next examples show, even the upper semicontinuity may be lost on the level of point attractors.
 \end{example}
\begin{example}\label{Ex1.2D1} More interesting things may happen when we consider equations on the plane. It is more transparent to plot the phase portraits on a plane than to write down the explicit equations generating the corresponding dynamics, so we  describe bellow the dynamics by plotting the corresponding pictures. In this example, we consider the following dynamical systems:
 \begin{figure}[h]
\begin{subfigure}{0.4\textwidth}
\includegraphics[scale=0.33]{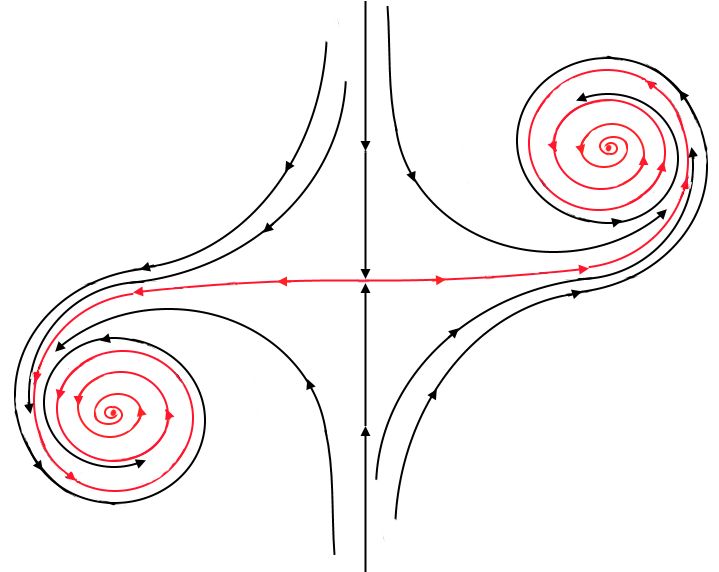}
\caption{The attractor before Andronov-Hopf}
\end{subfigure}
\begin{subfigure}{0.4\textwidth}
\includegraphics[scale=0.33]{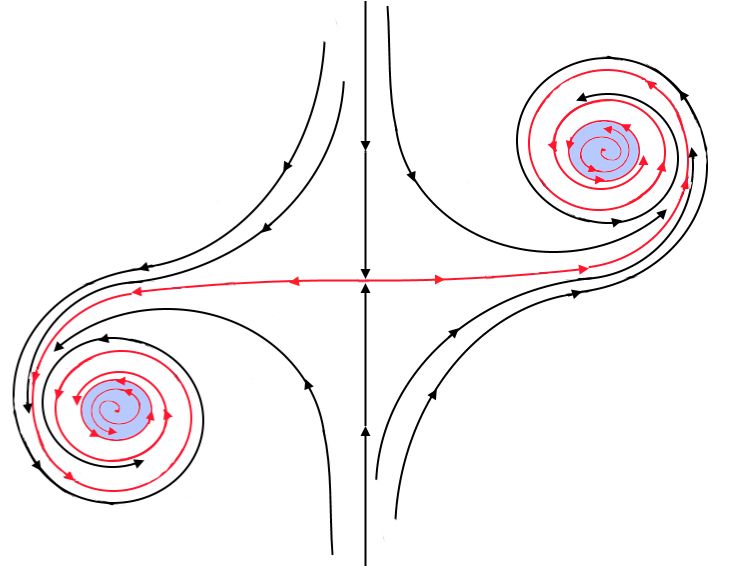}
\caption{The attractor after Andronov-Hopf}
\end{subfigure}
\end{figure}

These systems  have a saddle at the origin and two foci, say at $(1,1)$ and $(-1,-1)$. In the left picture, the foci are stable, so all of the trajectories converge to one of these equilibria. Thus, $\Cal A_{point}$ consists of these equilibria and the global attractor $\Cal A_{gl}$ contains these equilibria together with two unstable separatrices of the saddle plotted by red in the picture.  Thus, topologically the global attractor is still a segment and both Hausdorff and fractal dimensions of it are equal to one.
\par
In the right picture, the foci become unstable via the Andronov-Hopf bifurcation and two extra limit cycles are born, see e.g. \cite{Shi01}. In this case, the point attractor will consist of three equilibria together with newly born limit cycles. The global attractor will now contain extra two disks restricted by limit cycles which are indicated by blue on the picture (together with the equilibria, limit cycles and two unstable separatrices of the saddled, marked by red). This follows, e.g. from the fact that the global attractor consists of all complete bounded orbits. Both Hausdorff and fractal dimensions are equal to two in this case. Remarkable here is that the global attractor remains connected, but becomes not {\it linearly connected} after the bifurcation.
\par
It is also interesting to look at the moment of bifurcation. At this moment, the phase portrait remains topologically the same as on the left picture, but the foci become degenerate. In a generic case the radius $R$ of the spirals satisfy $\frac{ d R}{dt}\sim R^3$, so $R\sim t^{-1/2}\sim\varphi^{-1/2}$, where $\varphi$ is an angle of the spiral. An elementary computation shows that the fractal dimension of such a spiral is equal to $4/3$. Thus, since the unstable separatrices of the saddle form such spirals, we have
\begin{equation}\label{1.frac}
\dim_f(\Cal A_{gl})=\frac43\ne \dim_H(\Cal A_{gl})=1
\end{equation}
(the Hausdorff dimension is one since the attractor is a countable union of 1D segments). This is the simplest mechanism which generates global attractors with non-integer fractal dimension as well as with the difference between Hausdorff and fractal dimensions. We also mention that the Lyapunov dimension of the attractor at the bifurcation moment is obviously equal to two.
\end{example}
\vskip10pt
\begin{example}\label{Ex1.det} One more nontrivial example of 2D dynamics (introduced in \cite{KAZ22}) is given by the next picture:

\vspace{2mm}

{
\hspace{35mm}
\begin{tikzpicture}[scale=0.35]
\fill[outer color=red!5] (0,1) circle (2);
\draw[thick, ->] (0,-1) arc (270:360:0.5);
\draw[thick, ->] (0.5,-0.5) arc (0:180:0.5);
\draw[thick, ->] (-0.5,-0.5) arc (180:270:0.5);
\draw[thick, ->] (1,0) arc (0:120:1);
\draw[thick, ->] (-0.5,0.8660254) arc (120:360:1);
\draw[thick, ->] (0,-1) arc (270:360:1.5);
\draw[thick, ->] (1.5,0.5) arc (0:180:1.5);
\draw[thick, ->] (-1.5,0.5) arc (180:270:1.5);
\draw[color=red!70, thick, ->] (0,-1) arc (270:360:2);
\draw[color=red!70, thick, ->] (2,1) arc (0:90:2);
\draw[color=red!70, thick, ->] (0,3) arc (90:180:2);
\draw[color=red!70, thick, ->] (-2,1) arc (180:270:2);
\draw[thick, ->](0,3.8) arc (90:230:2.6);
\draw[thick, ->](-1.6712478,-0.791715544) to [out=330, in=195] (0,-1);
\draw[thick, ->](2.7,1.1) arc (0:90:2.7);
\draw[thick](2.7,1.1) arc (360:300:2.7);
\draw[thick, ->] (0.5,-3) to [out=85, in=195] (1.35,-1.23826858);
\draw[thick, ->] (0.5,-4)--(0.5,-3);
\draw[thick, ->] (0,-4)--(0,-2);
\draw[thick, ->] (0,-2)--(0,-1);
\draw[thick, ->](0,5.8) arc (90:180:4.7);
\draw[thick, ->](-4.7,1.1) arc (180:230:4.7);
\draw[thick, ->](-3.021101767,-2.500408868) to [out=330, in=260] (0,-1);
\draw[thick](0,5.8) arc (90:30:4.9);
\draw[thick, ->](4.24352446,-1.55) arc (330:390:4.9);
\draw[thick, ->] (2.5,-4) to [out=85, in=230] (4.24352446,-1.55);
\draw[thick, ->] (0.5,-4)--(0.5,-3);
\draw[thick, ->](-5,-4) to [out=350, in=265] (-0.05,-1.4);
\draw[thick,](0,-1) -- (-0.05,-1.4);
\end{tikzpicture}
}

This phase portrait consists of a saddle-node at the origin $(0,0)$ glued with the disc $\{(x,y)\in\R^2\,:\  x^2+(y-1)^2\le 1\}$ filled by homoclinic orbits to the origin. The key feature of this DS is that the $\omega$-limit set of any single trajectory coincides with the origin and, therefore, the point attractor $\Cal A_{point}=\{0\}$ is trivial, but the global attractor
$$
\Cal A_{gl}=\{(x,y)\in\R^2,\ \ x^2+(y-1)^2\le1\}
$$
is not trivial. Note that this phase portrait is extremely degenerate and an arbitrary small perturbation will destroy the homoclinic orbits and may born a "big" limit cycle. Thus, in contrast to the global attractor, the point attractor is in general not upper semicontinuous with respect to perturbation and this is one more reason why the global attractor looks preferable.
\end{example}

\begin{example}\label{Ex1.lor} The situation becomes much more interesting when the dimension of the phase space is larger than two, since we no more have Poincare-Bendixon theorem and more complicated structures than equilibria, limit cycles and homo/heteroclinic connections between them may appear.  One of the most popular 3D model examples here is the Lorenz attractor, see \cite{Lor63} which illustrates the possibility of chaotic "unpredictable" behaviour in deterministic systems - the so-called deterministic chaos. It is strongly believed that similar effects are responsible for the complicated behaviour of more realistic systems (including PDEs) arising e.g. in hydrodynamics, weather prediction, chemical reactions, etc.
\par
The Lorenz system consists of the following equations:
\begin{equation}\label{1.lorenz}
\frac{dx}{dt}=\sigma(y-x),\ \frac{dy}{dt}=x(\rho-z)-y,\ \frac{dz}{dt}=xy-\beta z
\end{equation}
and the standard choice of the parameters is $\sigma=10$, $\beta=\frac83$, $\rho=28$. The corresponding attractor is plotted below:

\begin{figure}[H]
\center{\includegraphics[scale=0.3]{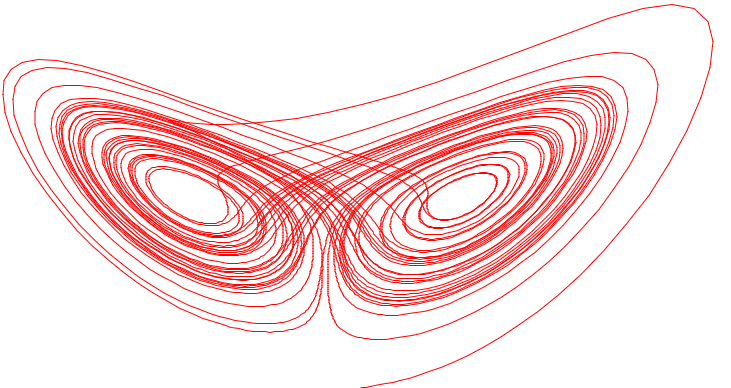}}
\caption{Lorenz attractor: numeric simulation with standard parameters}
\label{fig:4.2.1}
\end{figure}

(the attractor may be essentially different for other values of parameters).
\par
The structure of the Lorenz attractor is well-understood nowadays, but this theory is far beyond the scopes of the survey, so we restrict ourselves by mentioning a few number of interesting properties of this object. More details can be found in \cites{ABS77,GH83,HoM07,Gon21,Kif88,Kuz20,Rand78}.
\par
It is known that the Lorenz system is dissipative and possesses an absorbing ellipsoid  $\Cal B\subset \R^3$, which  is semi-invariant (see e.g. \cite{tem}) and, therefore the global attractor $\Cal A_{gl}$ of \eqref{1.lorenz}  exists. It is a fractal set whose Hausdorff and fractal dimensions are strictly inbetween 2 and 3:
$$
2<\dim_H(\Cal A)\le\dim_f(\Cal A)<3
$$
(at least for the standard values of parameters, see \cite{MPI20}). In particular, the upper bounds follow from the volume contraction method and the explicit formula for the Lyapunov dimension of this attractor:
$$
\dim_L(\Cal A)= 3-\frac {2(\sigma +\beta +1)}{\sigma +1+\sqrt {(\sigma -1)^{2}+4\sigma \rho }},
$$
which holds for all values of the parameters for which all three equilibria are hyperbolic, see~\cite{LKKK16}.
\par
It is worth to emphasize that this global attractor $\Cal A_{gl}$ {\it does not coincide} with the classical Lorenz attractor $\Cal A_{lor}$ or geometric Lorenz attractor, whose existence was verified by Tucker using the interval arithmetics and  the computer assistant proof, see \cite{Tuck02}. Namely, he verified the existence of a compact seminvariant domain $U$ in $\R^3$ (inside of the absorbing ellipsoid $\Cal B$). This domain  looks like a filled double torus which contains zero saddle equilibrium $C_0$, but does not contain two other saddle-foci $C_\pm$. The local attractor of this domain $U$ is exactly the Lorenz attractor $\Cal A_{lor}$, which possesses a geometric description in terms of iterations of 1D discontinuous Poincare map suggested in \cites{ABS77,GH83}, and the main achievement of \cite{Tuck02} is the rigorous numerical verification of the fact that the Lorenz system with classical values of parameters satisfies all of the hypothesis stated in the geometric model. In particular, it follows from the verified assumptions that $\Cal A_{lor}$ is topologically transitive, pseudo-hyperbolic, possesses an invariant measure with nice properties (SRB measure)  and  periodic orbits are dense in it.
\par
On the other hand, there is a strong numeric evidence that there is no complete bounded trajectories belonging to the domain $\Cal B\setminus U$ in the Lorenz system with the standard parameters, although, to the best of our knowledge, this is not rigorously verified yet on the level of computer assistant proofs. If we believe this fact, then the relations between global, Lorenz and point attractor for the Lorenz system are given by

$$
\Cal A_{point}=\Cal A_{lor}\cup C_+\cup C_-,\ \ \Cal A_{gl}=\Cal A_{lor}\cup \Cal M^+(C_+)\cup \Cal M^+(C_-),
$$
where $\Cal M^+(C_\pm)$ are two-dimensional unstable manifolds of the saddle-foci $C_\pm$ and all unions are disjoint.
\end{example}

\begin{example}\label{Ex1.heat} We now turn to examples of PDEs and start with the heat equation in $\R^d$:
\begin{equation}\label{1.heat}
\Dt u=\Dx u,\ \ u\big|_{t=0}=u_0\in\Phi=L^2(\R^d).
\end{equation}
This problem is simple enough to be solved explicitly
$$
u(t,x)=\int_{y\in\R^d}K(x,y)u_0(y)\,dy,\ \ K(x,y)=\frac1{(4\pi t)^{d/2}}e^{\frac{|x-y|^2}{4t}}
$$
and from this formula we see that the solution semigroup $S(t): \Phi\to\Phi$ is well-defined. Moreover, it is not difficult to prove that $u(t)\to0$ in $\Phi$ as $t\to\infty$ for every fixed $u_0\in\Phi$. Thus, the point attractor exists and consists of a zero equilibrium:
$$
\Cal A_{point}=\{u_0\}.
$$
On the other hand, the global attractor $\Cal A_{gl}$ {\it does not exist} here. Indeed, since the considered equation is linear, the existence of a global attractor implies the {\it exponential} stability of the equilibrium which is clearly not the case for equation \eqref{1.heat}.
\par
There are several possibilities to extend the theory of global attractors to this case based on different classes of "bounded" sets    which must be attracted to it or/and different topologies of the phase space. First of all, we may restrict our attention  to one-point sets, then we already have the attractor. However, doing that, we lose a lot of information (clearly the attraction is actually stronger).
\par
The second  possibility would be to restrict the attraction property to {\it compact} sets only. This is probably the most appropriate choice for equation \eqref{1.heat} since we indeed have the uniformity of the attraction for compact sets only.
\par
Other possibilities are related with the change of the  topology of the attraction. Indeed, the attraction of all bounded sets of $\Phi=L^2(\R^d)$ will hold if we endow the phase space  $\Phi$ with the topology induced by the embedding $\Phi\subset \Phi_{loc}:=L^2_{loc}(\R^d)$ (again a straightforward corollary of the explicit formula for the solutions),  This corresponds to the  {\it locally compact} attractor which attracts bounded sets in the local topology only and which is typical for PDEs in unbounded domains, see \cite{MZ08} and references therein for more details.
\par
Alternatively, we may endow the space $\Phi=L^2(\R^d)$ with the {\it weak} topology. Then all bounded sets will be attracted in this weak topology. In all these cases, the attractor will consist of a single point $\{0\}$.
\end{example}
\begin{example}\label{Ex1.1D-par} We complete this section by the model example of a nonlinear heat equation in a bounded domain which demonstrates the standard way how the abstract Theorem \ref{Th1.atr-sim} is used in order  to verify the existence of an attractor for nonlinear PDEs. Namely, we consider the following problem:
\begin{equation}\label{1.ce}
\Dt u=\partial_x^2u+a u-u^3, \ \ x\in(0,\pi), \ \ u\big|_{x=0,\pi}=0,\ \ u\big|_{t=0}=u_0
\end{equation}
in the interval $\Omega=(0,\pi)$ with Dirichlet boundary conditions. Here $a>0$ is a given parameter. We first recall the basic dissipative energy estimate in the phase space $\Phi=L^2(\Omega)$. In order to avoid technicalities, we give below only the formal derivation of the required estimates without their justification (and even without formalizing what is a weak solution of equation \eqref{1.ce}). The skipped  details can be found e.g. in \cites{BV92,tem}.
\par
 Indeed, multiplying  equation \eqref{1.ce} by $u$ and integrating with respect to $x$, we have
$$
\frac12\frac{d}{dt}\|u(t)\|^2_{L^2}+\|\partial_x u(t)\|_{L^2}^2+\|u(t)\|^4_{L^4}=a\|u(t)\|^2_{L^2}.
$$
Using the Poincare inequality $\|\partial_x u\|^2_{L^2}\ge\alpha\|u\|^2_{L^2}$ together with the obvious estimate $a\|u\|^2_{L^2}\le \|u\|^4_{L^4}+Ca^2$, we arrive at
\begin{equation}\label{1.en}
\frac{d}{dt}\|u(t)\|^2_{L^2}+\alpha\|u(t)\|^2_{L^2}+\|\Nx u(t)\|^2_{L^2}\le Ca^2,
\end{equation}
where $C$ is some constant which is independent on $a$,
and, after the integration in time, we get the desired dissipative estimate in $\Phi$:
\begin{equation}\label{1.dis-par}
\|u(t)\|^2_{L^2}\le \|u(0)\|^2_{L^2}e^{-\alpha t}+C_*a^2,
\end{equation}
where $C_*=C/\alpha$.
The global existence  of a solution is straightforward here and can be verified e.g. using the Galerkin approximations, see e.g. \cites{BV92,tem} for more details. Let us verify the uniqueness. Indeed, let $u_1(t)$ and $u_2(t)$ be two solutions of \eqref{1.ce} and let $v(t)=u_1(t)-u_2(t)$. Then this function solves
\begin{equation}\label{1.dif-dif}
\Dt v-\partial_x^2 v=av-(u_1^2+u_1u_2+u_2^2)v, \ \ v\big|_{t=0}=u_1(0)-u_2(0).
\end{equation}
Multiplying this equation by $v(t)$ and using that $u_1^2+u_1u_2+u_2^2\ge0$, we have
$$
\frac{d}{dt}\|v(t)\|^2_{L^2}+2\|\partial_x v(t)\|^2_{L^2}\le 2a\|v(t)\|^2_{L^2}
$$
and
\begin{equation}\label{1.lip}
\|u_1(t)-u_2(t)\|^2_{L^2}+2\int_0^t e^{2a(t-s)}\|\partial_x v(s)\|^2_{L^2}\,ds\le e^{2at}\|u_1(0)-u_2(0)\|^2_{L^2},
\end{equation}
which proves the  uniqueness.
  Therefore, we have constructed a solution semigroup $S(t):\Phi\to\Phi$ associated with equation \eqref{1.ce}. This semigroup possesses a dissipative estimate \eqref{1.dis} in the phase space $\Phi=L^2(\Omega)$ and, by this reason,  the set
$$
\Cal B:=\{u_0\in\Phi\, :\ \ \|u_0\|^2_{\Phi}\le 2C_*a^2\}
$$
is a bounded attracting (and even absorbing) set for this semigroup. However, this is still not enough to verify the existence of a global attractor since the set $\Cal B$ is not {\it compact} in $\Phi$. To overcome this difficulty, we need to utilize the parabolic {\it smoothing} property. To obtain the desired smoothing property, we multiply equation \eqref{1.ce} by $t\partial_x^2 u$ and integrate in $x$. This gives
$$
\frac12\frac d{dt}(t\|\partial_x u(t)\|^2_{L^2})=-t\|\partial^2_x u(t)\|^2_{L^2}-3t\|u\partial_x u\|^2_{L^2}+(at+\frac12)\|\partial_x u(t)\|^2_{L^2}\le (at+\frac12)\|\partial_x u(t)\|^2_{L^2}.
$$
Integrating this estimate with respect to $t\in[0,1]$, we arrive at
$$
\|\partial_x u(1)\|^2_{L^2}\le (2a+1)\int_0^1\|\partial_x u(s)\|^2_{L^2}\,ds.
$$
Finally, we estimate the right-hand side of this inequality by integrating estimate \eqref{1.en} with respect to  $t\in[0,1]$ and using the dissipative estimate \eqref{1.dis-par}. This gives
$$
\|\partial_x u(1)\|_{L^2}^2\le C(a+1)\|u(0)\|^2_{L^2},
$$
where the constant $C$ is independent of the parameter $a$. By shifting time, we also see that
$$
\|\partial_x u(t+1)\|_{L^2}^2\le C(a+1)\|u(t)\|^2_{L^2}.
$$
Combining this estimate with \eqref{1.dis-par}, we see that the set
\begin{equation}\label{1.abs-a}
\Cal B_1:=\big\{u_0\in H^1_0(\Omega),\ :\ \|\partial_x u_0\|_{L^2}^2\le 2CC_*a^2(a+1)\big\}
\end{equation}
is also  the attracting (and even absorbing) set for the semigroup $S(t)$. The difference is that this set is compact in $\Phi$. The continuity of the operators $S(t)$ with respect to the initial data follows from estimate \eqref{1.lip}. Thus, all of the assumptions of Theorem \ref{Th1.atr-sim} are verified and the existence of a global attractor $\Cal A_{gl}\subset\Phi$ for equation \eqref{1.ce} is also verified.
\par
The next question is the structure of the global attractor $\Cal A_{gl}$. The key role in its further investigation  plays the global {\it Lyapunov} function. Indeed, multiplying equation \eqref{1.ce} by $\Dt u$ and integrating in $x$, we derive that
$$
\frac{d}{dt}\Cal L(u(t)):=\frac{d}{dt}\(\frac12\|\partial_x u(t)\|^2_{L^2}+\frac14\|u(t)\|^4_{L^4}-\frac a2\|u(t)\|^2_{L^2}\)=-\|\Dt u(t)\|^2_{L^2}
$$
and see that the function $u\to\Cal L(u)$ is strictly decreasing along the non-equilibria trajectories. This allows us to conclude that any trajectory of \eqref{1.ce} tends as $t\to\infty$ to the set $\Cal R$ of equilibria. The equilibria $u_0\in\Cal R$ solve the second order scalar ODE and their structure can be completely understood. In particular, it is known that new equilibria can bifurcate from zero equilibrium only and all non-zero equilibria remain hyperbolic for all values of the parameter $a$. This fact shows that there are exactly $N=2\lceil \sqrt a\rceil-1$ different equilibria in $\Cal A_{gl}$ if $a>1$ (the case $a<1$ is not interesting since, in this case, we have a single globally exponentially stable zero equilibrium). Thus, from this fact and the existence of a global Lyapunov function, we conclude that any trajectory of the considered equation stabilizes as $t\to\infty$ to one of these equilibria:
$$
\Cal A_{point}=\Cal R=\{u_1,u_2,\cdots,u_N\}
$$
and the global attractor $\Cal A_{gl}$ consists of equilibria and heteroclinic orbits connecting them. More detailed analysis based on Morse-Smale theory allows us to show the robustness of the dynamics on the attractor for $a\in(n^2,(n+1)^2)$, $n\in\Bbb N$, as well as  to determine what equilibria are connected by heteroclinic orbits and to compute the dimensions of the corresponding sets of heteroclinic orbits, see \cites{BV92,FR10,Hen85} for more details.
\par
To conclude we note that \eqref{1.ce} is a rare exception in the attractors theory, where the structure of the global attractor can be completely understood. This is possible thanks to three different nice properties of this equation. The first one is the existence of the global Lyapunov function $\Cal L$ discussed above. The second one is the existence of another type of (discrete) Lyapunov function, namely, the number $Z(u(t))$ of zeros of the profile $x\to u(t,x)$ which is non-increasing function of time. Exactly this discrete Lyapunov function is responsible for the transversality of stable and unstable manifolds of equilibria and the Morse-Smale property. And the third property is that the considered equation is order preserving with respect to the cone of non-negative functions. This allows us to apply the Perron-Frobenious theory and essentially simplify the analysis of spectral properties of equilibria, see \cites{BV92,CI74,Hen85,Kost95} for more details. In contrast to this, the structure of the global attractor remains a mystery  in a more or less general situation (say, for Navier-Stokes equations) and our knowledge is restricted to the general facts that it is compact, connected, consists of bounded trajectories and has the finite Hausdorff and fractal dimensions.
\end{example}

\section{Attractors: a unified approach}\label{s2}
In this section, we present a unified approach to various types of attractors. As we have seen in the previous section, we need to specify two major things:
\par
 1) Bornology: what sets will be attracted by the attractor (we will call such sets "bounded" which explains the denomination bornology);
 \par
  2) Topology: in what sense the attraction will hold.
\par
We assume here that our phase space $\Phi$ is a Hausdorff topological space. On the one hand, main results of the theory discussed in the previous section can be easily extended to general  Hausdorff topological spaces and, on the other hand, this extension makes the theory more convenient and more elegant especially when the attraction in a weak or weak-star topology is considered.
\par
Then we fix a family of sets $\Bbb B\subset 2^{\Phi}$ which will be referred as a {\it bornology} (or bounded structure) on the phase space $\Phi$. Recall that usually the definition of a bounded structure includes some assumptions on $\Bbb B$ such as the stability with respect to inclusions and finite unions as well as the fact that $\Phi=\cup_{B\in\Bbb B}B$, see e.g. \cite{Hog77}. Since the attraction of a set $B$ automatically implies the attraction of all its subsets and the same is true for finite unions, the first two assumptions are actually not restrictive (we always can extend our bornology to satisfy them without changing the attraction properties). However, the third assumption is a big restriction since it excludes local attractors, so we prefer not to pose it. We also do not pose the first two assumptions since they can be satisfied by the straightforward extension mentioned above, so, in our theory, $\Bbb B$ is an arbitrary family of non-empty sets.
\par
Finally, we are given a semigroup $S(t):\Phi\to\Phi$ acting in our phase space which is not assumed to be continuous, so we will need some modifications in the theory. We start with the standard definitions.

\begin{definition} A set $\Cal B\subset\Phi$ is an absorbing ($\Bbb B$-absorbing) set for the semigroup $S(t)$ if, for every $B\in\Bbb B$, there exists time $T=T(B)$ such that
$$
S(t)B\subset\Cal B,\ \ t\ge T.
$$
A set $\Cal B$ is an attracting ($\Bbb B$-attracting) set for the semigroup $S(t)$ if, for every $B\in\Bbb B$ and every neighbourhood $\Cal O(\Cal B)$ of the set $\Cal B$, there exists $T=T(B,\Cal O)$ such that
$$
S(t)B\subset\Cal O(\Cal B),\ \ t\ge T.
$$
\end{definition}
We now turn to the attractors. The key difference here is that the invariance of the attractor $\Cal A$ requires some kind of continuity of $S(t)$ which we do not assume, so we need to replace the invariance by minimality.

\begin{definition} A set $\Cal A\subset\Phi$ is an attractor ($\Bbb B$-attractor) of the semigroup $S(t)$ acting in the topological space $\Phi$ if the following assumptions are satisfied:
\par
1) The set $\Cal A$ is a compact set in $\Phi$;
\par
2) The set $\Cal A$ is an attracting set ($\Bbb B$-attracting set) for the semigroup $S(t)$;
\par
3) The set $\Cal A$ is a minimal (by inclusion) set which satisfies properties 1) and 2).
\end{definition}
  We are now ready to state the main result of this section related with the existence of an attractor.

\begin{theorem}\label{Th2.main} Let $S(t):\Phi\to\Phi$ be a semigroup acting in a Hausdorff topological space $\Phi$ endowed with a bornology $\Bbb B$. Assume that $S(t)$ possesses a compact $\Bbb B$-attracting set $\Cal B$. Then there exists a $\Bbb B$-attractor $\Cal A$ which can be obtained as follows:
\begin{equation}\label{2.attr}
\Cal A=\big[\cup_{B\in\Bbb B}\omega(B)\big]_\Phi,
\end{equation}
where $\omega(B)$ is the $\omega$-limit set of $B$ defined by \eqref{1.omega}.
\end{theorem}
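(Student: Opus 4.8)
The plan is to build $\Cal A$ via formula \eqref{2.attr} and verify the three defining properties of a $\Bbb B$-attractor, using Lemma \ref{Lem1.p-omega} as the workhorse. First I would observe that, since $\Cal B$ is a compact $\Bbb B$-attracting set, the semigroup is automatically asymptotically compact on every $B\in\Bbb B$: for any $u_n\in B$ and $t_n\to\infty$, the attraction property forces $S(t_n)u_n$ eventually into any neighbourhood of the compact set $\Cal B$, so the closure $[S(t_n)u_n]_\Phi$ is contained in a compact set and is itself compact. Hence Lemma \ref{Lem1.p-omega} applies to each $B\in\Bbb B$, giving that every $\omega(B)$ is non-empty and attracts $B$. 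Moreover, the same asymptotic compactness argument shows $\omega(B)\subset\Cal B$ for every $B$ (any limit point of $S(t_n)u_n$ must lie in $\Cal B$ because the orbit enters every neighbourhood of $\Cal B$), so the union $\cup_{B\in\Bbb B}\omega(B)$ is contained in $\Cal B$ and its closure $\Cal A$ is a closed subset of the compact set $\Cal B$, hence compact. This disposes of property 1).

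Next I would check the attraction property 2). Fix $B\in\Bbb B$ and a neighbourhood $\Cal O(\Cal A)$. Since $\omega(B)\subset\Cal A$, the set $\Cal O(\Cal A)$ is in particular a neighbourhood of $\omega(B)$, so by part 2) of Lemma \ref{Lem1.p-omega} there is $T=T(B,\Cal O)$ with $S(t)B\subset\Cal O(\omega(B))\subset\Cal O(\Cal A)$ for $t\ge T$. Thus $\Cal A$ is $\Bbb B$-attracting. The point to be careful about here is that we only have attraction of each individual $B$, not uniform attraction of the whole bornology, but the definition of $\Bbb B$-attracting set asks precisely for the former, so this is exactly what is needed.

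The main obstacle is property 3), minimality. Here I would argue that any compact $\Bbb B$-attracting set $\Cal A'$ must contain every $\omega(B)$, whence $\Cal A'\supset\cup_B\omega(B)$ and, by closedness of $\Cal A'$, $\Cal A'\supset\Cal A$; this shows $\Cal A$ is minimal among compact attracting sets. To see $\omega(B)\subset\Cal A'$, take $u_0\in\omega(B)$. By the (sequential, or net-theoretic) characterization of $\omega$-limit sets there are $u_n\in B$ and $t_n\to\infty$ with $S(t_n)u_n\to u_0$; since $\Cal A'$ attracts $B$, for every neighbourhood $\Cal O(\Cal A')$ the points $S(t_n)u_n$ eventually lie in $\Cal O(\Cal A')$, so $u_0$ is in the closure of every neighbourhood of the compact set $\Cal A'$, and compactness (hence closedness, $\Phi$ being Hausdorff) forces $u_0\in\Cal A'$. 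The delicate point is that in a non-metrizable Hausdorff space the sequential formula \eqref{1.omega-seq} and the topological formula \eqref{1.omega} may differ, so I would run this argument directly from the intersection-of-closures definition \eqref{1.omega}: $u_0\in\omega(B)$ means $u_0\in[\cup_{t\ge T}S(t)B]_\Phi$ for all $T$, and since $[\cup_{t\ge T}S(t)B]_\Phi\subset\Cal O(\Cal A')$ once $T$ is large (by attraction), $u_0$ lies in the closure of $\Cal O(\Cal A')$ for every neighbourhood, forcing $u_0\in\Cal A'$ by compactness. This avoids any appeal to sequences and handles the general topological case cleanly.

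Finally, I would remark that we have \emph{not} claimed strict invariance $S(t)\Cal A=\Cal A$, precisely because $S(t)$ is not assumed continuous; the third part of Lemma \ref{Lem1.p-omega}, which yields invariance of $\omega(B)$, requires continuity of the $S(t)$, and in its absence minimality is the correct surrogate. If one does add the continuity hypothesis, each $\omega(B)$ becomes strictly invariant and a short additional argument recovers $S(t)\Cal A=\Cal A$, reconciling this statement with the classical Theorem \ref{Th1.atr-sim}.
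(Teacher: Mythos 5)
Your skeleton --- define $\Cal A$ by \eqref{2.attr}, then check compactness, attraction and minimality --- is the same as the paper's, and your minimality step is correct: showing that any compact $\Bbb B$-attracting set $\Cal A'$ contains each $\omega(B)$, argued from the intersection-of-closures definition \eqref{1.omega} together with Hausdorff separation of a point from the compact set $\Cal A'$, is essentially the paper's own Step 2 (which the paper applies both to $\Cal B$ and, in its Step 4, to an arbitrary compact attracting set). The genuine gap lies elsewhere: you delegate the two substantive steps --- non-emptiness of $\omega(B)$ and the fact that $\omega(B)$ attracts $B$ --- to Lemma \ref{Lem1.p-omega}, but that lemma is not available as a black box here. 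The paper states it in Section \ref{s1} without proof and announces that it ``will be proved in the next section''; the proof of Theorem \ref{Th2.main} is precisely that proof (its Steps 1 and 3 establish non-emptiness and attraction directly, by open-cover and separation arguments built on the compact attracting set $\Cal B$). Invoking the lemma to prove the theorem is therefore circular with respect to the paper's logic, and your proposal never supplies the missing arguments independently.

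Even granting the lemma, your reduction to it is shakier than you indicate. Lemma \ref{Lem1.p-omega} has a \emph{sequential} hypothesis, while its conclusions concern the topologically defined set \eqref{1.omega}, and the paper warns in Definition \ref{Def1.omega} that the sequential and topological notions diverge when $\Phi$ is not metrizable; bridging this (true, but nontrivial) implication requires a nested-compact-sets argument that you do not give, on top of your unproved --- though correct --- claim that eventual absorption into every neighbourhood of $\Cal B$ makes $[\{S(t_n)u_n\}]_\Phi$ compact in a general Hausdorff space. The same sequential/topological slip infects your justification of $\omega(B)\subset\Cal B$ (``any limit point of $S(t_n)u_n$ must lie in $\Cal B$''): points of $\omega(B)$ need not arise as sequential limits in this generality. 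That particular hole is repairable by your own later argument (apply the minimality reasoning with $\Cal A'=\Cal B$), but as written the compactness of $\Cal A$ rests on it. In short, the frame and the minimality argument are right; the core $\omega$-limit-set facts that constitute the substance of the paper's proof are assumed rather than proved.
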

\begin{proof} Although the proof looks more or less standard and various particular cases of this theorem can be found in the literature, see \cites{ChVi02,MarNe02}, for the convenience of the reader we give all details  here. As usual, we need to verify the analogue of \eqref{Lem1.p-omega} for our case.
\par
{\it Step 1.} The set $\omega(B)$ is not empty for all non-empty $B\in\Bbb B$. Let
$$
K_T:=\big[\cup_{t\ge T}S(t)B\big]_{\Phi},\ \ U_T=\Phi\setminus K_T.
$$
Assume that $\omega(B)=\cap_{T\ge0}K_T=\varnothing$. Then $\{U_T\}_{T\ge0}$, is an open covering of $\Phi$ and, in particular, an open covering of the compact set $\Cal B$. Since $U_T$ are non-empty and nested, there exists $T\ge0$ such that $\Cal B\subset U_T$ and, therefore,
$$
K_T\cap\Cal B=  [\cup_{t\ge T}S(t)B\big]_{\Phi}\cap\Cal B=\varnothing.
$$
We claim that this contradicts the attraction property. Indeed, let $x\in\Cal B$ be arbitrary. Then there exists a neighbourhood $U_x$ of $x$ such that $U_x\cap \cup_{t\ge T}S(t)B=\varnothing$. Let $U=\cup_{x\in\Cal B}U_x$. Then, on the one hand, $U$ is a neighbourhood of $\Cal B$ and, on the other hand
$$
U\cap \cup_{t\ge T}S(t)B=\varnothing.
$$
Therefore, $U\cap S(t)B=\varnothing$ for all $t\ge T$ and $\Cal B$ is not an attracting set for $B$. This contradiction proves that $\omega(B)$ is not empty.
\par
{\it Step 2.} $\omega(B)\subset\Cal B$. Indeed, let $x\in\omega(B)\setminus\Cal B$. Then, since $\Phi$ is Hausdorff and $\Cal B$ is compact, there exist neighbourhoods $U_{\Cal B}$ and $U_x$ of the set $\Cal B$ and the point $x$ respectively such that $U_{\Cal B}\cap U_x=\varnothing$.   Then, from the attraction property we know that $S(t)B\subset U_{\Cal B}$ for all $t\ge T$. On the other hand, we know that $x\in K_T$ for all $T\ge0$. From here we conclude that $\cup_{t\ge T}S(t)B\cap U_x\ne\varnothing$ for all $T\ge0$. This contradicts the fact that $U_{\Cal B}\cap U_x=\varnothing$ and proves the statement.
\par
{\it Step 3.} $\omega(B)$ attracts $B$.  Note  that $\omega(B)$ is compact as a closed subset of a compact set. Let $U$ be an arbitrary neighbourhood of  $\omega(B)$. Let us consider closed sets $C_T:=K_T\setminus U=K_T\cap(\Phi\setminus U)$. Then $\cap_{T\ge0}C_T=(\cap_{T\ge0}K_T)\setminus U=\varnothing$ (since $\cap_{T\ge0}K_T=\omega(B)\subset U$). Thus, the sets $V_T:=\Phi\setminus C_T$ cover the compact set $\Cal B$.
 Taking a finite covering and using that the sets $V_T$ are increasing, we get that $\Cal B\subset V_T$ for some $T$. Thus, $C_T\cap B=\varnothing$. Since $C_T$ is closed and $\Cal B$ is compact, there is a neighbourhood $U_{\Cal B}$ of $\Cal B$ such that $C_T\cap U_{\Cal B}=\varnothing$. In particular, $(S(t)B\setminus U)\cap U_{\Cal B}=\varnothing$ for all $t\ge T$ (since $S(t)B\subset K_T$ for $t\ge T$). Since $S(t)B\subset U_{\Cal B}$ for a sufficiently large $t$ due to the attraction property, we must have $S(t)B\subset U$ if $t$ is large enough which proves the attraction property.
\par
{\it Step 4.} The attractor. We define it by formula \eqref{2.attr}. Then since  $\omega(B)\subset\Cal B$ for all $B\in\Bbb B$, $\Cal A$ is a closed subset of the compact set $\Cal B$, so it is compact in $\Phi$. The attraction property is also obvious since, by definition, $\Cal A$ contains $\omega(B)$ for every $B\in\Bbb B$. Thus, we only need to check the minimality. To this end, it is enough to prove that $\omega(B)$ is a minimal set which attracts $B$. Indeed, let $\Omega(B)$ be another compact attracting set for $B$. Then, arguing as in Step 2, we get that $\omega(B)\subset\Omega(B)$ and $\omega(B)$ is indeed minimal. This finishes the proof of the theorem.
\end{proof}
Let us now discuss the invariance of the attractor $\Cal A$. This requires some kind of continuity of the operators $S(t)$.
\begin{proposition}\label{Prop1.inv} Let the assumptions of Theorem \ref{Th2.main} hold and let, in addition, the operators $S(t):\Phi\to\Phi$ be continuous for every fixed $t$. Then the attractor $\Cal A$ is strictly invariant with respect to $S(t)$, i.e.
$$
S(t)\Cal A=\Cal A
$$
for all $t\ge0$.
\end{proposition}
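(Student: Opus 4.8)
The plan is to first establish that each individual $\omega$-limit set is strictly invariant, i.e. $S(t)\omega(B)=\omega(B)$ for every $B\in\Bbb B$ and every $t\ge0$, and only then to lift this to the attractor $\Cal A=[\cup_{B\in\Bbb B}\omega(B)]_\Phi$ using the continuity of $S(t)$ together with the compactness of $\Cal A$. Throughout I keep the notation $K_T:=[\cup_{s\ge T}S(s)B]_\Phi$ from the proof of Theorem \ref{Th2.main}, so that $\omega(B)=\cap_{T\ge0}K_T$ and the family $\{K_T\}_{T\ge0}$ is nested and decreasing.

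The forward inclusion $S(t)\omega(B)\subset\omega(B)$ is routine. Since $S(t)$ is continuous it maps closures into closures of images, whence $S(t)K_{T}\subset[\cup_{s\ge T}S(t+s)B]_\Phi=K_{T+t}\subset K_T$. As $\omega(B)\subset K_T$ for every $T$, this gives $S(t)\omega(B)\subset\cap_{T\ge0}K_T=\omega(B)$.

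The backward inclusion $\omega(B)\subset S(t)\omega(B)$ is the main obstacle, and this is precisely where the fact that $\Phi$ is only assumed to be Hausdorff (and not metrizable) bites: the sequential description \eqref{1.omega-seq} is unavailable, so I must argue with nets rather than sequences. Fix $x\in\omega(B)$ and $t\ge0$. Using that $x\in K_{T+t}$ for all $T$, I would build a net as follows: index by pairs $(U,T)$, where $U$ is a neighbourhood of $x$ and $T\ge0$, directed by reverse inclusion in $U$ and the usual order in $T$; for each such index choose $s\ge T+t$ and $b\in B$ with $S(s)b\in U$, and set $y_{(U,T)}:=S(s-t)b$. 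By construction $S(s)b\to x$ along this net, while the shifted net $y_{(U,T)}$ lies in $\cup_{\sigma\ge T}S(\sigma)B$; by the $\Bbb B$-attraction property this shifted net is eventually contained in every neighbourhood of the compact set $\Cal B$, and a standard compactness argument then produces a cluster point $y\in\Cal B$, i.e. a convergent subnet $y_\gamma\to y$. To see that $y\in\omega(B)$, note that the $T$-coordinate is cofinal along any subnet, so for each fixed $T'$ the net $y_\gamma$ eventually lies in $K_{T'}$, which is closed; hence $y\in\cap_{T'}K_{T'}=\omega(B)$. Finally, continuity of $S(t)$ gives $S(t)y=\lim S(t)y_\gamma=\lim S(s)b=x$, the last equality by uniqueness of limits in the Hausdorff space $\Phi$. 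Hence $x=S(t)y\in S(t)\omega(B)$.

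It remains to pass from the sets $\omega(B)$ to $\Cal A$. For the forward inclusion, continuity of $S(t)$ again yields $S(t)\Cal A\subset[S(t)\cup_{B}\omega(B)]_\Phi=[\cup_B S(t)\omega(B)]_\Phi=[\cup_B\omega(B)]_\Phi=\Cal A$. For the reverse inclusion, strict invariance of each $\omega(B)$ gives $\cup_B\omega(B)=S(t)(\cup_B\omega(B))\subset S(t)\Cal A$; since $\Cal A$ is compact and $S(t)$ is continuous, $S(t)\Cal A$ is compact and therefore closed in the Hausdorff space $\Phi$, so taking closures produces $\Cal A=[\cup_B\omega(B)]_\Phi\subset[S(t)\Cal A]_\Phi=S(t)\Cal A$. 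Combining the two inclusions gives $S(t)\Cal A=\Cal A$, as claimed. The only genuinely delicate point is the net construction in the third paragraph; everything else reduces to the continuity of $S(t)$ and the compactness already secured in Theorem \ref{Th2.main}.
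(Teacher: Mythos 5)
Your proof is correct, but it takes a genuinely different route from the paper on the hard inclusion $\Cal A\subset S(t)\Cal A$. The paper never proves backward invariance of the individual $\omega$-limit sets: it observes that $S(t)\Cal A$ is compact (continuous image of a compact set) and is itself a $\Bbb B$-attracting set — given a neighbourhood $U$ of $S(t)\Cal A$, the preimage $V=S(t)^{-1}(U)$ is a neighbourhood of $\Cal A$ by continuity, so $S(h)B\subset V$ for $h\ge T$ yields $S(h)B\subset U$ for $h\ge T+t$ — and then simply invokes the minimality clause in the definition of the attractor to conclude $\Cal A\subset S(t)\Cal A$. That device is shorter and avoids nets entirely; it is precisely what lets the theory run in non-metrizable Hausdorff spaces where the sequential description \eqref{1.omega-seq} of $\omega(B)$ is unavailable. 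You instead prove the stronger statement that each $\omega(B)$ is strictly invariant, via a net/cluster-point argument (the faithful generalization of the classical sequential proof from the metric setting), and then pass to $\Cal A$ by taking closures, using that $S(t)\Cal A$ is compact and hence closed in the Hausdorff space $\Phi$. Your net argument is sound: the directed set of pairs $(U,T)$ is legitimate; attraction to the compact set $\Cal B$ makes the shifted net eventually enter every neighbourhood of $\Cal B$, which by the standard finite-subcover argument produces a cluster point $y\in\Cal B$; cofinality of the $T$-coordinate along any subnet places $y$ in every closed set $K_{T'}$, hence in $\omega(B)$; and uniqueness of limits in a Hausdorff space gives $S(t)y=x$. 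What your route buys is exactly the invariance assertion of Lemma \ref{Lem1.p-omega} for each individual $B$, which the paper obtains only indirectly (by applying its Proposition to the singleton bornology $\{B\}$); what it costs is the net machinery that the paper's minimality trick was designed to avoid. The remaining inclusion $S(t)\Cal A\subset\Cal A$ is handled identically in both arguments.
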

\begin{proof} We first prove that $\Cal A\subset S(t)\Cal A$. Indeed, $S(t)\Cal A$ is compact as a continuous image of a compact set. Let us prove that $S(t)\Cal A$ is an attracting set. Let $U$ be a neighbourhood of $S(t)\Cal A$. Then, by continuity, $V:=S(t)^{-1}(U)$ is a neighbourhood of $\Cal A$. Due to the attraction property for $\Cal A$, for any $B\in\Bbb B$, there exists time $T=T(B,V)$ such that $S(h)B\subset V$ for all $h\ge T$. Then $S(h)B\subset U$ for all $h\ge T+t$ and the attraction property is proved. Thus, by minimality of $\Cal A$, we have the desired inclusion.
 \par
 Let us prove the opposite inclusion. Indeed, for any $B\in\Bbb B$, we have
\begin{multline*}
 S(t)\omega(B)=S(t)\cap_{T\ge0}[\cup_{h\ge T}S(h)B]_{\Phi}\subset \cap_{T\ge0}S(t)[\cup_{h\ge T}S(h)B]_{\Phi}\subset\\\subset \cap_{T\ge0}[S(t)\cup_{h\ge T}S(h)B]_{\Phi}=\cap_{T\ge0}[\cup_{h\ge T+t}S(h)B]_{\Phi}=\omega(B).
\end{multline*}
Using again that, due to the continuity, $S(t)[A]_\Phi\subset [S(t)A]_\Phi$ together with \eqref{2.attr}, we conclude that $S(t)\Cal A\subset\Cal A$ and finish the proof of the proposition.
\end{proof}
The next example shows that the continuity assumption can not be omitted.
\begin{example}\label{Ex2.ni} Let $\Phi:=(-\infty,0]\cup\{1\}$ with the standard topology and bornology  induced by the embedding to $\R$ and let
$$
S(t)x:=\begin{cases} e^{-t}x,\ \ x<0,\\
              1,\ \ x\ge0.
              \end{cases}
$$
Then, obviously, $\Cal A=\{0,1\}$. However, $\omega(\Cal A)=\{1\}\ne\Cal A$ and $S(t)\Cal A=\{1\}\ne \Cal A$. This example shows that without continuity assumptions we cannot expect strict invariance of the attractor. Moreover, if $\Cal B$ is a compact attracting set of $S(t)$, then in general we cannot expect that
\begin{equation}\label{2.n-om}
\Cal A=\omega(\Cal B).
\end{equation}
Indeed, in our case, we may take $\Cal B=\Cal A$. In addition, this example shows that the condition that $S(t): \Cal B\to \Phi$ are continuous is not enough to get the invariance (again, $S(t)$ are continuous on $\Cal B:=\Cal A$ in our example) and we really need to require continuity on a larger set than $\Cal B$.
\end{example}
We now discuss the validity of the representation formula \eqref{1.rep}. We now define a bounded complete trajectory $u:\R\to\Phi$ as a full trajectory of $S(t)$ such that
$$
B_u:=\cup_{t\in\R}u(t)\in\Bbb B
$$
and consider the set $\Cal K$ of all complete bounded trajectories with respect to a given bornology~$\Bbb B$.
\par
It is immediate to see from the attraction property that any complete bounded trajectory belongs to the attractor $\Cal A$, so we have
\begin{equation}\label{2.emb}
\Cal K\big|_{t=0}\subset\Cal A.
\end{equation}
However, the opposite embedding is not true in general and requires some extra assumptions. In particular, we need some kind of continuity in order to get the invariance of the attractor (see Example \ref{Ex2.ni} where there are no complete bounded trajectories passing through the point $0\in\Cal A$). This continuity is still not enough to get the representation formula \eqref{1.rep}. For instance, if the bornology $\Bbb B$ consists of one-point sets only, then complete bounded trajectories are equilibria only, so \eqref{1.rep} holds for the corresponding point attractor $\Cal A_{point}$ if and only if it coincides with the set of equilibria ($\Cal A_{point}=\Cal R$). The next proposition gives a useful sufficient condition for the validity of \eqref{1.rep}.

\begin{proposition}\label{Prop2.rep} Let $\Phi$ be a Hausdorff topological space with a bornology $\Bbb B$ which is stable with respect to inclusions. Let also $S(t):\Phi\to\Phi$ be a continuous semigroup in $\Phi$ (i.e., all maps $S(t)$, $t\ge0$, are continuous) and possess a bounded compact attracting set $\Cal B$. Then this semigroup possesses a $\Bbb B$-attractor $\Cal A\subset\Cal B$ which is strictly invariant and is generated by all complete bounded trajectories (i.e., \eqref{1.rep} holds).
\end{proposition}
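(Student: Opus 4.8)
The plan is to combine the existence result of Theorem~\ref{Th2.main} with the invariance from Proposition~\ref{Prop1.inv} and then upgrade the trivial inclusion \eqref{2.emb} to an equality. Concretely, the assumptions here are exactly those of Theorem~\ref{Th2.main} (a Hausdorff space, a bornology, and a compact $\Bbb B$-attracting set $\Cal B$), so a $\Bbb B$-attractor $\Cal A$ exists, is given by \eqref{2.attr}, and satisfies $\Cal A\subset\Cal B$ because each $\omega(B)\subset\Cal B$ (as shown in Step 2 of that proof) and $\Cal B$, being compact in a Hausdorff space, is closed. Since the maps $S(t)$ are assumed continuous, Proposition~\ref{Prop1.inv} gives strict invariance $S(t)\Cal A=\Cal A$ for all $t\ge0$. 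The inclusion $\Cal K\big|_{t=0}\subset\Cal A$ is already recorded in \eqref{2.emb}, so the entire content of the proposition reduces to the reverse inclusion $\Cal A\subset\Cal K\big|_{t=0}$: every point of the attractor must lie on some complete bounded trajectory.

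First I would fix $u_0\in\Cal A$ and construct a full trajectory through it. Forward in time there is nothing to do: $S(t)u_0$ is defined for $t\ge0$ and stays in $\Cal A$ by invariance. The real work is the backward extension, which is where strict invariance is used as \emph{surjectivity} of $S(\,\cdot\,)$ on $\Cal A$. Since $S(1)\Cal A=\Cal A$, I can pick $u_{-1}\in\Cal A$ with $S(1)u_{-1}=u_0$, and inductively choose $u_{-n-1}\in\Cal A$ with $S(1)u_{-n-1}=u_{-n}$ for all $n\ge0$. I would then define $u(t):=S(t+n)u_{-n}$ whenever $n\ge-t$. This is well defined because the semigroup identity together with $S(1)u_{-n-1}=u_{-n}$ makes the formula independent of the chosen $n$: for $t+n\ge0$ one has $S(t+n+1)u_{-n-1}=S(t+n)S(1)u_{-n-1}=S(t+n)u_{-n}$. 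A direct check using the semigroup property shows $S(t)u(h)=u(t+h)$ for all $t\ge0$, $h\in\R$, so $u:\R\to\Phi$ is a genuine complete trajectory with $u(0)=u_0$, and $u(t)\in\Cal A$ for every $t$ since each $u_{-n}\in\Cal A$ and $\Cal A$ is invariant.

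It remains to check that this trajectory is \emph{bounded} in the sense of the bornology, i.e.\ that $B_u=\cup_{t\in\R}u(t)\in\Bbb B$, and this is precisely where the hypothesis that $\Bbb B$ is stable under inclusions enters. Indeed $B_u\subset\Cal A\subset\Cal B$, and $\Cal B\in\Bbb B$ because $\Cal B$ is a bounded set; stability under inclusions then yields $B_u\in\Bbb B$. Hence $u\in\Cal K$ and $u_0=u(0)\in\Cal K\big|_{t=0}$, giving $\Cal A\subset\Cal K\big|_{t=0}$ and, with \eqref{2.emb}, the representation formula \eqref{1.rep}. The only genuinely delicate point is the backward construction: one must verify that the integer-time preimages glue into a single globally defined trajectory (the consistency computation above) and that it never leaves the bounded set $\Cal B$ --- both of which hinge on strict invariance, with the bornology axiom supplying the final boundedness conclusion.
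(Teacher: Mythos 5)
Your proposal is correct and follows essentially the same route as the paper's proof: existence via Theorem~\ref{Th2.main}, strict invariance via Proposition~\ref{Prop1.inv}, backward extension through $\Cal A$ by iterating surjectivity of $S(1)$ at integer times with interpolation $u(t)=S(t+n)u(-n)$, and boundedness of $B_u$ from $B_u\subset\Cal A\subset\Cal B$ together with stability of $\Bbb B$ under inclusions. The only addition is your explicit consistency check that the backward pieces glue into a well-defined trajectory, which the paper leaves implicit.
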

\begin{proof} Indeed, the existence of the attractor $\Cal A$ follows from Theorem \ref{Th2.main}. Its strict invariance is proved in Proposition \ref{Prop1.inv}, embedding \eqref{2.emb} follows from the attraction property, so we only need to verify that every point $u_0\in\Cal A$ belongs to some complete bounded trajectory. This trajectory can be constructed as follows: for $t\ge0$, we just define $u(t):=S(t)u_0\in\Cal A$. By the proved invariance, we have $u(t)\in\Cal A$ for all $t\ge0$.
Let us now define $u(-n)$ for $n\in\Bbb N$ by induction. Indeed, by the strict invariance, there exists $u(-1)\in\Cal A$ such that $S(1)u(-1)=u_0$, then there exists $u(-2)\in\Cal A$ such that $S(1)u(-2)=u(-1)$ and so on. Finally, for $t\in[-n,-n+1]$, we set $u(t):=S(t+n)u(-n)$. Then, by the invariance, $u(t)\in\Cal A$ for all  $t\in\R$ and by the construction $u(0)=u_0$. Since $B_u\in\Cal A$ and $\Cal A$ is bounded, using the stability of $\Bbb B$ by inclusion, we conclude that $B_u\in\Bbb B$ and $u\in\Cal K$. This proves the proposition.
\end{proof}
\begin{remark} As we have seen, the continuity assumption for the semigroup $S(t)$ cannot be omitted. However, there are several possibilities to weaken it. One of them is to require that the graph of the map $S(t)$ is closed in $\Phi\times\Phi$ for every fixed $t\ge0$, see \cites{BV92,PZ07}. Another possibility is to verify this continuity not in the whole phase space $\Phi$, but on the {\it absorbing} set $B_0$  only. Namely, it is not difficult to verify that the strict invariance of the attractor remains true if $S(t):B_0\to \Phi$ are continuous for every fixed $t$. This is especially useful for the case where $\Phi$ is endowed with weak or weak star topology. Although this topology is not metrizable on the whole space $\Phi$, very often its restriction on the properly chosen {\it absorbing} set is metrizable. This allows us to verify only the {\it sequential} continuity of the maps $S(t)$. This may be a great simplification since the topological continuity of the nonlinear maps $S(t)$ is much harder to check than the sequential one (at least when we are speaking about weak and weak-star topologies).
\par
We also recall that in a general topological space compactness and sequential compactness are different (even unrelated), so there are  two corresponding types of attractors: the topological and sequential ones which also may be completely different. We will not go into the details here (see \cite{KZ23} for these details), but only mention two natural cases where they  coincide. The first one is when we have a metrizable absorbing set and the second, more interesting case is when $\Phi$ is a Banach space endowed with a weak topology. Then compactness and sequential compactness coincide due to the Eberlein-Smulian theory.
\end{remark}
\begin{remark} To conclude this section, we also mention that in many cases, the key dissipative estimates give us the existence of an absorbing set $\Cal B$ which is in a sense "bounded" in $\Phi$ and by this reason it is often a complete {\it metrizable} space even if the initial phase space $\Phi$ is not metrizable. Then the verification of the existence of a {\it compact} attracting set is reduced to checking the so-called asymptotic compactness, namely, we need to check that, for every sequences $t_n\to\infty$ and $u_n\in\Cal B$, the sequence $\{S(t_n)u_n\}_{n=1}^\infty$ is pre-compact. Actually, it is not difficult to check that under the above assumptions this property is {\it equivalent} to the existence of a compact attracting set, but the usage of asymptotic compactness arguments allows us to avoid the explicit construction of this compact attracting set, which is useful in many applications.
\par
Up to the moment, there are many powerful methods for verifying the asymptotic compactness, e.g. based on the Kuratowski measure of non-compactness or on the  energy method or on the compensated compactness arguments. We refer the interested reader to \cites{7,Hal88,MRW98} (see also references therein) for more details.
\end{remark}

\section{Trajectory attractors}\label{s3}
In this section, we demonstrate how the attractors theory can be applied for studying equations without the uniqueness of solutions  of the corresponding initial value problem. One of possible approaches to handle the non-uniqueness problem is to consider semigroups of multivalued maps and to extend the concept of an attractor to such semigroups, see \cites{BV86,Ball78,Ba94,MV98}, however, there exists an alternative more elegant approach which has been developed in \cites{ChVi95,sell96} for the study of 3D Navier-Stokes equations and which we will use here (see also \cite{VZ96} for the case of elliptic equations). Under this approach, one constructs a  {\it trajectory} dynamical system associated with the considered problem  and then applies the usual theory of attractors to this dynamical system. We illustrate below the main idea on a number of examples.
\par
\subsection{ODEs and reaction-diffusion equations} We start with considering relatively simple case of ODEs and reaction-diffusion systems (RDS) without uniqueness.

\begin{example}\label{Ex3.1} Let us consider a system  of ODEs
\begin{equation}\label{3.ODE}
u'+f(u)=0,\ \ u=(u^1,\cdots, u^N), \ \ u\big|_{t=0}=0.
\end{equation}
We assume that the nonlinearity $f$ is locally Lipschitz and satisfies the standard dissipativity assumption:
\begin{equation}\label{3.dis}
f(u).u\ge-C+\alpha|u|^2,\ \ u\in\R^N
\end{equation}
for some positive constants $C$ and $\alpha$. Then problem \eqref{3.ODE} is locally uniquely solvable for any $u_0\in\R^N$. Moreover, taking a scalar product of \eqref{3.ODE} with $u$, we arrive at the inequality
$$
\frac12\frac d{dt}|u(t)|^2+\alpha|u(t)|^2\le C
$$
and integrating this inequality, we arrive at the key dissipative estimate
\begin{equation}\label{3.dis1}
|u(t)|^2\le |u_0|^2e^{-2\alpha t}+\frac C{\alpha}.
\end{equation}
This estimate gives us the global well-posedness of problem \eqref{3.ODE} for any $u_0\in\R^N$ and, therefore, this problem generates a dissipative semigroup $S(t):\Phi\to\Phi$ in the phase space $\Phi=\R^N$. Note that this semigroup is not necessarily a group since the solutions $u(t)$ may blow up for negative times.
\par
To construct the attractor for this semigroup, we fix the standard bornology $\Bbb B$ in $\R^N$ which consists of  usual bounded sets of $\R^N$. Then the ball $\Cal B:=\{u\in\R^N,\ \ |u|^2\le \frac{2C}\alpha\}$ will be a compact and bounded absorbing set for the semigroup $S(t)$. Moreover, the maps $S(t)$ are continuous with respect to the initial data. Thus, according to the general theory,
there exists a global attractor $\Cal A\subset\Phi$.
\par
To explain the idea of trajectory attractors, we consider the set $\Cal K_+\subset C_{loc}(\R_+,\Phi)$ which consists of all semi-trajectories $u(t)$, $t\ge0$, of the constructed solution semigroup $S(t)$ starting from all $u_0\in\Phi$. Then the semigroup of time shifts $T(h)$, $h\ge0$, acts on this set:
\begin{equation}\label{3.tr}
(T(h)u)(t):=u(t+h),\ \ t,h\in\R_+,\ \ T(h)\Cal K_+\subset \Cal K_+.
\end{equation}
Thus, we have defined a dynamical system $T(h)$  on $\Cal K_+$. This dynamical system is called a {\it trajectory} dynamical system associated with equation \eqref{3.ODE} and the set $\Cal K_+$ is referred as the {\it trajectory} phase space.
\par
Note that, in the case where the uniqueness theorem holds, the constructed trajectory dynamical system is topologically conjugate to the semigroup  $(S(t),\Phi)$ acting on the usual phase space $\Phi$. Indeed, let $\Bbb S: \Phi\to \Cal K_+$ be the solution operator $u_0\to u(\cdot)$ associated with equation \eqref{3.ODE}. Then this map is one-to-one with the inverse map defined via $\Bbb S^{-1}u:=u(0)$ and is a homeomorphism if we endow the set $\Cal K_+$ with the topology induced by the embedding to $C_{loc}(\R_+,\Phi)$. Moreover, we have an obvious relation
$$
S(t)=\Bbb S^{-1}\circ T(t)\circ\Bbb S,
$$
which show that the trajectory dynamical system $(T(h),\Cal K_+)$ is equivalent to the classical one $(S(t),\Phi)$. Finally, if we define the bornology $\Bbb B_{tr}$  on $\Cal K_+$ as follows:
\begin{equation}\label{3.t-born}
\Bbb B_{tr}=\{B\subset\Cal K_+,\ \ B\big|_{t=0} \ \text{is bounded in}\ \Phi\},
\end{equation}
then we will have also one-to-one correspondence between bounded sets in $\Cal K_+$ and $\Phi$. Thus, since $(S(t),\Phi)$ possesses a global attractor $\Cal A$ (as explained above), the trajectory dynamical system $(T(h),\Cal K_+)$ also possesses an attractor $\Cal A_{tr}:=\Bbb S\Cal A$ which is called the {\it trajectory} attractor associated with equation \eqref{3.ODE}. This attractor is strictly invariant with respect to the shift semigroup and consists of complete bounded trajectories:
\begin{equation}\label{3.tr-comp}
\Cal A_{tr}=\Cal K\big|_{t\ge0}.
\end{equation}
Note also that, due to the dissipative estimate \eqref{3.dis1}, the bornology $\Bbb B_{tr}$ consists of all bounded sets in $C_{loc}(\R,\Phi)\cap \Cal K_+$ (or even $C_b(\R,\Phi)\cap \Cal K_+$).
\par
Up to the moment, the above constructions look as a tautology, however, the construction of the trajectory dynamical system $(T(h),\Cal K_+)$ {\it does not} require the uniqueness theorem to hold, so it can be extended to the cases where the uniqueness theorem is either violated or is not known. For instance, in the considered example, we may relax the assumption of the Lipschitz continuity of $f$ and assume that $f$ is only {\it continuous}, but satisfies the dissipativity assumption \eqref{3.dis}. Then we still have the existence of a solution for any $u_0\in\Phi$ and the validity of dissipative estimate \eqref{3.dis1}, so we may construct the trajectory dynamical system $(T(h),\Cal K_+)$ and the bornology $\Bbb B_{tr}$ in the same way as before, so we may speak about the (trajectory) attractor $\Cal A_{tr}$ of this system despite the fact that the uniqueness theorem does not hold here in general and the classical dynamical system $(S(t),\Phi)$ is even not properly defined.
\begin{proposition}\label{Prop3.ODE-tr} Let the function $f$ be continuous and satisfy the dissipativity assumption \eqref{3.dis}. Then the trajectory DS $(T(h),\Cal K_+)$ endowed with the topology of $C_{loc}(\R,\Phi)$ and the bornology $\Bbb B_{tr}$ described above possesses an attractor $\Cal A_{tr}\subset\Cal K_+$ which is strictly invariant and consists of all complete bounded trajectories of \eqref{3.ODE}, i.e. \eqref{3.tr-comp} holds.
\end{proposition}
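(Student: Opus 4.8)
The plan is to reduce everything to the abstract existence and representation results of Section~\ref{s2}, applied to the shift semigroup $(T(h),\Cal K_+)$ on the phase space $\Cal K_+$ endowed with the topology inherited from $C_{loc}(\R_+,\Phi)$ (local uniform convergence) together with the bornology $\Bbb B_{tr}$ from \eqref{3.t-born}. Before invoking Theorem~\ref{Th2.main} I would first record the structural properties of $\Cal K_+$ that survive the loss of uniqueness. By Peano's theorem and the a priori bound \eqref{3.dis1}, a global solution exists through every $u_0\in\Phi$, so $\Cal K_+$ is non-empty and contains a semi-trajectory through each point. The essential observation is that $\Cal K_+$ is \emph{closed} in $C_{loc}(\R_+,\Phi)$: if $u_n\in\Cal K_+$ and $u_n\to u$ locally uniformly, then, writing the equation in integral form $u_n(t)=u_n(0)-\int_0^t f(u_n(s))\,ds$ and using the \emph{continuity} of $f$ together with the local uniform bound to pass to the limit, one obtains $u(t)=u(0)-\int_0^t f(u(s))\,ds$, so $u\in\Cal K_+$. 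It is precisely here (and not through uniqueness) that continuity of $f$ does genuine work. The continuity of the shift maps $T(h)$ in this topology is immediate.

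The heart of the argument is the construction of a compact $\Bbb B_{tr}$-attracting set. Let $R_0^2>C/\alpha$ and put
$$
\Cal B_{tr}:=\{u\in\Cal K_+\ :\ |u(t)|\le R_0\ \text{for all}\ t\ge0\}.
$$
This set is in fact absorbing: for $B\in\Bbb B_{tr}$ the initial values $\{u(0):u\in B\}$ are uniformly bounded, so \eqref{3.dis1} forces $|u(t)|\le R_0$ once $t\ge T(B)$, whence $T(h)B\subset\Cal B_{tr}$ for $h\ge T(B)$; it is also bounded since $\Cal B_{tr}|_{t=0}\subset\{|v|\le R_0\}$. To see that $\Cal B_{tr}$ is compact in $C_{loc}(\R_+,\Phi)$ I would invoke the Arzel\`a--Ascoli theorem: the elements of $\Cal B_{tr}$ are uniformly bounded by construction, and they are uniformly (indeed Lipschitz) equicontinuous because $|u'(t)|=|f(u(t))|\le L_0:=\max_{|v|\le R_0}|f(v)|<\infty$, the maximum being finite by continuity of $f$ on the compact ball. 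Hence $\Cal B_{tr}$ is precompact, and it is closed by the limiting argument of the previous paragraph (the uniform bound $|u(t)|\le R_0$ passes to pointwise limits), so it is compact.

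With a bounded compact $\Bbb B_{tr}$-attracting set in hand, a continuous semigroup $T(h)$, and the bornology $\Bbb B_{tr}$, which is clearly stable under inclusions, all hypotheses of Theorem~\ref{Th2.main} and Proposition~\ref{Prop2.rep} are met. They yield a $\Bbb B_{tr}$-attractor $\Cal A_{tr}\subset\Cal B_{tr}$ that is strictly invariant under $T(h)$ and is generated by the complete bounded trajectories of the shift semigroup. It then remains to identify these complete bounded trajectories of $(T(h),\Cal K_+)$ with the complete bounded solutions $u:\R\to\Phi$ of \eqref{3.ODE}, which is the content of \eqref{3.tr-comp}. Given a complete bounded ODE solution $u$, the family $w(h):=u(\,\cdot+h)|_{[0,\infty)}$ is a complete trajectory of $T$ lying in $\Bbb B_{tr}$; conversely, a complete bounded trajectory $w$ of $T$ glues into a single solution $u(\tau):=w(\tau)(0)$ on all of $\R$, the shifts $w(-h)$ extending $u$ to ever earlier times, and the $\Bbb B_{tr}$-boundedness of $w$ translates into $\sup_{\tau}|u(\tau)|<\infty$. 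This correspondence gives exactly $\Cal A_{tr}=\Cal K|_{t\ge0}$.

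I expect the main obstacle to be the compactness step: everything hinges on extracting the equicontinuity from the differential equation and, crucially, on the stability of the solution set under local uniform limits. This stability is the one place where continuity of $f$ is indispensable, and it is precisely what allows the trajectory construction to replace the uniqueness assumption altogether; the remaining steps are then a routine verification of the hypotheses of the abstract theorems of Section~\ref{s2}.
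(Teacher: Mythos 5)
Your proposal is correct and follows essentially the same route as the paper's proof: the dissipative estimate \eqref{3.dis1} yields a bounded absorbing set, uniform boundedness of $u'$ via the equation plus the Arzel\`a--Ascoli theorem yields compactness, and Proposition \ref{Prop2.rep} then gives the strictly invariant attractor generated by complete bounded trajectories, identified with complete bounded solutions of \eqref{3.ODE}. The only difference is that you spell out the closedness of $\Cal K_+$ under local uniform limits (via the integral form of the equation and continuity of $f$), a step the paper's proof leaves implicit when passing from precompactness to compactness of the absorbing set.
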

\begin{proof} In contrast to the case with uniqueness, we are now unable to get the result by lifting the global attractor $\Cal A$ to the trajectory phase space (the dynamical system $(S(t),\Phi)$ does not exist any more), so we will use Proposition \ref{Prop2.rep} instead. Indeed, the shift maps $T(h)$ are continuous in the topology of $C_{loc}(\R,\Phi)$ and the dissipative estimate \eqref{3.dis1} gives us a bounded absorbing set $\Cal B$ for the semigroup $T(h)$. Namely, we may take
$$
\Cal B:=\{u\in\Cal K_+,\ \ \|u\|_{C_b(\R,\Phi)}\le \frac{2C}\alpha\}.
$$
We claim that this set is actually compact in $\Cal K_+\subset C_{loc}(\R_+,\Phi)$. Indeed, from equation \eqref{3.ODE}, we conclude that $u'$ is uniformly bounded with respect to $u\in\Cal B$ and the Arzela theorem gives the desired compactness. Thus, we have proved the existence of the trajectory attractor $\Cal A_{tr}\subset\Cal B$ which is strictly invariant with respect to $T(h)$ and is generated by all complete bounded trajectories of \eqref{3.ODE} (being pedantic, we have verified only that it generates by all complete bounded trajectories of $T(h)$, but it is immediate to see that there is a one-to-one correspondence between such trajectories and complete bounded solutions of \eqref{3.ODE}).
\end{proof}
\end{example}
\begin{example}\label{Ex3.RDS-tr} We now consider (following mainly \cite{CVZ10}) a bit more complicated case of a reaction-diffusion equation in a bounded domain $\Omega\subset\R^d$
\begin{equation}\label{3.RDS}
\Dt u=a\Dx u-f(u),\ \ u\big|_{\partial\Omega}=0,\ \ u\big|_{t=0}=u_0.
\end{equation}
Here $u=(u^1,\cdots,u^N)$ is an unknown vector-valued function, $\Dx$ is the Laplacian with respect to the variable $x=(x_1,\cdots,x_d)$, $a$ is a given diffusion matrix which satisfies the condition $a+a^*>0$ and $f$ is a given nonlinearity, which satisfies a slightly stronger dissipativity assumption:
\begin{equation}\label{3.dis2}
-C+\alpha|u|^{p+1}\le f(u).u\le C(1+|u|^{p+1}),
\end{equation}
where $p>0$ is a given exponent. We say that the function
$$
u\in L^2_{loc}(\R_+;W^{1,2}_0(\Omega))\cap L^{p+1}_{loc}(\R_+,L^{p+1}(\Omega)):=\Theta_{loc}
$$
is a weak solution of \eqref{3.RDS} if it satisfies the equation in the sense of distributions, see e.g. \cite{ChVi02}. Important for us that from \eqref{3.RDS}, we see that for any such  solution
\begin{equation}\label{3.der}
\Dt u\in L^2_{loc}(\R_+,W^{-1,2}(\Omega))+L^q_{loc}(\R_+,L^q(\Omega)),
\end{equation}
where $\frac1q+\frac1{p+1}=1$. Therefore, the function $t\to\frac12\|u(t)\|_{L^2}^2$ is well defined and  absolutely continuous in time and the following energy identity:
\begin{equation}\label{3.en}
\frac12\frac d{dt}\|u(t)\|^2_{L^2}+(a\Nx u(t),\Nx u(t))+(f(u(t)),u(t))=0
\end{equation}
holds for almost all $t\ge0$. In turn, this identity implies that $u\in C([0,T],L^2(\Omega))$ and, therefore, the initial condition $u\big|_{t=0}=u_0$ is well-defined, see \cites{ChVi02,CVZ10}.
\par
It is not difficult to verify, using e.g. Galerkin approximations and the energy identity \eqref{3.en}, that thus defined weak solutions exist for every $u_0\in L^2(\Omega):=\Phi$ and satisfy the dissipative estimate:
\begin{equation}\label{3.dis3}
\|u(t)\|^2_{L^2}+\int_t^{t+1}\|\Nx u(s)\|^2_{L^2}\,ds+\int_t^{t+1}\|u(s)\|^{p+1}_{L^{p+1}}\,ds\le C\|u_0\|^2_{L^2}e^{-\beta t}+C_*
\end{equation}
for some positive constants $C$, $C_*$ and $\beta$, see \cites{ChVi02,CVZ10}.
\par
Let us now turn to attractors. As before, we define the trajectory phase space $\Cal K_+\subset\Theta_{loc}$ as a set of all weak solutions of problem \eqref{3.RDS}. Then the semigroup of time shifts $T(h)$ acts on $\Cal K_+$, so the trajectory dynamical system $(T(h),\Cal K_+)$ is well-defined. As in the previous example, we lift the standard bornology of the phase space $\Phi=L^2(\Omega)$ to the set $\Cal K_+$, namely, $B\in\Bbb B$ if and only if $B\big|_{t=0}$ is bounded in $\Phi$. Then estimate \eqref{3.dis3} and the embedding
$$
\Cal K_+\subset C_{loc}(\R_+,\Phi)
$$
 guarantee that the set
$$
\Cal B:=\{u\in\Cal K_+,\ \|u\|_{L^2_b(\R_+,W^{1,2}_0)}^2+\|u\|_{L^{p+1}_b(\R_+, L^{p+1})}\le 2C_*\}
$$
is a bounded absorbing set for the semigroup $T(h)$.  Moreover, since this set is bounded in the reflexive locally convex space $\Theta_{loc}$, it is precompact in the weak topology $\Theta_{loc}^w$ of the space $\Theta_{loc}$ by the Banach-Alaoglu theorem. The set $\Cal B$ is also closed in $\Theta_{loc}^w$ since it is precompact and is sequentially closed (the last fact is proved similarly to the proof of the existence of a weak solution, see \cite{ChVi02}), so it is compact by Eberlein-Smulian theorem (actually, we may avoid the usage of Eberlein-Smulian theory here since $\Cal B$ is bounded in $\Theta_{loc}$ and the weak topology on it is metrizable).
\par
Thus, we have found a compact bounded absorbing set $\Cal B$ for the trajectory DS $(T(h),\Cal K_+)$ endowed with the weak topology. Mention also that the shift semigroup $T(h)$ is obviously continuous.  Then, due to Proposition \ref{Prop2.rep}, there exists a weak trajectory attractor $\Cal A_{tr}^w\subset \Cal K_+$ which is the attractor of the trajectory dynamical system $(T(h),\Cal K_+)$ in the weak topology of $\Theta_{loc}$. It is strictly invariant and is generated by all complete bounded solutions of equation \eqref{3.RDS}.
\par
We now prove that the constructed trajectory attractor is actually an attractor in the strong topology of $\Theta_{loc}$ as well. To this end, we need an absorbing set which will be compact in a strong topology as well. We claim that $\Cal B_1:=T(1)\Cal B$ is such a set. To verify the compactness, we use the  energy method developed in \cites{MRW98,Ball04,CVZ10}. This method is based on the fact that for many Banach spaces (including Hilbert ones, the spaces $L^p$ with $1<p<\infty$, etc.), the weak convergence $\xi_n\to\xi$ together with the convergence of the norms imply the strong convergence.
\par
Let $u_n\in\Cal B$ be a sequence of solutions. Then, without loss of generality we may assume that $u_n\to u\in\Cal K_+$ in a weak topology of $\Theta_{loc}$. To verify the compactness of $T(1)\Cal B$ in the strong topology, it is sufficient to prove that $u_n\to u$ strongly in $L^2(T,T+1;W^{1,2}_0)$ and in $L^{p+1}(T,T+1;L^{p+1})$ for every $T\ge1$. For simplicity we consider only the case $T=1$ (the general case is analogous). To this end, we multiply the energy identity \eqref{3.en} for $u_n$ by $t$ and integrate over $t\in[0,2]$ to get
\begin{equation}\label{3.en-n}
\frac12\|u_n(2)\|^2_{\Phi}+\int_0^2t(a\Nx u_n(t),\Nx u_n(t))\,dt+\int_0^2t(f(u_n(t)),u_n(t))\,dt=\frac12\int_0^2\|u_n(t)\|^2_\Phi\,dt.
\end{equation}
Using the boundedness of $u_n$ in $\Theta_{loc}$ together with the control \eqref{3.der} of time derivatives $\Dt u_n$, we conclude that $u_n\to u$ weakly in $C(0,2;\Phi)$ and strongly in $L^2(0,2;\Phi)$. Thus, without loss of generality, we also have the convergence $u_n\to u$ almost everywhere. Passing to the subsequence if necessary, we extract from the last equality that
\begin{multline}
\frac12\lim_{n\to\infty}\|u_n(2)\|^2_{\Phi}+\lim_{n\to\infty}\int_0^2t(a\Nx u_n(t),\Nx u_n(t))\,dt+\alpha\lim_{n\to\infty}\int_0^2t\|u_n(t)\|^{p+1}_{L^{p+1}}\,dt+\\+\lim_{n\to\infty}\int_0^2t(f(u_n(t)),u_n(t))-\alpha t\|u_n(t)\|^{p+1}_{L^{p+1}}\,dt=\frac12\int_0^2\|u(t)\|^2_\Phi\,dt.
\end{multline}
Using now the weak lower semicontinuity of convex functions and the Fatou lemma (in order to handle the term containing the nonlinearity $f$) together with the identity \eqref{3.en-n} for the limit solution $u(t)$, we conclude that
$$
\lim_{n\to\infty}\int_0^2t(a\Nx u_n(t),\Nx u_n(t))\,dt=\int_0^2t(a\Nx u(t),\Nx u(t))\,dt
$$
and
$$
\lim_{n\to\infty}\int_0^2t\|u_n(t)\|^{p+1}_{L^{p+1}}\,dt=\int_0^2t\|u(t)\|^{p+1}_{L^{p+1}}\,dt.
$$
Taking into  account the weak convergence, we conclude that $u_n\to u$ strongly in $L^2(1,2;W^{1,2}_0)$ as well as in $L^{p+1}(1,2;L^{p+1})$. Thus, we have verified the compactness of the set $\Cal B_1$ in the strong topology of $\Theta_{loc}$. In turn, this gives the existence of a trajectory attractor $\Cal A_{tr}$ in the strong topology of $\Theta_{loc}$ which is generated by all complete bounded trajectories of equation \eqref{3.RDS} and coincides with the weak trajectory attractor constructed above.
\end{example}
\begin{remark} We recall that the possible non-uniqueness of solutions of \eqref{3.RDS} is formally caused by the absence of local Lipschitz continuity of the nonlinearity $f$. However, in contrast to ODEs, adding this natural assumption would not change the situation drastically due to possible blow up of smooth solutions which may occur despite the dissipative energy estimate \eqref{3.dis3}, see \cite{BRW05} for the case of 3D complex Ginzburg-Landau equation and \cites{HV98,KaZ16,PS00} for different classes of RDS.
\end{remark}
\begin{remark} We emphasize another important property of system \eqref{3.RDS} which does not hold for more general classes of PDEs, namely, the fact that {\it every} weak solution of \eqref{3.RDS} satisfies the energy identity \eqref{3.en} and energy estimate \eqref{3.dis3}. In particular, this fact allows us to define the bornology in the trajectory phase space $\Cal K_+$  just by lifting bounded sets in the classical phase space $\Phi$ to  the space of trajectories, see \eqref{3.t-born}, which is not typical for the trajectory attractors theory, see examples below. As a result, we may define the generalized multivalued semigroup $S(t):\Phi\to2^{\Phi}$ via
\begin{equation}\label{3.gen}
S(t)u_0:=\{u(t)\in\Phi\, : u \ \text{ is a weak solution of \eqref{3.RDS} on the interval $[0,t]$}\}
\end{equation}
and consider the (generalized) global attractor $\Cal A_{gl}$ for this semigroup
avoiding the usage of any trajectory spaces. Of course, we will have the relation $\Cal A_{gl}=\Cal A_{tr}\big|_{t=0}$, see \cites{ChVi02,CVZ10,VC11} for more details. Again, in a more general situation, the straightforward formula \eqref{3.gen} does not work and we cannot define the associated multi-valued semigroup without using the trajectory phase space $\Cal K_+$ and a non-trivial bornology on it. We also mention the  concatenation property, namely, if $u_1(t)$, $t\in[0,T_1]$ and $u_2(t)$, $t\in[T_1,T_2]$ are two weak solutions of \eqref{3.RDS} and $u_1(T_1)=u_2(T_1)$, then the compound function
$$
u(t)=\begin{cases}u_1(t),\ t\in[0,T_1],\\ u_2(t),\ t\in[T_1,T_2]\end{cases}
$$
is a weak solution of \eqref{3.RDS} on the interval $t\in[0,T_2]$. This property also fails in more complicated applications of the trajectory attractors theory.
\end{remark}

\subsection{Trajectory attractors for elliptic PDEs} We continue by  more simple and a bit artificial example which demonstrates some important features of trajectory attractors and gives us a toy example for  trajectory attractors of elliptic PDEs.

\begin{example}  Let us consider  the following second order ODE:
\begin{equation}\label{3.hyp}
u''+\gamma u'+u^3-u=0,\ \ u\big|_{t=0}=u_0\in\R.
\end{equation}
Of course, the solution of this problem is not unique since we "forgot" to pose the initial data for $u'\big|_{t=0}$. However, multiplying this equation by $u'$, we arrive at the following energy identity:
\begin{equation}\label{3.lyap}
\frac d{dt}\(\frac{u'^2}2+\frac{u^4}4-\frac{u^2}2\)+\gamma u'^2=0
\end{equation}
which shows that all solutions of this equation are bounded in time. Moreover, a bit more accurate arguments related with multiplication of this equation by $u'+\alpha u$ where $\alpha>0$ is properly chosen, give the dissipative estimate:
\begin{equation}\label{3.hyp-dis}
u'^2(t)+u^2(t)\le C(u^2(0)+u'^2(0))^2e^{-\kappa t}+C_*,
\end{equation}
where $C$, $\kappa$ and $C_*$ are some positive constants which are independent of $u$.
\par
Let us define the trajectory phase space $\Cal K_+\subset \Theta_{loc}:=C^1_{loc}(\R)$ as the set of all solutions $u(t)$, $t\ge0$ of problem \eqref{3.hyp} which correspond to all $u_0\in\R$. The bornology $\Bbb B$ on it is defined as follows: $B\subset\Cal K_+$ is bounded if $B\big|_{[0,1]}$ is bounded in $C^1[0,1]$. Then the dissipative estimate \eqref{3.hyp-dis} can be rewritten in the form of
$$
\|u\|_{C^1[T,T+1]}\le C\|u\|_{C^1[0,1]}^2e^{-\kappa T}+C_*
$$
and, therefore, the set $\Cal B:=\{u\in\Cal K_+, \|u\|_{C^1_b(\R_+)}\le 2C_*\}$ is a bounded absorbing set for the trajectory dynamical system $(T(h),\Cal K_+)$. Moreover, expressing the second derivative $u''$ from the equation, we see that $u''$ is also bounded if $u\in\Cal B$, so by the Arzela theorem $\Cal B$ is compact in $\Theta_{loc}$. Thus, according to the general theory, the trajectory dynamical system $(T(h),\Cal K_+)$ possesses an attractor $\Cal A_{tr}\subset\Cal K_+$ which is generated by all complete bounded solutions of \eqref{3.hyp}.
\par
The structure of the constructed attractor can be easily understood since we have the global Lyapunov
function (due to the identity \eqref{3.lyap}), so we have only the equilibria $u=0,\pm1$ and heteroclinic orbits between them. Moreover, since the map $u\to(u(0),u'(0))$ is one-to-one as the map from $\Cal K_+\to\R^2$, the obtained attractor is actually two-dimensional and qualitatively looks like
in Example \ref{Ex1.2D1} (left picture up to rotation on $\pi/4$). Thus, the trajectory approach allows us to restore the "forgotten" initial condition on $u'\big|_{t=0}$ and end up with the standard attractor for this equation.
\end{example}
\begin{remark}\label{Rem3.strange} We see that in the previous example, we are unable to obtain the bornology on the trajectory phase space just by lifting bounded sets from the space $\R$ of the initial data (we will not have the dissipative estimate for such "bounded" sets since $u'(0)$ will be out of control) and the concatenation property is also lost here.
\par
 Actually, we may define the topology and bornology on $\Cal K_+$ in several equivalent ways. For instance, we may take $\Theta_{loc}:=C_{loc}(\R_+)$ and understand first and second derivative of $u$ in the distributional sense (it is not difficult to show that such distributional solutions are actually classical ones and a posteriori $u\in C^2_{loc}(\R_+)$). Bounded sets in $\Cal K_+$ also can be defined alternatively as sets which are bounded in the Frechet space $\Theta_{loc}$ or in the Banach space $C^1_b(\R)$ (indeed, the dissipative estimate \eqref{3.hyp-dis} guarantees that in all these cases we will have the same bornology on $\Cal K_+$). We mention here a bit exotic construction of $\Cal K_+$ and the associated bornology suggested by Vishik and Chepyzhov   to handle trajectory attractors for 3D Navier-Stokes and damped wave equations (see \cites{ChVi95,ChVi02}). Namely, let us define the trajectory space $\Cal K_+$ as the set of all solutions $u$ of \eqref{3.hyp} which satisfy the following estimate:
$$
\|u\|_{C^1[T,T+1]}\le C_ue^{-\kappa t}+2C_*,\ \ T\in\R_+,
$$
where $C_*$ and $\kappa$ are fixed (the same as in \eqref{3.hyp-dis}) and the constant $C_u$ may depend on $u$. Note that the set $\Cal K_+$ thus defined is shift-invariant (so the trajectory dynamical system $(T(h),\Cal K_+)$ is well-posed), but a priori may be smaller than the set of all solutions of \eqref{3.hyp}. However, for our model example,  thanks to \eqref{3.hyp-dis} we know that a posteriori this is not true. The bornology on $\Cal K_+$ is then naturally defined as follows: $B\in\Bbb B$ if and only if $C_u\le C_B<\infty$ for all $u\in B$. It is also easy to see that, in our case, this choice of $\Bbb B$ gives the same bounded sets as the previous constructions.
\end{remark}
\begin{example}\label{Ex3.ell} We now turn to a bit more interesting example of trajectory attractors related with elliptic boundary problems in cylindrical domains. Namely, let us consider the following elliptic boundary value problem:
\begin{equation}\label{3.ell}
a(\partial^2_t u+\Dx u)+\gamma\Dt u-f(u)=0, \  \ u\big|_{\partial\Omega}=0,\ \ u\big|_{t=0}=u_0,\  t\ge0,\ \ x\in\Omega,
\end{equation}
where $\Omega$ is a bounded domain of $\R^d$, $u=(u^1(t,x),\cdots,u^n(t,x))$ is an unknown vector valued function, $a=a^*>0$ is a given diffusion matrix, $\gamma\in\R$ is a given parameter and $f\in C(\R^d,\R^d)$ is a given nonlinearity which satisfies the dissipativity assumption
\begin{equation}\label{3.dis4}
f(u).u\ge-C+\alpha|u|^{2+\eb}
\end{equation}
for some positive $C$, $\alpha$ and $\eb$.
\par
We emphasize that originally the PDE \eqref{3.ell} is not an evolutionary one, but may appear, for instance,  under the study of traveling wave solutions for evolutionary PDEs. Then the parameter $\gamma$ is naturally interpreted as a wave-speed, see \cites{CMS93,Ba94,VZ96,MieZ02,VVV94} and references therein. One of possible and rather popular approach to study these equations is related with the interpretation of the variable along the axis of the cylinder as time and apply the methods of the theory of dynamical systems (center manifolds, inertial manifolds, attractors, etc., see \cites{Ba94,Ba95,K82,Mie94} for more details).
\par
Since the Cauchy problem is ill-posed for elliptic equations, as a rule, we do not have uniqueness of solutions for problem \eqref{3.ell} and, for this reason, it is natural (following \cite{VZ96}) to use the trajectory approach. To this end, we fix the space $\Phi:=C_0(\Omega)$ as the space of initial data and consider the set $\Cal K_+\subset \Theta_{loc}:=C_{loc}(\R_+,\Phi)$ of all (weak) solutions $u(t,x)$ which are defined in the whole semi-cylinder $\R_+\times\Omega$. Note from the very beginning that due to the interior regularity for elliptic equations, any solution $u\in W^{2,p}((t,t+1)\times\Omega)$ for all $p<\infty$ and the regularity of a solution is restricted by the smoothness of $f$ only.
 \par
The following lemma gives the crucial dissipative estimate for the solutions of \eqref{3.ell}.
\begin{lemma}\label{Lem3.ell-dis} Let $u$ be a solution of problem \eqref{3.ell} defined on the interval $t\in[0,N]$. Then the following estimate holds:
\begin{equation}\label{3.dis-est5}
\|u(t)\|_{\Phi}\le Q(\|u(0)\|_\Phi) H(1-t)+Q(\|u(N)\|_\Phi)H(t-N+1)+C_*,\ \ t\in[0,N],
\end{equation}
where $C_*$ and $Q$ are some constant and monotone increasing function respectively, which are independent of $u$, $t$ and $N$, and $H(z)$ is the standard Heaviside function.
\end{lemma}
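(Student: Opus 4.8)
The plan is to reduce the PDE estimate to a scalar second-order differential inequality for the energy functional $z(t):=\frac12\int_\Omega u(t,x)\cdot a\,u(t,x)\,dx$ and then to exploit the \emph{superlinearity} hidden in the exponent $2+\eb$ of \eqref{3.dis4}. Since interior elliptic regularity (already noted above) gives $u\in W^{2,p}$ on every interior sub-cylinder, $z\in C^2(0,N)\cap C[0,N]$ and I may differentiate under the integral. Replacing $a(\ptt u+\Dx u)$ by $f(u)-\gamma\Dt u$ from \eqref{3.ell}, integrating by parts in $x$ (the boundary term vanishes since $u|_{\partial\Omega}=0$ and $a=a^*>0$), using the dissipativity \eqref{3.dis4} together with $\int_\Omega|u|^{2+\eb}\ge|\Omega|^{-\eb/2}\|u\|_{L^2}^{2+\eb}$, and estimating the cross term $-\gamma\int_\Omega\Dt u\cdot u$ by Young's inequality in the $a$-weighted inner product, I would arrive (after dropping two manifestly nonnegative quadratic terms) at $z''(t)\ge\alpha|\Omega|^{-\eb/2}\|u(t)\|_{L^2}^{2+\eb}-\beta\|u(t)\|_{L^2}^2-C|\Omega|$. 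Since $\tfrac12\lambda_{\min}(a)\|u\|_{L^2}^2\le z\le\tfrac12\lambda_{\max}(a)\|u\|_{L^2}^2$, this is an autonomous inequality
\[
z''(t)\ge\phi(z(t)),\qquad \phi(z)\ge c\,z^{1+\eb/2}-K,\quad c>0,
\]
and everything below is driven by it; the crucial superlinear growth of $\phi$ is precisely what $2+\eb>2$ provides. (For scalar $a$ one may instead run the whole argument directly in the sup norm via the maximum principle applied to $\rho=\tfrac12|u|^2$, which satisfies $a(\ptt\rho+\Dx\rho)+\gamma\Dt\rho\ge\alpha(2\rho)^{1+\eb/2}-C$.)

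The heart of the matter, and the step I expect to be the main obstacle, is the \emph{absolute} interior bound $z(t)\le C_*$ for $\dist(t,\{0,N\})\ge1$, valid no matter how large the boundary data $z(0),z(N)$ are; this is a Keller--Osserman phenomenon. I would construct the barrier $\bar Z(t):=B\big(t^{-4/\eb}+(N-t)^{-4/\eb}\big)$ on $(0,N)$, which blows up at both ends. For $\eta(s)=Bs^{-4/\eb}$ a direct computation gives $\eta''\le\frac c2\eta^{1+\eb/2}$ once $B$ is large (the exponents match because $(4/\eb)(\eb/2)=2$), and superadditivity $(p+q)^{1+\eb/2}\ge p^{1+\eb/2}+q^{1+\eb/2}$ then upgrades this to $\bar Z''\le\phi(\bar Z)$ for $B$ large enough to absorb the subcritical terms $-\beta z-C$. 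Since $z$ is finite on $[0,N]$ while $\bar Z\to+\infty$ at the endpoints, comparison on $[\delta,N-\delta]$ with $\delta\to0$ (using that $\phi$ is increasing on the relevant range) yields $z\le\bar Z$ throughout $(0,N)$, whence $z(t)\le2B=:C_*$ for $\dist(t,\{0,N\})\ge1$. The delicate points are the supersolution check for the full $\phi$ and the limiting comparison, which must be carried out with care for the matrix $a$.

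For the endpoint layers I would use only the trivial lower bound $\phi(z)\ge-K$, i.e. $z''\ge-K$, so that $t\mapsto z(t)+\frac K2t^2$ is convex. On $[0,1]$ convexity together with the interior value $z(1)\le C_*$ (from the previous step, valid when $N\ge2$) gives $z(t)\le z(0)+C_*+\frac K2$; symmetrically $z(t)\le z(N)+C_*+\frac K2$ on $[N-1,N]$, while for $N<2$ convexity on all of $[0,N]$ bounds $z$ by $z(0)+z(N)+C_*$. Translating back through $\|u\|_{L^2}^2\asymp z$ produces exactly the $L^2$-analogue of \eqref{3.dis-est5}, the two convex layers accounting for the two Heaviside terms.

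It remains to pass from the $L^2$-norm to the $\Phi=C_0(\Omega)$-norm. Here I would invoke interior elliptic regularity for \eqref{3.ell}: an $L^2$ bound of $u$ on a sub-cylinder $(t-\tfrac12,t+\tfrac12)\times\Omega$ controls $\|u\|_{W^{2,p}}$ there and hence, by Sobolev embedding, $\|u(t)\|_{C_0(\Omega)}$ on the inner slice, at the cost of shrinking each interval by a fixed amount (absorbed by running the barrier from distance $\tfrac12$ rather than $1$ and readjusting constants). Combined with $\|u(0)\|_{L^2}\le|\Omega|^{1/2}\|u(0)\|_{\Phi}$ and the analogous bound at $t=N$, this upgrades the three regimes to the stated estimate in $\Phi$, with a single monotone $Q$ and a single $C_*$ independent of $u$, $t$ and $N$.
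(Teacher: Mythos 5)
The core of your strategy---reduce to a second-order differential inequality with superlinear forcing and run a Keller--Osserman barrier argument---is the right mechanism, and your ODE-level analysis is essentially the estimate the paper ``leaves to the reader''. But you apply it to the wrong quantity, and this creates a gap your proposed tools cannot close. The final step, upgrading the $L^2$ bound on $z(t)$ to the $C_0(\Omega)$ bound via ``interior elliptic regularity'', fails under the hypotheses of this example: $f$ is only assumed \emph{continuous} with the one-sided bound \eqref{3.dis4}, and no upper growth bound is imposed. Consequently $\|f(u)\|_{L^p}$ on a sub-cylinder cannot be estimated in terms of $\|u\|_{L^2}$ (take, e.g., $f(s)=s\,e^{|s|^4}$, which satisfies \eqref{3.dis4}), so the right-hand side of \eqref{3.ell} is not quantitatively controlled and linear elliptic estimates yield nothing uniform; any bootstrap from $L^2$ is either impossible or circular. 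The way to exploit the sign structure is pointwise: $w:=au\cdot u$ satisfies $\Dt^2 w+\Dx w\ge \alpha' w^{1+\eb/2}-C$ after absorbing the cross term $\gamma\Dt u\cdot u$ into $a\nabla_{t,x}u\cdot\nabla_{t,x}u$ and into the superlinear term. This is exactly the paper's route: it applies the maximum/comparison principle to $w$ in the whole cylinder, comparing with the solution of the scalar ODE with boundary data $y(0)=\|au(0)\cdot u(0)\|_C$, $y(N)=\|au(N)\cdot u(N)\|_C$, so the Keller--Osserman analysis is performed directly on a pointwise quantity with sup-norm data and no $L^2\to L^\infty$ step is ever needed. (Your parenthetical remark about scalar $a$ is in fact the paper's proof, and it works verbatim for symmetric positive matrices $a$.)

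There are also two defects inside the barrier argument itself. First, the two-sided barrier $\bar Z(t)=B\bigl(t^{-4/\eb}+(N-t)^{-4/\eb}\bigr)$ is \emph{not} a supersolution uniformly in $N$: in the middle of a long cylinder $\bar Z$ is of size $2B(N/2)^{-4/\eb}$, where $\phi(\bar Z)=c\bar Z^{1+\eb/2}-K<0<\bar Z''$, so the claim that ``$B$ large absorbs the subcritical terms'' is false there; enforcing it would require $B\gtrsim K N^{4/\eb}$, destroying the $N$-independence of $C_*$. The standard cure is to add a constant, working with $\bar Z+M$ where $cM^{1+\eb/2}\ge K$, after which your superadditivity step closes the inequality. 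Second, the short-cylinder case $N<2$ is not actually covered: convexity over the whole interval gives $z(t)\le z(0)+z(N)+C$ for \emph{all} $t$, whereas for $1<t\le N$ the claimed estimate \eqref{3.dis-est5} contains no $Q(\|u(0)\|_\Phi)$ term and must be independent of $u(0)$ (symmetrically for $t<N-1$). To obtain the stated Heaviside structure one needs one-sided barriers, singular at one endpoint and matching the data at the other; these come for free in the paper's ODE comparison, which carries the boundary data at both ends from the start.
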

\begin{proof}[Sketch of the proof] Let $w(t,x):=au(t,x).u(t,x)$. Then, taking the dot product of equation \eqref{3.ell} with $u(t,x)$, we arrive at
$$
\frac12(\partial^2_t w(t,x)+\Dx w(t,x))=f(u(t,x)).u(t,x)+a\nabla_{t,x}u(t,x).\nabla_{t,x}u(t,x)-\gamma\Dt u(t,x).u(t,x)
$$
and using the dissipativity condition on $f$ and the positivity of the matrix $a$, we get
$$
\Dt^2 w(t,x)+\Dx w(t,x)+\alpha w(t,x)^{1+\eb/2}\ge -C.
$$
Thus, due to the maximum/comparison principle it is enough to verify the analogue of estimate \eqref{3.dis-est5} for the following ODE:
$$
y''(t)+\alpha y(t)^{1+\eb/2}=-C,\ \ y\big|_{t=0}=\|a u(0).u(0)\|_C,\ \ y\big|_{t=N}=\|a u(N).u(N)\|_C.
$$
But such an estimate for the ODE is straightforward and we leave it to the reader, so the lemma is proved.
\end{proof}
The proved estimate allows us to verify the existence of a solution for problem \eqref{3.ell} for every $u_0\in\Phi$. Indeed, to this end we first solve the corresponding boundary value problem on the finite interval $t\in[0,N]$ with an extra boundary condition $u_N\big|_{t=N}=0$ and then pass to the limit $N\to\infty$. The possibility to do this is guaranteed by this estimate, see \cite{VZ96} for more details. Moreover, passing to the limit $N\to\infty$ in \eqref{3.ell}, we get that {\it any} solution $u\in\Cal K_+$ satisfies the estimate
\begin{equation}\label{3.ell-dis}
\|u(t)\|_{\Phi}\le Q(\|u_0\|_\Phi) H(1-t)+C_*.
\end{equation}
In particular, any solution $u\in\Cal K_+$ is bounded as $t\to\infty$.
\par
Thus, the trajectory DS $(T(h),\Cal K_+)$ associated with equation \eqref{3.ell} is constructed. We endow the space $\Cal K_+$ with the topology of $\Theta_{loc}$ and with the bornology $\Bbb B$ lifted from the phase space $\Phi$, namely, $B\subset\Cal K_+$ is in $\Bbb B$ if $B\big|_{t=0}$ is bounded in $\Phi$. Then estimate \eqref{3.ell-dis} guarantees that the set
$$
\Cal B:=\{u\in\Cal K_+,\ \|u\|_{C_b(\R_+,\Phi)}\le C_*\}
$$
is a bounded absorbing set for the above defined trajectory DS. Moreover, due to the interior regularity estimate for elliptic equation, we know that the set $\Cal B_1:=T(1)\Cal B$ is bounded in $C^1_b(R_+\times\Omega)$ and, by the Arzela theorem it is compact in $\Theta_{loc}$. Thus, a bounded and compact absorbing set for the trajectory DS is constructed and the following result holds.
\begin{proposition}\label{Prop3.ell} Under the above assumptions, problem \eqref{3.ell} possesses a trajectory attractor $\Cal A_{tr}\subset\Theta_{loc}$ which consists of all bounded solutions of \eqref{3.ell} defined on the whole cylinder $\R\times\Omega$:
$$
\Cal A_{tr}=\Cal K\big|_{t\ge0}.
$$
\end{proposition}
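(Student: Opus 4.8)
The plan is to deduce the proposition directly from the general representation theorem, Proposition \ref{Prop2.rep}, applied to the trajectory dynamical system $(T(h),\Cal K_+)$. All the structural ingredients required by that theorem have essentially been assembled in the discussion preceding the statement: the phase space is $\Cal K_+\subset\Theta_{loc}$ equipped with the (metrizable) topology of $\Theta_{loc}=C_{loc}(\R_+,\Phi)$, and the bornology $\Bbb B$ is lifted from $\Phi$, so that $B\in\Bbb B$ iff $B\big|_{t=0}$ is bounded in $\Phi$; this lifted bornology is clearly stable under inclusions. What remains is to check the hypotheses of Proposition \ref{Prop2.rep} one by one and then to unwind its conclusion into the concrete representation $\Cal A_{tr}=\Cal K\big|_{t\ge0}$.

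First I would record that the construction is well posed: since \eqref{3.ell} is autonomous in $t$, restricting any $u\in\Cal K_+$ to $[h,\infty)$ and shifting back produces another element of $\Cal K_+$, so $T(h)\Cal K_+\subset\Cal K_+$, and the maps $T(h)$ are continuous in $\Theta_{loc}$ (translation is continuous in the local-uniform topology). Next, the compact absorbing set is already at hand: estimate \eqref{3.ell-dis} shows that $\Cal B$ absorbs every $B\in\Bbb B$, and, because interior elliptic regularity bounds $\Cal B_1=T(1)\Cal B$ in $C^1_b(\R_+\times\Omega)$, the Arzela theorem makes $\Cal B_1$ compact in $\Theta_{loc}$. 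Since $T(h+1)B=T(1)T(h)B\subset\Cal B_1$ once $T(h)B\subset\Cal B$, the set $\Cal B_1$ is a bounded compact absorbing (hence attracting) set. Metrizability of $\Theta_{loc}$ guarantees that topological and sequential compactness coincide, so there is no ambiguity in the notion of attractor. With these verifications Proposition \ref{Prop2.rep} yields a strictly invariant $\Bbb B$-attractor $\Cal A_{tr}\subset\Cal B_1$ which is generated by the complete bounded trajectories of the shift semigroup.

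The last and most delicate step is to translate ``complete bounded trajectory of $(T(h),\Cal K_+)$'' into ``bounded solution of \eqref{3.ell} on the whole cylinder $\R\times\Omega$'', i.e. to identify the abstract object furnished by Proposition \ref{Prop2.rep} with the set $\Cal K$. A complete trajectory is a map $v:\R\to\Cal K_+$ with $v(s+h)=T(h)v(s)$, that is $v(s+h)(t)=v(s)(t+h)$ for all $s\in\R$ and $h,t\ge0$; this consistency lets me define a single two-sided function $\tilde u(\tau):=v(s)(\tau-s)$ for any $s\le\tau$, the definition being independent of the choice of $s$. I would then check that $\tilde u$ solves \eqref{3.ell} on all of $\R\times\Omega$ (this is purely local in $t$, so it follows from the fact that each $v(s)$ solves it on $\R_+\times\Omega$) and that the bornological boundedness of the trajectory, combined with \eqref{3.ell-dis}, forces $\sup_{\tau\in\R}\|\tilde u(\tau)\|_\Phi<\infty$, so $\tilde u\in\Cal K$. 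The converse inclusion is immediate: restricting any $\tilde u\in\Cal K$ to $\R_+$ and shifting produces a complete bounded trajectory through $\tilde u\big|_{\R_+}$. I expect this bookkeeping --- verifying consistency of the shifts and that a complete trajectory genuinely glues into one global solution rather than a mere family of compatible semi-trajectories --- to be the main obstacle, all the dissipativity and compactness work having already been done in Lemma \ref{Lem3.ell-dis} and the passage to \eqref{3.ell-dis}.
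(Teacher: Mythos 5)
Your proposal is correct and follows essentially the same route as the paper: the text preceding Proposition \ref{Prop3.ell} constructs exactly the compact bounded absorbing set $\Cal B_1=T(1)\Cal B$ (via estimate \eqref{3.ell-dis}, interior elliptic regularity and the Arzela theorem) and then the proposition is obtained by invoking the general theory of Proposition \ref{Prop2.rep}, with the same identification of complete bounded trajectories of the shift semigroup with bounded solutions on the whole cylinder $\R\times\Omega$. Your explicit bookkeeping of that last identification is just a spelled-out version of what the paper treats as immediate (cf.\ the remark at the end of the proof of Proposition \ref{Prop3.ODE-tr}).
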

\end{example}
\begin{remark} Note that, at least for the application to traveling waves, exactly the set $\Cal K$ of all bounded solutions of \eqref{3.ell} on the whole cylinder consists of traveling waves and is the main object of interest. This set clearly contains the equilibria $\Cal R$ of \eqref{3.ell} (trivial solutions which are independent of $t$) and an interesting and important question whether or not it contains anything else (whether or not non-trivial traveling waves exist). In the case of evolutionary equations we know that the attractor is usually connected (see also \cites{Val95,KlV10} and the material below for connectedness of attractors without uniqueness), therefore we would expect that $\Cal K\ne\Cal R$ if $\Cal R$ is disconnected. Surprisingly, this may be not true for the attractors of elliptic equations. Indeed, let us consider equation
\eqref{3.ell} with {\it Neumann} boundary conditions, $\gamma=0$ and $f(u)=u(u-1)^2\cdots(u-N)^2$. Then, introducing the function $z(t):=\int_\Omega a u(t,x).u(t,x)\,dx$, we get
$$
z''(t)=2(f(u(t)),u(t))+2(a\nabla_{t,x}u,\nabla_{t,x}u)\ge0.
$$
So, the function $z(t)$ is bounded on $\R$ and convex and, therefore, is constant. Since $z''(t)\equiv0$, we conclude that $\nabla_{t,x}u\equiv0$ and $u=const$. Thus, $\Cal A_{tr}=\R=\{0,1,2,\cdots,N\}$ is totally disconnected  and we do not have non-trivial traveling waves.
\par
Fortunately, the example given above is an exception, not a rule. Indeed, as proved in \cite{FSV98}, we have $\Cal K\ne\Cal R$ under some extra  mild assumptions, namely, we need to have at least two {\it non-degenerate} (hyperbolic) equilibria to guarantee the existence of non-trivial traveling waves. Note that in the above counterexample we have exactly one non-degenerate equilibrium $u=0$ and all others are degenerate. The proof of this fact is based on the analogous result for the Galerkin system of ODEs approximating \eqref{3.ell} (where it can be established using the Conley index) and the upper semicontinuity of the corresponding trajectory attractors.
\end{remark}
\begin{remark} We note that in the case of equation \eqref{3.ell}, the corresponding bornology on the trajectory phase space is defined via lifting the bornology on the usual phase space $\Phi$, exactly as in the case of RDS considered above. Nevertheless, we do not have the concatenation property here and cannot define the generalized multivalued semigroup $S(t)$ analogously to \eqref{3.gen}. The reason is that we cannot guarantee that the solution $u(t)$ of \eqref{3.ell} initially defined on the interval $t\in[0,N]$ can be continued for $t\ge N$ (it may blow up immediately for $t\ge N$), so to define this semigroup properly, we need to use only the solutions which are defined for all $t\ge0$ (interpreting this assumption as some kind of the second boundary condition at $t=\infty$), see \cite{Ba94}. Alternatively, it can be proved that the projection
$$
\Pi:\Cal K_+\to\Phi\times\Phi, \ \ \Pi u:=\(u\big|_{t=0},\Dt u\big|_{t=0}\)
$$
is injective and therefore is a homeomorphism between $\Cal K_+$ and $\Cal G:=\Pi\Cal K_+\subset\Phi^2$. Thus, we may define the equivalent DS $\Cal S(t):\Cal G\to\Cal G$ associated with equation \eqref{3.ell} and its trajectory DS. Then we will have its global attractor $\Cal A_{gl}:=\Pi\Cal A_{tr}$. However, the structure of the set $\Cal G$ remains unclear (and for this reason such an approach does not give essential advantages in comparison with the trajectory one). In addition, the semigroup is only H\"older continuous in general (and is not Lipschitz) which, in turn, allows the attractors $\Cal A_{gl}$ and $\Cal A_{tr}$ to be infinite dimensional, see \cite{MieZ02} for the corresponding example.
\par
We also mention that the situation becomes much better in the case of the so-called {\it fast} traveling waves $\gamma\gg1$, where the uniqueness of a solution can be established for problem \eqref{3.ell}. Then the theory becomes very similar to the standard situation related e.g. with RDS satisfying the conditions of the uniqueness theorem, see \cites{CMS93,VZ99}.
\end{remark}
\begin{remark} The trajectory approach is applicable for elliptic boundary value problems in non-cylindrical domains as well. Indeed, if an unbounded domain $\Omega\subset\R^d$ is invariant with respect to shifts in some fixed direction $\vec l\in\R^d$, i.e.
$$
\Cal T_{\vec l}(h)\Omega\subset\Omega,\ \ h\ge0,\ \Cal T_{\vec l}(h)x:=x+h\vec l,
$$
then the corresponding semigroup of shifts $(T_{\vec l}(h)u)(x):=u(x+h\vec l)$ will acts on the properly defined set $\Cal K_+$ of solutions of the considered elliptic boundary value problem. Thus, we may interpret the direction $\vec l$ as a direction of "time",  construct the associated trajectory dynamical system $(T_{\vec l}(h),\Cal K_+)$ and study its attractors. If in addition, the domain $\Omega$ satisfies
$$
\cup_{h\le0}\Cal T_{\vec l}(h)\Omega=\R^d,
$$
 the trajectory attractor will be generated by all bounded solutions of the considered problem defined for all $x\in\R^d$, see \cites{Z-VIN, Z98a} for the details. There is also an interesting possibility to apply the attractors theory for the case where the domain $\Omega$ is not semi-invariant with respect to shifts in any direction (e.g. where $\Omega$ is an exterior domain). This possibility is based on the recently developed theory of attractors for semigroups with multi-dimensional time, see \cite{KZ23b} for the details.
\end{remark}

\subsection{3D Navier-Stokes system}\label{s4.NS} In this subsection, we consider various approaches to attractors of the 3D Navier-Stokes system:
\begin{equation}\label{3.NS}
\Dt u+(u,\Nx)u+\Nx p=\nu\Dx u+g,\ \ \divv u=0,\ \ u\big|_{t=0}=u_0
\end{equation}
in a bounded domain $\Omega\subset\R^3$ with smooth boundary. Here $u=(u^1,u^2,u^3)$ is an unknown velocity vector field, $p$ is an unknown pressure,
$$
(u,\Nx)u=\sum_{i=1}^3u_i\partial_{x_i}u,
$$
$\nu>0$ is a given viscosity and $g$ are given external forces.
\par
As usual, we denote by $\Cal V:=\{\varphi\in C_0^\infty(\Omega),\ \ \divv\varphi=0\}$ the space of divergence free test functions. Then the phase space  $\Phi=H$ and the space  $V$ are defined as the closure of $\Cal V$ in $[L^2(\Omega)]^3$ and $[H^1(\Omega)]^3$ respectively and the space $V^{-1}$ is a dual space to $V$ with respect to the duality generated by the standard inner product of $H$. We also assume that $u_0\in H$ and $g\in V^{-1}$, see e.g. \cites{BV92,tem} for more details.
\par
By definition, a function $u\in L^\infty(\R_+,H)\cap L^2_b(\R_+,V)$ is a weak energy solution of \eqref{3.NS} if, for every test function $\phi\in C_0^\infty(\R_+\times\Omega)$ such that $\divv \phi=0$, the following identity holds:
$$
-\int_\R(u(t),\Dt\phi(t))\,dt+\int_\R((u(t),\Nx)u(t),\phi(t))\,dt+\nu\int_\R(\Nx u(t),\Nx \phi(t))\,dt=\int_\R(g,u(t))\,dt.
$$
It is well-known that any such a solution $u\in C([0,T], H_w)$ for all $t\ge0$ and, moreover, $\Dt u\in L^{4/3}_b(\R_+,L^{4/3}(\Omega))$. In particular, the initial data $u\big|_{t=0}=u_0$ is well-posed.
\par
It is also well-known that, for any $u_0\in H$ and $g\in V^{-1}$, equation \eqref{3.NS} possesses at least one weak energy solution $u$ which is, in addition,  continuous at $t=0$ as a function with values in $H$ (with the strong topology) and satisfies the following energy inequality:
\begin{equation}\label{3.NS-en}
\frac12\frac d{dt}\|u(t)\|^2_H+\nu\|\Nx u(t)\|^2_{L^2}\le(g,u(t))
\end{equation}
which is understood in the sense of distributions. By definition, an energy  solution $u$ which satisfies these extra properties is called a Leray-Hopf (LH) solution of \eqref{3.NS}, see \cites{L34,Hopf 51}. In turn, the energy inequality \eqref{3.NS-en} is equivalent to the inequality
\begin{equation}\label{3.NS-en-int}
\|u(t)\|^2_H+2\nu\int_s^t\|\Nx u(\tau)\|^2_{L^2}\,d\tau\le \|u(s)\|^2_{H}+2\int_s^t(g,u(\tau))\,d\tau,
\end{equation}
which holds for almost all $s\in\R_+$ and all $t\ge s$. The solution $u(t)$ satisfying this inequality is strongly continuous at $t=0$ (i.e. it is a LH-solution) if and only if the inequality holds for $s=0$ (i.e. $s=0$ is not in the exceptional zero measure set), see \cites{ChVi02,RRS16,tem1} for more details. Note also that, applying the Gronwall inequality to \eqref{3.NS-en-int}, we get the dissipative energy estimate of the form:
\begin{equation}\label{3.NS-dis}
\|u(t)\|_H^2+\nu\int_0^{t}e^{-\beta(t-\tau)}\|\Nx u(\tau)\|^2_{L^2}\,d\tau \le (\|u(s)\|^2_H-C\|g\|^2_{V^{-1}})e^{-\beta(t-s)}+C\|g\|^2_{V^{-1}}
\end{equation}
which holds for almost all $s\ge0$ and all $t\ge s$ with some positive constants $C$ and $\beta$ which are independent of $t$, $s$, $g$ and $u$.
\par
We emphasize that the class of LH-solutions is a priori not invariant with respect to time shifts. Indeed, if $u$ is an LH-solution and $T(s)u$ is its time shift, we cannot guarantee that estimate \eqref{3.NS-en-int} holds for $s$ and, therefore, $T(s)u$ may be discontinuous at $t=0$. Important for us is the fact that $T(s)u$ is an LH-solution for {\it almost all} $s\in\R_+$. We also introduce the class of generalized LH-solutions as the weak solutions which satisfy \eqref{3.NS-en}, but not necessarily strongly continuous at $t=0$. Obviously, such solutions will be shift-invariant and $T(s)u$ is a LH-solution for almost every $s\in\R_+$.
\par
The above mentioned facts about the solutions of \eqref{3.NS} give a base for the attractors theory, but the construction of the corresponding trajectory attractor can be realized in several alternative ways. We start with the most general scheme suggested by Vishik and Chepyzhov, see \cites{ChVi95,ChVi02}.

\begin{example}\label{Ex3.NS1} Let us consider the set $\Cal K_+^{VC}$ of all weak energy solutions $u$ of \eqref{3.NS} which correspond to all $u_0\in H$ and satisfy the following estimate:
\begin{equation}\label{3.VC-dis}
\|u(t)\|^2_H+\nu\int_0^{t}e^{-\beta(t-s)}\|\Nx u(s)\|^2_{L^2}\,ds\le (C_u-C\|g\|^2_{V^{-1}})e^{-\beta t}+C\|g\|^2_{V^{-1}},
\end{equation}
where the positive constants $C$ and $\beta$ are the same as in \eqref{3.NS-dis} and the constant $C_u$ depends on the solution $u$. Then, on the one hand, the set $\Cal K_+^{VC}$ is not empty and contains all LH-solutions and, on the other hand, it is shift invariant: $T(h)\Cal K_+^{VC}\subset\Cal K_+^{VC}$, so the associated trajectory DS $(T(h),\Cal K_+^{VC})$ is well-defined. We endow the space $\Cal K_+^{VC}$ with the weak-star topology of the space $\Theta_{loc}:=L^\infty_{loc}(\R_+,H)\cap L^2_{loc}(\R_+,V)$ as well as with the bornology $\Bbb B$ discussed in Remark \ref{Rem3.strange}. Namely, $B\subset\Cal K_+^{VC}$ belongs to $\Bbb B$ if and only if $\sup_{u\in B} C_u:=C_B<\infty$.
\par
Then, due to assumption \eqref{3.VC-dis}, the set
\begin{equation}\label{3.absVC}
\Cal B:=\big\{u\in\Cal K_+^{VC},\ \ \|u(t\|_H^2+\nu\int^{t}_0e^{-\beta(t-s)}\|\Nx u(s)\|_{L^2}^2\,ds\le 2C\|g\|^2_{V^{-1}}\big\}
\end{equation}
 is a bounded absorbing set for the constructed trajectory DS.
Moreover, since
$$
L^\infty(0,T;H)\cap L^2(0,T;V)=[L^1(0,T;H)+L^2(0,T;V^{-1})]^*,
$$
for all $T$, the set $\Cal B$ is precompact in $\Cal K_+^{VC}$ endowed with the topology of $\Theta_{loc}^{w^*}$. Since the weak-star topology on $\Cal B\subset\Theta_{loc}$ is metrizable, to verify that $\Cal B$ is closed, it is sufficient to verify its sequential closeness. In turn, this can be done by passing to the limit in \eqref{3.NS} similarly to the proof of the existence of a weak solution, see \cite{ChVi02} for more details.
\par
Thus, we have verified that $\Cal B$ is a bounded compact absorbing set for the trajectory DS $(T(h),\Cal K_+^{VC})$ and, according to the general theory, there exists a trajectory attractor $\Cal A^{VC}_{tr}\subset \Cal K_+^{VC}$ which is generated by all complete bounded weak solutions of \eqref{3.NS}, i.e., by all weak solutions which are defined for all $t\in\R$ and satisfy the inequality
\begin{equation}
\|u(t)\|^2_H+\int_{-\infty}^{t}e^{-\beta(t-s)}\|\Nx u(s)\|^2_{L^2}\,ds\le C\|g\|^2_{V^{-1}}.
\end{equation}
We denote the set of all such solutions by $\Cal K^{VC}$.
\end{example}
\begin{remark} The constructed attractor $\Cal A^{VC}_{tr}$ attracts, in particular, all LH and generalized LH-solutions, however, it is somehow "too big" and contains a lot of non-physical solutions. In particular, it depends on the choice of the constant $C$ in estimate \eqref{3.VC-dis}. Indeed, due to \cite{BuVi19}, we know that, for any given sufficiently regular function $E(t)$, there exists a weak energy solution $u(t)$ of problem \eqref{3.NS} which satisfies the additional assumption $\|u(t)\|_H=E(t)$, $t>0$. Based on this, we see that even the set $\Cal K^{VC}$ of complete bounded trajectories depends on the choice of the constant $C$. For this reason, it is interesting to consider alternative constructions which allow us to drop out most part of non-physical solutions.
\end{remark}
\begin{example}\label{Ex3.LH} Let us consider the set $\Cal K_+^{gLH}$ of all generalized LH-solutions. As we have already mentioned, this set is semi-invariant with respect to time shifts and, therefore, the corresponding trajectory DS $(T(h),\Cal K_+^{gLH})$ is well-defined. The topology on $\Cal K_+^{gLH}$ is naturally defined by the embedding to $\Theta_{loc}^{w^*}$ as in the previous case and we only need the bornology.
\par
We say that $B\subset\Cal K_+^{gLH}$ is bounded if $B\big|_{t\in[0,1]}$ is a bounded set in $L^\infty(0,1,H)$. Then, due to estimate \eqref{3.NS-dis}, we have
\begin{equation}\label{3.NS-dis1}
\|u(t)\|_H^2+\nu\int_s^te^{-\beta(t-\tau)}\|\Nx u(\tau)\|^2_{L^2}\,d\tau \le C\|u\|^2_{L^\infty(s,s+1,H)}e^{-\beta(t-s)}+C\|g\|^2_{V^{-1}}
\end{equation}
already for all $t\ge s\ge0$ and, therefore,
\begin{equation}\label{3.abs1}
\Cal B:=\big\{u\in \Cal K_+^{gLH},\ \|u(t)\|_H^2+\nu\int_0^{t}e^{-\beta(t-\tau)}\|\Nx u(\tau)\|^2_{L^2}\,d\tau \le2C\|g\|^2_{V^{-1}}\big\}
\end{equation}
is a bounded absorbing set for the considered trajectory DS. This set is precompact due to the Banach-Alaoglu theorem and its compactness follows from the standard fact that the weak limit of generalized   LH-solutions is a generalized LH-solution.  Thus, according to the general theory, the considered trajectory DS possesses an attractor $\Cal A^{gLH}_{tr}$ which is generated by all bounded LH-solutions of \eqref{3.NS} defined for all $t\in\R$. We denote the set of such solutions by $\Cal K^{LH}$.
\end{example}
\begin{remark} The constructed attractor $\Cal A^{gLH}_{tr}$ looks as the most natural trajectory attractor for 3D Navier-Stokes equations and appears in slightly different, but equivalent forms in many papers starting from the pioneering work of Sell \cite{sell96}. For instance, one can replace the topology of $\Theta_{loc}^{w^*}$ by the strong topology of $L^2_{loc}(\R_+,H)$ avoiding the usage of weak topologies in definitions. Indeed, as it is not difficult to show, using the control of the proper norm of time derivative $\Dt u$ from equation \eqref{3.NS}, see e.g. \cite{sell96}, the topologies of $L^2_{loc}(\R_+,H)$ and $\Theta_{loc}^{w^*}$ coincide on the absorbing ball $\Cal B$. Moreover, due to the trick with an alternative generalization of LH-solutions suggested in \cite{sell96}, one can use  bounded sets of $L^2_{loc}(\R_+,H)$ to define the bornology on the trajectory phase space as well. However,  we cannot use the bornology generated by bounded in $H$ sets of initial data. As we have already mentioned, estimate \eqref{3.NS-dis} may fail at $s=0$ for  generalized LH-solutions and the boundedness of the set of $u(0)$ does not imply that the corresponding set of trajectories is bounded in $\Theta_{loc}$.
\end{remark}
\begin{example} There is an alternative way which allows us to construct a trajectory attractor using the bornology related with bounded sets in $H$, namely,  one may consider the set $\Cal K_+^{LH}$ of standard (not generalized) LH-solutions as a "trajectory phase space". The problem here is that, as discussed before,  $T(h)\Cal K_+^{LH}$ may be not a subset of $\Cal K_+^{LH}$, so we need to deal with non-invariant sets of trajectories. The extension of the attractors theory to this case has been developed in \cite{ZvK14}. We will not discuss this theory in more details since there is a simple trick which allows us to reduce it to the general scheme considered in the previous section. Namely, let us consider the {\it whole space} $\Theta_{loc}^{w^*}$ as the trajectory phase space for problem \eqref{3.NS}. Then the semigroup $T(h)$ of time shifts obviously acts continuously on $\Theta_{loc}^{w^*}$, so the trajectory DS $(T(h),\Theta_{loc}^{w^*})$ is well-defined.
\par
Of course, up to the moment this trajectory DS looks as an abstract nonsense since it is completely unrelated with the initial Navier-Stokes equation. But we still did not introduce the bornology for this DS and exactly the bornology will relate it with the initial problem. Namely, a set $B\subset \Theta_{loc}^{w^*}$ belongs to the bornology $\Bbb B$ if
\par
1) $B\subset \Cal K_+^{LH}$, i.e. $B$ consists of LH-solutions of the Navier-Stokes problem;
\par
2)  The set $B\big|_{t=0}:=\{u(0),\ u\in B\}$ is bounded in $H$.
\par
Indeed, due to the dissipative estimate \eqref{3.NS-dis} and our choice of the bornology, we see that the compact set \eqref{3.abs1} constructed above is an absorbing set for this new trajectory DS and, therefore, according to the general theory, we have an attractor  $\Cal A_{tr}^{LH}\subset \Cal B$ (and therefore $\Cal A_{tr}^{LH}\subset \Cal K_+^{gLH}$). Since $T(h)$ is continuous, this theory also guarantees that $\Cal A_{tr}^{LH}$ is strictly invariant and, therefore, is generated by some subset of complete bounded LH-solutions:
$$
\Cal A_{tr}^{LH}\subset \Cal A_{tr}^{gLH}=\Cal K^{LH}\big|_{t\ge0}.
$$
Note that the general theory does not give us the coincidence of two attractors since we cannot claim that the constructed absorbing set belongs to $\Bbb B$ (we do not know whether or not $\Cal B\subset\Cal K_+^{LH}$). But in our case it follows immediately from the fact that for every $u\in\Cal K^{LH}$, we have that $T(h)u\big|_{t\ge0}\in \Cal K_+^{LH}$ for almost every $h\in\R$. Thus,
$$
\Cal A_{tr}^{LH}=\Cal A_{tr}^{gLH}
$$
and considering the non-invariant spaces of trajectories does not bring anything new to the theory of trajectory attractors for 3D Navier-Stokes equations. We also mention that this is a lucky exception for the theory of attractors of non-invariant sets of trajectories that we are able to establish the last equality and clarify the structure of the corresponding attractor. In more general situation, we usually do not know how this attractor is related with the solutions of the considered equation and what complete bounded trajectories  generate it. This is actually the main drawback of the theory.
\end{example}
\begin{example} We now consider one more approach to trajectory attractors which has been suggested in \cite{Zel04a} (see also \cites{CVZ11,GSZ10,MZ08}) for the study of damped wave equations with supercritical nonlinearities and which is based on approximations of the original system by the system for which we have the uniqueness of solutions. We restrict ourselves to considering the Galerkin approximations of the original 3D Navier-Stokes system only. Namely, let $\{e_n\}_{n=1}^\infty$ be the orthonornal in $H$ system of eigenvectors of the classical Stokes operator $A:=-\Pi\Dx$ in $\Omega$ with Dirichlet boundary conditions and let $P_N$ be the corresponding orthoprojector to the first $N$ eigenvectors of $A$:
$$
P_Nu:=\sum_{n=1}^N(u,e_n)e_n, \ \ H_N:=P_N H.
$$
Then, for a given $N\in\Bbb N$, the corresponding Galerkin approximation system reads
\begin{equation}\label{3.gal}
\Dt u_N+P_N(u_N,\Nx)u_N)=-\nu Au_N+P_Ng,\ \ u_N\big|_{t=0}=u_N^0,\ \ u_N=\sum_{i=1}^Nu_N^i(t)e_i\in H_N.
\end{equation}
These equations are  a smooth system of ODEs with respect to functions $u^1_N(t),\cdots, u_N^N(t)$. In addition, any solution of this system satisfies exactly the same energy estimates as the limit Navier-Stokes system and, for this reason, we have the unique global solvability of \eqref{3.gal} as well as the uniform with respect to $N$ dissipativity estimate \eqref{3.NS-dis} for solutions $u_N$. Moreover, arguing in a standard way, we conclude that any sequence $u_N$ of the Galerkin solutions such that $u_N(0)$ are uniformly bounded in $H$ has a subsequence, convergent in $\Theta_{loc}^{w^*}$ to a generalized LH solution $u$ of the initial Navier-Stokes problem \eqref{3.NS} (we do not assume here that $u_N(0)$ converges strongly in $H$, so we cannot guarantee the continuity of $u(t)$ in $H$ at $t=0$). This is the standard way how the LH-solutions are usually constructed, see \cites{ChVi02,RRS16} for more details.
\par
We now define the trajectory phase space $\Cal K_+^{gal}\subset\Cal K_+^{gLH}\subset\Theta_{loc}$ as the set of all generalized LH-solutions of \eqref{3.NS} which can be obtained as weak-star limits of the Galerkin solutions:
\begin{equation}\label{3.tr-gal}
\Cal K_+^{gal}:=\big\{u\in\Theta_{loc}:\,  u= \lim_{k\to\infty}u_{N_k}\big\},
\end{equation}
where the limit is taken in the topology of $\Theta_{loc}^{w^*}$.
Since we do not assume that $u_{N_k}(0)\to u(0)$ strongly in $H$, the set $\Cal K_+^{gal}$ is invariant with respect to time shifts and, therefore, the trajectory DS $(T(h),\Cal K_+^{gal})$ is well-defined. We endow this DS with the topology of $\Theta_{loc}^{w^*}$, so, to speak about attractors, it remains to introduce the bornology on $\Cal K_+^{gal}$. This requires some accuracy since we want  the weak-star limit of  such solutions, belonging to a bounded set, to be also such a solution, i.e. it should be possible to  obtain it as a weak-star limit of Galerkin solutions. To this end, following \cite{Zel04a}, we introduce the so-called $M$-functional:
\begin{equation}
M_u(t):=\inf_{u_{N_k}\rightharpoondown u}\liminf_{k\to\infty}\|u_{N_k}(t)\|^2_{H},\ \ u\in\Cal K_+^{gal}.
\end{equation}
The external infinum is taken over all subsequences of Galerkin solutions which converge to a given $u\in\Cal K_+^{gal}$. By  definition, the $M$-functional is well-defined on $\Cal K_+^{gal}$ and the straightforward arguments show that
\begin{multline}\label{3.Mdis}
1.\ \ M_{T(s)u}(t)\le M_u(t+s),\ \ 2.\ \ \|u(t)\|^2_H\le M_u(t),\\ 3.\ \  M_u(t+s)+\nu\int_s^{t+s}\|\Nx u(\tau)\|_{L^2}^2\,d\tau\le CM_u(s)e^{-\beta t}+C\|g\|_{V^{-1}}^2,\ \ t,s\ge0.
\end{multline}
The key property of the $M$-functional is stated in the lemma below.
\begin{lemma}\label{Lem3.M} Let the sequence $u_n\in\Cal K_+^{gal}$ be such that $u_n\to u$ in $\Theta_{loc}^{w^*}$. Then $u\in\Cal K_+^{gal}$ and
$$
M_u(t)\le \liminf_{n\to\infty}M_{u_n}(t),\ \ t\ge0.
$$
\end{lemma}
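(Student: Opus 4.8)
The plan is to prove both assertions at once, by combining the Galerkin sequences that approximate the individual $u_n$ into a single Galerkin sequence approximating $u$. If $\liminf_{n\to\infty}M_{u_n}(t)=+\infty$ there is nothing to prove, so I will first pass to a subsequence of $\{u_n\}$ (still converging to $u$ in $\Theta_{loc}^{w^*}$, and not relabelled) along which $M_{u_n}(t)\to L:=\liminf_{n\to\infty}M_{u_n}(t)<\infty$. For each fixed $n$, the definition of the $M$-functional provides a sequence of Galerkin solutions $\{w^{(n)}_m\}_{m\ge1}$, of orders $N^{(n)}_m\to\infty$, with $w^{(n)}_m\to u_n$ in $\Theta_{loc}^{w^*}$ and
$$
\liminf_{m\to\infty}\|w^{(n)}_m(t)\|^2_H\le M_{u_n}(t)+\tfrac1n ;
$$
after a further extraction in $m$ I may assume this $\liminf$ is a genuine limit.

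First I will fix the topological framework. Using the duality $L^\infty(0,T;H)\cap L^2(0,T;V)=[L^1(0,T;H)+L^2(0,T;V^{-1})]^*$ recalled above and the separability of the predual, the weak-star topology of $\Theta_{loc}$ is metrizable on bounded subsets; let $d$ denote such a metric on a fixed bounded set containing all the trajectories to be used. I will then select, for each $n$, an index $m(n)$ so large that $\tilde w_n:=w^{(n)}_{m(n)}$ satisfies simultaneously
$$
d(\tilde w_n,u_n)<\tfrac1n,\qquad \|\tilde w_n(t)\|^2_H\le M_{u_n}(t)+\tfrac2n,\qquad N^{(n)}_{m(n)}>N^{(n-1)}_{m(n-1)} .
$$
Since $d(\tilde w_n,u)\le d(\tilde w_n,u_n)+d(u_n,u)\to0$, the sequence $\{\tilde w_n\}$ is a sequence of Galerkin solutions, of orders tending to infinity, converging to $u$ in $\Theta_{loc}^{w^*}$; this already yields $u\in\Cal K_+^{gal}$. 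Moreover $\{\tilde w_n\}$ is admissible in the infimum defining $M_u(t)$, whence
$$
M_u(t)\le\liminf_{n\to\infty}\|\tilde w_n(t)\|^2_H\le\liminf_{n\to\infty}\Big(M_{u_n}(t)+\tfrac2n\Big)=L,
$$
which is exactly the asserted inequality.

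The step I expect to be the main obstacle is justifying the use of a single metric $d$, that is, the equi-boundedness in $\Theta_{loc}$ of the chosen approximants $\tilde w_n$ on every interval $[0,T]$: without it, convergence tested against a countable dense family cannot be promoted to genuine weak-star convergence on the whole half-line, and $u\in\Cal K_+^{gal}$ would not follow. Control at the single instant $t$ is automatic from the selection $\|\tilde w_n(t)\|^2_H\le L+3$, and propagating it to $[t,\infty)$ is immediate from the uniform-in-$N$ dissipative estimate \eqref{3.NS-dis} satisfied by all Galerkin solutions. The genuinely delicate interval is $[0,t]$, where no backward bound is available; there I will lean on the structural relations \eqref{3.Mdis} of the $M$-functional together with the uniform control of $\Dt w^{(n)}_m$ extracted from \eqref{3.gal}, which together confine the approximants to a fixed bounded region of $\Theta_{loc}$ and thereby legitimise both the metric $d$ and the diagonal scheme above.
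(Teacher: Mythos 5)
Your skeleton — diagonal selection of Galerkin approximants combined with metrizability of the weak-star topology on bounded subsets of $\Theta_{loc}$ — is exactly the argument the paper sketches, and the routine steps are executed correctly. But the step you yourself flag as the main obstacle is a genuine gap, and the tools you cite cannot close it. The relations \eqref{3.Mdis} propagate information only \emph{forward} in time, and the control of $\Dt w^{(n)}_m$ extracted from \eqref{3.gal} presupposes a bound on $w^{(n)}_m$ rather than producing one; neither yields any bound on $[0,t]$. In fact no uniform backward bound exists: a Galerkin solution of order $N$ obeys only an estimate of the type $\|w(s)\|_H^2\le e^{C\lambda_N(t-s)}\big(\|w(t)\|_H^2+\|g\|^2_{V^{-1}}\big)$ for $s<t$, which is useless since the orders along your diagonal tend to infinity. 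Worse, your selection forces the problem to appear: any approximating sequence that is near-optimal at time $t$ satisfies $\liminf_m\|w^{(n)}_m(0)\|^2_H\ge M_{u_n}(0)$, so your $\tilde w_n$ has $\|\tilde w_n(0)\|^2_H\ge M_{u_n}(0)-1$, and nothing in the hypotheses prevents $M_{u_n}(0)\to\infty$ while $M_{u_n}(t)$ stays bounded (property 3 of \eqref{3.Mdis} gives no reverse control, and Banach--Steinhaus bounds $\|u_n\|_{\Theta_T}$, not $M_{u_n}(0)$). In that situation $\{\tilde w_n\}$ is unbounded in $L^\infty(0,t;H)$, the metric $d$ no longer metrizes weak-star convergence along it, convergence tested on countably many functionals cannot be upgraded to convergence in $\Theta_{loc}^{w^*}$, and neither $u\in\Cal K_+^{gal}$ nor the admissibility of $\{\tilde w_n\}$ in the infimum defining $M_u(t)$ follows. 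A secondary slip: when $\liminf_{n\to\infty}M_{u_n}(t)=+\infty$ it is not true that ``there is nothing to prove'' --- the membership $u\in\Cal K_+^{gal}$ still requires an argument.

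The repair, and the case the paper actually uses, is to run your selection with near-optimality \emph{at time $0$} instead of at time $t$: along a subsequence realizing $\liminf_n M_{u_n}(0)$ (when finite) one gets $\|\tilde w_n(0)\|^2_H\le\liminf_n M_{u_n}(0)+2$, and then the dissipative estimate \eqref{3.NS-dis} --- which holds for Galerkin solutions for \emph{every} $s\ge0$, since they are classical solutions --- gives equi-boundedness of $\{\tilde w_n\}$ on all of $\R_+$; your metric/diagonal argument then closes and yields both $u\in\Cal K_+^{gal}$ and $M_u(0)\le\liminf_n M_{u_n}(0)$. This $t=0$ statement is precisely what is needed for the compactness of the absorbing set $\Cal B$ in the theorem that follows the lemma. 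For $t>0$ the scheme would need approximants that are simultaneously near-optimal at $t$ and uniformly controlled at $0$; the definition of $M$ does not supply these (the infima at different times may be realized along incompatible sequences, and interleaving two sequences controls both liminfs but not any single selected element), and even adding the hypothesis $\sup_n M_{u_n}(0)<\infty$ does not obviously help. So as written your proof, like the survey's one-line sketch, establishes the lemma only at $t=0$; the claim for positive $t$ needs an additional idea that is present in neither.
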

The proof of this lemma is based on the diagonal procedure and the fact that the topology of $\Theta_{loc}^{w^*}$ is metrizable on bounded subsets of $\Theta_{loc}$, see \cite{Zel04a} for the details.
\par
We are now ready to define the bornology $\Bbb B$ for the trajectory DS $(T(h),\Cal K_+^{gal})$. Namely, the set $B\subset\Cal K_+^{gal}$ is an element of $\Bbb B$ if and only if $\sup_{u\in B}M_u(0)<\infty$. Then, according to Lemma \ref{Lem3.M} and dissipative estimate \eqref{3.Mdis}, the set
$$
\Cal B:=\{u\in\Cal K_+^{gal},\ M_u(0)\le 2C\|g\|^2_{V^{-1}}\}
$$
is a compact and bounded absorbing set for the trajectory DS $(T(h),\Cal K_+^{gal})$, so according to the general theory, we have the following result.
\begin{theorem} Under the above assumptions, the trajectory DS $(T(h),\Cal K_+^{gal})$ associated with the Navier-Stokes problem \eqref{3.NS} possesses an attractor $\Cal A_{tr}^{gal}\subset \Cal A_{tr}^{gLH}$ which is generated by all complete bounded trajectories of \eqref{3.NS} which can be obtained as a weak-star limit of the corresponding Galerkin approximations. Namely,
$$
\Cal A_{tr}^{gal}=\Cal K^{gal}\big|_{t\ge0},
$$
where $u\in\Cal K^{gal}$ if and only if there exists a sequence $t_k\to-\infty$, a bounded in $H$ sequence of the initial data and a sequence of the Galerkin solutions $u_{N_k}(t)$, $t\ge t_k$, such that $u$ is a weak-star limit of $u_{N_k}$, see \cite{Zel04a} for more details.
\end{theorem}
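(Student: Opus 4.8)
The plan is to deduce the theorem from the abstract machinery of Section \ref{s2}, specifically from Proposition \ref{Prop2.rep}, once we have verified that $(T(h),\Cal K_+^{gal})$ satisfies all of its hypotheses. The phase space $\Cal K_+^{gal}$, endowed with the topology induced from $\Theta_{loc}^{w^*}$, is a Hausdorff topological space (being a subspace of one), the shift maps $T(h)$ are continuous on $\Theta_{loc}^{w^*}$ and leave $\Cal K_+^{gal}$ invariant, and the bornology $\Bbb B$ defined through the $M$-functional is stable with respect to inclusions: if $B'\subset B$ and $\sup_{u\in B}M_u(0)<\infty$, then a fortiori $\sup_{u\in B'}M_u(0)<\infty$. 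Thus the only substantial task is to produce a bounded compact absorbing set, and the natural candidate is $\Cal B=\{u\in\Cal K_+^{gal}:\ M_u(0)\le 2C\|g\|^2_{V^{-1}}\}$. The whole point of introducing the $M$-functional rather than simply lifting $H$-bounded sets of initial data is that, unlike the class of genuine LH-solutions, the bornology $\Bbb B$ is shift-invariant and behaves well under weak-star limits, which is exactly what the abstract theorem requires.

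First I would check that $\Cal B$ is bounded and absorbing. Boundedness is immediate from the definition of $\Bbb B$. For the absorbing property, take $B\in\Bbb B$ with $R:=\sup_{u\in B}M_u(0)<\infty$. Combining the first and third inequalities in \eqref{3.Mdis} gives, for every $u\in B$ and every $h\ge0$,
\begin{equation*}
M_{T(h)u}(0)\le M_u(h)\le CM_u(0)e^{-\beta h}+C\|g\|^2_{V^{-1}}\le CRe^{-\beta h}+C\|g\|^2_{V^{-1}}.
\end{equation*}
Hence $M_{T(h)u}(0)\le 2C\|g\|^2_{V^{-1}}$ as soon as $h$ is large enough (depending only on $R$), so $T(h)B\subset\Cal B$ for $h\ge T(B)$; this is meaningful since $T(h)u\in\Cal K_+^{gal}$ by shift-invariance.

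Next I would verify that $\Cal B$ is compact in $\Theta_{loc}^{w^*}$, which is the technical heart of the argument. For precompactness, the second and third properties in \eqref{3.Mdis} give, for every $u\in\Cal B$, the uniform bound $\|u(t)\|_H^2\le M_u(t)\le CM_u(0)e^{-\beta t}+C\|g\|^2_{V^{-1}}$ together with the corresponding control of $\int\|\Nx u\|^2_{L^2}$, so $\Cal B$ is bounded in $\Theta_{loc}$; the Banach--Alaoglu theorem then yields precompactness in the weak-star topology $\Theta_{loc}^{w^*}$. For closedness I would invoke Lemma \ref{Lem3.M}: if $u_n\in\Cal B$ and $u_n\to u$ in $\Theta_{loc}^{w^*}$, then $u\in\Cal K_+^{gal}$ and, by lower semicontinuity of the $M$-functional, $M_u(0)\le\liminf_n M_{u_n}(0)\le 2C\|g\|^2_{V^{-1}}$, so $u\in\Cal B$. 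Since the weak-star topology is metrizable on the bounded set $\Cal B$, sequential compactness coincides with topological compactness, and $\Cal B$ is compact.

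With these verifications Proposition \ref{Prop2.rep} applies directly and yields a strictly invariant $\Bbb B$-attractor $\Cal A_{tr}^{gal}\subset\Cal B$ generated by all complete bounded trajectories of the shift semigroup; the inclusion $\Cal A_{tr}^{gal}\subset\Cal A_{tr}^{gLH}$ follows from $\Cal K_+^{gal}\subset\Cal K_+^{gLH}$ together with $\Cal B\subset\Cal K_+^{gLH}$. The remaining and most delicate step is to identify these abstract complete bounded trajectories with the set $\Cal K^{gal}$ described in the statement. A complete bounded trajectory $u$, lying in the attractor, has the property that each of its shifts restricted to a half-line $[t_k,\infty)$ again belongs to $\Cal K_+^{gal}$ and is therefore itself a weak-star limit of Galerkin solutions on $[t_k,\infty)$; a diagonal extraction over a sequence $t_k\to-\infty$ then produces a single family $u_{N_k}$ of Galerkin solutions on expanding intervals converging weak-star to $u$, while the converse inclusion is direct. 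I expect this diagonal identification --- reconnecting the abstract attractor with concretely Galerkin-approximable solutions on the whole line --- to be the main obstacle, precisely because it is here that one must leave the comfort of the abstract framework and exploit the specific approximation structure built into $\Cal K_+^{gal}$.
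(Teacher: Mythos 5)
Your proposal is correct and follows essentially the same route as the paper: the paper likewise takes the bornology defined by the $M$-functional, invokes Lemma \ref{Lem3.M} together with the estimates \eqref{3.Mdis} to conclude that $\Cal B=\{u\in\Cal K_+^{gal}:\ M_u(0)\le 2C\|g\|^2_{V^{-1}}\}$ is a compact bounded absorbing set, and then applies the general theory of Section \ref{s2} (your appeal to Proposition \ref{Prop2.rep} is exactly what "the general theory" means there). The identification of the abstract complete bounded trajectories with the concretely Galerkin-approximable set $\Cal K^{gal}$ via a diagonal extraction, which you correctly single out as the delicate step, is precisely the part the paper itself does not carry out but defers to \cite{Zel04a}.
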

\end{example}
\begin{remark} The key advantage of the approach related with approximations and the $M$-functional is that we can use not only the energy estimates, but also other type of estimates available on the level of approximations in order to study the corresponding trajectory attractor. Indeed, this approach has been originally suggested in \cite{Zel04a} for the study of damped wave equations with super-critical nonlinearities in order to verify that any complete bounded trajectory $u(t)$, $t\in\R$, of such an equation is actually smooth for $t\le T_u$. This fact follows from the smoothness of the corresponding set of equilibria and the gradient structure of the equation, but its proof requires rather delicate estimates which can be justified on the level of Galerkin approximations only, so the analogous fact is not known for other types of trajectory attractors. As a drawback, we mention that the equality $\Cal A_{tr}^{gLH}=\Cal A_{tr}^{gal}$ is not known and the obtained attractor may a priori depend on the way of approximation (e.g. on the choice of the Galerkin base).
\end{remark}

\subsection{Connectedness of trajectory attractors} It is well-known that global attractors of evolutionary PDEs are usually connected, see \cites{BV92,tem}. This is based on a simple topological fact that an $\omega$-limit set $\omega(B)$ of a connected set $B$ is connected if the corresponding DS is continuous (may be not true if it only has a closed graph, see \cite{PZ07}). However, the situation with trajectory attractors is more delicate since it is a priori not clear whether or not the corresponding trajectory phase space is connected (and as we have seen above, it may be not connected at least for the trajectory attractors of elliptic PDEs).  Nevertheless, in many cases, the trajectory attractors related with evolutionary PDEs remain connected. We briefly discuss below this theory  based on the model example of 3D Navier-Stokes equations (utilizing the ideas from  \cites{Val95,KlV10}) although the method has a general nature and works in many other cases.
\par
We start with the simplest case of the trajectory DS $(T(h),\Cal K_+^{gal})$.
\begin{proposition}\label{Prop3.gal} Let $(T(h),\Cal K_+^{gal})$ be the trajectory DS associated with Galerkin solutions of the Navier-Stokes equations constructed above. Then the set $\Cal K_+^{gal}$, the absorbing set $\Cal B$ and the corresponding trajectory attractor $\Cal A_{tr}^{gal}$ are connected in the topology of $\Theta_{loc}^{w^*}$.
\end{proposition}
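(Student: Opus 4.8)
The plan is to reduce all three assertions to the connectedness of a single family of bounded pieces and to prove the latter by Galerkin approximation. First I would introduce, for $R>0$, the sublevel sets $\Cal B_R:=\{u\in\Cal K_+^{gal}:\ M_u(0)\le R\}$, so that the absorbing set $\Cal B$ is one of them and $\Cal K_+^{gal}=\bigcup_{R>0}\Cal B_R$. By properties 2 and 3 in \eqref{3.Mdis} every $u\in\Cal B_R$ obeys $\|u(t)\|_H^2\le M_u(t)\le CRe^{-\beta t}+C\|g\|_{V^{-1}}^2$, so $\Cal B_R$ is bounded in $\Theta_{loc}$; together with the lower semicontinuity of $u\mapsto M_u(0)$ from Lemma \ref{Lem3.M} (which says precisely that this sublevel set is closed) this shows each $\Cal B_R$ is compact and metrizable in $\Theta_{loc}^{w^*}$. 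Since the $\Cal B_R$ form an increasing chain, and the union of a chain of connected sets is connected, it suffices to prove that every $\Cal B_R$ is connected.

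Next I would pass to the finite-dimensional level. For fixed $N$ the Galerkin system \eqref{3.gal} is a smooth ODE on $H_N\cong\R^N$, so its solution map $u_0\mapsto u_N(\cdot)$ is continuous from $H_N$ into $C_{loc}(\R_+,H)$, hence into $\Theta_{loc}^{w^*}$. Therefore the image $\Cal K_+^{N,R}$ of the closed ball $\{u_0\in H_N:\ \|u_0\|_H^2\le R\}$ is connected (a ball is convex, hence connected, and continuous images of connected sets are connected), and the uniform dissipative estimate places all these images in one common compact metrizable subset $\Cal D_R\subset\Theta_{loc}^{w^*}$. The lower semicontinuity of $M$ gives $\mathrm{Ls}_N\,\Cal K_+^{N,R}\subseteq\Cal B_R$, while the definition of $M_u(0)$ gives $\Cal B_R\subseteq\mathrm{Ls}_N\,\Cal K_+^{N,R+\eps}$ for every $\eps>0$, so that $\Cal B_R$ is squeezed between upper Kuratowski limits of the connected sets $\Cal K_+^{N,R}$. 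I would then invoke the classical fact that, in a compact metric space, the upper limit of a sequence of connected sets is connected provided the corresponding lower limit is nonempty (cf. \cites{Val95,KlV10}).

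The main obstacle is precisely the verification that the lower limit $\mathrm{Li}_N\,\Cal K_+^{N,R}$ is nonempty, i.e. the existence of a single limiting trajectory approximated by Galerkin trajectories for all large $N$ simultaneously. This is the only place where the absence of uniqueness bites: individual Galerkin solutions (e.g. those issued from $u_0=0$, or the Galerkin equilibria produced by Brouwer's theorem) are uniformly bounded and hence precompact, but a priori converge only along subsequences, which yields points of the upper limit but not of the lower limit. I would resolve this by exhibiting a reference trajectory that all $\Cal K_+^{N,R}$ approach, using the short-time uniqueness of Galerkin solutions to pin a common initial segment together with the compactness of $\Cal D_R$ and the metric structure of $\Theta_{loc}^{w^*}$, in which later times carry vanishing weight; this places a common anchor in $\mathrm{Li}_N\,\Cal K_+^{N,R}$ and yields connectedness of each $\Cal B_R$, hence of $\Cal K_+^{gal}$.

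Finally, connectedness of the attractor follows from that of $\Cal B$ by a purely dynamical argument and does not require the limiting lemma again. Since $T(h)$ is continuous on $\Theta_{loc}^{w^*}$ and $\Cal A_{tr}^{gal}$ is strictly invariant with $\Cal A_{tr}^{gal}\subset\Cal B$, we have $\Cal A_{tr}^{gal}=T(h)\Cal A_{tr}^{gal}\subseteq T(h)\Cal B$ for every $h\ge0$, and $T(h)\Cal B$ is connected as the continuous image of the connected set $\Cal B$. If $\Cal A_{tr}^{gal}=\Cal A_1\sqcup\Cal A_2$ were a splitting into disjoint nonempty compacta, their disjoint $\delta/3$-neighbourhoods $\Cal O_1,\Cal O_2$, with $\delta=\dist(\Cal A_1,\Cal A_2)>0$ in the metric of $\Cal B$, would by the attraction property contain $T(h)\Cal B$ for all large $h$; but then the connected set $T(h)\Cal B\supseteq\Cal A_{tr}^{gal}$ would meet both $\Cal O_1$ and $\Cal O_2$, a contradiction. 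Hence $\Cal A_{tr}^{gal}$ is connected as well.
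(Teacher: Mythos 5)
Your overall architecture is sound and, up to the decisive step, runs parallel to the paper's own argument: the reduction to the sublevel sets $\Cal B_R$ of the $M$-functional, the sandwich $\mathrm{Ls}_N\,\Cal K_+^{N,R}\subseteq\Cal B_R\subseteq\mathrm{Ls}_N\,\Cal K_+^{N,R+\eps}$, the continuum-theory fact that in a compact metric space an upper Kuratowski limit of connected sets is connected once the lower limit is nonempty, and your final deduction of the connectedness of $\Cal A_{tr}^{gal}$ from that of $\Cal B$ are all correct. You have also correctly located the crux: one must exhibit a single trajectory which is approached by Galerkin trajectories along the \emph{whole} sequence $N\to\infty$, not merely along subsequences.

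However, your proposed resolution of that crux does not work, and this is a genuine gap. ``Short-time uniqueness of Galerkin solutions'' gives nothing here: for different $N$ the systems \eqref{3.gal} are different ODEs on different subspaces $H_N$, so trajectories issued from projected data share no common initial segment, and uniqueness within each fixed $N$ says nothing about coherence across $N$. Likewise, the weights in the metric of $\Theta_{loc}^{w^*}$ restricted to $\Cal D_R$ do not let you discard large times: convergence in that metric is exactly weak-star convergence on \emph{every} finite interval $[0,T]$. Finally, compactness of $\Cal D_R$ produces only subsequential limits, i.e.\ points of $\mathrm{Ls}$, which is precisely what you yourself observed to be insufficient. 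The paper closes this gap with one concrete idea that your proposal lacks: it anchors the construction at a \emph{stationary} weak solution $\bar u\in H^1_0(\Omega)$. Being time-independent and in $V\subset L^6(\Omega)$, $\bar u$ lies in a Serrin-type uniqueness class, so the standard Gronwall estimate for the difference between a Galerkin solution and $P_N\bar u$ shows that the Galerkin trajectories starting from $P_N\bar u$ converge to $\bar u$ for the \emph{full} sequence $N\to\infty$ (this is why the paper stresses that the whole sequence $P_N\bar u$, not just a subsequence, converges). The paper then joins an arbitrary $u\in\Cal B$ to $\bar u$ by the connected curves $\gamma_{k,s}$ obtained by solving \eqref{3.gal} with the interpolated data $su_{N_k}(0)+(1-s)P_{N_k}\bar u$. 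Inserting this same anchor into your scheme (for $R\ge\|\bar u\|_H^2$, which costs nothing since only large $R$ matters in your chain and intersection arguments) makes $\mathrm{Li}_N\,\Cal K_+^{N,R}\ne\varnothing$ and repairs your proof; without it, the argument does not close.
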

\begin{proof}[Sketch of the proof]   Indeed, let $u,\bar u\in\Cal B$ be two Galerkin solutions and let $\bar u(t)\equiv u_2$ be a stationary solution. It is well-known that $\bar u$ exists and is regular enough ($\bar u\in H^1_0(\Omega)$), so the functions $P_N\bar u\to \bar u$ as $N\to\infty$ strongly in $H^1_0(\Omega)$ and, in particular, in $\Theta_{loc}^{w^*}$. Important for us that the whole sequence $P_N\bar u$ (not only up to a subsequence) is convergent.
\par
By definition, there exists a sequence of Galerkin approximations $u_{N_k}$ such that $u_{N_k}\to u$ in $\Theta_{loc}^{w^*}$ as $k\to\infty$. Let us consider a continuous curve $\gamma_{k,s}\subset\Theta_{loc}^{w^*}$, $s\in[0,1]$. To construct this curve, we take $s u_{N_k}(0)+(1-s)P_{N_k}\bar u$ as the initial data for the Galerkin system \eqref{3.gal} and take $\gamma_{k,s}(t)$ as a unique solution of this problem.
\par
It can be shown that $\cup_{k\in\Bbb N, s\in[0,1]}\gamma_{k,s}$ is a precompact set in $\Theta_{loc}^{w^*}$ and any limit point of this set as $k\to\infty$ belongs to $\Cal B$. Since $u$ and $\bar u$ belong to the set of such limit points, they must be in the same  connected component of $\Cal B$ and, since $u$ is arbitrary, we conclude that $\Cal B$ is connected and $\Cal A_{tr}^{gal}$ is also connected as an $\omega$-limit set of a connected set.
\end{proof}
We now turn to the other types of trajectory attractors.
\begin{proposition} The trajectory attractors $\Cal A_{tr}^{gLH}$ and $\Cal A_{tr}^{VC}$ are connected in $\Theta_{loc}^{w^*}$.
\end{proposition}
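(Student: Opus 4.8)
The plan is to follow the same two-stage strategy as in the proof of Proposition \ref{Prop3.gal}: first reduce the connectedness of each attractor to the connectedness of the corresponding absorbing set, and then establish the latter by joining every solution to a fixed stationary solution. For the first stage, recall that both $\Cal A_{tr}^{gLH}$ and $\Cal A_{tr}^{VC}$ are $\omega$-limit sets (in their respective trajectory DS) of the compact absorbing sets $\Cal B$ from \eqref{3.abs1} and \eqref{3.absVC}, that the shift semigroup $T(h)$ acts continuously on $\Theta_{loc}^{w^*}$, and that on $\Cal B$ this topology is metrizable. Since $\cup_{h\ge T}T(h)\Cal B$ is the continuous image of the connected set $[T,\infty)\times\Cal B$ whenever $\Cal B$ is connected, its closure is compact and connected; these closures are nested in $T$, so their intersection, which is exactly the attractor, is connected as the intersection of a decreasing family of compact connected sets. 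Thus it suffices to prove that $\Cal B$ is connected, and for this I would argue, as in Proposition \ref{Prop3.gal}, that every element of $\Cal B$ lies in the same connected component as a fixed regular stationary solution $\bar u$ (which exists and satisfies $\bar u\in V$).

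To connect an arbitrary solution $u\in\Cal B$ to $\bar u$, I would split the argument into two pieces. Fix $w:=u(0)\in H$. First, the Galerkin solutions issuing from $P_N w$ are unique and, by the dissipative estimate together with the compactness used above, admit a weak-star limit point $v\in\Cal B$ which is a Galerkin solution with $v(0)=w$; for such a Galerkin solution the homotopy of Proposition \ref{Prop3.gal} (interpolating the Galerkin initial data between $v$ and $P_N\bar u$ and passing to the limit) produces a connected subset of $\Cal B$ joining $v$ to $\bar u$. It remains to join the given $u$ to this particular Galerkin solution $v$ inside $\Cal B$. Since $u$ and $v$ share the initial datum $w$, this follows once the whole fibre $\Cal D(w):=\{u'\in\Cal B:\ u'(0)=w\}$ is known to be connected: then $u$ and $v$ lie in one connected set, which together with the homotopy yields a connected subset of $\Cal B$ containing both $u$ and $\bar u$. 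As $u$ is arbitrary and all these sets share the point $\bar u$, their union $\Cal B$ is connected.

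Equivalently, I could phrase the reduction through the evaluation map $p:\Cal B\to H$, $p(u):=u(0)$, which is continuous into $H$ endowed with its weak topology and whose image is contained in a weakly connected ball. By the elementary fact that a compact space mapping continuously onto a connected base with all fibres connected is itself connected, the connectedness of $\Cal B$ reduces once more to the connectedness of the fibres $\Cal D(w)$.

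The main obstacle is therefore precisely the connectedness of the solution fibre $\Cal D(w)$, i.e.\ a Kneser-type theorem asserting that the set of generalized Leray-Hopf solutions (respectively VC-solutions) emanating from a prescribed initial datum $w$ is a connected subset of $\Theta_{loc}^{w^*}$. This is the only point where non-uniqueness genuinely enters: at the Galerkin level every Cauchy problem is uniquely solvable, so the difficulty is to show that passing to the weak-star limit does not split the solution funnel into separated pieces. I would establish this following \cites{Val95,KlV10}, approximating the limit problem by the uniquely solvable Galerkin (or Lipschitz-regularized) problems, using the compactness of $\Cal B$ and the stability of the energy (in)equality under weak-star limits (as in Lemma \ref{Lem3.M} for the Galerkin class) to keep the limits inside the admissible solution class, and invoking a Hukuhara-Kneser continuation argument to conclude that the funnel is a continuum. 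Granting this, the previous paragraphs give the connectedness of $\Cal B$, and the first paragraph yields the connectedness of both $\Cal A_{tr}^{gLH}$ and $\Cal A_{tr}^{VC}$; the scheme applies verbatim to the two attractors, the only difference being the precise definition of the solution class entering $\Cal D(w)$.
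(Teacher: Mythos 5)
Your overall reduction (attractor connectedness follows from connectedness of the absorbing set, via $\omega$-limits of connected sets) matches the paper, but the core of your argument — connectedness of the solution fibre $\Cal D(w)=\{u'\in\Cal B:\,u'(0)=w\}$ — is precisely the point you do not prove, and it cannot be imported as a black box. The Kneser-type results you invoke (\cites{Val95,KlV10}) concern connectedness of attainability sets of Leray--Hopf solutions in the weak topology of $H$, not connectedness, in $\Theta_{loc}^{w^*}$, of the set of \emph{generalized} LH solutions lying in the specific absorbing set \eqref{3.abs1}, and certainly not of the Vishik--Chepyzhov class, whose elements are only required to satisfy \eqref{3.VC-dis} and need not obey the energy inequality at all. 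Worse, your sketched route to the fibre property (approximate by uniquely solvable Galerkin or regularized problems, then a Hukuhara--Kneser continuation argument) breaks down exactly where uniqueness fails: such an argument only produces a connected set of solutions that \emph{arise as limits of the approximation scheme}, and it is not known that every generalized LH (or VC) solution is such a limit — this is the very reason the paper keeps $\Cal K_+^{gal}$ separate from $\Cal K_+^{gLH}$ and states that the equality $\Cal A_{tr}^{gal}=\Cal A_{tr}^{gLH}$ is unknown. So a fibre containing a non-approximable solution $u$ is not reached by your argument, and your chain "$u$ to $v$ to $\bar u$" is broken at its first link.

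The ingredient that repairs this is exactly the key idea of the paper's proof, which your proposal never uses: the \emph{time-switching concatenation}. For each $s$, one follows the given solution $u_1$ on $[0,s]$ and then continues with the \emph{unique} Leray-$\alpha$ solution starting from $u_1(s)$; this is admissible because gluing a generalized LH solution to a solution that is strongly continuous at the seam preserves the energy inequality (respectively, the estimate \eqref{3.VC-dis} holds uniformly in $\alpha$ for the compound trajectories). Varying $s\in[0,T]$ and interpolating the initial data linearly inside the Leray-$\alpha$ flow produces continuous curves $\gamma_{\alpha,s}$ in $\Theta_T^{w^*}$ whose endpoints are exactly $u_1$ and $u_2$ and whose limit points as $\alpha\to0$ all lie in $\Cal B$; compactness then forces $u_1$ and $u_2$ into the same component. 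Note also that the paper explicitly warns that Galerkin approximations are \emph{not} suitable here (one cannot start the Galerkin flow from the exact state $u_1(s)\in H$, and projecting destroys the seam), whereas your proposal relies on Galerkin-type approximation throughout. In short: your proof defers the main difficulty to an unproven fibre-connectedness statement whose only known proof mechanism is the concatenation construction itself, so the argument as written has a genuine gap for both $\Cal A_{tr}^{gLH}$ and, even more severely, for $\Cal A_{tr}^{VC}$.
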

\begin{proof}[Sketch of the proof] The idea of the proof is similar: to construct a family of continuous curves $\gamma_{k,s}$ connecting different points of $\Cal K_+$ in $\Theta_{loc}^{w^*}$ and pass to the limit $k\to\infty$. However, we now do not have a good way to approximate the trajectories of our equation, so we need to proceed in a more accurate way. Namely, Galerkin approximations are no more appropriate for our purposes, so we consider an alternative Leray-$\alpha$ approximations:
\begin{equation}\label{3.aNS}
\Dt u_\alpha+(v_\alpha,\Nx)u_\alpha+\Nx p=\nu\Dx u_\alpha+g,\ \divv u_\alpha=0, \ v_\alpha=(1-\alpha A)^{-1}u_\alpha,\ \ u_\alpha\big|_{t=0}=u_\alpha(0),
\end{equation}
where $\alpha>0$ is a regularization parameter. It is well-known that this problem possesses a unique solution which continuously depends on the initial data $u_\alpha(0)\in H$ for every $\alpha>0$. Moreover, the solution $u_\alpha$ satisfies all the energy estimates stated above uniformly with respect to $\alpha$ and,  when $\alpha\to0$,  we have the convergence (up to extracting a subsequence) in $\Theta_{loc}^{w^*}$ to generalized LH-solutions of the limit Navier-Stokes system \eqref{3.NS}. In addition, if $u_\alpha(0)\to u(0)$ strongly in $H$, the limit solution will be an LH-solution, see \cites{CTV07,RRS16} for more details.
\par
Let us start with the trajectory DS $(T(h),\Cal K_+^{gLH})$. It is enough to prove that the absorbing set $\Cal B$ defined by \eqref{3.abs1} is connected in $\Theta_{loc}^{w^*}$. In turn, in order to verify this fact, it is enough to check that the restriction $\Cal B_T:=\Cal B\big|_{t\in[0,T]}$ is connected in $\Theta_T^{w^*}=\Theta_{loc}^{w^*}\big|_{t\in[0,T]}$ for any fixed $T>0$.
\par
Let now $u_1,u_2\in\Cal K_+^{gLH}$. For a given $\alpha>0$, we construct a continuous curve $\gamma_{\alpha,s}\in\Theta_T^{w^*}$, $s\in[-T-1,T]$ as follows. For $s\in[0,T]$, we define
\begin{equation}\label{1}
\gamma_{\alpha,s}(t):=\begin{cases} u_1(t),\ t\le s,\\ u_{\alpha,1}(t),\ \ t\ge s,\end{cases}
\end{equation}
where $u_{\alpha,1}(t)$ is a unique solution of the Leray-$\alpha$ approximation \eqref{3.aNS} with
$u_{\alpha,1}(s)=u_1(s)$. For $s\in[-1,0]$ $\gamma_{\alpha,s}(t)$ solves \eqref{3.aNS} with the initial data $(s+1)u_1(0)-su_2(0)$. Finally, for $s\in[-T-1,-1]$,
\begin{equation}\label{2}
\gamma_{\alpha,s}(t):=\begin{cases} u_2(t),\ t\le -s+1,\\ u_{\alpha,2}(t),\ \ t\ge -s+1,\end{cases}
\end{equation}
where $u_{\alpha,2}(t)$ solves \eqref{3.aNS} with the initial data $u_{\alpha,2}(-s+1)=u_2(-s+1)$. It is not difficult to see that $\gamma_{\alpha,s}$ is indeed a continuous curve in $\Theta_T^{w^*}$ with $\gamma_{\alpha,T}=u_1$ and $\gamma_{\alpha,-T-1}=u_2$.
\par
As in the previous case, one can show that the set $\cup_{\alpha,s}\gamma_{\alpha,s}$ is precompact in $\Theta_T^{w^*}$ and that all limit points as $\alpha\to0$ are the generalized LH-solutions of the limit Navier-Stokes system and belong to $\Cal B$ (to verify this we essentially use the fact that the compound trajectories \eqref{1} and \eqref{2} satisfy the energy  equality \eqref{3.NS-en} on the whole time interval $t\in[0,T]$). Thus, $u_1$ and $u_2$ must belong to the same connected component of $\Cal B_T$ and $\Cal B_T$ is connected.
\par
We now turn to the case of trajectory DS $(T(h),\Cal K_+^{VC})$. The proof in this case is analogous: we use exactly the same continuous curves $\gamma_{\alpha,s}$  and the only difference that we need to check that the compound trajectories \eqref{1} and \eqref{2} satisfy the energy estimate involved into the definition \eqref{3.absVC} uniformly with respect to $\alpha$. The last fact is an immediate corollary of \eqref{3.NS-dis} for $u_\alpha$. Thus, the connectedness of $\Cal A_{tr}^{VC}$ is verified and the proposition is proved.
\end{proof}
\begin{remark} Note that the validity of the concatenation property is not known for the trajectory spaces $\Cal K_+^{gLH}$ and $\Cal K_+^{gal}$, nevertheless, the corresponding trajectory attractors are connected. On the other hand, as we can see from the proof, the possibility to construct a compound solutions by "gluing" together two different solutions  is crucial for this result. For instance, the concatenation property is not verified by two generalized LH-solutions, but it holds if the second one is continuous at the left endpoint and this is the key point of the proof. It contrast to this, we are unable in general to extend a solution of an elliptic PDE considered in Example \ref{Ex3.ell} which is defined for $t\in[0,T]$ for $t\in T$ by solving the appropriate boundary value problem with the "initial" condition at $t=T$ and this allows the trajectory phase space to be not connected.
\end{remark}

\section{Attractors for non-autonomous problems}\label{s4}

In this section, we discuss the attractors theory for the dynamical processes related with dissipative PDEs which depend explicitly on time. Up to the moment, there are two major approaches to extend the concept of an attractor to non-autonomous equations: the first one treats such an attractor as a time depending set $\Cal A(t)$ which naturally leads to  {\it pullback} attractors (or  kernel sections in the terminology of Chepyzhov and Vishik), see \cites{CF94,ChVi93,CLR12,KlR11} and references therein; the second one is based on the reduction of a non-autonomous problem to the autonomous one by the proper extension of the phase space and leads to  {\it uniform} attractors which remain independent of time, see \cites{ChVi95a,ChVi02,Har91}. We also mention that there are several intermediate approaches, e.g. the so-called forward attractors, etc.  which are essentially more difficult for study and less popular. In addition,  they often somehow accumulate drawbacks of both pullback and uniform attractors and, for this reason, will be not considered here, see \cites{Har91,KlY20} for the details. We start our exposition with pullback attractors.

\subsection{Pullback attractors: a general approach} The aim of this subsection is to give a natural extension of the main Theorem \ref{Th2.main} to the non-autonomous case . We first give a general definition of a dynamical process.
\begin{definition} Let $\{\Phi_\tau\}_{\tau\in\R}$ be a family of Hausdorff topological spaces. Then a family of maps $U(t,\tau):\Phi_\tau\to\Phi_t$, $\tau\in\R$, $t\ge\tau$, is a dynamical process (DP) if
\begin{equation}\label{4.dp}
U(\tau,\tau)=Id,\ \ U(t,s)=U(t,\tau)\circ U(\tau,s),\ \ t\ge\tau\ge s.
\end{equation}
\end{definition}
As it was in the autonomous case, in order to speak about attractors, we need to specify what sets are "bounded". In the non-autonomous case and under the pullback approach, it is natural to consider bounded sets which also depend on time: $t\to B(t)\in\Phi_t$, $t\in\R$. Namely, we specify the bornology $\Bbb B$ as a collection of such time dependent sets: $B=\{B(t)\}_{t\in\R}\in\Bbb B$. We put the only requirement for the elements $B\in\Bbb B$: for any $t\in\R$, $B(t)\ne\varnothing$. We also mention that the bornology $\Bbb B$   is often referred as a "universe" in the theory of pullback attractors. We however prefer to keep the denomination "bornology" to be consistent with our general theory for the autonomous case.

\begin{definition}\label{Def4.abs} Let $U(t,\tau)$ be a DP in Hausdorff topological spaces $\Phi_t$. A time dependent set $\Cal B=\{\Cal B(t)\}_{t\in\R}$ is a pullback absorbing set with respect to the bornology $\Bbb B$ if for every $B\in\Bbb B$ and every $t\in\R$, there exists $T=T(B,t)>0$ such that
\begin{equation}\label{4.abs}
U(t,t-s)B(t-s)\subset\Cal B(t),\ \ \text{ for all }\ \ s\ge T.
\end{equation}
Analogously, a time dependent set $\Cal B$ is an attracting set with respect to the bornology $\Bbb B$ if for every $B\in\Bbb B$, every $t\in\R$ and every neighbourhood $\Cal O(\Cal B(t))$ of the set $\Cal B(t)$ in the topology of $\Phi_t$, there exists $T=T(B,t,\Cal O)$ such that
\begin{equation}\label{4.attr}
U(t,t-s)B(t-s)\subset\Cal O(\Cal B(t)),\ \ \text{ for all }\ \ s\ge T.
\end{equation}
\end{definition}
\begin{remark} In may look a bit surprising, but exactly the {\it pullback} attraction property is a natural generalization of the attraction property to the non-autonomous case (at least if we treat the attractor as a time dependent set). Roughly speaking, if we fix a present time $t$ and start the evolution from a bounded set $B(t-s)$ sufficiently far in the past, its image $U(t,t-s)B(t-s)$ at present time will be arbitrarily close to the attracting set $\Cal B(t)$. In contrast to this, the forward attraction property (i.e., if you start from a bounded set $B(t)$ at present time then the image $U(t+s,t)B(t)$ will be close to $\Cal B(t+s)$ if $s$ is large enough) is much more delicate and is not convenient since it may fail for pullback attractors, see Example \ref{Ex4.bad} below. Moreover, the author is not aware about any more or less general constructions which give a "forward" attractor with reasonably good properties. Actually, exactly the lack of forward attraction is a key drawback of the theory of pullback attractors. There are two alternative ways to overcome this drawback, one of them is to consider random external forces with some ergodicity (then the forward attraction will hold in the sense of convergence in measure), the alternative is to consider non-autonomous {\it exponential} attractors where we have uniform in time attraction, see the exposition below.
\end{remark}
As in the autonomous case, the theory is based on a proper generalization of $\omega$-limit sets.
\begin{definition}\label{Def4.om} Let $U(t,\tau)$ be a DP in the Hausdorff topological spaces $\Phi_t$, $t\in\R$ endowed with some bornology $\Bbb B$. Then the pullback $\omega$-limit set of $B\in\Bbb B$ is defined as follows:
\begin{equation}\label{4.om}
\omega_B(t):=\cap_{s\ge0}\big[\cup_{\tau\ge s}U(t,t-\tau)B(t-\tau)\big]_{\Phi_t}.
\end{equation}
\end{definition}
\begin{definition}\label{Def4.pattr} A time dependent set $\Cal A=\{\Cal A(t)\}_{t\in\R}$ is a pullback attractor for a DP $U(t,\tau)$ in Hausdorff topological spaces $\Phi_t$ endowed with some bornology $\Bbb B$ if
\par
1) For any $t\in\R$, the set $\Cal A(t)$ is compact in $\Phi_t$;
\par
2) $\Cal A$ is a pullback attracting set;
\par
3) $\Cal A$ is a minimal (by inclusion) time dependent set which satisfies properties 1) and 2).
\end{definition}
The next theorem gives the analogue of Theorem \ref{Th2.main} and is the main result of this subsection. We mention also that, in the case where $\Phi_t$ are Banach spaces and the bornology $\Bbb B$ consists of uniformly bounded sets, this theorem is proved in \cite{CPT13}.
\begin{theorem}\label{Th4.main} Let a DP $U(t,\tau)$, $t\ge\tau\in\R$,  acting in Hausdorff topological spaces $\{\Phi_t\}_{t\in\R}$ endowed with some bornology $\Bbb B$, possess a compact pullback attracting set $\Cal B$. Then the DP $U(t,\tau)$ possesses a pullback attractor $\Cal A(t)$, $t\in\R$. Moreover, for all $t\in\R$, we have $\Cal A(t)\subset\Cal B(t)$.
\end{theorem}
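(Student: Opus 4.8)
The plan is to mimic, at each fixed present time $t\in\R$, the proof of Theorem \ref{Th2.main}, since the pullback construction decouples completely across different values of $t$: no uniformity in $t$ is required, and — crucially — the evolution property \eqref{4.dp} is not even needed for existence (it would only enter if one wanted the negative invariance $U(t,\tau)\Cal A(\tau)=\Cal A(t)$, which is not asserted here). The candidate attractor is
$$
\Cal A(t):=\big[\cup_{B\in\Bbb B}\omega_B(t)\big]_{\Phi_t},
$$
with $\omega_B(t)$ the pullback $\omega$-limit set from \eqref{4.om}. Fixing $t$, I would write $K_s:=\big[\cup_{\tau\ge s}U(t,t-\tau)B(t-\tau)\big]_{\Phi_t}$, so that $\omega_B(t)=\cap_{s\ge0}K_s$ and the family $\{K_s\}$ is nested decreasing in $s$ (the union runs over a shrinking index set). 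This nestedness, together with the compactness of $\Cal B(t)$ and the pullback attraction property, is all the machinery the steps below require.

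First I would prove $\omega_B(t)\ne\varnothing$ for every $B\in\Bbb B$, exactly as in Step 1 of Theorem \ref{Th2.main}: if the intersection were empty, the complements $\Phi_t\setminus K_s$ would form an increasing open cover of the compact set $\Cal B(t)$, giving (by a finite subcover) some $s$ with $K_s\cap\Cal B(t)=\varnothing$; covering $\Cal B(t)$ by neighbourhoods each disjoint from $\cup_{\tau\ge s}U(t,t-\tau)B(t-\tau)$ then produces a neighbourhood of $\Cal B(t)$ avoided by all $U(t,t-\tau)B(t-\tau)$, $\tau\ge s$, contradicting \eqref{4.attr}. Next, the Hausdorff separation argument of Step 2 gives $\omega_B(t)\subset\Cal B(t)$: a point $x\in\omega_B(t)\setminus\Cal B(t)$ can be separated from the compact $\Cal B(t)$ by disjoint neighbourhoods, but pullback attraction pushes $U(t,t-\tau)B(t-\tau)$ into the neighbourhood of $\Cal B(t)$ for large $\tau$, while $x\in K_s$ for all $s$ forces these same sets to accumulate at $x$. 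In particular $\omega_B(t)$ is closed in the compact $\Cal B(t)$, hence compact. Step 3 then shows $\omega_B(t)$ pullback-attracts $B$: for a neighbourhood $U$ of $\omega_B(t)$ the closed sets $C_s:=K_s\setminus U$ have empty intersection, so their complements cover $\Cal B(t)$, and nestedness plus compactness yield $s$ with $C_s\cap\Cal B(t)=\varnothing$; separating the closed $C_s$ from the compact $\Cal B(t)$ and invoking \eqref{4.attr} forces $U(t,t-\tau)B(t-\tau)\subset U$ for all large $\tau$.

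With these three facts the theorem follows. Each $\omega_B(t)\subset\Cal B(t)$, so $\Cal A(t)$ is a closed subset of the compact set $\Cal B(t)$, hence compact and contained in $\Cal B(t)$ as claimed; it is pullback attracting because it contains every $\omega_B(t)$. For minimality I would show $\omega_B(t)$ is the smallest compact set pullback-attracting $B$ — repeating the Step 2 separation argument with $\Cal B(t)$ replaced by an arbitrary compact pullback-attracting set $\Omega(t)$ gives $\omega_B(t)\subset\Omega(t)$. Consequently any compact pullback-attracting family $\tilde{\Cal A}$ contains every $\omega_B(t)$, hence their union, and being compact (thus closed) in the Hausdorff space $\Phi_t$ it contains $\Cal A(t)$, which is the required minimality. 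I would not expect a genuine obstacle here: the only point needing care is the bookkeeping of the pullback index — the union defining $K_s$ runs over the initial times $t-\tau$ receding into the past, so one must confirm that shrinking $s$-thresholds do give nested sets and that \eqref{4.attr} is applied at the fixed evaluation time $t$ rather than uniformly. Once this is in place, the argument is a verbatim transcription of the autonomous proof carried out separately at each $t$.
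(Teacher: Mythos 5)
Your proposal is correct and follows essentially the same route as the paper: the paper's proof is precisely to define $\Cal A(t)=\big[\cup_{B\in\Bbb B}\omega_B(t)\big]_{\Phi_t}$ and to verify, for each fixed $t$, that $\omega_B(t)$ is a non-empty compact subset of $\Cal B(t)$ which pullback attracts $B$ and is minimal with this property, by repeating the four steps of the proof of Theorem \ref{Th2.main} word by word. Your write-up merely fills in the details the paper leaves to the reader (including the correct observations that the construction decouples over $t$ and that the process property \eqref{4.dp} is not needed for existence), so there is nothing to flag.
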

\begin{proof}[Sketch of the proof] As in the autonomous case, the statement follows from the corresponding properties of $\omega$-limit sets. Namely, we need to verify that under the assumptions of the theorem, for every $B\in\Bbb B$, $\omega_B(t)$ is non-empty compact subset of $\Cal B(t)$  which pullback attracts the images of $B$ and that is the minimal compact set which possesses this property. Then the desired attractor is defined via the standard formula
$$
\Cal A(t):=\big[\cup_{B\in\Bbb B}\omega_B(t)\big]_{\Phi_t},\ \ t\in\R.
$$
The verification of the above properties of pullback $\omega$-limit sets repeats almost word by word the proof of Theorem \ref{Th2.main} and by this reasons is left to the reader.
\end{proof}
Analogously to the autonomous case, the pullback attractor is strictly invariant with respect to the corresponding DP if the maps $U(t,\tau)$ are continuous.
\begin{proposition}\label{Prop4.inv} Let the assumptions of Theorem \ref{Th4.main} hold and let, in addition, $U(t,\tau):\Phi_\tau\to\Phi_t$ be continuous for all fixed $t\ge\tau\in\R$. Then the pullback attractor $\Cal A(t)$ is strictly invariant:
\begin{equation}\label{4.inv}
\Cal A(t)=U(t,\tau)\Cal A(\tau).
\end{equation}
\end{proposition}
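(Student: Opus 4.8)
The plan is to mirror the autonomous argument of Proposition~\ref{Prop1.inv}, splitting the identity \eqref{4.inv} into the two inclusions $U(t,\tau)\Cal A(\tau)\subset\Cal A(t)$ and $\Cal A(t)\subset U(t,\tau)\Cal A(\tau)$. I would rely on two ingredients coming from the proof of Theorem~\ref{Th4.main}: the representation $\Cal A(t)=[\cup_{B\in\Bbb B}\omega_B(t)]_{\Phi_t}$, and the sharpened form of minimality established there (exactly as in the proof of Theorem~\ref{Th2.main}), namely that $\omega_B(t)$, and hence $\Cal A(t)$, is contained in $\Cal C(t)$ for every compact pullback attracting family $\Cal C$.

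For the inclusion $U(t,\tau)\Cal A(\tau)\subset\Cal A(t)$ I would first prove the pullback invariance of the $\omega$-sets, namely $U(t,\tau)\omega_B(\tau)\subset\omega_B(t)$ for every $B\in\Bbb B$ and $t\ge\tau$. Using the continuity of $U(t,\tau)$ to push the map inside the closures and the nested intersections, via $U(t,\tau)[V]_{\Phi_\tau}\subset[U(t,\tau)V]_{\Phi_t}$ and $U(t,\tau)\cap_s X_s\subset\cap_s U(t,\tau)X_s$, together with the cocycle identity $U(t,\tau)U(\tau,\tau-\sigma)=U(t,\tau-\sigma)$ from \eqref{4.dp}, gives
$$
U(t,\tau)\omega_B(\tau)\subset\cap_{s\ge0}\big[\cup_{\sigma\ge s}U(t,\tau-\sigma)B(\tau-\sigma)\big]_{\Phi_t}.
$$
Reindexing by $\rho:=\sigma+(t-\tau)$ turns $\tau-\sigma$ into $t-\rho$, and since the closed sets $[\cup_{\rho\ge r}U(t,t-\rho)B(t-\rho)]_{\Phi_t}$ decrease in $r$, shifting the lower limit by $t-\tau$ does not change the intersection, so the right-hand side is exactly $\omega_B(t)$. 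Applying $U(t,\tau)$ to $\Cal A(\tau)=[\cup_B\omega_B(\tau)]_{\Phi_\tau}$ and again pushing the continuous map through the closure and the union then yields $U(t,\tau)\Cal A(\tau)\subset\Cal A(t)$.

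For the reverse inclusion I would use minimality through a modified competitor family. Fixing $\tau$, define $\tilde{\Cal A}(r):=\Cal A(r)$ for $r<\tau$ and $\tilde{\Cal A}(r):=U(r,\tau)\Cal A(\tau)$ for $r\ge\tau$. Each slice is compact (for $r\ge\tau$ it is a continuous image of the compact set $\Cal A(\tau)$), so property~1) of Definition~\ref{Def4.pattr} holds. The crucial point is pullback attraction of $\tilde{\Cal A}$ at times $r\ge\tau$: given $B\in\Bbb B$ and a neighbourhood $\Cal O$ of $U(r,\tau)\Cal A(\tau)$, continuity makes $U(r,\tau)^{-1}(\Cal O)$ a neighbourhood of $\Cal A(\tau)$, into which $U(\tau,\tau-s)B(\tau-s)$ falls for all large $s$ by the attraction of $\Cal A$ at time $\tau$; applying $U(r,\tau)$ and using the cocycle identity converts this, after the elementary shift of the time parameter, into pullback attraction of $U(r,\tau)\Cal A(\tau)$ at time $r$. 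Thus $\tilde{\Cal A}$ is a compact pullback attracting family, and the minimality recalled above forces $\Cal A(r)\subset\tilde{\Cal A}(r)$ for all $r$; taking $r=t\ge\tau$ gives $\Cal A(t)\subset U(t,\tau)\Cal A(\tau)$, and combining the two inclusions proves \eqref{4.inv}.

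I expect the main obstacle to be purely the bookkeeping around continuity: the interchange of $U(t,\tau)$ with the infinite intersections and with the topological closures (valid only because $U(t,\tau)$ is continuous, exactly as the identity $S(t)[A]_\Phi\subset[S(t)A]_\Phi$ was the load-bearing step in Proposition~\ref{Prop1.inv}), together with the verification that the modified family $\tilde{\Cal A}$ stays pullback attracting across the cut at $r=\tau$. Everything else is a routine reindexing of the time variable enabled by the cocycle property \eqref{4.dp}, and no compactness beyond that already supplied by $\Cal B$ is needed.
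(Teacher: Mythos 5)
Your proposal is correct and follows essentially the same route as the paper, which omits the proof with the remark that it repeats the autonomous Proposition \ref{Prop1.inv} word by word: one inclusion via pullback invariance of the $\omega$-sets ($U(t,\tau)\omega_B(\tau)\subset\omega_B(t)$, using continuity to push the map through closures and intersections plus the time reindexing), the other via minimality against a compact pullback attracting competitor. Your cut-and-paste family $\tilde{\Cal A}$ is exactly the right way to make the minimality step rigorous in the non-autonomous setting (where competitors must be time-dependent families), a detail the paper's ``word by word'' remark glosses over.
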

The proof of this statement also repeats word by word the proof of Proposition \ref{Prop1.inv} and thus is omitted.
\begin{remark} We see that, analogously to the autonomous case, we have the invariance of the pullback attractor if the maps $U(t,\tau)$ are continuous and this property remains true if we replace the continuity by the assumption that the graphs of all maps $U(t,\tau)$ are closed. However, in contrast to the autonomous case, we cannot in general replace the minimality assumption in Definition \ref{Def4.pattr} by the strict invariance \eqref{4.inv} even if all maps $U(t,\tau)$ are continuous. Indeed, as simplest examples show, see e.g. \cites{CLR12,MZ08} and Example \ref{Ex4.bad} below, such a modification of the definition may lead to the non-uniqueness of a pullback attractor.
\end{remark}
We now discuss the validity of the representation formula for pullback attractors. We say that the complete trajectory $u(t)$, $t\in\R$, is bounded ($\Bbb B$-bounded) if $\{u(t)\}_{t\in\R}\in\Bbb B$ and denote by $\Cal K$ the set of all bounded complete trajectories of the DP $U(t,\tau)$ (following Vishik and Chepyzhov we refer to $\Cal K$ as a $\Bbb B$-kernel of the DP $U(t,\tau)$). Then, obviously
$$
\Cal K\big|_{t=\tau}\subset \Cal A(\tau)
$$
if the pullback attractor exists. The next statement is the non-autonomous analogue of Proposition \ref{Prop2.rep}.
\begin{proposition}Let the assumptions of Theorem \ref{Th4.main} hold and let, in addition, the maps $U(t,\tau)$ be continuous for all fixed $t\ge\tau\in\R$ and the compact absorbing set $\Cal B$ be bounded ($\Cal B\in\Bbb B$). Assume also that the bornology $\Bbb B$ is stable with respect to inclusions (if $B\in\Bbb B$ and $B_1(t)\subset B(t)$ for all $t\in\R$ and $B_1(t)\ne\varnothing$, then $B_1\in\Bbb B$). Then
\begin{equation}
\Cal A(\tau)=\Cal K\big|_{t=\tau},\ \ \tau\in\R,
\end{equation}
where $\Cal K$ is the $\Bbb B$-kernel of the DP $U(t,\tau)$.
\end{proposition}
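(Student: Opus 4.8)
The plan is to mirror the autonomous argument from Proposition~\ref{Prop2.rep}, adapting it to the pullback setting. The inclusion $\Cal K\big|_{t=\tau}\subset\Cal A(\tau)$ is already noted, so the real content is the opposite inclusion: every point $u_0\in\Cal A(\tau)$ must lie on a complete $\Bbb B$-bounded trajectory passing through $u_0$ at time $\tau$. First I would fix $\tau\in\R$ and $u_0\in\Cal A(\tau)$, and build the forward part of the trajectory by setting $u(t):=U(t,\tau)u_0$ for $t\ge\tau$. By the strict invariance \eqref{4.inv} (which holds here since the maps $U(t,\tau)$ are assumed continuous, via Proposition~\ref{Prop4.inv}), we have $u(t)=U(t,\tau)u_0\in U(t,\tau)\Cal A(\tau)=\Cal A(t)$ for all $t\ge\tau$, so the forward branch already lies in the attractor.

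The delicate part is the backward extension. Just as in the autonomous case, I would use the strict invariance $\Cal A(\tau)=U(\tau,\tau-1)\Cal A(\tau-1)$ to pick a preimage $u(\tau-1)\in\Cal A(\tau-1)$ with $U(\tau,\tau-1)u(\tau-1)=u_0$, then inductively choose $u(\tau-n)\in\Cal A(\tau-n)$ with $U(\tau-n+1,\tau-n)u(\tau-n)=u(\tau-n+1)$ for every $n\in\Bbb N$. For intermediate times $t\in[\tau-n,\tau-n+1]$ I would set $u(t):=U(t,\tau-n)u(\tau-n)$; the cocycle identity \eqref{4.dp} guarantees this is consistent at the integer endpoints and yields a well-defined complete trajectory with $u(\tau)=u_0$ and $u(t)\in\Cal A(t)$ for every $t\in\R$.

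It remains to verify that this complete trajectory is $\Bbb B$-bounded, i.e. $\{u(t)\}_{t\in\R}\in\Bbb B$. Here is where the extra hypotheses enter: since $u(t)\in\Cal A(t)\subset\Cal B(t)$ for all $t$, the trajectory is pointwise dominated by the absorbing family $\Cal B$, which is assumed to lie in $\Bbb B$. Using the stability of $\Bbb B$ under inclusions (the hypothesis that $B_1(t)\subset B(t)$ with $B\in\Bbb B$ and $B_1(t)\ne\varnothing$ forces $B_1\in\Bbb B$), we conclude $\{u(t)\}_{t\in\R}\in\Bbb B$, hence $u\in\Cal K$ and $u_0=u(\tau)\in\Cal K\big|_{t=\tau}$.

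The main obstacle is entirely the backward construction: the maps $U(t,\tau)$ need not be injective or surjective in general, so the existence of backward preimages is not automatic and relies crucially on the strict invariance \eqref{4.inv}, which in turn requires the continuity assumption on $U(t,\tau)$. Without invariance one only has $U(\tau,\tau-1)\Cal A(\tau-1)\subset\Cal A(\tau)$, which is insufficient to guarantee a preimage inside the attractor. The boundedness verification, by contrast, is routine once the two inclusion-stability and $\Cal B\in\Bbb B$ hypotheses are in hand, exactly as in Proposition~\ref{Prop2.rep}. For this reason I expect the proof to be stated as a direct transcription of the autonomous argument, with the induction over $n$ and the appeal to \eqref{4.inv} being the only points deserving explicit mention.
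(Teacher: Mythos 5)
Your proposal is correct and is precisely the argument the paper intends: the paper omits the proof, stating only that it repeats the proof of Proposition \ref{Prop2.rep}, and your non-autonomous transcription (forward branch via strict invariance from Proposition \ref{Prop4.inv}, backward branch by inductive choice of preimages $u(\tau-n)\in\Cal A(\tau-n)$, and $\Bbb B$-boundedness from $\Cal A(t)\subset\Cal B(t)$, $\Cal B\in\Bbb B$ and inclusion-stability) is exactly that adaptation. No gaps.
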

The proof of this statement is similar to the proof of Proposition \ref{Prop2.rep} and is omitted.
\begin{example}\label{Ex4.bad} Let us consider the following ODE in $\Phi_t\equiv\R$:
\begin{equation}\label{4.bad}
y'(t)=f(t,y),\ \ y\big|_{t=\tau}=y_\tau,\  \ f(t,y):=\begin{cases} -y,\ \ t<0,\\ y-y^3,\ \ t\ge0.\end{cases}
\end{equation}
Let us also fix the standard bornology $\Bbb B$ as the bornology of uniformly bounded sets, namely, $B\in\Bbb B$ if and only if $B(t)\ne\varnothing$ and $\sup_{t\in\R}\|B\|<\infty$. Then, as it is not difficult to see, the solution operators $U(t,\tau)$ of the considered ODE generate a DP in $\R$ which possesses a pullback attractor $\Cal A(t)=\{0\}$. Note that this attractor does not possess a forward in time attraction property. Indeed, for $t\ge0$, the equilibrium  $y=0$ is exponentially unstable and all of the close trajectories run away from the neighbourhood of zero (and approach the interval $[-1,1]$ which is a uniform attractor for this case). This example demonstrates the key drawback of the theory of pullback attractors which forces us to identify the {\it repelling} point $y=0$ with the "attractor".
\par
Let us fix two points $a>0$ and $b<0$ and consider the corresponding solutions $a(t)$ and $b(t)$, $t\in\R$, of equation \eqref{4.bad} satisfying $a(0)=a$ and $b(0)=b$. Then the time-dependent set
$$
\bar{\Cal A}(t):=[b(t),a(t)]
$$
 satisfies the properties 1) and 2) of Definition \ref{Def4.pattr} together with the strict invariance \eqref{4.inv}. This example shows that the minimality property of pullback attractors cannot be replaced by strict invariance without the risk to lose the uniqueness of a pullback attractor.
\end{example}

\subsection{Cocycles and random attractors} The abstract construction of a pullback attractor is somehow "too general" for practical applications, so it looks reasonable to consider its particular cases. One of the most interesting particular cases is  given by a cocycle which, in particular, bridges  the theory of pullback and random attractors.
\begin{definition}\label{Def4.co}Let $\Phi$ be a Hausdorff topological space, $\Psi$ be an arbitrary set and let $T(h):\Psi\to\Psi$, $h\in\R$, be a group of operators acting on $\Psi$. Then the family of maps $\Cal S_\xi(t):\Phi\to\Phi$, $\xi\in\Psi$, $t\ge0$, is a cocycle over the group $T(h)$ if
\begin{equation}\label{4.co}
1)\ \ \Cal S_\xi(0)=Id,\ \ 2)\ \ \Cal S_\xi(t+s)=\Cal S_{T(s)\xi}(t)\circ\Cal S_\xi(s),\ \ t,s\ge0,\ \ \xi\in\Psi.
\end{equation}
\end{definition}
As it is not difficult to check, any cocycle $\Cal S_\xi(t)$ generates a family of DP on $\Phi$ via
\begin{equation}\label{4.dpco}
U_\xi(t,\tau):=S_{T(\tau)\xi}(t-\tau),\ \ t\ge\tau\in\R,\ \ \xi\in\Psi,
\end{equation}
depending on a parameter $\xi\in\Psi$. This family satisfy the additional translation identity
\begin{equation}\label{4.trans}
U_\xi(t+s,\tau+s)=U_{T(s)\xi}(t,\tau),\ \ \xi\in\Psi, \ t\ge\tau\in\R,\ \ s\in\R.
\end{equation}
Vice versa, any family of DP $U_\xi(t,\tau):\Phi\to\Phi$ which satisfies the translation identity \eqref{4.trans} generates a cocycle $\Cal S_\xi(t):=U_\xi(t,0)$ in $\Phi$. In applications, we are usually given a non-autonomous PDE
\begin{equation}\label{4.npde}
\Dt u=A(u,\xi(t)),\ \ u\big|_{t=\tau}=u_\tau,
\end{equation}
where $A(\cdot,\cdot)$ is a nonlinear operator which we will not specify here and $\xi=\xi(t)$ accumulates all terms of the equation which depend explicitly on time, $\Psi$ is an appropriate space of time dependent functions (e.g., some shift invariant subspace of $L^2_{loc}(\R,H)$ where $H$ is some Banach space) and $T(h):\Psi\to\Psi$, $h\in\R$ is a group of time shifts: $(T(h)\xi)(t)=\xi(t+h)$. Then, if  problem \eqref{4.npde} is globally well-posed for all $u_\tau\in\Phi$ and all $\xi\in\Psi$, the corresponding solution operators $U_\xi(t,\tau):\Phi\to\Phi$ generate a translation invariant family of DP and, therefore, also generate a cocycle over $T(h):\Psi\to\Psi$ in the phase space $\Phi$, see \cites{CLR12,ChVi02,KlY20} for more details.
\par
In the case of cocycles, it is natural to define the bornology $\Bbb B$ as a collection of non-empty $\xi$-dependent sets $\{B(\xi)\}_{\xi\in\Psi}\in\Bbb B$. Then absorbing, attracting sets will be also $\xi$-dependent sets. For instance,  the set $\{\Cal B(\xi)\}_{\xi\in\Psi}$ is pullback attracting if for every $B\in\Bbb B$, every $\xi\in\Psi$, and every neighbourhood $\Cal O(\Cal B(\xi))$ of the set $\Cal B(\xi)$ in $\Phi$, there exists $\tau=\tau(\xi,B,\Cal O)$ such that
$$
\Cal S_{T(-t)\xi}(t)B(T(-t)\xi)\subset\Cal O(\Cal B(\xi)),\ \ t\ge\tau.
$$
Analogously, the set $\{\Cal A(\xi)\}_{\xi\in\Psi}$ is a pullback attractor for the cocycle $\Cal S_\xi(t)$ if
\par
1) The set $\Cal A(\xi)$ is compact for all $\xi\in\Psi$;
\par
2) The set $\{\Cal A(\xi)\}_{\xi\in\Psi}$ is a pullback attracting set for $\Cal S_\xi(t)$;
\par
3) It is the minimal (by inclusion) set which satisfies properties 1) and 2).
\par
The next theorem gives the analogue of Theorem \ref{Th4.main}.
\begin{theorem}\label{Th4.main1} Let $\Phi$ be a  Hausdorff topological space, $\Psi$ be a set and let $\Cal S_\xi(t):\Phi\to\Phi$ be a cocycle over the group $T(h):\Psi\to\Psi$, $h\in\R$. Assume that this cocycle possesses a compact pullback attracting set $\{\Cal B(\xi)\}_{\xi\in\Psi}$. Then there exists a pullback attractor $\{\Cal A(\xi)\}_{\xi\in\Psi}$ which is a compact subset of $\{\Cal B(\xi)\}_{\xi\in\Psi}$.
\end{theorem}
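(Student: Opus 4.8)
The plan is to deduce this from the already-established process version, Theorem~\ref{Th4.main}, by freezing the base point and exploiting the translation identity \eqref{4.trans}. Fix $\xi\in\Psi$ and consider the dynamical process $U_\xi(t,\tau)=\Cal S_{T(\tau)\xi}(t-\tau)$ generated by the cocycle via \eqref{4.dpco}. I would lift the cocycle bornology to a time-dependent bornology $\Bbb B_\xi$ for this process by associating to each $B=\{B(\eta)\}_{\eta\in\Psi}\in\Bbb B$ the time-dependent set $t\mapsto B(T(t)\xi)$, and lift the attracting family to $t\mapsto\Cal B(T(t)\xi)$. The first thing to check is that this lifted family is a compact pullback attracting set for $U_\xi$ in the sense of Definition~\ref{Def4.abs}. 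Writing the process attraction at a time $t$ and using $U_\xi(t,t-s)=\Cal S_{T(t-s)\xi}(s)$ together with $T(t-s)\xi=T(-s)\eta$ for $\eta:=T(t)\xi$, the required inclusion
$$
\Cal S_{T(-s)\eta}(s)\,B(T(-s)\eta)\subset\Cal O(\Cal B(\eta))
$$
is exactly the cocycle pullback attraction at the base point $\eta$. Thus the hypothesis that $\{\Cal B(\eta)\}_{\eta\in\Psi}$ attracts at \emph{every} base point furnishes process attraction at \emph{every} time, and Theorem~\ref{Th4.main} yields a pullback attractor $\{\Cal A_\xi(t)\}_{t\in\R}$ for $U_\xi$ with $\Cal A_\xi(t)\subset\Cal B(T(t)\xi)$.

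I then define the candidate cocycle attractor by its time-zero section, $\Cal A(\xi):=\Cal A_\xi(0)$. Compactness and the inclusion $\Cal A(\xi)\subset\Cal B(\xi)$ are immediate, and reading the process attraction of $\Cal A_\xi$ at $t=0$ back through the identification above gives precisely the cocycle pullback attraction of $\{\Cal A(\xi)\}_{\xi\in\Psi}$.

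The crux of the argument, and the step I expect to be the main obstacle, is the consistency relation $\Cal A(T(t)\xi)=\Cal A_\xi(t)$, which is what knits the individual frozen-$\xi$ process attractors into a single coherent $\xi$-family. This rests on two facts: first, the translation identity \eqref{4.trans} shows that the process $U_{T(t)\xi}$ is just $U_\xi$ shifted in time, $U_{T(t)\xi}(a,b)=U_\xi(a+t,b+t)$, so a time-shift of the pullback attractor of $U_\xi$ satisfies Definition~\ref{Def4.pattr} for $U_{T(t)\xi}$; second, the minimality clause in Definition~\ref{Def4.pattr} forces uniqueness of the process pullback attractor, since any minimal compact pullback attracting family contains the canonical one $[\cup_{B}\omega_B]_{\Phi}$ built in Theorem~\ref{Th4.main} and hence coincides with it. Combining these, $\Cal A_{T(t)\xi}(0)=\Cal A_\xi(t)$, which is the asserted relation.

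With consistency in hand, minimality of $\{\Cal A(\xi)\}_{\xi\in\Psi}$ follows cleanly. If $\{C(\xi)\}_{\xi\in\Psi}$ is any compact pullback attracting family with $C(\xi)\subset\Cal A(\xi)$ for all $\xi$, then its lift $t\mapsto C(T(t)\xi)$ is a compact pullback attracting family for the process $U_\xi$ (by the same base-point computation), and, using consistency, it satisfies $C(T(t)\xi)\subset\Cal A(T(t)\xi)=\Cal A_\xi(t)$. The minimality of the process attractor $\Cal A_\xi$ then forces equality for all $t$, and evaluating at $t=0$ gives $C(\xi)=\Cal A(\xi)$. This establishes all three defining properties. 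I would note finally that one could instead bypass the reduction and repeat the proof of Theorem~\ref{Th4.main} verbatim with the cocycle $\omega$-limit sets $\omega_B(\xi):=\cap_{s\ge0}[\cup_{t\ge s}\Cal S_{T(-t)\xi}(t)B(T(-t)\xi)]_{\Phi}$, but the reduction above is shorter and isolates the one genuinely new ingredient, namely the translation identity.
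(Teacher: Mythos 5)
Your proposal is correct and follows essentially the same route as the paper's own proof: freeze $\xi$, lift the bornology to time-dependent sets $t\mapsto B(T(t)\xi)$, apply Theorem~\ref{Th4.main} to the process $U_\xi(t,\tau)$, and use the translation identity together with uniqueness of the process pullback attractor to get the consistency relation $\Cal A_{T(h)\xi}(t)=\Cal A_\xi(t+h)$, then set $\Cal A(\xi):=\Cal A_\xi(0)$. You merely spell out the steps the paper leaves to the reader (the base-point computation, the uniqueness-from-minimality argument, and the final minimality check), which is a faithful elaboration rather than a different method.
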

\begin{proof}[Sketch of the proof] This statement is a straightforward corollary of Theorem \ref{Th4.main}. Indeed, for every fixed $\xi\in\Psi$ let us consider the corresponding DP $U_\xi(t,\tau)$ in the phase space $\Phi$ endowed with the bornology $\Bbb B_\xi$ which consists of time dependent sets $B_\xi(t):=B(T(t)\xi)$ for all $B\in\Bbb B$.  Then the DP $U_\xi(t,\tau)$ satisfies all of the assumptions of Theorem \ref{Th4.main} and, therefore, there exists a pullback attractor $\Cal A_\xi(t)$, $t\in\R$, for this process. Moreover, due to the translation identity and the uniqueness of a pullback attractor, we have $\Cal A_{T(h)\xi}(t)=\Cal A_\xi(t+h)$, $t,h\in\R$ and $\xi\in\Psi$. Thus, the $\xi$-dependent set
$$
\Cal A(\xi):=\Cal A_\xi(0)
$$
is well-defined and gives the desired pullback attractor for the cocycle $\Cal S_\xi(t)$.
\end{proof}
As in the previous case, if in addition, the maps $\Cal S_\xi(t)$ are continuous for every fixed $t\ge0$ and $\xi\in\Psi$, we have the strict invariance of the pullback attractor which now reads
$$
\Cal S_\xi(t)\Cal A(\xi)=\Cal A(T(t)\xi),\ \ \xi\in\Psi,\ \ t\ge0.
$$
Moreover, if in addition the compact attracting set $\Cal B\in\Bbb B$ and the bornology $\Bbb B$ is stable by inclusions, we also have the representation formula
$$
\Cal  A(\xi)=\Cal K_\xi\big|_{t=0},
$$
where $\Cal K_\xi$ is the $\Bbb B_\xi$-kernel of the DP $U_\xi(t,\tau)$.
\par
As we have seen in Example \ref{Ex4.bad}, the forward attraction property may fail even in the case of relatively simple dependence of the symbols $\xi(\cdot)\in\Psi$ on time. The situation becomes much better if we assume some recurrence properties of time-dependent external forces. Namely, let us assume that $\Psi$ possesses an invariant (with respect to $T(h)$) probability measure $\mu$, i.e. $(\Psi,\Cal F,\mu)$, where $\Cal F$ is a $\sigma$-algebra, is a probability space and the maps $T(h)$ are measure preserving. Then it is also natural to assume that the maps $\xi\to \Cal S_\xi(t)$ are measurable (or even continuous) and for every $B\in\Bbb B$ the map $\xi\to B(\xi)$ is a measurable set-valued map (the sets satisfying this property are called {\it random} sets and the corresponding cocycle is called  a random dynamical system (RDS)). Moreover, in order to work with random sets it is usually assumed that $\Phi$ is a Polish space (i.e. it is separable, metrizable and complete). We also mention that it is natural for RDS that all properties starting from \eqref{4.co} hold not for all $\xi\in\Psi$, but for $\mu$-almost all of them.
\par
The detailed exposition of the theory of RDS is out of scope of this survey, we refer the interested reader to \cites{CF94,Cra01,CraS18,CraK15,KlY20} and references therein for more details, and restrict ourselves by only mentioning
the random analogue of Theorem \ref{Th4.main1} and giving some examples.

\begin{theorem}\label{Th4.main2} Let $\Cal S_\xi(t):\Phi\to\Phi$ be a RDS over the measure preserving group $T(h):\Psi\to\Psi$ acting on the probability space $\Psi$ and let $\Phi$ be Polish. Assume also that this RDS possesses  a compact random attracting set $\Cal B$ (i.e. $\Cal B(\xi)$ is compact for almost all $\xi\in\Psi$) with respect to some bornology $\Bbb B$ which consists of random sets. Then the RDS $\Cal S_\xi(t)$ possesses a pullback attractor which is a compact random set in $\Phi$. Moreover, if the maps $\Cal S_\xi(t)$ are continuous for almost all $\xi$, then the attractor $\Cal A(\omega)$ is strictly invariant.
\end{theorem}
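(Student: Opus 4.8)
The plan is to stratify the statement into three layers and to reuse the deterministic machinery already in place, isolating the one genuinely probabilistic ingredient.

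First, for each fixed $\xi\in\Psi$ I would simply invoke Theorem \ref{Th4.main1}: a RDS is in particular a cocycle, and the compact random attracting set $\Cal B$ is, for every fixed $\xi$, an ordinary compact pullback attracting set for the associated DP $U_\xi(t,\tau)=\Cal S_{T(\tau)\xi}(t-\tau)$. Hence the compact pullback attractor $\Cal A(\xi)=\Cal A_\xi(0)$ exists setwise, together with the representation $\Cal A(\xi)=\big[\cup_{B\in\Bbb B}\omega_B(\xi)\big]_\Phi$ through the pullback $\omega$-limit sets (Definition \ref{Def4.om} applied to $U_\xi$). Strict invariance under the a.s. continuity of $\Cal S_\xi(t)$ is inherited from Proposition \ref{Prop4.inv} and the cocycle invariance recorded after Theorem \ref{Th4.main1}, giving $\Cal S_\xi(t)\Cal A(\xi)=\Cal A(T(t)\xi)$ for $\mu$-a.e.\ $\xi$. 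Thus the only new, genuinely probabilistic, content is that $\xi\mapsto\Cal A(\xi)$ is a random (measurable) set, and this is precisely where the hypotheses that $\Phi$ is Polish and that $\Bbb B$ consists of random sets become indispensable.

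Second, I would reduce measurability to a scalar statement. Since $\Phi$ is Polish and $\Cal A(\xi)$ is compact (hence closed) valued, the Castaing--Valadier criterion says that $\xi\mapsto\Cal A(\xi)$ is a random set if and only if $\xi\mapsto\dist(x,\Cal A(\xi))$ is $\mu$-measurable for every $x$ in a fixed countable dense subset of $\Phi$. It therefore suffices to express this distance through countable measurable operations.

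Third, the heart of the argument is to prove measurability of the $\omega$-limit of a single random set and then to collapse the union over $\Bbb B$. For fixed random $B$ the sets $K_s(\xi):=\big[\cup_{t\ge s}\Cal S_{T(-t)\xi}(t)B(T(-t)\xi)\big]_\Phi$ decrease in $s$ and are eventually nonempty compacta inside $\Cal B(\xi)$; for such a decreasing family $\dist(x,\omega_B(\xi))=\sup_{s\in\N}\dist(x,K_s(\xi))=\sup_{s\in\N}\inf_{t\ge s}\dist\big(x,\Cal S_{T(-t)\xi}(t)B(T(-t)\xi)\big)$. Using continuity in $t$ one restricts $t$ to the rationals, and using a Castaing representation $B(\eta)=\overline{\{b_n(\eta)\}}$ by measurable selections one obtains $\dist(x,\Cal S_{T(-t)\xi}(t)B(T(-t)\xi))=\inf_n\|x-\Cal S_{T(-t)\xi}(t)b_n(T(-t)\xi)\|_\Phi$; since $(\eta,u)\mapsto\Cal S_\eta(t)u$, the shifts $T(\cdot)$ and the selections $b_n$ are all measurable, this is measurable in $\xi$, and the remaining countable $\sup$ and $\inf$ preserve measurability.

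The main obstacle is the last collapse, because $\dist(x,\Cal A(\xi))=\inf_{B\in\Bbb B}\dist(x,\omega_B(\xi))$ is an a priori \emph{uncountable} infimum. I would circumvent it by not attracting through all of $\Bbb B$ directly, but by first upgrading the compact random attracting set $\Cal B$ to a compact random \emph{absorbing} set $\Cal B_0\in\Bbb B$ (a measurability step in its own right, e.g.\ via $\Cal B_0(\xi)=\big[\cup_{t\ge T_0}\Cal S_{T(-t)\xi}(t)\Cal B(T(-t)\xi)\big]_\Phi$), and then observing, exactly as in the deterministic theory, that a single absorbing element of the bornology already attracts every $B\in\Bbb B$, so that $\Cal A(\xi)=\omega_{\Cal B_0}(\xi)$. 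This reduces the uncountable union to the single-set computation above. I expect the two delicate points to be (i) producing $\Cal B_0$ as a genuinely measurable random set, and (ii) justifying the reduction of the real-time $\sup$ and $\inf$ to countable ones; both rely essentially on the separability of $\Phi$ together with the joint measurability and time-continuity of the RDS, and are exactly the reasons the theorem requires the Polish-space hypothesis that was absent from Theorems \ref{Th4.main} and \ref{Th4.main1}.
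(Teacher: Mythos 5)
Your proposal has a genuine gap, and it sits exactly at the point you yourself flag as ``the main obstacle''. The collapse of the uncountable union hinges on producing a compact random absorbing set $\Cal B_0$ \emph{belonging to the bornology} $\Bbb B$. Nothing in the hypotheses of Theorem \ref{Th4.main2} puts $\Cal B$, or any set built from its orbit, inside $\Bbb B$: the bornology is an arbitrary collection of random sets, not assumed stable under inclusions, and in the delicate case the paper singles out --- point random attractors, where $\Bbb B$ consists of one-point random sets --- no compact absorbing set can lie in $\Bbb B$ at all. Without $\Cal B_0\in\Bbb B$ the identity $\Cal A(\xi)=\omega_{\Cal B_0}(\xi)$ fails: $\omega_{\Cal B_0}$ is a compact attracting set, but the random attractor is the \emph{minimal} one, and minimality in the random setting is taken within the class of compact random sets attracting each $B\in\Bbb B$ only almost surely, with the exceptional null set allowed to depend on $B$. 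This is also why your first step is already unsound: applying Theorem \ref{Th4.main1} for each fixed $\xi$ requires attraction of \emph{every} $B\in\Bbb B$ at that particular $\xi$, whereas the random hypotheses give, for each $B$, attraction only for $\mu$-almost every $\xi$; with uncountably many $B$ these null sets can pile up and even exhaust $\Psi$. Concretely, the formula $\Cal A(\xi)=\big[\cup_{B\in\Bbb B}\omega_B(\xi)\big]_\Phi$ that you import from the deterministic theory is false here: as the paper points out, in the example of \cite{CraS18} the random pullback attractor is a one-point set almost surely while this union equals the whole space $\Phi$ almost surely.

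The paper's own treatment is different and is explicit about this difficulty: when the bornology is \emph{separable} (there is a countable sub-bornology $\Bbb B_0$ such that every $B\in\Bbb B$ is contained in some element of $\Bbb B_0$), the theorem does reduce to Theorem \ref{Th4.main1} together with a countable measurability argument of the kind you sketch (Castaing selections, rational times), see \cite{Cra01}; in the general case the proof is genuinely delicate and is taken from \cite{CraS18}, where one cannot work with the pointwise $\omega$-limit sets at all. Your measurability machinery in the second and third steps is sound and would serve in the separable case; what is missing is any mechanism replacing separability, and the $\Cal B_0$ construction cannot supply it.
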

The proof of this theorem is reduced to Theorem \ref{Th4.main1} under the extra assumption that the bornology $\Bbb B$ is separable, i.e. that there exists a {\it countable} bornology $\Bbb B_0$ such that for every $B\in\Bbb B$ there exists $B_0\in\Bbb B_0$ such that $B\subset B_0$, see \cite{Cra01}. In a general case, when this property fails (for instance when the point  random attractors are considered), the proof is more delicate. In particular, it is not a priori clear why the set $\xi\to\big[\cup_{B\in\Bbb B}\omega_B(\xi)\big]_{\Phi}$ which contains a non-countable union will be a random set. Moreover, as shown in \cite{CraS18}, this union can be essentially larger that the desired random attractor $\Cal A(\xi)$. In the example given there, the random pullback attractor is a one point set for almost all $\xi\in\Psi$ and the above union coincides with the whole space $\Phi$ almost surely. Nevertheless, as it is shown in \cite{CraS18}, the theorem remains true in the general setting as well (and even the measurability of sets from $\Bbb B$ can be removed). We also mention the paper \cite{Sch02}, where the examples of different type of random attractors are given, in particular, the examples where the random attractor does not attract bounded sets forward in time are given there.
\par
\begin{corollary} Under the assumptions of Theorem \ref{Th4.main2}, the random attractor $\Cal A(\xi)$ possesses the  forward attraction property in the following sense: for every $B\in\Bbb B$ and every~$\eb>0$
\begin{equation}\label{4.for}
\lim_{t\to+\infty}\mu\big\{\xi\in\Psi\,:\, \dist_{\Phi}\(\Cal S_\xi(t)B(\xi),\Cal A(T(t)\xi)\)>\eb\big\}=0,
\end{equation}
where $\dist_{\Phi}(A,B):=\sup_{y\in A}\inf_{x\in B}d(x,y)$ is a non-symmetric Hausdorff distance in $\Phi$.
\end{corollary}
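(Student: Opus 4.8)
The plan is to deduce the forward-in-probability attraction \eqref{4.for} from the almost-sure \emph{pullback} attraction already contained in Theorem \ref{Th4.main2}, using the measure-preserving property of the group $T(h)$ as the bridge between the two notions. The whole point is that the forward quantity at time $t$, after an invariance-preserving change of variable in $\xi$, becomes exactly the pullback quantity, which we already control.

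First I would record the pullback statement in metric form. Since $\Phi$ is Polish (hence metrizable) and $\Cal A(\xi)$ is the random attractor produced by Theorem \ref{Th4.main2}, the fact that $\Cal A$ is a pullback attracting set means precisely that, for every $B\in\Bbb B$ and $\mu$-almost every $\xi\in\Psi$,
$$\psi_t(\xi):=\dist_\Phi\big(\Cal S_{T(-t)\xi}(t)B(T(-t)\xi),\,\Cal A(\xi)\big)\longrightarrow 0,\qquad t\to+\infty,$$
i.e. $\psi_t\to 0$ $\mu$-almost surely as $t\to+\infty$.

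Next I would relate the forward quantity $\phi_t(\xi):=\dist_\Phi\big(\Cal S_\xi(t)B(\xi),\Cal A(T(t)\xi)\big)$ to $\psi_t$ by substituting $\xi\mapsto T(-t)\xi$. Using the group identity $T(t)T(-t)=\mathrm{Id}$ one gets the pointwise relation
$$\phi_t(T(-t)\xi)=\dist_\Phi\big(\Cal S_{T(-t)\xi}(t)B(T(-t)\xi),\,\Cal A(T(t)T(-t)\xi)\big)=\psi_t(\xi).$$
Since $T(-t)=(T(t))^{-1}$ preserves $\mu$, applying measure invariance to the indicator $\mathbf 1[\phi_t>\eb]$ yields
$$\mu\{\xi:\phi_t(\xi)>\eb\}=\mu\{\xi:\phi_t(T(-t)\xi)>\eb\}=\mu\{\xi:\psi_t(\xi)>\eb\}.$$
Thus the statement \eqref{4.for} is literally the assertion that $\psi_t\to 0$ in measure. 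I would then close the argument by the elementary fact that almost-sure convergence implies convergence in measure on the finite measure space $(\Psi,\mu)$: if \eqref{4.for} failed there would be $\delta>0$ and $t_n\to+\infty$ with $\mu\{\psi_{t_n}>\eb\}\ge\delta$, contradicting $\psi_{t_n}\to 0$ $\mu$-a.s.

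The hard part will be the measurability bookkeeping rather than the limiting argument. One must verify that $\xi\mapsto\phi_t(\xi)$ and $\xi\mapsto\psi_t(\xi)$ are $\Cal F$-measurable, so that the sets appearing in \eqref{4.for} are measurable and the change of variables above is legitimate; here one uses that $\Cal A$ is a random (measurable) compact set, that $\xi\mapsto B(\xi)$ and $\xi\mapsto\Cal S_\xi(t)$ are measurable, and the standard fact that on a Polish space the non-symmetric Hausdorff distance between two measurable set-valued maps is a measurable function of the parameter. All of this is routine within the RDS framework, so the only genuine ingredients are the almost-sure pullback convergence and the invariance of $\mu$ under $T(h)$.
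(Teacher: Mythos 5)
Your proof is correct and follows essentially the same route as the paper: both arguments rest on the almost-sure pullback attraction, the identity relating the forward distance at $\xi$ to the pullback distance at $T(t)\xi$, and the invariance of $\mu$ under $T(t)$ to transfer the estimate. The only cosmetic difference is that the paper encodes the final step ``a.s. convergence implies convergence in measure'' by applying dominated convergence to the bounded functional $x\mapsto x/(1+x)$ and changing variables inside the integral, whereas you change variables at the level of the sets $\{\phi_t>\eb\}$ and invoke the implication directly.
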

\begin{proof} Indeed, due to the pullback attraction property, for almost all $\xi\in\Psi$, we have
$$
\lim_{t\to\infty} \dist_\Phi(U_{T(-t)\xi}(-t,0)B(\xi),\Cal A(\xi))=0.
$$
Therefore, by the Lebesgue dominating convergence theorem,
$$
\lim_{t\to\infty}\int_{\xi\in\Psi}\frac{ \dist_\Phi(U_{\xi}(0,-t)B(T(-t)\xi),\Cal A(\xi))}{1+ \dist_\Phi(U_{T(-t)\xi}(-t,0)B(\xi),\Cal A(\xi))}\mu(d\xi)=0.
$$
Since $T(t)$ is measure preserving, the change of the independent variable $\xi\to T(t)\eta$ together with the translation identity $U_{T(t)\eta}(0,-t)=U_\eta(t,0)$ gives
$$
\lim_{t\to\infty}\int_{\eta\in\Psi}\frac{ \dist_\Phi(U_{\eta}(t,0)B(\eta),\Cal A(T(t)\eta))}{1+ \dist_\Phi(U_{\eta}(t,0)B(\eta),\Cal A(T(t)\eta))}\mu(d\eta)=0
$$
which gives the desired convergence in measure.
\end{proof}
We illustrate the theory by few model examples, more examples can be found e.g. in \cites{Sch02,KlY20}.
\begin{example} Let us return to example \ref{Ex4.bad}. It can be written in the form of \eqref{4.npde} with
$$
A(y,\xi):=(2\xi-1)y-\xi y^3,\ \ \xi(t):=H(t)=\begin{cases} 0,\ \ t<0,\\ 1,\ \ t\ge0.\end{cases}
$$
This non-autonomous DS can be embedded in
a cocyle, e.g. by introducing the {\it hull}
$$
\Cal H(\chi):=\{0\}\cup\{1\}\cup\{T(h)H,\ h\in\R\}
$$
of the external forces $H(t)$. We endow $\Psi$ with the topology of $L^1_{loc}(\R)$ which makes it a compact metric space and $T(h)\Psi=\Psi$ acts continuously on $\Psi$. After that, we may consider the corresponding family of DP $U_\xi(t,\tau)$, $\xi\in\Psi$ which satisfies the translation identity, so the cocycle associated with problem \eqref{4.bad} is defined.
\par
The DS $(T(h),\Psi)$ carries exactly two egrodic invariant probability measures: $\mu_0=\delta(\xi)$ and $\mu_1=\delta(\xi-1)$. The supports of both of these measures correspond to the autonomous equations with attractors $\Cal A_{\mu_0}=\{0\}$ and $\Cal A_{\mu_1}=[-1,1]$ both of which are forward "random" attractors.
\end{example}
\begin{remark} We see that although the measures constructed in the previous example satisfy formally all of the assumptions of the theory, they are a posteriori useless and do not give any additional information about the considered DS. This indicates a general problem of the theory of DS and, in particular, ergodic theory. Namely, although the invariant measure can be constructed on a general compact space by the Prohorov theorem, this is not enough to get a reasonable information about the considered system and to do so we need some "good" properties of this measure (for instance, to have the so-called "physical measure"), e.g. its absolute continuity with respect to the Lebesgue measure if $\Psi$ is a subset of $\R^n$. Unfortunately, such a good measure may not exist and it is not clear how to construct it in a more or less general situation, see \cites{KH95,Y02} and references therein for more details.
\end{remark}

\begin{example}\label{Ex4.hyp} Let us consider the damped nonlinear pendulum equation with a sign-changing dissipation:
\begin{equation}\label{4.hyp}
y''+\gamma(t)y'+y|y|^p-\alpha y=0,\ \ \xi_y:=\{y,y'\}\big|_{t=\tau}=\xi_\tau,
\end{equation}
where $\gamma\in L^\infty(\R)$ is a given damping coefficient, $p>0$ and $\alpha>0$ are fixed parameters. We assume that $\gamma\in \Psi$, where $\Psi$ is a {\it compact} set in $L^1_{loc}(\R)$ which is strictly invariant with respect to time translations $T(h):\Psi\to\Psi$. Then the solution operators $U_\gamma(t,\tau): \Phi\to\Phi$, $\Phi=\R^2$ satisfy the translation identity and, therefore, generate a cocycle $\Cal S_\gamma(t)$ in $\Phi$.  Let us introduce the standard energy functional
$$
E_y(t):=\frac12y'(t)^2+\frac1{p+2}|y(t)|^{p+2}.
$$
It can be shown using some refined energy arguments that the following estimate holds
\begin{equation}\label{4.hyp-dis-est}
E_y(t)\le C E_y(s)e^{-\int_s^t\(2\frac{p+2}{p+4}\gamma(\tau)-\kappa\)d\tau}+
C_\kappa\int_s^te^{-\int_m^t\(2\frac{p+2}{p+4}\gamma(\tau)-\kappa\)d\tau}\,dm
\end{equation}
for every $t\ge s\in\R$, every $\kappa>0$ and every $\gamma\in\Psi$, see \cite{CSZ23} where this estimate is verified in much more general setting of a hyperbolic PDE.
\par
To get a RDS, we assume that there exist a Borel measure $\mu$ on the set $\Psi$ which is invariant with respect to shift operators $T(h)$, $h\in\R$, and is {\it ergodic}. We also assume that the following dissipativity assumption holds:
\begin{equation}\label{4.hyp-dis}
\kappa_0:=\int_{\gamma\in\Psi}\int_0^1\gamma(t)\,dt\,\mu(d\gamma)>0.
\end{equation}
In order to specify the appropriate bornology $\Bbb B$ on $\Phi$, we need to recall that the function $t\to f(t)$, $t\in\R$, is called {\it tempered} if, for every $\beta>0$,
$$
\lim_{t\to-\infty}e^{\beta t}|f(t)|=0,
$$
see \cites{CLR12,KlL07} for more details.
We say that a random set $\{B(\xi)\}_{\xi\in\R}$ is  tempered if the function $t\to \|B(T(-t)\xi)\|_{\Phi}$ is tempered for $\mu$-almost all $\xi\in\Phi$. The standard bornology which consists of all tempered sets $\{B(\xi)\}_{\xi\in\Psi}$ will be denoted by $\Bbb B_{temp}$. At the next step, we use the Birkhoff ergodic theorem in order to establish that there exists a shift-invariant subset $\Psi_{erg}\subset\Psi$ of full $\mu$-measure such that
\begin{equation}\label{4.Birk}
\lim_{T\to\infty}\frac1T\int_{-T}^0\gamma(t)\,dt=\kappa_0>0,\ \ \gamma\in\Psi_{erg}
\end{equation}
and the existence of this limit allows us to establish that the function
\begin{equation}\label{4.b}
b_\gamma(t):=2C_\kappa\int_{-\infty}^te^{-\int_{m}^t\(2\frac{p+2}{p+4}\gamma(\tau)-\kappa\)d\tau}\,dm
\end{equation}
is well-defined and tempered for all $\gamma\in\Psi_{erg}$ if $\kappa>0$ is small enough (the exponent $\kappa$ is fixed from now on in such a way that this condition is satisfied), see \cite{CSZ23} for the details. In turn, this fact together with \eqref{4.hyp-dis-est} allow us to verify that the tempered random set
\begin{equation}\label{4.abs-temp}
\Cal B(\gamma):=\{\xi_y(0)\in\Phi,\, \ E_{y}(0)\le b_\gamma(0)\}
\end{equation}
is a compact pullback absorbing set for the random DS $\Cal S_\gamma(t):\Phi\to\Phi$ associated with equation \eqref{4.hyp}, see \cite{CSZ23}. Thus, according to the general theory, this random dynamical system possesses a tempered random attractor $\Cal A(\gamma)\subset\Cal B(\gamma)$, $\gamma\in\Psi_{erg}$, which is strictly invariant and possesses the standard representation formula:
\begin{equation}
\Cal A(\gamma)=\Cal K_\gamma\big|_{t=0},\ \ \gamma\in\Psi_{erg},
\end{equation}
where the tempered kernel $\Cal K_\gamma$ consists of all complete tempered trajectories of equation \eqref{4.hyp}.
\par
As a natural example of a  set $\Psi$ and measure $\mu$ let us consider the set of random piece-wise constant dissipation rates
$$
\gamma_l(t):=l_n,\ \ t\in[n,n+1),\ \ n\in\Bbb Z,
$$
where $l=\{l_n\}_{n\in\Bbb Z}\in \Gamma:=\{a,-b\}^{\Bbb Z}$. In other words, the dissipation rate $\gamma_l(t)$ may take only two values $a$ and $-b$ on the interval $[n,n+1)$, $a,b>0$ which are parameterized by the Bernoulli shift scheme $\Gamma$. We assume that the values $a$ and $-b$ have probabilities $q$ and $1-q$ to appear, so we endow $\Gamma$ with the standard Bernoulli product measure $\mu_q$. It is well-known, see e.g. \cite{KH95}, that discrete Bernoulli shifts $\Cal T(n):\Gamma\to\Gamma$ preserve this measure and that $\mu_q$ is ergodic with respect to these shifts. Using the obvious fact that
\begin{equation}\label{4.com}
T(n)\gamma_l=\gamma_{\Cal T(n)},\ \ l\in\Gamma, \ n\in\Bbb Z,
\end{equation}
we may lift the DS $(\Cal T(n),\Gamma)$ to the shift-invariant set $\Psi\subset L^\infty(\R)$. Namely,
let
$$
\Psi:=\{T(h)\gamma_l,\ \ h\in[0,1], \ l\in\Gamma\}.
$$
Then $\Psi$ is obviously shift invariant and is compact in $L^1_{loc}(\R)$. The commutation \eqref{4.com} also allows us to lift the Bernoulli measure $\mu_q$ to the appropriate shift-invariant measure $\tilde\mu_q$ on $\Psi$. Finally, condition \eqref{4.hyp-dis} now reads
$$
\kappa_0=aq-b(1-q)>0
$$
and gives us the sufficient condition for the existence of a random attractor $\Cal A(\gamma)$, see \cite{CSZ23} for the details.
\end{example}
\begin{remark}\label{Rem4.typ} The above example demonstrates many typical features of the attractors theory for random dynamical systems. First, we see the crucial role of the Birkhoff ergodic theorem or its probabilistic analogues in constructing  random attractors. Second, since the existence of a Birkhoff limit \eqref{4.Birk} is guaranteed not for all $\gamma\in\Psi$, but only for almost all of them, the usage of "$\mu$-almost all" in all definitions related with random attractors also looks unavoidable. In the exceptional measure zero choices of $\gamma$, the corresponding DP is simply not dissipative and does not possess an attractor (e.g. it will be so if we take $\gamma(t)=-b<0$ for all $t\in\R$).  We also mention that, at least in the example related with Bernoulli shifts, the function $b_\gamma(t)$ is {\it not bounded} as $t\to\pm\infty$ with probability one. This explains why it is important to develop the pullback attractors theory for non-autonomous equations with unbounded in time coefficients, see also next example. The introduced above bornology $\Bbb B_{temp}$ of tempered sets looks as an appropriate and natural generalization of the bornology $\Bbb B_{bound}$ of uniformly bounded in time sets (which is widely used in the deterministic case, see \cites{BV92,tem}) to the DP related with random and stochastic dynamical systems, see \cites{KlL07,KlY20}. Note however, that in many case, the usage of the bornology which consists of deterministic (uniformly bounded) sets is enough to recover a random attractor in a proper way, see \cites{CF94,CraS18} and references therein.
\end{remark}
Our next example will be related with the simplest nonlinear {\it stochastic} ODE.
\begin{example}\label{Ex4.sode} Let us consider the following stochastic ODE:
\begin{equation}\label{4.sde}
y'+y^3-y=\eb\eta'(t),\ \ y\big|_{t=\tau}=y_\tau,
\end{equation}
where  $\eta'(t)$ is a two-sided white noise, and $\eb>0$ is a small parameter.
\par
Following the standard procedure, we realize the two-sided Wiener process on the space $\Psi:=C_0(\R)$ of continuous functions $\eta:\R\to\R$ which are equal to zero at $t=0$ endowed with the locally compact topology and the standard Wiener measure $\mu$. Then the modified group of temporal shifts $(T(h)\eta)(t):=\eta(t+h)-\eta(h)$ acts on $\Psi$ and the Wiener measure $\mu$ is invariant and ergodic, see e.g. \cite{Str93}. We present the solution $y(t)=y_\eta(t)$ in the form of $y_\eta(t)=\eb v_\eta(t)+w_\eta(t)$, where $v_\eta(t)$ is the stationary Ornstein-Uhlenbeck process:
\begin{equation}\label{3.OU}
v_\eta'+v_\eta=\eta'(t),\ \ \text{ i.e. } v_\eta(t)=\eta(t)-\int_{-\infty}^t\eta(s)e^{s-t}\,ds
\end{equation}
and the reminder solves
\begin{equation}\label{4.reduced}
w'_\eta-w_\eta+(\eb v_\eta(t)+w_\eta)^3=2\eb v_\eta(t),\ \ w_\eta\big|_{t=0}=y_0-\eb v_\eta(0).
\end{equation}
Thus, the random dynamical system $\Cal S_\eta(t):\R\to\R$ over the group of shifts $T(h):\Psi\to\Psi$ is well-defined and we may speak about its pullback random attractor $\Cal A(\eta)$. According to the general theory, to this end we only need to construct a compact random absorbing set $\Cal B(\eta)$ with respect to the bornology $\Bbb B_{temp}$ of tempered random sets on $\Phi=\R$.  In turn, this means that we need to construct such an absorbing set for equation \eqref{4.reduced} for almost all $\eta$.  This can be done in many ways, for instance, multiplying the equation by $w_\eta(t)$ and using the Young's inequality, one easily derives that
$$
\frac d{dt}(w_\eta^2)+w_\eta^2\le C(1+\eb^4v_\eta(t)^4)
$$
for some deterministic constant $C$ which is independent of $\eb$. Therefore, for all $t-s\ge0$, we have
\begin{equation}
w_\eta(t)^2\le w_\eta(s)^2e^{s-t}+C\int_{s}^te^{\kappa-t}(1+\eb^4v_\eta(\kappa)^4)\,d\kappa.
\end{equation}
It only remains to note that the function
$$
b_\eta(t):=C\int_{-\infty}^te^{\kappa-t}(1+\eb^4v_\eta(\kappa)^4)\,d\kappa
$$
is tempered for almost all $\eta$ as well as the function $v_\eta(t)$ and, consequently the random set
$$
\Cal B(\eta):=v_\eta(0)+\big\{y_0\in\R\,:\ y_0^2\le b_\eta(0)\big\}
$$
is a compact tempered pullback absorbing set  for the constructed random dynamical system $\Cal S_\eta(t)$ associated with equation \eqref{3.OU}. Thus, according to the general theory, this system possesses a tempered pullback random attractor $\Cal A_\eb(\eta)$ which is generated by all complete tempered trajectories of \eqref{3.OU}, see \cite{CF98a} for the details.
\par
The constructed random DS possesses some remarkable properties. Namely, solving the associated Kolmogorov-Fokker-Plank equation, we see that
\begin{equation}\label{4.measure}
\nu(dy)=F_\eb(y)\,dy,\ \ F_\eb(y):=\frac{e^{\eb^{-1}\(-\frac{y^4}4+\frac{y^2}2\)}}
{\int_{\R}e^{\eb^{-1}\(-\frac{y^4}4+\frac{y^2}2\)}\,dy}
\end{equation}
is an invariant (stationary) measure for the considered stochastic process. Moreover, this measure is uniquely ergodic and mixing, see \cites{CF98a,Kha12} for the details. Due to the ergodicity, the corresponding Lyapunov exponent is given by
\begin{equation}\label{4.un-exp}
\lambda(\eb):=\int_{\R}(1-3y^2)F_\eb(y)\,dy=
-\frac12-\frac32\,\,\frac{I_{-3/4}(1/8\eb)+I_{3/4}(1/8\eb)}{I_{-1/4}(1/8\eb)+I_{1/4}(1/8\eb)}\le-\frac12<0,
\end{equation}
where $I_\kappa(z)$ is the modified Bessel function of the first kind and order $\kappa$.
\par
Since the Lyapunov coefficient is negative for all $\eb>0$, the attractor $\Cal A_\eb(\eta)$ consists of a single point $\Cal A_\eb(\eta)=\{u_{\eb,\eta}(0)\}$ and the corresponding tempered kernel $\Cal K_{\eb,\eta}$ consists of a single exponentially stable trajectory $u_{\eb,\eta}(t)$ for almost all $\eta>0$. Thus, adding an arbitrary small additive noise destroys the pitchfork instability and makes the random system exponentially stable. In particular, the limit deterministic attractor $\Cal A_0=[-1,1]$ is not robust with respect to random perturbations, see \cite{CF98a} for more details and related discussion.
\end{example}
\begin{remark} The stabilization effect of adding a small additive noise appears in much more general situations including stochastic reaction-diffusion equations, etc. The key ingredients here are: 1) the uniqueness and ergodicity of the invariant measure which is known in more or less general situation if the noise is not too degenerate, see \cites{Kha12,KuSh12} and references therein; 2) the order preserving structure which allows us to construct the maximal and minimal solutions belonging to the attractor. If these two solutions coincide  almost surely, the attractor consists of a single point as in the above example and if not, every of them carries an invariant measure which contradicts the uniqueness, see \cite{Chu02} for the details. Note that this mechanism does not work if the limit deterministic dynamics is really chaotic and have positive Lyapunov exponents, for instance, in the case of Lorenz system \eqref{1.lorenz} perturbed by the small additive noise, so we expect non-trivial attractors and reach dynamics in the stochastic case as well. Unfortunately, despite the solid numerical evidence, we failed to find a rigorous mathematical proof of this fact.
\end{remark}

\subsection{Uniform attractors}\label{ss5.3} We now turn to the alternative approach to attractors for non-autonomous equations which is based on their reduction to the autonomous ones. We start with a cocycle $\Cal S_\xi(t):\Phi\to\Phi$, over $T(h):\Psi\to\Psi$, where $\Phi$ and $\Psi$ are Hausdorff topological spaces and the corresponding family $U_\xi(t,\tau)$, $t\ge\tau$, $\xi\in\Psi$, of DPs.  Let us consider the {\it extended} phase space $\Bbb P:=\Phi\times\Psi$ and the associated extended DS $\Bbb S(t)$, $t\ge0$ on it defined by
\begin{equation}\label{4.eDS}
\Bbb S(t)(u_0,\xi):=(\Cal S_\xi(t)u_0,T(t)\xi),\ \ u_0\in\Phi,\ \ \xi\in\Psi.
\end{equation}
Indeed, it follows from the cocycle property that $\Bbb S(t)$ is a semigroup. Therefore, we end up with the autonomous DS acting on the extended phase space $\Bbb P$ and may speak about its attractors. To this end, we need to fix a bornology $\Bbb B$ on the space $\Phi$ (in contrast to the previous cases, $B\in \Bbb B$ are time-independent sets $B\subset\Phi$) and to define the extended bornology $\Bbb B_{ext}$ on $\Bbb P$ via $\frak B\subset \Bbb B_{ext}$ if $\Pi_1\frak B\in\Bbb B$ where $\Pi_1:\Bbb P\to\Phi$ is a projection to the first component of the Cartesian product.
\begin{definition} A set $\Cal B\subset\Phi$ is a uniformly attracting set for the cocycle $\Cal S_\xi(t)$, $\xi\in\Psi$, with respect to the bornology $\Bbb B$ if for any $B\in\Bbb B$ and any neighbourhood $\Cal O(\Cal B)$ of $\Cal B$ in $\Phi$, there exists $T=T(\Cal O,B)$ such that
$$
\Cal S_\xi(t)B\subset\Cal O(\Cal B),\ \ \forall\xi\in\Psi,\ \text{ if }\ t\ge T.
$$
It is not difficult to see that if the set $\Cal B$ is a  uniformly attracting set for the cocycle $\Cal S_\xi(t)$ then the set $\Cal B_{ext}:=\Cal B\times\Psi$ is an attracting set for the extended DS $\Bbb S(t):\Bbb P\to\Bbb P$ with respect to the bornology $\Bbb B_{ext}$. Vice versa, if $\frak B$ is an attracting set for $\Bbb S(t)$ then $\Cal B:=\Pi_1\frak B$ is a uniformly attracting set for the cocycle $\Cal S_\xi(t)$, $\xi\in\Psi$.
\end{definition}
\begin{definition} Let $\Phi$ be a Hausdorff topological space and let $\Cal S_\xi(t):\Phi\to\Phi$
be a cocycle over the DS $T(h):\Psi\to\Psi$, $h\in\R$. Then the set $\Cal A_{un}\subset \Phi$ is a uniform attractor for this cocycle with respect to some bornology $\Bbb B$ if
\par
1) $\Cal A_{un}$ is a compact set in $\Phi$;
\par
2) $\Cal A_{un}$ is a uniformly attracting set for the cocycle $\Cal S_\xi(t)$;
\par
3) $\Cal A_{un}$ is a minimal (by inclusion) set which satisfies properties 1) and 2).
\end{definition}
The analogue of the attractor's existence theorem for this case reads.
\begin{theorem}\label{Th4.main4} Let $\Phi$ and $\Psi$ be Hausdorff topological spaces and let $\Psi$ be a compact space. Let, in addition, $\Cal S_\xi(t):\Phi\to\Phi$ be a cocycle over the group $T(h):\Psi\to\Psi$, $h\in\R$, and let this cocycle possess a compact uniformly attracting set $\Cal B$ with respect to some bornology $\Bbb B$ on $\Phi$. Then the extended semigroup $\Bbb S(t):\Bbb P\to\Bbb P$ possesses an attractor $\Bbb A\subset\Cal B\times\Psi$. Moreover, its projection $\Cal A_{un}:=\Pi_1\Bbb A$ is a uniform attractor for the cocycle $\Cal S_\xi(t)$.
\par
Assume, in addition, that the map $(u_0,\xi)\to(\Cal S_\xi(t)u_0,T(t)\xi)$ is continuous for every fixed $t$. Then the attractor $\Bbb A$ is strictly invariant with respect to $\Bbb S(t)$. If also $\Cal B\in \Bbb B$ and $\Bbb B$ is stable with respect to inclusions, then $\Cal A_{un}$ is generated by all complete $\Bbb B$-bounded trajectories of $\Cal S_\xi(t)$, i.e.
\begin{equation}\label{4.rep-un}
\Cal A_{un}=\cup_{\xi\in\Psi}\Cal K_\xi\big|_{t=0},
\end{equation}
where $\Cal K_\xi$ is a $\Bbb B$-kernel of the DP $U_\xi(t,\tau)$. In particular, since the pullback attractor $\Cal A_{\xi,pb}(\tau)$ of the DP $U_\xi(t,\tau)$ is equal to $\Cal K_\xi\big|_{t=\tau}$, we have the relation
$$
\Cal A_{un}=\cup_{\xi\in\Psi}\Cal A_{\xi,pb}(0).
$$
\end{theorem}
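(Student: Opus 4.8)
The plan is to reduce the entire statement to the autonomous results already proved for the extended semigroup $\Bbb S(t)$ acting on $\Bbb P=\Phi\times\Psi$. First I would note that $\Bbb P$ is Hausdorff as a product of Hausdorff spaces and that $\Cal B\times\Psi$ is compact in $\Bbb P$, being a product of the two compact sets $\Cal B$ and $\Psi$; this is exactly the point where the compactness of $\Psi$ enters, and it cannot be dropped. By the equivalence recorded in the Definition preceding the theorem, the hypothesis that $\Cal B$ is a uniformly attracting set for the cocycle means precisely that $\Cal B\times\Psi$ is a $\Bbb B_{ext}$-attracting set for $\Bbb S(t)$. Thus $\Bbb S(t)$ possesses a compact $\Bbb B_{ext}$-attracting set, and Theorem \ref{Th2.main} applies directly, producing via \eqref{2.attr} a $\Bbb B_{ext}$-attractor $\Bbb A\subset\Cal B\times\Psi$.

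Next I would verify the three defining properties of $\Cal A_{un}:=\Pi_1\Bbb A$. Compactness is immediate since $\Pi_1$ is continuous and $\Bbb A$ is compact. The uniform attraction property is exactly the "vice versa" direction of the same Definition: as $\Bbb A$ is a $\Bbb B_{ext}$-attracting set, its projection $\Pi_1\Bbb A$ is a uniformly attracting set for the cocycle. For minimality I would take any compact uniformly attracting set $\Cal A'$ of the cocycle; then $\Cal A'\times\Psi$ is compact (again using compactness of $\Psi$) and $\Bbb B_{ext}$-attracting, so the minimality of $\Bbb A$ guaranteed by Theorem \ref{Th2.main} forces $\Bbb A\subset\Cal A'\times\Psi$, whence $\Cal A_{un}=\Pi_1\Bbb A\subset\Cal A'$. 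This establishes that $\Cal A_{un}$ is minimal among compact uniformly attracting sets.

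For the invariance claim, the assumption that $(u_0,\xi)\mapsto\Bbb S(t)(u_0,\xi)=(\Cal S_\xi(t)u_0,T(t)\xi)$ is continuous for each fixed $t$ says precisely that the operators $\Bbb S(t)$ are continuous, so Proposition \ref{Prop1.inv} yields $\Bbb S(t)\Bbb A=\Bbb A$. For the representation formula I would invoke Proposition \ref{Prop2.rep}, after observing that $\Bbb B_{ext}$ is stable with respect to inclusions whenever $\Bbb B$ is (if $\frak B'\subset\frak B$ then $\Pi_1\frak B'\subset\Pi_1\frak B\in\Bbb B$), and that $\Cal B\times\Psi\in\Bbb B_{ext}$ since $\Pi_1(\Cal B\times\Psi)=\Cal B\in\Bbb B$. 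Proposition \ref{Prop2.rep} then gives $\Bbb A=\Bbb K\big|_{t=0}$, where $\Bbb K$ is the set of complete $\Bbb B_{ext}$-bounded trajectories of $\Bbb S(t)$. The one genuinely content-bearing step is the identification of these trajectories: a complete trajectory of $\Bbb S(t)$ is a map $t\mapsto(u(t),\eta(t))$ satisfying $\Bbb S(h)(u(t),\eta(t))=(u(t+h),\eta(t+h))$, which forces $\eta(t)=T(t)\xi$ with $\xi:=\eta(0)$ and $u(t+h)=\Cal S_{T(t)\xi}(h)u(t)=U_\xi(t+h,t)u(t)$, i.e. $u(\cdot)$ is a complete trajectory of the DP $U_\xi$. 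The $\Bbb B_{ext}$-boundedness means exactly $\{u(t)\}_{t\in\R}=\Pi_1\{(u(t),\eta(t))\}_{t\in\R}\in\Bbb B$, so $u(\cdot)\in\Cal K_\xi$. Applying $\Pi_1$ to $\Bbb K\big|_{t=0}$ therefore produces \eqref{4.rep-un}, and the concluding relation with the pullback attractors is then immediate from the already established identity $\Cal A_{\xi,pb}(0)=\Cal K_\xi\big|_{t=0}$.

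The main obstacle I anticipate is not a hard estimate but the careful bookkeeping in this last trajectory identification, together with ensuring that the required compactness of $\Cal B\times\Psi$ (and of $\Cal A'\times\Psi$ in the minimality step) is genuinely available; this is precisely why the compactness of $\Psi$ is indispensable throughout the argument.
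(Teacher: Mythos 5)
Your proposal is correct and follows essentially the same route as the paper: the paper's own (sketched) proof likewise reduces everything to Theorem \ref{Th2.main} and Propositions \ref{Prop1.inv} and \ref{Prop2.rep} applied to the extended semigroup $\Bbb S(t)$ on $\Bbb P=\Phi\times\Psi$, using the equivalence between uniformly attracting sets of the cocycle and $\Bbb B_{ext}$-attracting sets of $\Bbb S(t)$, and proving minimality of $\Pi_1\Bbb A$ by exactly your argument with $\Cal A'\times\Psi$. You merely make explicit the bookkeeping the paper calls "immediate" (stability of $\Bbb B_{ext}$ under inclusions, $\Cal B\times\Psi\in\Bbb B_{ext}$, and the identification of complete $\Bbb B_{ext}$-bounded trajectories of $\Bbb S(t)$ with pairs $(u(\cdot),T(\cdot)\xi)$, $u\in\Cal K_\xi$), all of which is accurate.
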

\begin{proof}[Sketch of the proof] All of the statements of the theorem are straightforward corollaries of the key Theorem \ref{Th2.main} as well as Propositions \ref{Prop1.inv} and \ref{Prop2.rep}. Indeed, if $\Cal B$ is a uniformly attracting set for the cocycle $\Cal S_\xi(t)$ then $\frak B:=\Cal B\times\Psi$ is a compact attracting set for the extended semigroup $\Bbb S(t):\Bbb P\to\Bbb P$. Thus, the existence of the attractor $\Bbb A$ is verified. The fact that $\Cal A_{un}=\Pi_1\Bbb A$ is the desired unform attractor is also straightforward. Indeed, compactness and the attraction property are obvious and we only need to check the minimality. Let $\Cal B_1$ be another uniformly attracting set, then by minimality of $\Bbb A$, we have $\Bbb A\subset\Cal B_1\times\Psi$ and $\Cal A_{un}\subset \Cal B_1$. The rest statements are immediate corollaries of Propositions \ref{Prop1.inv} and \ref{Prop2.rep}.
\end{proof}
Note that the definition of a uniform attractor given above does not require any topology on the space $\Psi$. Moreover, its existence can be obtained based on the existence of a uniformly attracting set, namely, as in the autonomous case, we may define the uniform $\omega$-limit set
\begin{equation}
\omega_{un}(B):=\cap_{T\ge0}[\cup_{\xi\in\Psi}\cup_{t\ge T}\Cal S_\xi(t)B]_{\Phi}
\end{equation}
and construct the uniform attractor via $\Cal A_{un}:=[\cup_{B\in\Bbb B}\omega_{un}(B)]_{\Phi}$, see \cite{ChVi02} for the details. Then the compactness of $\Psi$ (as well as the topology on it) is not necessary to get a uniform attractor. In particular, we even may construct a uniform attractor for a single DP $U(t,\tau):\Phi\to\Phi$ by introducing  the group of shifts $T(h):\Psi\to\Psi$ acting on the space $\Psi=\R$  via $T(h)\xi:=\xi+h$ and defining the corresponding cocycle $\Cal S_\xi(t):=U(t+\xi,\xi)$. This corresponds to the trivial reduction of a non-autonomous equation
$
y'=f(y,t)
$
to the autonomous system
$$
\begin{cases} \dot y=f(t,y),\\ \dot t=1.\end{cases}
$$
The drawback of this approach is that it does not give any information on the structure of the uniform attractor (e.g. the representation formula \eqref{4.rep-un} fails without the compactness of $\Psi$) and it is not clear how to relate the attractor $\Cal A_{un}$ with the solutions of the considered PDE.  Thus, if we want to have the representation formula \eqref{4.rep-un}, we need to consider not only all time shifts of our initial equation, but also their limits in the appropriate topology. The next proposition shows that, under natural assumptions, taking this closure does not affect the size of the attractor.
\begin{proposition}\label{Prop4.closure} Let $\Phi$ and $\Psi$ be two Hausdorff topological spaces and let $\Cal S_\xi(t):\Phi\to\Phi$ be a cocycle over DS $T(h):\Psi\to\Psi$. Assume also that $\Psi_0\subset\Psi$ be a dense and invariant with respect to $T(h)$, $h\in\R$, set and that the map $\xi\to \Cal S_\xi(t)u_0$ is continuous for every fixed $t$ and $u_0\in\Phi$.
Then the uniform attractors $\Cal A_{\Psi}$ and $\Cal A_{\Psi_0}$ of $\Cal S_\xi(t)$ considered as the cocycles over $T(h):\Psi\to\Psi$ and $T(h):\Psi_0\to\Psi_0$ exist or do not exist simultaneously and coincide:
\begin{equation}
\Cal A_{\Psi}=\Cal A_{\Psi_0}.
\end{equation}
\end{proposition}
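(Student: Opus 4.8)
The plan is to reduce the whole statement to the coincidence of the uniform $\omega$-limit sets attached to the two index sets, the only analytic input being the density of $\Psi_0$ in $\Psi$ together with the assumed continuity of $\xi\mapsto\Cal S_\xi(t)u_0$. Throughout I write $\omega_{un}^{\Psi}(B)$ and $\omega_{un}^{\Psi_0}(B)$ for the uniform $\omega$-limit sets formed with $\xi$ ranging over $\Psi$ and over $\Psi_0$ respectively.

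First I would fix $B\in\Bbb B$, a time $t\ge0$ and a point $u_0\in B$, and consider the continuous map $F_t:\xi\mapsto\Cal S_\xi(t)u_0$ from $\Psi$ to $\Phi$. Since $\Psi_0$ is dense, $\Psi=[\Psi_0]_\Psi$, and because a continuous image of the closure of a set is contained in the closure of its image, $F_t(\Psi)\subset[F_t(\Psi_0)]_\Phi$, i.e.
$$
\cup_{\xi\in\Psi}\Cal S_\xi(t)u_0\subset\big[\cup_{\xi\in\Psi_0}\Cal S_\xi(t)u_0\big]_\Phi.
$$
Taking the union over $u_0\in B$ and over $t\ge T$, and then passing to the closure (which does not enlarge the already closed right-hand side), I obtain for every $T\ge0$
$$
\big[\cup_{\xi\in\Psi}\cup_{t\ge T}\Cal S_\xi(t)B\big]_\Phi=\big[\cup_{\xi\in\Psi_0}\cup_{t\ge T}\Cal S_\xi(t)B\big]_\Phi,
$$
the reverse inclusion being trivial from $\Psi_0\subset\Psi$. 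Intersecting over $T\ge0$ gives $\omega_{un}^{\Psi}(B)=\omega_{un}^{\Psi_0}(B)$ for every $B\in\Bbb B$, whence, by the representation $\Cal A_{un}=[\cup_{B\in\Bbb B}\omega_{un}(B)]_\Phi$, the equality $\Cal A_\Psi=\Cal A_{\Psi_0}$ holds as soon as both attractors exist.

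It then remains to see that existence is simultaneous, i.e. that the cocycle admits a compact uniformly attracting set $\Cal B$ over $\Psi$ if and only if it admits one over $\Psi_0$. One implication is immediate, since uniform attraction over $\Psi$ is a stronger requirement than over $\Psi_0$. For the converse I would start from the inclusion above, which already gives $\cup_{\xi\in\Psi}\cup_{t\ge T}\Cal S_\xi(t)B\subset[\cup_{\xi\in\Psi_0}\cup_{t\ge T}\Cal S_\xi(t)B]_\Phi$; thus if $\Cal B$ is uniformly $\Psi_0$-attracting and the $\Psi_0$-images lie in an open neighbourhood for $t\ge T$, the $\Psi$-images for $t\ge T$ lie in its closure. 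To upgrade this to genuine attraction into a prescribed neighbourhood $\Cal O(\Cal B)$, I would apply the $\Psi_0$-attraction not to $\Cal O(\Cal B)$ itself but to a smaller closed neighbourhood $N$ with $\Cal B\subset\operatorname{int}N\subset N\subset\Cal O(\Cal B)$, so that the $\Psi$-images end up in $[\operatorname{int}N]_\Phi\subset N\subset\Cal O(\Cal B)$, establishing uniform $\Psi$-attraction of the same set $\Cal B$.

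The delicate point---and the main obstacle---is precisely the existence of such a closed intermediate neighbourhood $N$: in an arbitrary Hausdorff space a compact set need not admit a closed neighbourhood base inside a given open set, this being a regularity property rather than a consequence of the Hausdorff axiom alone. This is where I would invoke that in all the situations of interest $\Phi$ is metrizable (or at least regular, or the relevant absorbing set is metrizable, as is typical once a dissipative estimate is available), which guarantees that $N$ exists and closes the argument. The coincidence step of the first two paragraphs, by contrast, is robust and uses only density and sequential/topological continuity, so it is the existence-equivalence step that carries all the topological weight.
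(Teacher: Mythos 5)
Your core argument coincides with the paper's: the survey's entire proof consists of the claim that a set $\Cal B$ is a compact uniformly attracting set for the cocycle over $\Psi$ if and only if it is one over $\Psi_0$ (simultaneous existence follows at once, and coincidence follows by minimality), and your existence-equivalence step is precisely a proof attempt of that claim, based on the inclusion $\cup_{\xi\in\Psi}\Cal S_\xi(t)u_0\subset\big[\cup_{\xi\in\Psi_0}\Cal S_\xi(t)u_0\big]_\Phi$ supplied by density and continuity. Your first two paragraphs (coincidence of the uniform $\omega$-limit sets, hence of the attractors once both exist) are a correct and worthwhile supplement: that part works in full Hausdorff generality and needs only the representation $\Cal A_{un}=\big[\cup_{B\in\Bbb B}\omega_{un}(B)\big]_\Phi$, which the paper records just before the proposition.

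The regularity point you flag is genuine, and it is exactly what the paper's one-line proof ("it is not difficult to show using the continuity that\dots") glosses over. Hausdorffness alone does not give, for a compact $\Cal B$ and open $\Cal O\supset\Cal B$, an open $V$ with $\Cal B\subset V\subset[V]_\Phi\subset\Cal O$; this is a regularity-type property (it already fails for a single point in $\R$ when the usual topology is refined by declaring the complement of $\{1/n\}_{n\in\Bbb N}$ to be open), so the upgrade from "the $\Psi$-images eventually lie in the closure of every neighbourhood of $\Cal B$" to genuine uniform $\Psi$-attraction is not free, and I do not see how to obtain it from the stated hypotheses alone (one could alternatively close the gap by assuming some tail set $\big[\cup_{\xi\in\Psi_0}\cup_{t\ge T}\Cal S_\xi(t)B\big]_\Phi$ is compact and using the finite intersection property against $\omega_{un}^{\Psi_0}(B)\subset\Cal B$, but such asymptotic compactness is likewise not assumed). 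Your resolution is the right one and is costless in context: in every application in the survey, $\Phi$ is a Banach space with its norm, weak or weak-star topology, i.e. a topological vector space topology, hence completely regular, and then the closed intermediate neighbourhood exists and your argument closes. So the verdict is: same approach as the paper, correctly executed, together with an honest and well-founded caveat about the purely Hausdorff formulation that the paper's own sketch does not address.
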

\begin{proof}[Sketch of the proof] Indeed, it is not difficult to show using the continuity that a set $\Cal B$ is a compact uniformly attracting set for the cocycle $\Cal S_\xi(t)$, $\xi\in\Psi$ if and only if it is a compact uniformly attracting set for $\Cal S_\xi(t)$, $\xi\in\Psi_0$, and this gives the desired result, see e.g. \cite{ChVi02}.
\end{proof}
We see that in order to get the key representation formula \eqref{4.rep-un}, we need to take a closure of time shifts of the considered DP  in a topology which, on the one hand, makes the closure a compact Hausdorff topological space and, on the other hand, preserves the continuity. This is a non-trivial task which does not always have a positive solution  (in this case, the representation formula will be lost), see e.g.,  \cite{SZ20}. We restrict ourselves to consider here only an important particular case (well adapted to the study of equations with additive non-autonomous external forces), where this problem possesses a more or less complete solution. Namely, we assume that $\Phi$ is a separable reflexive Banach space and the elements of $\Psi$ which represent the non-autonomous external forces are functions $\xi:\R\to H$ with values in the other separable reflexive Banach space $H$. Moreover, assume that we start from a given external force $\xi_0$ which satisfies
\begin{equation}\label{4.tr-b}
\xi_0\in L^p_b(\R,H)
\end{equation}
for some $1<p<\infty$. Then, due to the Banach-Alaoglu theorem, any bounded set in $L^p_{loc}(\R,H)$ is pre-compact in the weak topology. Therefore, the hull
\begin{equation}\label{5.w-hull}
\Cal H(\xi_0):=\left[T(h)\xi_0,\ h\in\R\right]_{L_{loc}^{p,w}(\R,H)}
\end{equation}
is a {\it compact} subset of $L^{p,w}_{loc}(\R,H)$, so we may naturally take $\Psi=\Cal H(\xi_0)$. The scheme works as follows: we start with equation \eqref{4.npde} with a given non-autonomous external force $\xi_0\in L^p_b(\R,H)$ and consider the whole family of similar problems generated by its shifts in time and the proper limits. Namely, we consider the family
\begin{equation}\label{4.npde-h}
\Dt u=A(u,\xi(t)),\ \ \xi\in\Psi:=\Cal H(\xi_0),\ \ u\big|_{t=0}=u_0\in\Phi.
\end{equation}
If these equations are uniquely solvable in the proper sense, they define a
cocycle $\Cal S_\xi(t):\Phi\to\Phi$ over the group $T(h):\Psi\to\Psi$ of time shifts. We fix weak topologies on both spaces $\Phi$ and $\Psi$ and let the bornology $\Bbb B$ on $\Phi$ consists of all bounded subsets of the Banach space $\Phi$. Since $\Phi$ is reflexive, any $B\in\Bbb B$ is precompact in a weak topology, so if we find a uniformly absorbing/attracting  set $\Cal B\in\Bbb B$ for the considered cocycle $\Cal S_\xi(t)$, the closed convex hull of it will be a bounded and compact uniformly absorbing set. This, together with Theorem \ref{Th4.main4} gives us the following result,
\begin{corollary}\label{Cor4.weak} Let the above assumptions hold and let the cocycle $\Cal S_\xi(t):\Phi\to\Phi$ over $T(h):\Psi\to\Psi$ with $\Psi=\Cal H(\xi_0)$ possesses a bounded uniformly absorbing set. Assume also that the map $(u_0,\xi)\to\Cal S_\xi(t)u_0$ is continuous for every fixed $t\ge0$. Then this cocycle possesses a uniform attractor $\Cal A_{un}^w$ in a weak topology of $\Phi$ and this attractors can be described as follows:
\begin{equation}\label{4.w-rep}
\Cal A_{un}^w=\cup_{\xi\in\Cal H(\xi_0)}\Cal K_\xi\big|_{t=0},
\end{equation}
where $\Cal K_\xi$ is a bounded kernel of the DP $U_\xi(t,\tau)$.
\end{corollary}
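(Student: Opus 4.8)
The plan is to verify the hypotheses of Theorem \ref{Th4.main4} for the extended semigroup $\Bbb S(t):\Bbb P\to\Bbb P$ on $\Bbb P=\Phi\times\Psi$, where both $\Phi$ and $\Psi=\Cal H(\xi_0)$ carry weak topologies. The three things to check are: (i) $\Psi$ is a compact Hausdorff space; (ii) the cocycle possesses a compact (not merely bounded) uniformly attracting set; and (iii) the extended map is continuous. Once these hold, Theorem \ref{Th4.main4} delivers both the uniform attractor $\Cal A_{un}^w=\Pi_1\Bbb A$ and, since the absorbing set will lie in $\Bbb B$ and $\Bbb B$ is stable under inclusions, the representation formula \eqref{4.w-rep} directly from \eqref{4.rep-un}.

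First I would address compactness of $\Psi$. By assumption $\xi_0\in L^p_b(\R,H)$ with $1<p<\infty$, so the family of shifts $\{T(h)\xi_0\}_{h\in\R}$ is bounded in $L^p_{loc}(\R,H)$ uniformly on every compact time interval. Since $H$ is reflexive and $1<p<\infty$, the space $L^p((-N,N),H)$ is reflexive for each $N$, and the Banach--Alaoglu theorem (together with the Eberlein--\v Smulian theorem, as invoked in the remark following Proposition \ref{Prop2.rep}) shows that bounded sets are weakly precompact. Taking the weak-local closure as in \eqref{5.w-hull} and using a diagonal argument over $N\to\infty$ yields that $\Cal H(\xi_0)$ is compact in $L^{p,w}_{loc}(\R,H)$; it is Hausdorff because the weak topology on a normed space is Hausdorff. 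The shift group $T(h)$ acts on $\Psi$ by construction. Next, to produce a \emph{compact} uniformly attracting set in $\Phi$ I would start from the bounded uniformly absorbing set $\Cal B_0\in\Bbb B$ guaranteed by hypothesis and replace it by $\Cal B:=[\overline{\mathrm{conv}}\,\Cal B_0]^{w}$, its weakly closed convex hull. This set is still bounded, hence (by reflexivity of $\Phi$, again via Eberlein--\v Smulian) weakly compact, and it remains uniformly absorbing since it contains $\Cal B_0$; weak compactness upgrades it to a compact uniformly attracting set in the weak topology of $\Phi$. Thus $\Cal B\times\Psi$ is a compact attracting set for $\Bbb S(t)$ with respect to $\Bbb B_{ext}$, verifying the central hypothesis of Theorem \ref{Th4.main4}.

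The main obstacle I expect is the continuity hypothesis of Theorem \ref{Th4.main4}: one must show that $(u_0,\xi)\mapsto(\Cal S_\xi(t)u_0,\,T(t)\xi)$ is continuous on $\Bbb P$ for each fixed $t\ge0$, where both factors carry weak topologies. Continuity of $\xi\mapsto T(t)\xi$ in the weak-local topology is immediate, as the time shift is a weakly continuous linear isometry on $L^p_{loc}$. The genuinely delicate part is joint weak continuity of the solution map $(u_0,\xi)\mapsto \Cal S_\xi(t)u_0$, which the corollary takes as an assumption, so strictly speaking I may invoke it directly; in concrete PDE applications this is the step where one passes to the limit in the weak formulation of \eqref{4.npde-h} along weakly convergent initial data and weakly convergent external forces, and it is precisely here that the weak topology is chosen so that the nonlinear term $A(u,\xi)$ behaves well under weak limits (typically requiring weak continuity of the linear and weak lower semicontinuity/compensated-compactness handling of the nonlinearity). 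Since the absorbing set $\Cal B$ is bounded, the weak topologies on $\Cal B$ and on $\Psi$ restricted to the absorbing region are metrizable (separability of $\Phi$ and $H$), so it suffices to check \emph{sequential} weak continuity there, which is the form in which such limit passages are naturally established.

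Finally, with (i)--(iii) in hand, Theorem \ref{Th4.main4} yields the attractor $\Bbb A\subset\Cal B\times\Psi$ of $\Bbb S(t)$, its strict invariance, and the representation $\Cal A_{un}=\cup_{\xi\in\Psi}\Cal K_\xi\big|_{t=0}$. Since $\Cal B\in\Bbb B$ and $\Bbb B$ is the (inclusion-stable) bornology of all bounded subsets of $\Phi$, the kernels $\Cal K_\xi$ appearing here are exactly the bounded complete trajectories of the DP $U_\xi(t,\tau)$, giving formula \eqref{4.w-rep} with $\Psi=\Cal H(\xi_0)$. This completes the proof, modulo the continuity input discussed above.
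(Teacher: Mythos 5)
Your proposal is correct and follows essentially the same route as the paper: compactness of the hull $\Cal H(\xi_0)$ in $L^{p,w}_{loc}(\R,H)$ via Banach--Alaoglu, upgrading the bounded uniformly absorbing set to a weakly compact one by passing to its closed convex hull (using reflexivity of $\Phi$), and then invoking Theorem \ref{Th4.main4} together with the assumed continuity (plus the obvious weak continuity of the shift $\xi\mapsto T(t)\xi$) and the inclusion-stability of the bornology of bounded sets to obtain both the attractor and the representation formula \eqref{4.w-rep}. This is precisely the argument the paper gives in the paragraph preceding the corollary.
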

Note that, as usual, the continuity assumption can be replaced by the closed graph assumption.
\par
We now turn to the case of strong attractors, so we want to fix a {\it strong} topology on the Banach space $\Phi$. Then, according to the general theory, the existence of a uniform attractor will be guaranteed if we find a compact (in a strong topology) uniformly attracting set for the cocycle $\Cal S_\xi(t)$ associated with \eqref{4.npde-h}. As elementary examples show, assumption \eqref{4.tr-b} is usually {\it not enough} to get this compactness, so some extra assumptions are needed, see \cite{Z15}. The most natural would be to assume that the hull $\Cal H(\xi_0)$ is compact not only in a weak topology of $L^p_{loc}(\R,H)$, but also in the {\it strong} topology. The functions $\xi_0\in L^p_b(\R,H)$ which satisfy this extra assumption are called {\it translation-compact} ($\xi_0\in L^p_{tr-c}(\R,H)$).  Then $\Psi$ is compact in the strong topology as well and we can fix the strong topology in both spaces $\Phi$ and $\Psi$ and get a complete analogue of Corollary \ref{Cor4.weak} for the  case of strong topology as  well. This scheme, which works for many important dissipative PDEs, is studied in details in \cite{ChVi02}, see also \cites{ChVi95,MZ08}, so we will not give more details here. Instead, we discuss a bit more delicate case where we take strong topology on $\Phi$, but try to keep
weak topology on the hull $\Psi=\Cal H(\xi_0)$. The possibility to get the strong uniform attractors for non-translation-compact external forces has been indicated in \cite{LWZ05}, see also \cites{Zel04b,Lu06,Lu07,MZS09,MCL08,MaZ07,Z15}. Then we still have the analogue of Corollary \ref{Cor4.weak}, since we take strong and weak topologies on the first and second components of the Cartesian product $\Bbb P:=\Phi\times\Cal H(\xi_0)$, so Theorem \ref{Th4.main4} is still applicable if we have a compact uniformly attracting set for the associated cocycle. Moreover, it is not difficult to show that in this case the uniform attractor in the strong topology coincides with already constructed $\Cal A_{un}^w$:
$$
\Cal A_{un}^s=\Cal A_{un}^w
$$
and we may use \eqref{4.w-rep} to describe the structure of the strong attractor $\Cal A_{un}^s$. Note that we need not to verify the continuity of maps $(\xi,u_0)\to\Cal S_\xi(t)u_0)$ with respect to the weak topology on $\Psi$ and the strong one on $\Phi$ (which is usually not true) and may check continuity (or the closeness of the graph) only in weak topologies, see \cite{Z15} for more details.
\par
To continue, we  need to introduce some classes of the external forces.
\begin{definition} Let $\xi_0\in L^p_b(\R,H)$, where $H$ is a reflexive Banach space and $1<p<\infty$. We say that $\xi_0$ is time regular if there exists a sequence $\xi_n\in C^1_b(\R,H)$ such that
\begin{equation}\label{4.lim}
\lim_{n\to\infty}\|\xi_n-\xi_0\|_{L^2_b(\R,H)}=0.
\end{equation}
We denote the class of such functions by $L^2_{t-reg}(\R,H)$.
\par
Analogously, we say that $\xi_0$ is space regular, if there exists a sequence of {\it finite-dimensional} Banach spaces $H_n\subset H$ and a sequence of functions $\xi_n\in L^2_b(\R,H_n)$ such that \eqref{4.lim} holds. The class of such functions is denoted by $L^p_{sp-reg}(\R,H)$.
\par
The function $\xi_0\in L^p_b(\R,H)$ is called normal if
$$
\lim_{\tau\to0}\sup_{t\in\R}\int_t^{t+\tau}\|\xi_0(s)\|^p_{H}\,ds=0.
$$
The class of such functions is denoted by $L^p_{norm}(\R,H)$.
\end{definition}
\begin{remark} It is shown in \cite{Z15} that
$$
L^p_{tr-c}(\R,H)=L^p_{t-reg}(\R,H)\cap L^p_{sp-reg}(\R,H),
$$
and both spaces in the right-hand side are strictly larger than the space of translation-compact external forces. The machinery for obtaining  uniform attractors in a strong topology for various PDEs (including damped wave equations, reaction-diffusion ones, etc.) with non-autonomous external forces from these classes is also presented there.
\par
Note that the assumption $1<p<\infty$ is crucial for the theory since it guarantees the reflexivity of the space $L^p_{loc}(\R,H)$ and the possibility to use the Banach-Alaoglu theorem. However, the case $p=1$ is also interesting from the point of view of applications especially for wave equations where it is naturally related with Strichartz estimates, see \cites{BSS09,BLP08,KSZ16,YKSZ23}. The situation here is much more delicate since starting from a regular function $\xi_0\in L^1_b(\R,H)$, we may easily get an $H$-valued {\it measure} when taking the closure in the proper topology. In turn, it naturally leads to DP with non-continuous in time trajectories, see \cite{SZ20}. The theory developed there is based on presenting a function $\xi_0\in L^1_b(\R,H)$ as a regular Borel measure $\xi_0\in M_b(\R,H)$. Then using the fact that $M(0,1;H)=[C(0,1;H)]^*$, we may endow the space $M_b(\R,H)$ with the local $w^*$-topology and consider the hull $\Cal H(\xi_0)$ in this topology. This is the way how to restore the compactness of the hull, but as the price to pay, we may lose the continuity of the map $\xi\to\Cal S_\xi(t)$, see \cite{SZ20} for more details.
\end{remark}
We illustrate the theory by the example of 2D Navier-Stokes system with non-autonomous normal external forces.
\begin{example}\label{Ex4.NS-norm} Let us consider the following system
\begin{equation}\label{4.NS}
\Dt u+(u,\Nx)u+\Nx p=\nu\Dx u+\xi_0(t),\ \ \divv u=0, \ u\big|_{t=0}=u_0.
\end{equation}
in a bounded 2D domain endowed with Dirichlet boundary conditions. We will use the notations of subsection \ref{s4.NS} adapted to the 2D case. The initial data $u_0$ is taken from the phase space $\Phi$ which is the closure of the space $\Cal V$ of divergence free test functions in the $L^2$-norm and the spaces $V$ and $V^{-1}$ are defined analogously and we assume that $\xi_0\in L^2_b(\R,V^{-1})$. Also we define a weak energy solution of \eqref{4.NS} exactly as in subsection \ref{s4.NS}.
\par
It is also well-known that, in contrast to the 3D case, in the 2D case, a weak energy solution is unique and satisfies the energy {\it identity}:
\begin{equation}\label{4.NS-id}
\frac12\frac d{dt}\|u(t)\|^2_\Phi+\nu\|\Nx u(t)\|^2_{L^2}=(\xi_0(t),u(t)),
\end{equation}
see \cites{BV92,tem} for the details. This identity gives us the dissipative estimate
\begin{equation}\label{4.NS-dis}
\|u(t)\|^2_\Phi+\nu\int_0^te^{-\beta(t-s)}\|\Nx u(s)\|^2\,ds\le \|u(0)\|_{\Phi}^2e^{-\beta t}+C\|\xi_0\|^2_{L^2_b(\R,V^{-1})},
\end{equation}
for some positive constants $\beta$ and $C$.
We now consider the hull $\Psi=\Cal H(\xi_0)$ of the external force $\xi_0$ in $L^{2,w}_{loc}(\R,H)$ and for every $\xi\in\Psi$, we define a map $\Cal S_\xi(t):\Phi\to\Phi$ as a solution operator at time moment $t$ for problem \eqref{4.NS} where $\xi_0$ is replaced by $\xi$. Obviously, $\Cal S_\xi(t)$ is a cocycle over $T(h):\Psi\to\Psi$ and the symbol space $\Psi$ is compact if we fix a weak topology on it. We also fix a standard bornology $\Bbb B$ which consists of all bounded subsets of a Banach space $\Phi$. Moreover, it is not difficult to see that estimate \eqref{4.NS-dis} is uniform with respect to $\xi\in\Cal H(\xi_0)$, see e.g. \cite{ChVi02}, so by Banach-Alaoglu theorem, the closed ball $\Cal B_R:=\{u_0\in\Phi\,,\ \|u_0\|_\Phi\le R\}\in\Bbb B$ will be a compact uniformly absorbing set for this cocycle if $R$ is large enough. The continuity of the map $(\xi,u_0)\to \Cal S_\xi(t)u_0$ in the chosen weak topologies on $\Phi$ and $\Psi$ is also straightforward and, due to Corollary \ref{Cor4.weak}, this cocycle possesses a uniform attractor $\Cal A_{un}^w$ which enjoys the representation formula \eqref{4.w-rep}.
\par
We now turn to the case of {\it strong} topology in $\Phi$. Since the existence of a uniform attractor $\Cal A_{un}^w$ in a weak topology of $\Phi$ together with the representation formula is already established, we only need to find a compact (in the strong topology of $\Phi$) uniformly attracting (or even absorbing) set for this cocycle. We assume, in addition, that $\xi_0\in L^2_{norm}(\R,V^{-1})$ and claim that the following set:
\begin{equation}
\Cal B:=\{S_\xi(1)\Cal B_R,\ \ \xi\in\Psi\}
\end{equation}
is such a set. Indeed, from estimate \eqref{4.NS-dis} we see that $\Cal B$ is a bounded uniformly absorbing set, so we only need to verify the compactness. To this end, we use the energy method. Namely, we consider an arbitrary sequences $\xi_n\in\Psi$ and $u_0^n\in\Cal B_R$ and the sequence of the corresponding solutions $u_n(t):=\Cal S_{\xi_n}(t)u_n$. Without loss of generality, we may assume that $\xi_n\to\xi$ and $u_0^n\to u_0$ in a weak topology and, due to the weak continuity of $\Cal S_\xi(t)$, we conclude that $u_n(1)\rightharpoondown u(1)$ where $u(t)=\Cal S_\xi(t)u_0$. Thus, we only need to prove that $u_n(1)\to u(1)$ in the strong topology of~$\Phi$.
In turn, this will be proved if we check that $\|u_n(1)\|_\Phi^2\to\|u(1)\|^2_\Phi$. To get this convergence we rewrite  the energy identity \eqref{4.NS-id} in the form of
$$
\frac d {dt}(t\|u_n(t)\|^2_\Phi)+N(t\|u_n(t)\|^2_{\Phi})+2\nu t\|\Nx u_n(t)\|^2_{L^2}=(Nt+1)\|u_n(t)\|^2_\Phi+2t(u_n(t),\xi_n(t)),
$$
where $N$ is an arbitrary positive number, multiply it by $e^{Nt}$ and integrate over $t\in[0,1]$ to get the integral identity
\begin{multline}\label{4.en-NS}
\|u_n(1)\|^2_\Phi+2\nu\int_0^1e^{-N(1-t)}t\|\Nx u_n(s)\|^2_{L^2}\,ds=\\ \int_0^1e^{-N(1-t)}(Nt+1)\|u_n(t)\|^2_\Phi\,ds+
2\int_0^1e^{-N(1-t)}(Nt+1)(u_n(t),\xi_n(t))\,ds.
\end{multline}
We want to pass to the limit $n\to\infty$ in \eqref{4.en-NS}. Using the compactness lemma and arguing in a standard way, see e.g. \cite{Z15}, we conclude that $u_n\to u$ strongly in $L^2(0,1,\Phi)$, so the passage to the limit in the first term in the RHS is straightforward. To pass to the limit in the second term in the LHS we use the weak lower semicontinuity of convex functions, so it only remains to estimate the last term in the RHS.  To this end, we use the key property of normal functions, namely, that
$$
\lim_{N\to\infty}\sup_{\xi\in\Cal H(\xi_0)}\sup_{t\in\R}\int_0^t e^{-N(t-s)}\|\xi(s)\|_{V^{-1}}^2\,ds=0,
$$
see e.g. \cite{Z15}. Using this fact together with the uniform boundedness of $u_n$ in $L^2(0,1;V)$, we see that, for every $\eb>0$ there exists $N=N(\eb)$ such that
$$
\big|\int_0^1e^{-N(1-t)}(u_n(t),\xi_n(t))\,dt\big|\le \eb
$$
and the same is true for the limit functions $u$ and $\xi$. Passing to the limit $n\to\infty$ in \eqref{4.en-NS}, we now get
\begin{multline}\label{4.en-NS1}
\limsup_{n\to\infty}\|u_n(1)\|^2_\Phi+2\nu\int_0^1e^{-N(1-t)}t\|\Nx u(s)\|^2_{L^2}\,ds\le \\ \le \int_0^1e^{-N(1-t)}(Nt+1)\|u(t)\|^2_\Phi\,ds+
2\eb
\end{multline}
and the comparison with the analogue of \eqref{4.en-NS} for the limit functions $u$ and $\xi$ gives
\begin{equation}\label{4.en-NS2}
\|u(1)\|^2_{\Phi}\le\liminf_{n\to\infty}\|u_n(1)\|^2_\Phi\le \limsup_{n\to\infty}\|u_n(1)\|^2_\Phi\le \|u(1)\|^2_\Phi+4\eb,
\end{equation}
where the first inequality is again a weak lower semicontinuity for a convex function. Finally, passing to the limit $\eb\to0$, we arrive to
$$
\lim_{n\to\infty}\|u_n(1)\|^2_\Phi=\|u(1)\|^2_\Phi
$$
which finishes the proof that $\Cal B$ is compact. A general theory now gives the existence of the uniform attractor $\Cal A^s_{un}$ in the strong topology of $\Phi$ and its coincidence with $\Cal A^w_{un}$.
\end{example}
\begin{remark} It is straightforward to see that, $L^2_{t-reg}(\R,H)\subset L^2_{norm}(\R,H)$, so the obtained result immediately gives the existence of a strong uniform attractor for the case of time-regular external forces, but gives nothing for the space-regular ones. The concept of a normal function can be weakened (following  \cite{Lu06}, see also \cite{Z15}), namely, the function $\xi_0\in L^p_b(\R,H)$ is {\it weakly normal} ($\xi_0\in L^p_{w-norm}(\R,H)$) if for every $\eb>0$ there exists a finite-dimensional subspace $H_\eb\subset H$ and a function $\xi_\eb\in L^p_b(\R,H_\eb)$ such that
$$
\limsup_{h\to0}\sup_{t\in\R}\int_t^{t+h}\|\xi_0(s)-\xi_\eb(s)\|^p_H\,ds\le \eb.
$$
Then, on the one hand
$$
L^p_{t-reg}(\R,H)+L^p_{s-reg}(\R,H)\subset L^p_{w-norm}(\R,H),
$$
so the class of weakly regular functions includes both space and time regular functions. On the other hand, the method presented in the previous example can be easily extended to the case of weakly normal external forces $\xi_0\in L^2_{w-norm}(\R,V^{-1})$, see \cite{Z15} for the details. In particular, this gives a unified proof of the existence of a strong uniform attractor for 2D Navier-Stokes with space regular or time regular external forces.
\par
We however note that the class of normal external forces is mainly adapted to {\it parabolic} equations and, in contrast to space or time regularity, the normality of the external forces in the proper space is not sufficient to get a strong uniform attractor for, say, damped wave equations, see \cite{Z15}.
\end{remark}
\begin{remark} To conclude this section, we briefly mention that general theorems \ref{Th2.main} and \ref{Th4.main} are well adapted also for developing the theory of {\it trajectory} attractors for problems in the form of \eqref{4.npde-h} without the uniqueness of solutions. The detailed exposition of this topic can be found in \cite{ChVi02}, so we just explain schematically the main ideas. To this end, we introduce, for every $\xi\in\Cal H(\xi_0)$, the corresponding set $\Cal K^+_\xi$ of solutions of \eqref{4.npde-h} defined on a semi-interval $\R_+$. Analogously to the autonomous case, this may be the set of all weak solutions or some subset of it consisting of some special solutions, but we need to satisfy the key invariance assumption:
$$
T(h):\Cal K^+_\xi\subset\Cal K^+_{T(h)\xi},\ \ \xi\in\Cal H(\xi_0),\ \ t\ge0.
$$
Then we define the set $\Cal K^+:=\cup_{\xi\in\Cal H(\xi_0)}\Cal K^+_\xi$ and consider the trajectory DS $(T(h),\Cal K^+)$ which gives the trajectory analogue  to the extended DS \eqref{4.eDS}. Then we may construct an attractor for this trajectory DS by verifying the conditions of Theorem \ref{Th2.main} and this gives us a {\it uniform} trajectory attractor for problem \eqref{4.npde-h}, see \cite{ChVi02} for more details.
\par
Alternatively, we may consider maps $T(h):\Cal K_\xi^+\to\Cal K_{T(h)\xi}^+$, $h\ge0$, as a cocycle over the DS $T(h):\Cal H(\xi_0)\to\Cal H(\xi_0)$, $h\in\R$. Then, fixing some topologies on the spaces $\Cal K^+_\xi$, we get a family of DP $U_\xi(t,\tau):=T(t-\tau):\Cal K^+_{T(\tau)\xi}\to\Cal K^+_{T(t)\xi}$ and may use the key Theorem \ref{Th4.main} in order to construct the trajectory analogues of pullback attractors, see \cite{ZZ08} for more details.
\end{remark}

\section{Dimensions of the attractor}\label{s5}
In this section, we start to discuss the finite-dimensionality of attractors related with dissipative PDEs. Note from the very beginning that attractors are usually not regular, but fractal subsets of the phase space, so we need to use the proper generalizations of a dimension which are suitable for fractal sets. Actually there are many such generalizations like Lebesgue covering, Hausdorff, fractal, Lyapunov, Assaud dimensions, etc. In general, all of them may be different, see e.g. \cites{EKZ13,Rob11} and also Example \ref{Ex1.2D1}.
\par
One of the main motivations to study the dimensions of attractors is to give a rigorous justification of the heuristic idea that despite the infinite-dimensionality of the initial phase space, the limit dynamics of many important dissipative PDEs is essentially finite-dimensional and can be described by finitely many parameters (the order parameters in the terminology of I. Progogine, see \cite{Pri77}) whose evolution is governed by  a system of ODEs. This finite-dimensional reduction would allow us to reduce the study, say,   turbulence which is described by Navier-Stokes equations to a system of ODEs which can be further investigated by the methods of classical dynamics. Unfortunately, despite many efforts in this direction, the above mentioned finite-dimensional reduction remains a "mystery" and the existing theory is still far from being complete, see \cites{EKZ13,Zel14,Rob11} and references therein for more details.
\par

\subsection{Man\'e projection theorem and finite-dimensional reduction}

In this subsection we restrict ourselves to the most studied case of a fractal dimension.

\begin{definition}\label{Def5.frac} Let $\Cal A$ be a (pre)compact set in a metric space $\Phi$. Then, by the Hausdorff criterion, for any $\eb>0$ it can be covered by finitely many balls of radius $\eb$ in $\Phi$. Let $N_\eb(\Cal A,\Phi)$ be the minimal number of such balls. Then the Kolmogorov entropy of $\Cal A$ in $\Phi$ is the following number:
$$
\Bbb H_\eb(K,\Phi):=\log_2 N_\eb(K,\Phi),
$$
where the base $2$ in the logarithm comes from the information  theory, see \cite{KT93} and references therein for more details. The (upper) fractal dimension of $K$ in $\Phi$ is defined as follows:
\begin{equation}
\dim_f(K,\Phi):=\limsup_{\eb\to0}\frac{\Bbb H_\eb(K,\Phi)}{\log_2\frac1\eb}.
\end{equation}
It is well-known that $\dim_f(K,\Phi)=n$ if $K$ is an $n$-dimensional Lipschitz manifold, but may be not integer if $K$ has a fractal structure, e.g. $\dim_f(K,[0,1])=\frac{\ln2}{\ln3}$ for the standard ternary Cantor set in $[0,1]$. Roughly speaking, $N_\eb(K,\Phi)\sim \(\frac1\eb\)^\kappa$ if $\dim_f(K,H)=\kappa$. Also mention that it can easily be in the case where $\Phi$ is infinite-dimensional that $\dim_f(K,\Phi)=\infty$. This simply means that $N_\eb(K,\Phi)$ has a stronger divergence rate as $\eb\to0$ than $\(\frac1\eb\)^\kappa$. In this case, the problem of finding/estimating the fractal dimension naturally transforms to the problem of finding the asymptotic behavior of $N_\eb(\Cal A,\Phi)$ as $\eb\to0$, see \cites{KT93,Tri78,Zel04,MZ08} for more details. We also mention that sometimes, instead of covering the set $\Cal A$ by $\eb$-balls, the covering by sets of diameter less than or equal to $\eb$ are used. Although it does not affect the value of the fractal dimension, it may be more suitable for its  estimation since the problem of whether or not the centers of $\eb$-balls belong to $\Cal A$ disappears under this more general setting.
\end{definition}
The applications of the fractal dimension to the above mentioned finite-dimensional reduction problem are based on the following Man\'e projection theorem, see \cite{Man81} and also \cites{HK99,Rob11} and references therein.

\begin{theorem}\label{Th5.mane} Let $\Cal A$ be a compact subset of a Hilbert space $H$ such that $\dim_f(\Cal A,H)\le n$ for some $n\in\Bbb N$. Then the orthoprojector $P_L$ to a "generic" plane $L\subset H$ of dimension $\dim L\ge 2n+1$ is one-to-one on $\Cal A$.
\end{theorem}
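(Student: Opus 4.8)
The plan is to translate the injectivity of $P_L$ into a statement about the difference set $\Cal A-\Cal A:=\{x-y:x,y\in\Cal A\}$ and then to run a dimension-count (transversality) argument over the family of candidate planes $L$. First I would observe that $P_L$ fails to be one-to-one on $\Cal A$ precisely when there exist distinct $x,y\in\Cal A$ with $P_L(x-y)=0$, i.e. when $L^\perp$ meets $(\Cal A-\Cal A)\setminus\{0\}$. Thus the theorem reduces to finding an $N$-dimensional plane ($N=\dim L\ge 2n+1$) whose orthogonal complement avoids every nonzero difference; ``generic'' will then mean that the set of such $L$ is residual (or of full prevalence) in the appropriate Grassmannian.

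The second step is to bound the size of the obstruction. Since the fractal dimension is subadditive under products, $\dim_f(\Cal A\times\Cal A,H\times H)\le 2\dim_f(\Cal A,H)\le 2n$, and since the subtraction map $(x,y)\mapsto x-y$ is Lipschitz it cannot increase fractal dimension; hence $\dim_f(\Cal A-\Cal A,H)\le 2n$. Away from the origin the normalization $d\mapsto d/\|d\|$ is locally Lipschitz, so the set of normalized difference directions $\Sigma$ also has fractal dimension at most $2n$, as one sees by exhausting $(\Cal A-\Cal A)\setminus\{0\}$ by the shells $\{\eb\le\|d\|\le 1\}$ and taking a countable union.

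Third, I would set up the transversality count. Working first in a large but finite-dimensional ambient space $H_M$ (justified below), consider the incidence variety $I:=\{(L,v):v\in\Sigma,\ v\in L^\perp\}$ inside $G_N(H_M)\times\Sigma$. For each fixed unit vector $v$ the condition $v\in L^\perp$ cuts out a subset of the Grassmannian of codimension exactly $N$, since it forces all $N$ directions spanning $L$ to lie in the hyperplane $v^\perp$. Fibering $I$ over $\Sigma$, whose dimension is at most $2n$, gives $\dim I\le \dim G_N(H_M)-N+2n$, and projecting $I$ onto the first factor yields the set $B$ of ``bad'' planes. Because $N\ge 2n+1$ we have $\dim G_N(H_M)-N+2n<\dim G_N(H_M)$, so $B$ has positive codimension and hence measure zero, equivalently its complement is residual. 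Therefore a generic $L$ is good, which is exactly the assertion.

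The main obstacle, and the step requiring real care, is that $H$ is infinite-dimensional, so neither the Grassmannian nor the naive ``dimension count'' is literally available. I would handle this by exploiting compactness of $\Cal A$: approximate $\Cal A$ by its orthogonal projection onto a finite-dimensional $H_M$ so that the tail is smaller than any prescribed $\eb$, run the finite-dimensional transversality argument there, and then pass to the limit, either through a Baire-category argument (writing the good set as a countable intersection of open dense sets indexed by the approximation scale) or through the Hunt--Sauer--Yorke prevalence formalism, which replaces ``measure zero on the Grassmannian'' by a translation-invariant notion of genericity valid in infinite dimensions. A secondary technical point is the behavior of the normalization near $d=0$: one must verify that the accumulation of differences at the origin does not manufacture bad directions surviving in the limit, which again follows from the shell decomposition together with the uniform fractal-dimension bound on $\Cal A-\Cal A$.
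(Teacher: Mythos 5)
The paper itself gives no proof of Theorem \ref{Th5.mane} --- it is quoted from \cite{Man81}, \cite{HK99} and \cite{Rob11} --- so your proposal has to be measured against the standard proofs in those references, whose overall architecture (difference set, dimension bound, genericity count over planes) you do reproduce. However, there is a genuine gap in your second step. You claim that the set $\Sigma$ of normalized difference directions satisfies $\dim_f(\Sigma)\le 2n$ ``by exhausting $(\Cal A-\Cal A)\setminus\{0\}$ by shells and taking a countable union.'' This is false: the fractal (upper box-counting) dimension, unlike the Hausdorff dimension, is \emph{not} stable under countable unions, and in fact $\Sigma$ need not even be totally bounded. Take $\Cal A=\{0\}\cup\{2^{-k}e_k:\ k\in\Bbb N\}$ with $\{e_k\}$ an orthonormal sequence in $H$: this set is compact with $\dim_f(\Cal A,H)=0$, yet $\Sigma$ contains the whole sequence $\{e_k\}$, whose points are pairwise at distance $\sqrt2$, so $\Sigma$ admits no finite $\frac12$-net and its fractal dimension is infinite. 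Consequently the incidence/transversality count in your third step, which takes $\dim\Sigma\le 2n$ as input, collapses as stated.

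The repair is exactly the place where the countable union must actually be performed. For each $k$, the shell $S_k:=\{z\in\Cal A-\Cal A:\ 2^{-k-1}\le\|z\|\le 2^{-k}\}$ is compact and its normalization $\Sigma_k$ is a Lipschitz image of $S_k$, so $\dim_f(\Sigma_k,H)\le 2n$; your codimension count (made quantitative by covering $\Sigma_k$ by $\eb$-balls and estimating the measure of planes $L$ with $L^\perp$ meeting a given ball) then shows that the set $B_k$ of planes killing some difference in $S_k$ is of measure zero, equivalently nowhere dense, equivalently shy in the prevalence sense of \cite{HK99}. The total bad set is $\bigcup_k B_k$, and measure zero, meagerness and shyness --- unlike the box dimension --- \emph{are} countably stable. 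Note that in the counterexample above this route still works: each difference contributes a codimension-$N$ set of bad planes and there are countably many of them, so generic injectivity holds even though $\dim_f(\Sigma)=\infty$. This restructuring also disposes of your worry about differences accumulating at the origin, since no dimension bound on the full direction set is ever needed. Your treatment of infinite-dimensionality (finite-dimensional approximation plus Baire category or prevalence) is the right idea but is left vague; the clean version, as in \cite{Man81} and \cite{Rob11}, works directly in $H$ with the open sets $U_k:=\{L:\ \inf_{z\in S_k}\|P_Lz\|>0\}$, which are open by compactness of $S_k$ and dense by the finite-dimensional count plus a perturbation argument, and the good planes form the residual set $\bigcap_k U_k$.
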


 Thus, since $\Cal A$ is compact, $P_L:\Cal A\to\bar {\Cal A}:=P_L\Cal A\subset L$ is a homeomorphism. If $\Cal A$ is an attractor of a continuous  DS $S(t): H\to H$, then we may project this semigroup to the one acting on a finite-dimensional space $L\sim\R^{2n+1}$:
 \begin{equation}\label{5.DS-red}
 \bar S(t):\bar{\Cal A}\to\bar{\Cal A},\ \ \bar S(t):=P_L\circ S(t)\circ P_L^{-1}
 \end{equation}
and, therefore, the  dynamics on $\Cal A$ is indeed described by a continuous semigroup acting on a compact subset of $\R^{2n+1}$. This is exactly the way how we may realize the desired finite dimensional reduction based on the finiteness of the fractal dimension of the attractor and the Man\'e projection theorem. Moreover, we may even write out a system of ODEs on the order parameters $y(t):=P_Lu(t)$ if $u(t)\in\Cal A$ solves a general PDE of the form
$$
\Dt u=A(u).
$$
Namely, applying the projector $P_L$ to both sides of this equation, we formally get
\begin{equation}\label{5.IF}
\frac d{dt}y(t)=P_LA(P_L^{-1}y(t)):=\Cal F(y(t)),\ \ y(t)\in\bar{\Cal A}\subset L\sim\R^{2n+1}.
\end{equation}
This ODE is often referred as an Inertial Form (IF) of the initial PDE. Thus, under this approach, the fractal dimension of the attractor $\Cal A$ is interpreted as a number of effective degrees of freedom in the reduced IF. This, in turn, motivates a great interest to various methods for obtaining upper and lower bounds for that dimension, see \cites{BV92,tem,MZ08} and references therein.
\begin{remark}\label{Rem5.bad} There are many versions of the Man\'e projection theorem, in particular, the  H\"older Man\'e projection theorem which allows us to establish the H\"older continuity of the inverse map $P_L^{-1}:\bar{\Cal A}\to\Cal A$. Moreover, under some extra conditions which are usually satisfied at least in the case of semilinear parabolic equations, the H\"older exponent can be made arbitrarily close to one by increasing the dimension of $L$. The fact that $H$ is Hilbert is also not essential and all these results remain true in Banach spaces, see \cite{Rob11} and references therein.
\par
However, the finiteness of the fractal dimension is not enough to guarantee that it is possible to find $L$ in such a way that $P_L^{-1}$ is Lipschitz, so in general the IF constructed in such a way has only H\"older continuous vector field $\Cal F$ and this is the key drawback of the described approach.
Indeed, the H\"older continuity of $\Cal F(y)$ is not enough even to establish the uniqueness of solutions for the IF \eqref{5.IF}, so it is not clear how to select the "physically relevant" solutions of this system without referring to the initial PDE. In addition, there are more and more examples where the dynamics on the attractor $\Cal A$ remains, in a sense, infinite-dimensional (e.g. admits super-exponentially stable limit cycles, decaying traveling waves in the Fourier space, etc.) despite the finiteness of the fractal dimension. These attractors cannot be embedded in any finite-dimensional Lipschitz or log-Lipschitz submanifold of the phase space, see \cites{EKZ13,Zel14,KZ18} and also next section for more details. Thus, despite the widely accepted paradigm, the finite-dimensional reduction based on the Man\'e projection theorem does not look as a proper solution of the problem and the fractal dimension of the  attractor is not an appropriate tool to estimate the effective number of degrees of freedom of the reduced system. We will discus the alternative methods in the next sections and the rest of this section is devoted to upper and lower bounds for the fractal dimension.
\end{remark}
\subsection{Upper bounds via squeezing property: the autonomous case}\label{s5.sq} Let us assume that the attractor $\Cal A\subset\Phi$ of the considered DS is already constructed and the maps $S(t)$ are continuous. Then, for every $t_0\in\R_+$, we have
\begin{equation}\label{5.inv}
S(\Cal A)=\Cal A,\ \  S:=S(t_0)
\end{equation}
and the key question is "under what assumptions on the map $S$ we can guarantee that $\Cal A$ has a finite fractal dimension?" The next theorem gives the simplest of such conditions.
 \begin{theorem}\label{Th5.lad} Let $\Phi$ and $\Phi_1$ be two Banach spaces and the embedding $\Phi_1\subset\Phi$ be compact. Assume also that $\Cal A$ is a bounded subset of $\Phi_1$ and the map $S:\Cal A\to\Cal A$ satisfies \eqref{5.inv} as well as the following squeezing/smoothing property:
 \begin{equation}\label{5.sq1}
 \|S(u_1)-S(u_2)\|_{\Phi_1}\le L\|u_1-u_2\|_\Phi,\ \ u_1,u_2\in\Phi.
 \end{equation}
 Then the fractal dimension of $\Cal A$ in $\Phi_1$ is finite and enjoys  the following estimate:
 \begin{equation}\label{5.est11}
 \dim_f(\Cal A,\Phi_1)\le\Bbb H_{\frac1{4L}}(\Phi_1\hookrightarrow\Phi),
 \end{equation}
 where $\Bbb H_{\frac1{4L}}(\Phi_1\hookrightarrow\Phi)$ is the entropy of the unit ball of the space $\Phi_1$ considered as a pre-compact set in $\Phi$.
 \end{theorem}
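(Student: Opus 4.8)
The plan is to prove finiteness of $\dim_f(\Cal A,\Phi_1)$ by a covering/recursion argument that exploits the strict invariance \eqref{5.inv} together with the smoothing estimate \eqref{5.sq1} to convert a \emph{coarse} cover of $\Cal A$ in the weaker norm of $\Phi$ into a \emph{fine} cover of $\Cal A$ in the stronger norm of $\Phi_1$. The decisive quantity is $M:=N_{\frac1{4L}}(\Phi_1\hookrightarrow\Phi)$, the minimal number of $\Phi$-balls of radius $\frac1{4L}$ needed to cover the unit ball of $\Phi_1$; by the compactness of the embedding $\Phi_1\subset\Phi$ this number is finite, and $\Bbb H_{\frac1{4L}}(\Phi_1\hookrightarrow\Phi)=\log_2 M$. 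By homogeneity, any $\Phi_1$-ball of radius $\rho$ can therefore be covered by $M$ balls of radius $\frac{\rho}{4L}$ in the topology of $\Phi$.

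The heart of the argument is a single recursion step relating the $\Phi_1$-covering numbers of $\Cal A$ at two scales. First I would cover $\Cal A$ by $N_\rho(\Cal A,\Phi_1)$ balls of radius $\rho$ in $\Phi_1$. Viewing each such ball inside $\Phi$ and applying the previous paragraph, I obtain a cover of $\Cal A$ by at most $M\cdot N_\rho(\Cal A,\Phi_1)$ balls $B_\Phi(v,\tfrac{\rho}{4L})$ of radius $\frac{\rho}{4L}$ in $\Phi$. For each such ball meeting $\Cal A$ I select a point $a\in\Cal A\cap B_\Phi(v,\tfrac{\rho}{4L})$, so that any $u\in\Cal A\cap B_\Phi(v,\tfrac{\rho}{4L})$ satisfies $\|u-a\|_\Phi\le\frac{\rho}{2L}$; the smoothing property \eqref{5.sq1} then gives $\|S(u)-S(a)\|_{\Phi_1}\le L\cdot\frac{\rho}{2L}=\frac\rho2$, i.e.
$$
S\big(\Cal A\cap B_\Phi(v,\tfrac{\rho}{4L})\big)\subset B_{\Phi_1}\big(S(a),\tfrac\rho2\big),\qquad S(a)\in\Cal A.
$$
Since $\Cal A=S(\Cal A)$ by \eqref{5.inv}, these images cover all of $\Cal A$, and keeping the centres $S(a)$ inside $\Cal A$ yields the recursion
$$
N_{\rho/2}(\Cal A,\Phi_1)\le M\cdot N_\rho(\Cal A,\Phi_1).
$$

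Iterating this inequality starting from $\rho_0:=\operatorname{diam}_{\Phi_1}(\Cal A)<\infty$ (finite because $\Cal A$ is bounded in $\Phi_1$), where $N_{\rho_0}(\Cal A,\Phi_1)=1$, produces $N_{\rho_0 2^{-k}}(\Cal A,\Phi_1)\le M^{k}$ for every $k$. Feeding the scale $\eb=\rho_0 2^{-k}$ into the definition of the fractal dimension gives
$$
\dim_f(\Cal A,\Phi_1)=\limsup_{\eb\to0}\frac{\Bbb H_\eb(\Cal A,\Phi_1)}{\log_2\frac1\eb}\le\lim_{k\to\infty}\frac{k\log_2 M}{k-\log_2\rho_0}=\log_2 M=\Bbb H_{\frac1{4L}}(\Phi_1\hookrightarrow\Phi),
$$
which is exactly \eqref{5.est11}. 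The only genuinely delicate point is the bookkeeping of radii and of the location of the covering centres: the factor $\frac1{4L}$ in the entropy is calibrated precisely so that the doubling incurred by forcing the centres $a,S(a)$ to lie in $\Cal A$ still leaves a clean contraction by a factor $2$ per step, and hence the logarithm base $2$ in the final estimate. This centre issue can alternatively be bypassed by working throughout with covers by sets of diameter $\le\eb$ rather than by balls, as noted after Definition \ref{Def5.frac}. The two structural ingredients that make the scheme run are the compactness of $\Phi_1\hookrightarrow\Phi$ (ensuring $M<\infty$) and the invariance $S(\Cal A)=\Cal A$, which is what allows a $\Phi$-cover of the source to be upgraded to a $\Phi_1$-cover of the same set $\Cal A$.
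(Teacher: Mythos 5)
Your proposal is correct and follows essentially the same iterative covering scheme as the paper: a coarse $\Phi_1$-cover is refined into a $\Phi$-cover via the entropy of the embedding, recentred inside $\Cal A$ (your triangle-inequality step with the point $a$ is exactly the paper's "double the radius to put centres in $\Cal A$" maneuver, and accounts for the same factor making $4L$ rather than $2L$ appear), and then pushed forward by $S$ using invariance to halve the $\Phi_1$-scale, giving $N_{\rho/2}\le M\,N_\rho$ and hence $\dim_f(\Cal A,\Phi_1)\le\log_2 M$. No gaps; the argument matches the paper's proof step for step.
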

 \begin{proof} Indeed, let us assume that we have already constructed an $\eb$-net $\{u_k\}_{k=1}^{N(\eb)}$ of the set $\Cal A$ in $\Phi_1$ (i.e.  the $\eb$-balls of $\Phi_1$ with centers in $u_k\in\Cal A$ cover $\Cal A$). Let us cover every of these balls by $N$ balls of radius $\eb/(4L)$ in $\Phi$. It is possible to do due to the compactness of the embedding $\Phi_1\subset\Phi$.  Moreover, the number of balls which is necessary to cover every of such balls does not exceed
 $$
 N:=N_{\eb/(4L)}(B_\eb(u_k,\Phi_1),\Phi)=N_{1/(4L)}(B_1(0,\Phi_1),\Phi).
 $$
 Crucial for us that this number is independent of $\eb$ and $u_k$. This gives us the covering of $\Cal A$ by $\eb/(4L)$-balls in $\Phi$ and the number of these balls does not exceed $NN(\eb)$. Moreover, increasing the radii of the balls by the factor of two, we may assume also that the centers $\{v_k\}_{k=1}^{NN(\eb)}$ of these balls belong to $\Cal A$.
 \par
  Then, due to the invariance of $\Cal A$ and condition \eqref{5.sq1}, the $\eb/2$-balls in $\Phi_1$ centered at $\{S(v_k)\}$ cover $\Cal A$. Thus, starting from $\eb$-covering of $\Cal A$ which consists of $N(\eb)$ elements, we end up with a new $\eb/2$-covering with the number of elements $N(\eb/2)\le N N(\eb)$. Since $\Cal A$ is bounded, we may start with some $\eb_0=R_0$ such that $\Cal A\subset B_{R_0}(u_0,\Phi_1)$ and therefore $N(\eb_0)=1$. Iterating the above described procedure, we finally get the $\eb_n:=R_02^{-n}$-coverings which consist of
  $$
  N(\eb_n)=N(R_02^{-n})\le N^n
  $$
  elements.
  Thus,
  $$
  \Bbb H_{\eb}(\Cal A,\Phi_1)\le n\log_2N\le  n \Bbb H_{\frac1{4L}}(\Phi_1\hookrightarrow\Phi),\ \ R_02^{-n}\le\eb\le R_02^{-n+1}
  $$
  and this gives us the desired estimate for the fractal dimension of $\Cal A$.
 \end{proof}
 \begin{remark} To the best of our knowledge the key idea used in the proof of this theorem is due to Mallet-Paret \cite{MP76} (where he used it for estimating the Hausdorff dimension under the extra assumption that $S$ is a $C^1$-map) and the stated theorem has been proved by Ladyzhenskaya, see \cite{Lad82}. We present a more or less complete proof here since it is simple and elegant on the one hand and, on the other hand, all known estimates of the fractal dimension of attractors are based on similar iteration schemes.
 \end{remark}
\begin{example}\label{Ex5.1D-par} We return to Example \ref{Ex1.1D-par} of 1D semilinear parabolic equation \eqref{1.ce}. The existence of solution semigroup $S(t)$ associated with this equation as well as the existence of a global attractor $\Cal A\subset H^1_0(0,1)\subset C[0,1]$ for this semigroup are already verified in Example \ref{Ex1.1D-par}. Moreover, due to the maximum/comparison principle we also know that any complete bounded solution $u(t)$, $t\in\R$, of this equation satisfies
\begin{equation}\label{5.max}
-a^{1/2}\le u(t,x)\le a^{1/2},\ \ t\in\R,\ \ x\in [0,1].
\end{equation}
Let now $u_1(t)$ and $u_2(t)$ be two trajectories belonging to the attractor $\Cal A$ and let $v(t):=u_1(t)-u_2(t)$. Then, multiplying equation \eqref{1.dif-dif} by $t\partial_x^2v$, integrating by parts and using \eqref{5.max}, we get
$$
\frac d{dt}(t\|\partial_x v(t)\|_{L^2}^2)-2a(t\|\partial_x v(t)\|^2_{L^2})\le Ca^2 t\|v(t)\|^2_{L^2}+\|\partial_x v(t)\|^2_{L^2},
$$
where $C$ is independent of $a$. Integrating this inequality in time and using \eqref{1.lip}, we end up with
$$
t\|\partial_x v(t)\|^2_{L^2}\le\int_0^te^{2a(t-s)}\(C a^2 s\|v(s)\|^2_{L^2}+\|\partial_x v(s)\|^2_{L^2}\)\,ds
\le C_1(at+1)e^{at}\|v(0)\|^2_{L^2}.
$$
for some $C_1>0$ which is independent of $a$. Fixing now $t_0=a^{-1}$ and $S:=S(t_0)$, we prove that for any two points $u_1(0),u_2(0)\in \Cal A$, estimate \eqref{5.sq1} is satisfied with $\Phi=L^2(0,1)$, $\Phi_1=H^1_0(0,1)$ and $L=Ca^{1/2}$ for some $C$ which is independent of $a$. Thus,
\begin{equation}\label{5.ent-est0}
\dim_f(\Cal A,H^1_0)\le \Bbb H_{\frac1{4L}}(H^1_0\hookrightarrow L^2)\le Ca^{1/2},
\end{equation}
where we have used the well-known result about the entropy of embeddings of Sobolev spaces in bounded domains $\Omega\subset\R^d$, namely
\begin{equation}\label{5.ent}
\Bbb H_\nu(W^{s_1,p_1}\hookrightarrow W^{s_2,p_2})\le C\(\frac1\nu\)^{\frac d{s_1-s_2}},
\end{equation}
see \cite{Tri78}.
\end{example}
\begin{remark} As we can see from Example \ref{Ex5.1D-par}, the squeezing property \eqref{5.sq1} is a straightforward corollary of the parabolic smoothing property for the linear PDE (equation of variations) for the difference of two solutions associated with  the initial nonlinear PDE. Since such a smoothing property is typical for dissipative PDEs (for non-parabolic equations it should be replaced by the proper asymptotic smoothing property discussed below), this explains why, in many cases, we have the finiteness of the fractal dimension for attractors of dissipative PDEs.
\par
Note also that the obtained upper bound \eqref{5.ent-est0} is sharp with respect to $a\to\infty$ (the lower bounds of the same order in $a$ are available). Remarkable that, in order to get reasonably sharp estimates, one should consider the squeezing property \eqref{5.sq1} on a {\it small} time interval $t_0\to0$ as $a\to\infty$. In contrast to this, the estimates based on volume contraction method  usually work better for large $t_0\to\infty$, see \cites{BV92,tem}.
\par
We mention also that the key advantage of the squeezing property \eqref{5.sq1} is that, in comparison with the volume contraction, it does not require the semigroup $S(t)$ to be {\it differentiable} with respect to the initial data. This is crucial e.g. for applications to singular or/and degenerate PDEs, where such a differentiability usually does not take place or is very difficult/impossible to verify. As the price to pay, this method may give essentially worse estimates in comparison with the volume contraction. For instance, for the 2D Navier-Stokes equation in a bounded domain, the best known upper bound for the fractal dimension of the attractor reads
$$
   \dim_f(\Cal A,H)\le C\nu^{-2},
$$ where $\nu$ is a kinematic viscosity, see \cites{BV92,tem} and references therein, but even the polynomial in $\nu^{-1}$ bounds via the squeezing property are not known so far.
\end{remark}
We now state a natural analogue of the squeezing property \eqref{5.sq1} which, in particular, is suitable for  non-parabolic equations.
\begin{theorem}\label{Th5.2lad}
 Let $\Phi$ and $\Phi_1$ be two Banach spaces and the embedding $\Phi_1\subset\Phi$ be compact. Assume also that $\Cal A$ is a bounded subset of $\Phi_1$ and the map $S:\Cal A\to\Cal A$ satisfies \eqref{5.inv} as well as the following squeezing/smoothing property:
 \begin{equation}\label{5.sq2}
 \|S(u_1)-S(u_2)\|_{\Phi_1}\le \kappa\|u_1-u_2\|_{\Phi_1}+L\|u_1-u_2\|_\Phi,\ \ u_1,u_2\in\Phi.
 \end{equation}
 Then the fractal dimension of $\Cal A$ in $\Phi_1$ is finite and satisfies the following estimate
 \begin{equation}\label{5.est1}
 \dim_f(\Cal A,\Phi_1)\le\frac{\Bbb H_{\frac{1-\kappa}{4L}}(\Phi_1\hookrightarrow\Phi)}{\log_2\frac2{1+\kappa}},
 \end{equation}
 where $0\le\kappa<1$ and $L>0$ are some constants.
 \end{theorem}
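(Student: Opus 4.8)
The plan is to run essentially the iteration scheme of the proof of Theorem \ref{Th5.lad}: from any finite covering of $\Cal A$ I would manufacture a finer one whose mesh is reduced by a fixed factor $\theta<1$ while its cardinality grows only by a fixed factor $N$, and then read off $\dim_f(\Cal A,\Phi_1)\le\frac{\log_2 N}{\log_2(1/\theta)}$. The genuinely new feature, caused by the extra term $\kappa\|u_1-u_2\|_{\Phi_1}$ in \eqref{5.sq2}, is that after applying $S$ one must control the $\Phi_1$-distance between two points of the same cell, not merely their $\Phi$-distance. For this reason I would work throughout with coverings by sets of small \emph{diameter} rather than by balls; as noted after Definition \ref{Def5.frac} this changes nothing in the value of the fractal dimension, but it is crucial here, since two points of a common set of $\Phi_1$-diameter $\le\eb$ lie within $\eb$ of each other in $\Phi_1$, whereas two points of a common ball of radius $\eb$ could be $2\eb$ apart.

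Concretely, suppose $\Cal A$ is covered by $M(\eb)$ sets $\{F_j\}$ of $\Phi_1$-diameter $\le\eb$. Since $\Phi_1\subset\Phi$ is compact and each $F_j$ lies in a $\Phi_1$-ball of radius $\eb$, I would cover every $F_j$ by at most
\[
N:=N_{\frac{(1-\kappa)\eb}{4L}}\big(B_\eb(0,\Phi_1),\Phi\big)=N_{\frac{1-\kappa}{4L}}\big(B_1(0,\Phi_1),\Phi\big)=2^{\Bbb H_{\frac{1-\kappa}{4L}}(\Phi_1\hookrightarrow\Phi)}
\]
$\Phi$-balls of radius $\tfrac{(1-\kappa)\eb}{4L}$, i.e. by sets $G_{j,i}$ of $\Phi$-diameter $\le\frac{(1-\kappa)\eb}{2L}$; by homogeneity of the norms this number $N$ is independent of $\eb$ and of $j$. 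Thus $\Cal A$ is covered by the $N\,M(\eb)$ sets $E_{j,i}:=\Cal A\cap F_j\cap G_{j,i}$, each of $\Phi_1$-diameter $\le\eb$ and $\Phi$-diameter $\le\frac{(1-\kappa)\eb}{2L}$.

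Now I would invoke the squeezing property \eqref{5.sq2} together with the invariance \eqref{5.inv}. For any $w_1,w_2\in E_{j,i}$ one has $\|w_1-w_2\|_{\Phi_1}\le\eb$ and $\|w_1-w_2\|_\Phi\le\frac{(1-\kappa)\eb}{2L}$, hence
\[
\|S(w_1)-S(w_2)\|_{\Phi_1}\le\kappa\eb+L\cdot\frac{(1-\kappa)\eb}{2L}=\frac{1+\kappa}{2}\,\eb .
\]
Therefore $\{S(E_{j,i})\}$ covers $\Cal A=S(\Cal A)$ by $N\,M(\eb)$ sets of $\Phi_1$-diameter $\le\frac{1+\kappa}{2}\eb$, which is the recursion $M\big(\frac{1+\kappa}{2}\eb\big)\le N\,M(\eb)$. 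Starting from $\eb_0=\operatorname{diam}_{\Phi_1}\Cal A$ (so that $M(\eb_0)=1$, as $\Cal A$ is bounded in $\Phi_1$) and iterating with $\theta=\frac{1+\kappa}{2}$ gives $M(\theta^n\eb_0)\le N^n$, whence
\[
\dim_f(\Cal A,\Phi_1)\le\frac{\log_2 N}{\log_2(1/\theta)}=\frac{\Bbb H_{\frac{1-\kappa}{4L}}(\Phi_1\hookrightarrow\Phi)}{\log_2\frac{2}{1+\kappa}},
\]
which is exactly \eqref{5.est1}.

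The only delicate point — and the place where the argument genuinely departs from the case $\kappa=0$ of Theorem \ref{Th5.lad} — is this bookkeeping: one must track \emph{simultaneously} the $\Phi_1$-diameter of a cell (to absorb the $\kappa$-term) and its $\Phi$-diameter (to absorb the smoothing $L$-term). It is precisely the passage from ball-coverings to diameter-coverings that yields the sharp contraction factor $\frac{1+\kappa}{2}$; had one used balls, the uncontrolled factor $2$ in the $\Phi_1$-distance would give the weaker factor $\frac{1+3\kappa}{2}$, which is $<1$ only for $\kappa<\frac13$ and would fail to cover the whole range $\kappa\in[0,1)$. Everything else is the same routine geometric iteration.
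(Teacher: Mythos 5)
Your proposal is correct and follows exactly the route the paper takes: it repeats the iteration scheme of Theorem \ref{Th5.lad}, replacing ball-coverings by coverings with sets of small diameter so that both the $\Phi_1$-diameter (to absorb the $\kappa$-term) and the $\Phi$-diameter (to absorb the $L$-term) of each cell are controlled, yielding the contraction factor $\frac{1+\kappa}{2}$ and the stated bound; the paper merely states this modification and defers the details to \cite{7}, which you have supplied accurately. (Your side remark on where pure ball-coverings break down differs slightly from the paper's threshold $\kappa>1/2$ — with an optimized allocation of the $\Phi$-radius, balls actually survive up to $\kappa<1/2$ — but this does not affect the correctness of your main argument.)
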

 The proof of this theorem repeats almost word by word the arguments given in the proof of the previous theorem. The only difference is that, to be able to work with $\kappa>1/2$, we need to use the coverings by sets of diameter less than $\eb$ in the iteration scheme, see \cite{7} for more details.
 \par
 We also mention that nowadays there are  a huge amount of various formulations of the squeezing property (which give the finiteness of the fractal dimension of an invariant set) adapted to the concrete classes of problems. The detailed exposition of them is out of scope of this survey, so we refer the interested reader to \cites{EMZ05,MZ08} and  references therein.

\begin{example} Let us consider the degenerate version of real Ginzburg-Landau equation:
 \begin{equation}\label{5.deg}
 \Dt u=\Dx(u^3)+u-u^3,\ \ u\big|_{t=0}=u_0,\ \ u\big|_{\partial\Omega}=0
 \end{equation}
 in a bounded smooth domain $\Omega$ of $\R^d$.  This equation generates a dissipative semigroup $S(t)$, say, in the space $\Phi=L^1(\Omega)$ (the proof of this fact is based on the Kato inequality and the related multiplication of the equation by $\sgn(u)$, see \cite{Cle87}). Moreover, this semigroup is globally Lipschitz in $L^1(\Omega)$ and,  due to the H\"older continuity results for solutions of degenerate parabolic problems, it possesses a compact absorbing set (with respect to the bornology of bounded sets in $\Phi$) which is bounded in the space $C^\alpha(\Omega)$ for some $\alpha>0$, therefore, the attractor $\Cal A$ exists and is generated by all complete bounded trajectories which are H\"older continuous in space and time, see \cites{EZ07,Iv82,DiB93}.
 \par
 However, as shown in \cite{EZ07} this attractor has an infinite Hausdorff and fractal dimensions:
 $$
 \dim_H(\Cal A, \Phi)=\infty.
 $$
 Moreover, the infinite-dimensional family of complete bounded solutions can be constructed almost explicitly based on the finite propagation speed of compactly supported solutions.
 \par
 This infinite-dimensionality can be somehow explained by the fact that the formal "linearization" of \eqref{5.deg} on a degenerate solution $u=0$ is unstable, so the energy income in the system is possible in the "area" where the equation is degenerate. In the case where the areas where the equation is degenerate or singular and where the energy income may happen are separated,  typically the corresponding attractor has finite fractal dimension, see \cites{EZ07,EZ09,MirZ09,MZ08,MirZ07,MirZ05,SZ09} for the justification of this heuristic principle for concrete classes of singular/degenerate equations. In particular, if we consider a slightly modified  version of equation \eqref{5.deg}:
 \begin{equation}\label{5.deg1}
 \Dt u=\Dx(u^3)+3u^2-2u-u^3,\ \ u\big|_{t=0}=u_0,\ \ u\big|_{\partial\Omega}=0,
 \end{equation}
  the formal linearization $\Dt v=-2v$ on $u=0$ is exponentially stable and the proper version of the squeezing property gives us the finiteness of the fractal dimension of the attractor $\Cal A$, see~\cite{EZ07}.
\end{example}
\subsection{Upper bounds via squeezing property: the non-autonomous case}

  The standard situation in this case
is when we have a cocycle $\Cal S_\xi(t):\Phi\to\Phi$, $\xi\in\Psi$, over DS $T(h):\Psi\to\Psi$ and an invariant set $\Cal A_\xi$ such that
$$
\Cal S_\xi(1)\Cal A_\xi=\Cal A_{T(1)\xi}, \ \ \xi\in \Psi.
$$
Instead of time step $t=1$, one may take an arbitrary $t_0>0$, but for simplicity we assume that $t_0=1$ here.
The non-autonomous analogue of the squeezing property \eqref{5.sq2} reads
\begin{equation}\label{5.sq3}
\|\Cal S_\xi(1)u_1-\Cal S_\xi(1)u_2\|_{\Phi_1}\le \kappa\|u_1-u_2\|_{\Phi_1}+L(\xi)\|u_1-u_2\|_{\Phi},
\end{equation}
which holds for all $\xi\in\Psi$ and $u_1,u_2\in\Cal A_\xi$. For simplicity, we take a "deterministic" value of $\kappa\in[0,1)$ (it is independent of $\xi$, but its generalization to the "random" case $\kappa=\kappa(\xi)$ is straightforward.
\par
The estimate for the fractal dimension of $\Cal A_\xi$ can be obtained in the same way as in Theorems \ref{Th5.lad} and \ref{Th5.2lad}. The only difference is that, instead of iterating a single map $S$, we now need to iterate different maps from the cocycle and use that
$$
\Cal S_{T(-1)\xi}(1)\circ\cdots\circ\Cal S_{T(n-1)\xi}(1)\circ\Cal S_{T(-n)\xi}(1)\Cal A_{T(-n)\xi}=\Cal A_\xi
$$
for all $\xi\in\Psi$ and $n\in\Bbb N$. This gives the following result, see \cites{PRSE05,ShZ13} for more details.
.
\begin{theorem}\label{Th5.3lad} Let $\Phi_1\subset\Phi$ be two embedded Banach spaces such that the embedding is compact and let $\Cal S_\xi(t):\Phi\to\Phi$, $\xi\in\Psi$, be a cocycle over $T(h):\Psi\to\Psi$. Assume that $\xi\to\Cal A_\xi\subset\Phi_1$ is a family of bounded strictly invariant sets which satisfy the squeezing property \eqref{5.sq3} with some $\kappa\in[0,1)$. Then the fractal dimensions of $\Cal A_\xi$ satisfies the following estimate:
\begin{multline}\label{5.est-main}
\dim_f(\Cal A_\xi,\Phi_1)\le \frac1{\log_2\frac2{1+\kappa}-\limsup_{n\to\infty}\frac{\log_2 R(T(-n)\xi)}{n}}\times\\\times \limsup_{n\to\infty}\frac{\Bbb \sum_{k=1}^n \Bbb H_{\frac{1-\kappa}{4L(T(-k)\xi)}}(\Phi_1\hookrightarrow\Phi)}n,
\end{multline}
where $R(\xi):=\|\Cal A_\xi\|_{\Phi_1}$ and we assume that the right-hand side is infinite if the denominator is negative.
\end{theorem}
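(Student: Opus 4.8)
The plan is to reduce Theorem~\ref{Th5.3lad} to an iteration of the one-step covering argument already carried out in the proofs of Theorems~\ref{Th5.lad} and~\ref{Th5.2lad}, the only genuinely new feature being that the smoothing constant now varies from step to step along the backward orbit of $\xi$. First I would fix $\xi\in\Psi$ and set $\zeta_k:=T(-k)\xi$ for $k=0,1,\dots,n$, so that $\zeta_0=\xi$, $T(1)\zeta_k=\zeta_{k-1}$, and, by strict invariance, $\Cal S_{\zeta_k}(1)\Cal A_{\zeta_k}=\Cal A_{\zeta_{k-1}}$. The cocycle property then gives the factorization
\begin{equation*}
\Cal A_\xi=\Cal S_{\zeta_1}(1)\circ\Cal S_{\zeta_2}(1)\circ\cdots\circ\Cal S_{\zeta_n}(1)\,\Cal A_{T(-n)\xi},
\end{equation*}
so that covering $\Cal A_\xi$ amounts to pushing a covering of $\Cal A_{T(-n)\xi}$ forward through $n$ one-step maps.

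The heart of the argument is the single-step covering estimate already established inside the proof of Theorem~\ref{Th5.2lad}: if a subset of $\Cal A_{\zeta_k}$ is covered by $M$ sets of $\Phi_1$-diameter at most $d$, then, refining each of them by $2^{\Bbb H_{\nu_k}(\Phi_1\hookrightarrow\Phi)}$ balls of $\Phi$-radius $\nu_k d$ (which is possible because $\Phi_1\hookrightarrow\Phi$ is compact) and applying \eqref{5.sq3} with the choice $\nu_k:=\frac{1-\kappa}{4L(\zeta_k)}$, the image under $\Cal S_{\zeta_k}(1)$ covers the corresponding subset of $\Cal A_{\zeta_{k-1}}$ by at most $M\cdot 2^{\Bbb H_{\nu_k}(\Phi_1\hookrightarrow\Phi)}$ sets of $\Phi_1$-diameter at most $\frac{1+\kappa}{2}d$. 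This is verbatim the step from Theorem~\ref{Th5.2lad}, the only difference being that the entropy radius $\nu_k$ now depends on the step through $L(\zeta_k)=L(T(-k)\xi)$, while the geometric contraction factor $\frac{1+\kappa}{2}$ is the same at every step.

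Starting from the trivial cover of $\Cal A_{T(-n)\xi}$ by a single set of $\Phi_1$-diameter $2R(T(-n)\xi)$ and iterating this lemma $n$ times, from $k=n$ down to $k=1$, I obtain a cover of $\Cal A_\xi$ by at most $2^{\sum_{k=1}^n\Bbb H_{\nu_k}(\Phi_1\hookrightarrow\Phi)}$ sets of $\Phi_1$-diameter
\begin{equation*}
\eb_n:=2R(T(-n)\xi)\Big(\tfrac{1+\kappa}{2}\Big)^n.
\end{equation*}
Taking logarithms yields $\Bbb H_{\eb_n}(\Cal A_\xi,\Phi_1)\le\sum_{k=1}^n\Bbb H_{\frac{1-\kappa}{4L(T(-k)\xi)}}(\Phi_1\hookrightarrow\Phi)$ together with $\log_2\frac{1}{\eb_n}=n\log_2\frac{2}{1+\kappa}-\log_2\big(2R(T(-n)\xi)\big)$. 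Inserting these into the definition of $\dim_f$ evaluated along $\eb_n\to0$, dividing numerator and denominator by $n$, and using the elementary inequality $\limsup_n\frac{a_n}{b_n}\le\frac{\limsup_n a_n}{\liminf_n b_n}$ (valid since the denominators are eventually positive), I recover precisely \eqref{5.est-main}, with the $\limsup$ of the averaged entropies in the numerator and $\log_2\frac{2}{1+\kappa}-\limsup_n\frac{\log_2 R(T(-n)\xi)}{n}$ in the denominator.

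The two points requiring care are the following. First, $\dim_f$ is a $\limsup$ over all $\eb\to0$, whereas the construction controls only the scales $\eb_n$; this gap is filled by the monotonicity of $\eb\mapsto\Bbb H_\eb$ together with the fact that consecutive scales obey $\eb_{n-1}/\eb_n\to\frac{2}{1+\kappa}$, a bounded ratio, so that no intermediate scale inflates the quotient. Second, and more essentially, one must guarantee both $\eb_n\to0$ and that the limiting denominator is positive; this is exactly the hypothesis that $\log_2\frac{2}{1+\kappa}$ strictly exceeds $\limsup_n\frac{\log_2 R(T(-n)\xi)}{n}$, and when it fails the right-hand side is declared infinite so there is nothing to prove. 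I expect the bookkeeping of the step-dependent entropies $\Bbb H_{\nu_k}$ and the passage from the sequential bound to the genuine $\limsup$ to be the only places demanding real attention, the remainder being a faithful non-autonomous transcription of the autonomous iteration.
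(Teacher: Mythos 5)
Your proposal follows essentially the same route as the paper: the paper's own proof is precisely the sketch you elaborate, namely iterate the one-step covering argument of Theorems \ref{Th5.lad} and \ref{Th5.2lad} along the backward orbit, using the factorization $\Cal A_\xi=\Cal S_{T(-1)\xi}(1)\circ\cdots\circ\Cal S_{T(-n)\xi}(1)\Cal A_{T(-n)\xi}$ with the step-dependent entropy radius $\nu_k=\frac{1-\kappa}{4L(T(-k)\xi)}$, and your bookkeeping does reproduce \eqref{5.est-main}.

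One justification in your final paragraph is false as stated, although the step it is meant to support is salvageable. You claim $\eb_{n-1}/\eb_n\to\frac{2}{1+\kappa}$; in fact $\eb_{n-1}/\eb_n=\frac{R(T(-(n-1))\xi)}{R(T(-n)\xi)}\cdot\frac{2}{1+\kappa}$, and nothing in the hypotheses prevents the ratio of consecutive radii from oscillating unboundedly, since only $\limsup_n\frac{\log_2 R(T(-n)\xi)}{n}$ is controlled, not the step-to-step variation of $R$. The passage from the controlled scales $\eb_n$ to the genuine $\limsup$ over all $\eb\to0$ should instead be argued as follows: for small $\eb$ set $m=m(\eb):=\min\{n:\ \eb_n\le\eb\}$ (finite and tending to $\infty$ as $\eb\to0$, since the standing positivity of the denominator in \eqref{5.est-main} forces $\eb_n\to0$), so that by monotonicity of the entropy $\Bbb H_\eb(\Cal A_\xi,\Phi_1)\le\Bbb H_{\eb_m}(\Cal A_\xi,\Phi_1)\le\sum_{k=1}^m\Bbb H_{\nu_k}(\Phi_1\hookrightarrow\Phi)$, while minimality gives $\eb<\eb_{m-1}$, hence $\log_2\tfrac1\eb\ge(m-1)\log_2\tfrac2{1+\kappa}-\log_2\bigl(2R(T(-(m-1))\xi)\bigr)$. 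Dividing numerator and denominator by $m$ and applying the inequality $\limsup (a_n/b_n)\le \limsup a_n/\liminf b_n$ exactly as you do, the possibly wild fluctuation of $R$ along the backward orbit is absorbed into the term $\limsup_n\frac{\log_2 R(T(-n)\xi)}{n}$, which is precisely why this term appears in the denominator of \eqref{5.est-main}; positivity of that denominator also guarantees that the comparison denominators above are eventually positive, so no boundedness of consecutive ratios is needed. With this replacement your argument is complete and coincides with the paper's intended proof.
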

The most straightforward application of this general theorem is related with the "deterministic" (uniform) case, where the size $R(\xi)$ of the attractors $\Cal A_\xi$ is uniformly bounded: $R(\xi)\le R_0$ and the expanding factor $L(\xi)\le L$ is also uniformly bounded. This is typical for non-autonomous equations with uniformly in time bounded external forces considered in \cite{PRSE05}, see also  references therein. Then we have exactly the same estimate for the dimensions of $\Cal A_\xi$ as in the autonomous case.
\begin{corollary} Let the assumptions of Theorem \ref{Th5.3lad} hold and let, in addition, $R(\xi)\le R_0$ and $L(\xi)\le L$ for all $\xi\in\Psi$. Then the fractal dimensions of $\Cal A_\xi$ are finite and satisfy estimate \eqref{5.sq2} uniformly with respect to $\xi\in\Psi$.
\end{corollary}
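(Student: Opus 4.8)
The plan is to deduce the Corollary directly from the master estimate \eqref{5.est-main} of Theorem \ref{Th5.3lad}, feeding in the two uniform bounds $R(\xi)\le R_0$ and $L(\xi)\le L$. No new iteration scheme is required: the whole argument reduces to controlling, term by term, the denominator of the first factor and the Ces\`aro average in the second factor on the right-hand side of \eqref{5.est-main}, and then checking that both reductions produce $\xi$-independent constants.

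First I would treat the denominator. Since $R(T(-n)\xi)=\|\Cal A_{T(-n)\xi}\|_{\Phi_1}\le R_0$ uniformly in $n$ and $\xi$, the quantity $\log_2 R(T(-n)\xi)$ is bounded above by the constant $\log_2 R_0$, so that
$$
\limsup_{n\to\infty}\frac{\log_2 R(T(-n)\xi)}{n}\le \limsup_{n\to\infty}\frac{\log_2 R_0}{n}=0 .
$$
Consequently the denominator of the first factor obeys
$$
\log_2\frac2{1+\kappa}-\limsup_{n\to\infty}\frac{\log_2 R(T(-n)\xi)}{n}\ \ge\ \log_2\frac2{1+\kappa}\ >\ 0,
$$
the strict positivity coming from $\kappa\in[0,1)$. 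In particular the right-hand side of \eqref{5.est-main} is not declared infinite, and the first factor is bounded by $\bigl(\log_2\frac2{1+\kappa}\bigr)^{-1}$ uniformly in $\xi$.

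Next I would treat the Ces\`aro average. The key observation is that the Kolmogorov entropy $\nu\mapsto \Bbb H_\nu(\Phi_1\hookrightarrow\Phi)$ is a non-increasing function of the radius $\nu$, since shrinking the covering radius can only increase the number of balls needed. The bound $L(T(-k)\xi)\le L$ gives $\frac{1-\kappa}{4L(T(-k)\xi)}\ge\frac{1-\kappa}{4L}$, whence by monotonicity
$$
\Bbb H_{\frac{1-\kappa}{4L(T(-k)\xi)}}(\Phi_1\hookrightarrow\Phi)\ \le\ \Bbb H_{\frac{1-\kappa}{4L}}(\Phi_1\hookrightarrow\Phi)
$$
for every $k$ and every $\xi$, the right-hand side being finite because the embedding $\Phi_1\subset\Phi$ is compact. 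Averaging over $k=1,\dots,n$ and passing to the $\limsup$ bounds the second factor by the constant $\Bbb H_{\frac{1-\kappa}{4L}}(\Phi_1\hookrightarrow\Phi)$, again uniformly in $\xi$.

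Multiplying the two bounds yields
$$
\dim_f(\Cal A_\xi,\Phi_1)\ \le\ \frac{\Bbb H_{\frac{1-\kappa}{4L}}(\Phi_1\hookrightarrow\Phi)}{\log_2\frac2{1+\kappa}}\qquad\text{for all }\xi\in\Psi,
$$
which is exactly the autonomous bound \eqref{5.est1} and is independent of $\xi$. Since the argument is just substitution combined with monotonicity of the entropy, I do not expect a genuine obstacle. The only point demanding a line of care is the sign bookkeeping in the denominator: one must note that the \emph{upper} bound $R(\xi)\le R_0$ alone forces the relevant $\limsup$ to be $\le 0$ (no matter how small the sets $\Cal A_\xi$ may be, since that only pushes the ratio further negative), so that the denominator stays strictly positive and \eqref{5.est-main} indeed delivers a finite, $\xi$-uniform bound.
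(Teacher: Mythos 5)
Your proposal is correct and follows exactly the route the paper intends: the paper states this corollary as an immediate consequence of estimate \eqref{5.est-main}, and your term-by-term substitution (the uniform bound $R(\xi)\le R_0$ killing the $\limsup$ in the denominator, plus monotonicity of $\nu\mapsto\Bbb H_\nu(\Phi_1\hookrightarrow\Phi)$ bounding the Ces\`aro average) is precisely that argument, recovering the autonomous bound \eqref{5.est1} uniformly in $\xi$. Note that the reference to \eqref{5.sq2} in the corollary's statement is a labeling slip in the paper (it should point to the dimension estimate \eqref{5.est1}), and your reading of the intended conclusion is the right one.
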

The applications of this general theorem to the random case are more delicate and interesting. In this case, we have an invariant ergodic measure $\mu$ for the DS $T(h):\Psi\to\Psi$ and assume that the function $\xi\to\Cal A_\xi$ is $\mu$-measurable. We also assume that the expanding factor $L(\xi)$ is also measurable and that the entropy of the embedding $\Phi_1\subset\Phi$ possesses the estimate
 \begin{equation}\label{5.sob-ent}
 \Bbb H_\nu(\Phi_1\hookrightarrow\Phi)\le C\(\frac1\nu\)^\theta
 \end{equation}
 for some positive $C$ and $\theta$. Thus assumption is not restrictive since usually in applications $\Phi$ and $\Phi_1$ are Sobolev spaces for which such an estimate holds. Then we have the following result.
 \begin{corollary} Let the assumptions of Theorem \ref{Th5.3lad} hold and let, in addition, the above mentioned random structure be introduced and  \eqref{5.sob-ent} hold. We also assume that the family of the attractors $\Cal A_\xi$ is tempered in  $\Phi_1$ and that the expanding factor $L(\xi)$ has the finite $\theta$'s momentum:
 \begin{equation}\label{5.mom}
 \Bbb E(L^\theta):=\int_{\Psi}L(\xi)^\theta\mu(d\xi)<\infty.
 \end{equation}
 Then, for almost all values $\xi\in\Psi$, the fractal dimension of $\Cal A_\xi$ is finite and satisfies the estimate:
\begin{equation}\label{5.r-est}
\dim_f(\Cal A_\xi,\Phi_1)\le \frac{C\frac{4^\theta}{(1-\kappa)^\theta}\Bbb E(L^\theta)}{\log_2\frac{2}{1+\kappa}}.
\end{equation}
 \end{corollary}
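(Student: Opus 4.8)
The plan is to derive \eqref{5.r-est} by feeding the general bound \eqref{5.est-main} of Theorem \ref{Th5.3lad} into the Birkhoff ergodic theorem, handling the denominator and the numerator of \eqref{5.est-main} separately and then intersecting the resulting full-measure sets. First I would dispose of the term $\limsup_{n\to\infty}\frac{\log_2 R(T(-n)\xi)}{n}$ appearing in the denominator. By hypothesis the family $\{\Cal A_\xi\}$ is tempered in $\Phi_1$, which is precisely the statement that the pullback norms $R(T(-n)\xi)=\|\Cal A_{T(-n)\xi}\|_{\Phi_1}$ grow subexponentially in $n$ for $\mu$-almost every $\xi$; consequently $\frac1n\log_2 R(T(-n)\xi)\to0$ as $n\to\infty$. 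Hence, off a $\mu$-null set, the denominator in \eqref{5.est-main} reduces to $\log_2\frac2{1+\kappa}$ (and even if temperedness only delivered a nonpositive $\limsup$, the denominator would be $\ge\log_2\frac2{1+\kappa}$, which can only improve the final bound).

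Next I would estimate the numerator. Inserting the entropy hypothesis \eqref{5.sob-ent} with $\nu=\frac{1-\kappa}{4L(T(-k)\xi)}$ gives
\begin{equation*}
\Bbb H_{\frac{1-\kappa}{4L(T(-k)\xi)}}(\Phi_1\hookrightarrow\Phi)\le C\(\frac{4L(T(-k)\xi)}{1-\kappa}\)^\theta=C\frac{4^\theta}{(1-\kappa)^\theta}L(T(-k)\xi)^\theta,
\end{equation*}
so that the Ces\`aro sum in \eqref{5.est-main} is controlled by $C\frac{4^\theta}{(1-\kappa)^\theta}\cdot\frac1n\sum_{k=1}^nL(T(-k)\xi)^\theta$. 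The moment condition \eqref{5.mom} is exactly the assertion that $L^\theta\in L^1(\Psi,\mu)$, so the Birkhoff ergodic theorem, applied to the measure preserving ergodic time-one map $T(-1)$ and the integrable observable $L^\theta$, yields for $\mu$-almost every $\xi$
\begin{equation*}
\lim_{n\to\infty}\frac1n\sum_{k=1}^nL(T(-k)\xi)^\theta=\int_\Psi L(\eta)^\theta\,\mu(d\eta)=\Bbb E(L^\theta).
\end{equation*}
Therefore the numerator $\limsup$ is bounded by $C\frac{4^\theta}{(1-\kappa)^\theta}\Bbb E(L^\theta)$ outside a second $\mu$-null set, and combining the two estimates on the intersection of the corresponding full-measure sets produces the asserted bound \eqref{5.r-est}.

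The routine points are the measurability bookkeeping: one must check that the stated measurability of $\xi\mapsto L(\xi)$ and $\xi\mapsto\Cal A_\xi$ makes $L(T(-\cdot)\xi)^\theta$ an admissible Birkhoff observable and that the two exceptional null sets combine into one. The main obstacle, however, is the ergodicity bookkeeping underlying the Birkhoff step: the iteration in Theorem \ref{Th5.3lad} proceeds along integer time steps, so the relevant transformation is the discrete time-one map $T(1)$ (equivalently $T(-1)$), and one genuinely needs \emph{this map}, not merely the continuous group $T(h)$, to be ergodic. This holds in the standard applications — for instance the Bernoulli shift of Example \ref{Ex4.hyp} — and in the exceptional situation where only the flow is ergodic one would instead run a continuous-time Birkhoff average, which converges to the same spatial mean $\Bbb E(L^\theta)$ and leaves the conclusion unchanged.
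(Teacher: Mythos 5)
Your proposal is correct and follows essentially the same route as the paper: temperedness of $\Cal A_\xi$ makes the correction term in the denominator of \eqref{5.est-main} vanish, while the numerator is bounded by inserting \eqref{5.sob-ent} with $\nu=\frac{1-\kappa}{4L(T(-k)\xi)}$ and applying the Birkhoff ergodic theorem to the integrable observable $L^\theta$. Your additional remarks on measurability and on ergodicity of the discrete time-one map are sensible refinements of details the paper leaves implicit, but they do not constitute a different argument.
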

Indeed, since $\Cal A_\xi$ is tempered, $\lim_{n\to\infty}\frac{\log_2R(T(-n)\xi)}n=0$. The second multiplier in the right-hand side of \eqref{5.est-main} is estimated using \eqref{5.sob-ent}, \eqref{5.mom} and the Birkhoff ergodic theorem.

\begin{remark} It is typical for random DS that, in order to have the finite-dimensionality of the attractor, we need to verify that some random variable (which is responsible to the expansion rate of the distance between two solutions or Lyapunov's exponents) has the finite mean, see \cites{CF98,D98,ShZ13} and  references therein. This condition is non-trivial and is often the most difficult to verify. On the other hand, as we will see in the next toy example (suggested in \cite{CSZ23}), the dissipation mechanism may be not strong enough to provide the finite-dimensionality of a random attractor if this condition is violated.
\end{remark}
\begin{example}\label{Ex6.inf} Let $H=l_2$ (space of square summable sequences) and consider the random dynamical system in $H$ generated by the following equations:
\begin{equation}\label{5.odes}
\frac d{dt} u_1+\gamma(t)u_1(t)=1,\ \ \frac d{dt}u_k+k^4u_k=u_1(t)u_k-u_k^3, \ k=2,3,\cdots,
\end{equation}
where $u=(u_1,u_2,\cdots)\in H$ and $\gamma\in\Psi$ is exactly the Bernoulli process used in Example \ref{Ex4.hyp}. The first equation of this system models the energy evolution of \eqref{4.hyp} and the rest equations give some coupling of the first equation with a parabolic PDE.
\par
System of ODEs \eqref{5.odes} can  be solved explicitly. In particular, if $aq-(1-q)b>0$,
\begin{equation}\label{6.good}
u_1(t)=u_{1,\gamma}(t)=\int_{-\infty}^te^{-\int_s^t\gamma(l)\,dl}\,ds
\end{equation}
is a unique tempered complete solution of the first equation (for almost all $\gamma\in\Psi$). Moreover, it is not difficult to show that
\begin{equation}\label{6.uinf}
\int_{\gamma\in\Psi}u_{1,\gamma}(0)\mu(d\gamma)=\infty
\end{equation}
if
\begin{equation}\label{6.infinf}
\ln(qe^{-a}+(1-q)e^{b})>0>-aq+(1-q)b.
\end{equation}
In this case, the following result holds.

\begin{proposition}\label{Prop5.main} Let the exponents $a,b>0$ and  $q\in(0,1)$ satisfy \eqref{6.infinf}. Then the random attractor $\mathcal A_\gamma$ for system \eqref{5.odes} in $H$ has infinite Hausdorff and fractal dimensions:
\begin{equation}\label{6.dim}
\dim_H(\mathcal A_\gamma,H)=\dim_f(\mathcal A_\gamma,H)=\infty
\end{equation}
for almost all $\gamma\in\Psi$.
\end{proposition}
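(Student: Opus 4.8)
The plan is to exploit the fact that system \eqref{5.odes} is almost completely decoupled: the first component $u_1=u_{1,\gamma}(t)$ is determined by \eqref{6.good} alone, and for each $k\ge2$ the equation for $u_k$ involves only $u_1$ and $u_k$ itself. Consequently the tempered kernel factorizes and the attractor fiber has the product form $\Cal A_\gamma=\{u_{1,\gamma}(0)\}\times\prod_{k\ge2}\Cal A_\gamma^{(k)}$, where $\Cal A_\gamma^{(k)}\subset\R$ is the fiber attractor of the scalar random equation $\dot u_k=(u_{1,\gamma}(t)-k^4)u_k-u_k^3$. Thus it suffices to show that $\Cal A_\gamma^{(k)}$ is a nondegenerate interval for \emph{every} $k\ge2$ and almost every $\gamma$, and then to deduce infinite-dimensionality of the product.

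First I would analyze each scalar mode. The equation $\dot u_k=\beta_k(t)u_k-u_k^3$ with $\beta_k(t):=u_{1,\gamma}(t)-k^4$ is a Bernoulli equation, linearized by the substitution $z=u_k^{-2}$ (on $u_k>0$), which gives $\dot z+2\beta_k(t)z=2$. Its unique tempered complete solution is $z_k(t)=2\int_{-\infty}^t\exp\big(-2\int_s^t\beta_k(\tau)\,d\tau\big)\,ds$, so that $u_k^+:=z_k^{-1/2}$ is a positive tempered complete solution precisely when this integral converges, i.e. when $\int_s^t\beta_k(\tau)\,d\tau\to+\infty$ as $s\to-\infty$. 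Here is where condition \eqref{6.infinf} enters: since $t\mapsto u_{1,\gamma}(t)$ is a stationary ergodic nonnegative process with $\int_\Psi u_{1,\gamma}(0)\,\mu(d\gamma)=\infty$ by \eqref{6.uinf}, the Birkhoff ergodic theorem in its infinite-mean form yields $\frac1{|s|}\int_s^0 u_{1,\gamma}\to\infty$ almost surely, hence $\frac1{|s|}\int_s^0\beta_k\to\infty>0$ for \emph{all} $k$. Therefore the integral converges and $u_k^+(0,\gamma)>0$ for every $k\ge2$ and a.a.\ $\gamma$. By contrast, if the mean were finite only the finitely many modes with $k^4<\Bbb E(u_1)$ would be unstable, which is exactly why finite mean forces finite dimension.

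Next I would use the order-preserving (monotone) structure of the scalar flow. By the comparison principle solutions stay ordered, and since the vector field is odd in $u_k$, the function $u_k^-:=-u_k^+$ is the minimal tempered solution while $0$ lies between them. Pulling back a large interval $[-R,R]$ and using monotonicity of the flow, the pullback-attractor fiber is exactly $\Cal A_\gamma^{(k)}=[-u_k^+(0,\gamma),\,u_k^+(0,\gamma)]$, a nondegenerate interval because $u_k^+(0,\gamma)>0$. Combined with the representation formula for random attractors (cf.\ Theorem \ref{Th4.main2}), this identifies $\Cal A_\gamma$ with the product $\{u_{1,\gamma}(0)\}\times\prod_{k\ge2}[-u_k^+(0,\gamma),u_k^+(0,\gamma)]$, the embedding into $H=l_2$ being guaranteed by the already established existence of the random attractor.

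Finally, to conclude I would note that $0\in\Cal A_\gamma^{(k)}$ for every $k$, so for each $n\in\Bbb N$ the attractor contains the $n$-dimensional cube obtained by letting the coordinates $k=2,\dots,n+1$ range over their nondegenerate intervals while setting all higher coordinates to $0$. This embeds an $n$-cube into $\Cal A_\gamma$ for every $n$, whence $\dim_H(\Cal A_\gamma,H)\ge n$ for all $n$, and \eqref{6.dim} follows since $\dim_H\le\dim_f$. The main obstacle is the step establishing that the instability at zero persists for \emph{all} modes simultaneously: this genuinely requires the infinite-mean Birkhoff theorem applied to $u_{1,\gamma}$ under \eqref{6.infinf}, together with the verification that the monotone fiber attractor fills the entire interval rather than collapsing to a point, as it would in the synchronizing, negative-exponent situation of Example \ref{Ex4.sode}.
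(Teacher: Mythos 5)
Your proposal is correct and follows essentially the same route as the paper's proof: exploit the decoupled product structure of the attractor, produce a nonzero complete tempered trajectory for each mode via the explicit Bernoulli-type formula $u_k=\bigl(2\int_{-\infty}^t e^{2\int_s^t(k^4-u_{1,\gamma}(l))\,dl}\,ds\bigr)^{-1/2}$, and use the infinite-mean Birkhoff theorem (condition \eqref{6.uinf}) to guarantee convergence of that integral for every $k$. The only cosmetic difference is that you justify nondegeneracy of each fiber $\mathcal A^k_\gamma$ by monotone comparison and identify it exactly as $[-u_{k,\gamma}(0),u_{k,\gamma}(0)]$, whereas the paper simply invokes connectedness of attractors to conclude $\mathcal A^k_\gamma$ is an interval containing $0$ and $u_{k,\gamma}(0)$; both yield the same embedded $n$-cubes and hence \eqref{6.dim}.
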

\begin{proof}[Sketch of the proof] The detailed proof of this result is given in \cite{CSZ23}. Here we just briefly discuss the main ideas behind it. The existence of a random tempered absorbing set for the first component $u_1$ of system \eqref{5.odes} follows from the explicit formula for the solution. Let us now assume that $u_1(\tau)$ is already in this absorbing set and get the estimates for $u_k$. To this end, multiplying the $k$th equation by $\operatorname{sgn}(u_k)$ and taking a sum, after the standard estimates, we get
$$
\frac d{dt}\(\sum_{k=2}^\infty|u_k|\)+\sum_{k=2}^\infty k^4|u_k|\le C\(\sum_{k=2}^\infty\frac1{k^2}\)|u_1|^{3/2}\le C|u_1(t)|^{3/2}.
$$
Integrating this inequality and using that $u_1(t)$ is tempered, we get a tempered absorbing ball for $u$ in $l_1\subset H$. To obtain a compact absorbing set, it is enough to use the parabolic smoothing property in a standard way. Thus, the existence of a random attractor $\mathcal A_\gamma$ is verified.
\par
Recall also that a random attractor consists of all complete tempered trajectories of the system considered, so it is enough to find all such trajectories. The first equation is linear and is independent of the other equations, so such a trajectory is unique and is given by \eqref{6.good}. Thus, to find $\mathcal K_\gamma$, we need to fix $u_1(t)=u_{1,\gamma}(t)$ in the other equations of \eqref{6.good} and find the tempered attractor $\mathcal A^k_\gamma$, $k=2,3,\cdots$ for every component of \eqref{6.good} separately. Then the desired attractor $\mathcal A_\gamma$ for the whole system is presented as a product
\begin{equation}\label{6.pro}
\mathcal A_\gamma=\{u_{1,\gamma}(0)\}\times\otimes_{k=2}^\infty\mathcal A^k_\gamma.
\end{equation}
Moreover, since the attractor is always connected, every $\mathcal A^k_\gamma$ is a closed interval, so to prove the desired infinite-dimensionality, it is enough to prove that $\mathcal A^k_\gamma\ne\{0\}$ for all $k$. In other words, we need to find a non-zero tempered trajectory $u_k=u_k(t)$ for the equation
\begin{equation}\label{6.k}
\frac d{dt} u_k+k^4u_k=u_{1,\gamma}(t)u_k-u_k^3,
\end{equation}
using that $u_{1,\gamma}(t)$ is tempered and have an infinite mean.
\par
To this end, we first note that every solution of equation \eqref{6.k} is either tempered as $t\to-\infty$ or blows up backward in time (this can be easily shown by comparison using the fact that $u_{1,\gamma}(t)$ is tempered), so any complete solution $u_k(t)$, $t\in\R$, is automatically tempered.
\par
We construct this solution by solving equation \eqref{6.k} explicitly. Namely,
$$
u_{k,\gamma}(t):=\frac1{\sqrt{2\int_{-\infty}^te^{2\int_s^t(k^4-u_{1,\gamma}(l))\,dl}\,ds}}.
$$
Indeed, the finiteness of the integral is guaranteed by \eqref{6.uinf}, see \cite{CSZ23} for more details. Thus, we have proved that $\mathcal A^k_\gamma\ne \{0\}$ for all $k$ and almost all $\gamma$ and the proposition is proved.
\end{proof}
\end{example}
\begin{remark}Actually, we have found explicitly the random attractor in the previous example:
$$
\mathcal A_\gamma=\{u_{1,\gamma}(0)\}\times\otimes_{k=2}^\infty[-u_{k,\gamma}(0),u_{k,\gamma}(0)].
$$
This example shows that, in a contradiction with the commonly accepted paradigm, adding random terms may not only simplify the dynamics, but also make it essentially more complicated and even infinite-dimensional. We expect that this phenomena has a general nature and can be observed in more realistic equations.
\end{remark}
We conclude this subsection by considering the applications of the dimension estimate to {\it uniform} attractors.  In this case, we typically have a cocycle $S_\xi(t):\Phi\to\Phi$, $\xi\in\Psi$ over the DS $T(h):\Psi\to\Psi$, a family of invariant sets (pullback attractors) $\Cal A_\xi$, $\xi\in\Psi$, and a union
\begin{equation}\label{5.rep}
\Cal A_{un}=\cup_{\xi\in\Psi}\Cal A_\xi,
\end{equation}
which is exactly the uniform attractor whose dimension we want to estimate. For simplicity, we assume that $\Psi\subset L^p_b(\R,H)$ for some Banach space $H$ and $1\le p<\infty$ is a hull of some translation-compact external force $\xi_0\in L^p_b(\R,H)$, so we assume that
$$
\Psi:=\Cal H(\xi_0)\subset\subset L^p_{loc}(\R,H),
$$
see \cite{ChVi02} for more details and more general exposition. It is also well-known that, despite the finiteness of the fractal dimension of every $\Cal A_\xi$, the dimension of the union \eqref{5.rep} is usually infinite, because of the infinite-dimensionality of the hull $\Cal H(\xi_0)$. For this reason, it looks natural (following Vishik and Chepyzhov) to study the Kolmogorov $\eb$-entropy of $\Cal A_{un}$.  To this end, we need to include to the squeezing property \eqref{5.sq2} also some kind of Lipschitz/H\"older continuity with respect to $\xi$, namely, to assume that
\begin{equation}\label{5.sq5}
\|\Cal S_{\xi_1}(1)u_1-\Cal S_{\xi_2}(1)u_2\|_{\Phi_1}\le \kappa\|u_1-u_2\|_{\Phi_1}+L\|u_1-u_2\|_{\Phi}+K\|\xi_1-\xi_2\|_{L^p(0,1;H)},
\end{equation}
where $0\le\kappa<1$, $L$ and $K$ are independent of $\xi_1,\xi_2\in\Psi$ and $u_1,u_2\in\Cal A_\xi$. Then we get the following analogue of Theorem \ref{Th5.2lad}.

\begin{theorem}\label{Th5.un-lad} Let $\Cal S_\xi(t):\Phi\to\Phi$ be a cocycle over the DS $T(h):\Psi\to\Psi$ and $\Psi=\Cal H(\xi_0)$ be a hull of some function $\xi_0\in L^p_b(\R,H)$ which is translation-compact in this space. Assume also that there is a family of invariant sets $\Cal A_\xi$ which   satisfies the squeezing property \eqref{5.sq5}, where $\Phi_1$ is some other Banach space compactly embedded to $\Phi$, and also is uniformly bounded in the Banach space $\Phi_1$. Then the Kolmogorov entropy of the union \eqref{5.rep} possesses the following estimate:
\begin{multline}\label{5.ent-est}
\Bbb H_\eb(\Cal A_{un},\Phi_1)\le C_0+\frac{\Bbb H_{\frac{1-\kappa}{4L}}(\Phi_1\hookrightarrow\Phi)}{\log_2\frac2{1+\kappa}}\log_2\frac{\eb_0}\eb+\\+\Bbb H_{\frac\eb {K_0}}(\Cal H(\xi_0)\big|_{t\in[0,L_0\log_2\frac{\eb_0}\eb]}, L^p_b([0,L_0\log_2\frac{\eb_0}\eb],H)),
\end{multline}
where $\eb\le \eb_0$ and $\eb_0,C_0,K_0,L_0$ are some positive numbers which are independent of $\eb$.
\end{theorem}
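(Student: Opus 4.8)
The plan is to run the iteration scheme of Theorem \ref{Th5.2lad}, but now pulling the trajectory back step by step along the cocycle while simultaneously approximating the symbol. Fix $\eb\le\eb_0$, set $\theta:=\frac{1+\kappa}2$ and $R_0:=\sup_{\xi\in\Psi}\|\Cal A_\xi\|_{\Phi_1}$, and choose the number of pull-back steps $n$ to be the least integer with $\theta^n\cdot 2R_0\le\eb/2$; this yields $n\sim L_0\log_2\frac{\eb_0}\eb$ with $L_0=\big(\log_2\frac2{1+\kappa}\big)^{-1}$ and $\eb_0\sim R_0$. For $u\in\Cal A_\xi$ I would use the strict invariance $\Cal S_{T(-j)\xi}(1)\Cal A_{T(-j)\xi}=\Cal A_{T(-j+1)\xi}$ to write $u=\Cal S_{\xi_1}(1)\circ\cdots\circ\Cal S_{\xi_n}(1)u^{(n)}$, where $\xi_j:=T(-j)\xi$ and $u^{(j)}\in\Cal A_{\xi_j}$ satisfies $\Cal S_{\xi_j}(1)u^{(j)}=u^{(j-1)}$. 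Since $\mathcal{A}_{un}=\cup_{\xi}\Cal A_\xi$ has $\Phi_1$-diameter at most $2R_0$, I start the deepest level from a single fixed center $v^{(n)}$ with $e_n:=\|u^{(n)}-v^{(n)}\|_{\Phi_1}\le 2R_0$.

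Next I build the forward approximation. First pick a symbol $\eta\in\Psi$ whose restriction to $[-n,0]$ is $(\eb/K_0)$-close to $\xi|_{[-n,0]}$ in $L^p_b([-n,0],H)$; by shift-invariance of the hull the number of such choices is $2^{\Bbb H_{\eb/K_0}(\Cal H(\xi_0)|_{[0,n]},L^p_b([0,n],H))}$. Running a current approximation $v^{(j)}$ (with $\|u^{(j)}-v^{(j)}\|_{\Phi_1}\le e_j$) forward, I refine at each step by selecting, from the $e_j$-ball around $v^{(j)}$ in $\Phi_1$, a point $\tilde v^{(j)}\in\Cal A_{\xi_j}$ with $\|u^{(j)}-\tilde v^{(j)}\|_\Phi\le\frac{1-\kappa}{2L}e_j$ and $\|u^{(j)}-\tilde v^{(j)}\|_{\Phi_1}\le 2e_j$ (the doubling-radius device of Theorem \ref{Th5.2lad}), and then set $v^{(j-1)}:=\Cal S_{\eta_j}(1)\tilde v^{(j)}$. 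Each refinement costs at most $N:=2^{\Bbb H_{\frac{1-\kappa}{4L}}(\Phi_1\hookrightarrow\Phi)}$ labels, a number independent of $\xi$ and $\eb$, so the whole label sequence over the $n$ steps gives at most $N^n$ possibilities. Applying the squeezing property \eqref{5.sq5} at each step (and using coverings by sets of diameter less than $\eb$ to reach all $\kappa<1$) produces the recursion $e_{j-1}\le\theta e_j+K\|\xi-\eta\|_{L^p(-j,-j+1;H)}$, where I have used $\|\xi_j-\eta_j\|_{L^p(0,1;H)}=\|\xi-\eta\|_{L^p(-j,-j+1;H)}$.

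Unrolling the recursion gives $e_0\le\theta^n\cdot 2R_0+K\sum_{j=1}^n\theta^{j-1}\|\xi-\eta\|_{L^p(-j,-j+1;H)}$. The geometric weight $\theta^{j-1}$ is the decisive feature: by H\"older's inequality with exponent $p'=p/(p-1)$ and summation of the geometric series $\sum_{j\ge1}\theta^{(j-1)p'}$, the intervals $[-j,-j+1]$ recombine into $\sum_{j=1}^n\theta^{j-1}\|\xi-\eta\|_{L^p(-j,-j+1;H)}\le C_\theta\|\xi-\eta\|_{L^p(-n,0;H)}$ with $C_\theta$ independent of $n$. Setting $K_0:=2KC_\theta$ then forces $e_0\le\eb/2+\eb/2=\eb$. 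Consequently every $u\in\mathcal{A}_{un}$ lies within $\eb$ in $\Phi_1$ of a point determined by one of $O(1)$ initial centers, one of $N^n$ label sequences, and one symbol approximation $\eta$. Taking $\log_2$ of the product of these counts and substituting $n=L_0\log_2\frac{\eb_0}\eb$ reproduces exactly the three terms of \eqref{5.ent-est}: the $O(1)$ centers give $C_0$, the labels give $\frac{\Bbb H_{\frac{1-\kappa}{4L}}(\Phi_1\hookrightarrow\Phi)}{\log_2\frac2{1+\kappa}}\log_2\frac{\eb_0}\eb$, and the symbol net gives the final hull-entropy term.

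The step I expect to be the main obstacle is keeping the two error mechanisms cleanly separated: the $n$-independent constant $C_\theta$ coming from the geometric decay is precisely what prevents a spurious $\big(\log\frac{\eb_0}\eb\big)^{1-1/p}$ factor from entering the scale in the last term, so that it can be stated at scale $\eb/K_0$. Equally important is that translation-compactness of $\xi_0$ makes $\Cal H(\xi_0)|_{[0,n]}$ precompact in $L^p([0,n],H)$, which is what guarantees the last entropy is finite. The remaining delicate bookkeeping is to ensure the refinement centers $\tilde v^{(j)}$ genuinely lie in the attractors $\Cal A_{\xi_j}$ (so that \eqref{5.sq5} applies) while the number of labels stays uniform in $\xi$; this is handled verbatim by the doubling-radius argument already used in Theorem \ref{Th5.2lad}.
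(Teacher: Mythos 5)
The paper itself omits the proof of this theorem (it refers to \cite{EMZ04}, \cite{YKSZ23}, \cite{ChVi02}), and the scheme indicated there is exactly the pullback-and-refine iteration you set up; your choice of $n\sim L_0\log_2\frac{\eb_0}{\eb}$, the per-step cost $\Bbb H_{\frac{1-\kappa}{4L}}(\Phi_1\hookrightarrow\Phi)$, and the single symbol approximation from an $L^p_b$-net of the hull are all the right ingredients. However, two of your steps fail as written.

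First, the counting does not close. Your refinement point $\tilde v^{(j)}$ is selected inside $\Cal A_{\xi_j}$, and this set depends on the \emph{true} symbol $\xi$, not only on the label and the net element $\eta$. Hence the final approximant $v^{(0)}$ is not ``determined by one of $O(1)$ initial centers, one of $N^n$ label sequences, and one symbol approximation'': two symbols $\xi\ne\xi'$ lying in the same net cell, with identical label sequences, produce different chains $\tilde v^{(j)}\in\Cal A_{T(-j)\xi}$ versus $\tilde v'^{(j)}\in\Cal A_{T(-j)\xi'}$ and therefore different approximants, so the cardinality bound $O(1)\cdot N^n\cdot(\#\text{net})$ — which is the whole point of the argument — does not follow. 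The repair (and the real reason the diameter trick is needed, beyond reaching $\kappa\ge 1/3$) is to iterate on coverings by sets of small diameter of the entire tube $\bigcup\{\Cal S_{T(-j)\zeta}(j)\,\Cal A_{T(-j)\zeta}\,:\ \zeta\ \text{in the net cell of}\ \eta\}$ rather than on individual trajectories: if $p\in\Cal A_{T(-j)\xi}$ and $q\in\Cal A_{T(-j)\xi'}$ lie in one covering set of $\Phi$-diameter at most $\frac{(1-\kappa)d_j}{2L}$ inside a set of $\Phi_1$-diameter at most $d_j$, then \eqref{5.sq5} gives
\[
\|\Cal S_{T(-j)\xi}(1)p-\Cal S_{T(-j)\xi'}(1)q\|_{\Phi_1}\le\tfrac{1+\kappa}2\,d_j+K\|\xi-\xi'\|_{L^p(-j,-j+1;H)}\le\tfrac{1+\kappa}2\,d_j+\tfrac{2K\eb}{K_0},
\]
so the images of the covering sets form the next covering, whose elements are determined by the previous element and the label alone; the count then legitimately multiplies by $N$ per step, uniformly over all $\xi$ in the cell.

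Second, your symbol-error bookkeeping is internally inconsistent. You choose $\eta$ at distance $\eb/K_0$ in the $L^p_b$ metric (which is what the entropy term in \eqref{5.ent-est} refers to), but then bound the accumulated error by $C_\theta\|\xi-\eta\|_{L^p(-n,0;H)}$ via H\"older. For an $L^p_b$-small difference, the full $L^p$ norm on an interval of length $n$ can be as large as $n^{1/p}\eb/K_0$, so $e_0\le\eb$ does not follow; to run your chain you would need a net in the full $L^p$ metric, whose entropy is strictly larger than the $L^p_b$-entropy appearing in the statement. The correct (and simpler) estimate is the $\ell^1$--$\ell^\infty$ one:
\[
\sum_{j=1}^n\theta^{j-1}\|\xi-\eta\|_{L^p(-j,-j+1;H)}\le\frac1{1-\theta}\,\sup_{1\le j\le n}\|\xi-\eta\|_{L^p(-j,-j+1;H)}\le\frac1{1-\theta}\cdot\frac{\eb}{K_0},
\]
so that choosing $K_0$ of order $K/(1-\kappa)$ closes the recursion at scale $\eb$ while keeping the hull entropy exactly in the $L^p_b$ metric. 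The geometric weight is indeed decisive, but it should be used against the sup over unit intervals, not through $\ell^{p'}$--$\ell^p$ H\"older. With these two corrections your scheme does yield \eqref{5.ent-est}.
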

The derivation of this estimate is similar to the proof of Theorem \ref{Th5.lad}, so we omit it here and refer the interested reader to \cite{EMZ04} or \cite{YKSZ23}, see also \cite{ChVi02} for the analogous results obtained via the volume contraction method.
\par
As we already mentioned, the dimension of $\Cal A_{un}$ may be infinite due to the second term in the right-hand side of \eqref{5.ent-est}, see \cite{ChVi02} and references therein. However, there are interesting particular cases where the dimension of a uniform attractor remains finite, e.g. it is so for periodic or quasi-periodic DPs as well as for the DP stabilizing to the autonomous ones as time goes to $\pm\infty$, see \cites{ChVi02,EMZ04,YKSZ23} for more examples. To extract this finite-dimensionality from the key estimate \eqref{5.ent-est}, we introduce the fractal dimension of the hull $\Cal H(\xi_0)$ as follows:
\begin{equation}
\dim_f(\Cal H(\xi_0)):=\limsup_{\eb\to0}\frac{\Bbb H_{\eb}(\Cal H(\xi_0)\big|_{t\in[0,\log_2\frac{1}\eb]}, L^p_b([0,\log_2\frac{1}\eb],H))}{\log_2\frac1\eb}.
\end{equation}
Then estimate \eqref{5.ent-est} reads
\begin{equation}\label{5.un-est-dim}
\dim_f(\Cal A_{un},\Phi_1)\le \frac{\Bbb H_{\frac{1-\kappa}{4L}}(\Phi_1\hookrightarrow\Phi)}{\log_2\frac2{1+\kappa}}+\dim_f(\Cal H(\xi_0)),
\end{equation}
see \cite{ChVi02} for details. The first term in the right-hand side can be interpreted as (the upper bound of) the dimension of every $\Cal A_\xi$ and the second term is added due to the union.
\begin{remark} It is not difficult to see that $\dim_f(\Cal H(\xi_0))$ is indeed the standard fractal dimension of the set $\Cal H(\xi_0)$ in the weighted norm of $L^p_{e^{-|x|}}(\R,H)$. Note that, for a translation compact $\xi_0\in L^p_b(\R,H)$, the hull $\Cal H(\xi_0)$ is compact in a local topology of $L^p_{loc}(\R,H)$ and, in order to speak about the dimension, we need to fix a metric on $\Cal H(\xi_0)$. Estimate \eqref{5.ent-est} guesses that the weighted metric of $L^p_{e^{-|x|}}(\R,H)$ is the most appropriate for estimating the entropy of uniform attractors.
\par
Note also that estimate \eqref{5.ent-est} makes no sense if $\xi_0$ is not translation-compact (since the second term in the RHS will be infinite). However, there is a natural generalization of this formula suggested in \cite{YKSZ23} which allows us to work with the classes of non-translation-compact external forces considered in subsection \ref{ss5.3}.
\par
It is worth  mentioning that the result of Theorem \ref{Th5.un-lad} remains true with minor changes if we replace the Lipschitz continuity with respect to $\xi$ in \eqref{5.sq5} by the H\"older continuity with an arbitrary  exponent $\alpha\in(0,1]$. In contrast to this, one may lose the finite-dimensionality (even in the autonomous case) if you replace Lipschitz continuity with respect to the initial data $u_i$ by H\"older or $\log$-Lipschitz continuity, see \cite{MieZ02} for the counterexample related with attractors of elliptic PDEs.
\end{remark}

\subsection{Volume contraction and Lyapunov's dimension} We now turn to the  scheme for estimating the dimension of the attractor, which is based on volume contraction arguments. This scheme becomes extremely popular due to applications to Navier-Stokes equations, where it gives the best available so far estimates, which can not be obtained by other methods (although this scheme has essential drawbacks, e.g. it requires the phase space to be Hilbert and the corresponding DS to be differentiable with respect to the initial data, so it cannot completely replace other methods). Since there are nice expositions of this material in the literature, see e.g. \cites{BV92,tem} and references therein, we restrict ourselves only to a brief discussion.
\par
We start with introducing the  volume contraction factor. Let $\Phi$ be a separable Hilbert space and let $\varphi_1,\cdots,\varphi_d\in\Phi$. By definition, a wedge product $\varphi_1\wedge\cdots\wedge\varphi_d$ is a $d$-linear antisymmetric form on $\Phi$ defined by
$$
\varphi_1\wedge\cdots\wedge\varphi_d(\psi_1,\cdots,\psi_d):=\det\((\varphi_i,\psi_j)_{i,j=1}^d\),
$$
where $(\cdot,\cdot)$ is the inner product in $\Phi$. A $d$-linear form on $\Phi$ which is a wedge product of $d$ vectors of $\Phi$ is called decomposable. Let us denote
by $\tilde\Lambda^d\Phi$
the space of $d$-linear antisymmetric forms which can be presented as a finite linear combination
of decomposable functionals. For two decomposable forms $\varphi_1\wedge\cdots\varphi_d$ and
$\psi_1\wedge\cdots\wedge\psi_d$, their inner
product is defined as follows:
$$
 (\varphi_1\wedge\cdots\varphi_d,  \psi_1\wedge\cdots\wedge\psi_d) := \det\((\varphi_i,\psi_j)_{i,j=1}^d\)
$$
and being extended by linearity to $\tilde\Lambda^d\Phi$ it defines an inner product on the space $\tilde\Lambda^d\Phi$, see \cite{tem} for the
details. Finally, the completion of $\tilde\Lambda^d\Phi$
 with respect to this norm is called $d$th exterior power of the
space $\Phi$ and is denoted by $\Lambda^d\Phi$. Note that $\Lambda^d\Phi$ is a subspace of all continuous antisymmetric $d$-linear forms on $\Phi$ and this subspace is proper if $d>1$ and $\dim\Phi=\infty$. Let now $L\in\Cal L(\Phi,\Phi)$ be a linear continuous operator on $\Phi$. Then its $d$th exterior power $\Lambda^dL$ is defined as follows:
$$
(\Lambda^dL\xi)(\psi_1,\cdots,\psi_d):=\xi(L^*\psi_1,\cdots,L^*\psi_d),\ \ \xi\in \Lambda^d\Phi,
$$
where $L^*$ is an adjoint operator to $L$.  It is not difficult to check that $\Lambda^dL:\Lambda^d\Phi\to\Lambda^d\Phi$ is a linear continuous operator and its norm does not exceed $\|L\|^d$. The following lemma gives a geometric interpretation for the norm of this operator.
\begin{lemma} The norm of $d$th exterior power of $L$ satisfies
$$
\|\Lambda^dL\|_{\Cal L(\Lambda^d\Phi,\Lambda^d\Phi)}=\sup_{\varphi_1\wedge\cdots\wedge\varphi_d\ne0}
\frac{\|(L\varphi_1)\wedge\cdots\wedge(L\varphi_d)\|_{\Lambda^d\Phi}}
{\|\varphi_1\wedge\cdots\wedge\varphi_d\|_{\Lambda^d\Phi}}=\sup_{\Pi^d\subset\Phi}
\frac{\operatorname{vol}_d(L\Pi)}{\operatorname{vol}_d(\Pi)},
$$
where the last supremum is taken over all non-degenerate $d$-dimensional parallelepipeds in $\Phi$ and $\operatorname{vol}_d$ stands for the $d$-dimensional Lebesgue measure.
\end{lemma}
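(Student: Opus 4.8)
The plan is first to pin down how $\Lambda^dL$ acts on decomposable forms and then to reduce the computation of its operator norm to a finite-dimensional singular value problem. To begin I would check directly from the definitions that $\Lambda^dL$ sends decomposable forms to decomposable forms: for $\xi=\varphi_1\wedge\cdots\wedge\varphi_d$ one has $(\Lambda^dL\,\xi)(\psi_1,\dots,\psi_d)=\xi(L^*\psi_1,\dots,L^*\psi_d)=\det\big((\varphi_i,L^*\psi_j)\big)=\det\big((L\varphi_i,\psi_j)\big)$, which is exactly $\big((L\varphi_1)\wedge\cdots\wedge(L\varphi_d)\big)(\psi_1,\dots,\psi_d)$. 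Hence $\Lambda^dL(\varphi_1\wedge\cdots\wedge\varphi_d)=(L\varphi_1)\wedge\cdots\wedge(L\varphi_d)$. Since the decomposable forms sit inside $\Lambda^d\Phi$, the middle supremum is automatically $\le\|\Lambda^dL\|$. The last equality is then essentially a matter of definition: $\|\varphi_1\wedge\cdots\wedge\varphi_d\|_{\Lambda^d\Phi}^2=\det\big((\varphi_i,\varphi_j)\big)$ is the Gram determinant, i.e. the square of the $d$-dimensional volume of the parallelepiped $\Pi$ spanned by $\varphi_1,\dots,\varphi_d$, while $\|(L\varphi_1)\wedge\cdots\wedge(L\varphi_d)\|_{\Lambda^d\Phi}$ is the volume of its image, so the ratio of the two norms equals $\operatorname{vol}_d(L\Pi)/\operatorname{vol}_d(\Pi)$.

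The nontrivial point, and the main obstacle, is the reverse inequality $\|\Lambda^dL\|\le M$, where $M$ denotes the middle supremum over decomposables: a priori $\Lambda^d\Phi$ contains many non-decomposable forms, so the operator norm, which is a supremum over all of $\Lambda^d\Phi$, could in principle exceed the supremum taken only over decomposables. The key observation that removes this difficulty is a reduction to finite dimensions. Since $\tilde\Lambda^d\Phi$ is dense in $\Lambda^d\Phi$ and $\Lambda^dL$ is continuous, it suffices to test the norm on elements $\eta\in\tilde\Lambda^d\Phi$; each such $\eta$ is a finite combination of decomposable forms and therefore lies in $\Lambda^dV$ for some finite-dimensional subspace $V\subset\Phi$. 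By the formula above, $\Lambda^dL\eta$ depends only on $L|_V$, and in fact the restriction of $\Lambda^dL$ to $\Lambda^dV$ coincides with $\Lambda^d(L|_V)$ viewed as a map into $\Lambda^dW$ with $W:=LV$ finite-dimensional. Consequently $\|\Lambda^dL\|=\sup_V\|\Lambda^d(L|_V)\|$, the supremum being over all finite-dimensional $V\subset\Phi$.

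Finally I would carry out the finite-dimensional computation by the singular value decomposition. Writing $L|_V$ in the form $Le_i=\sigma_i f_i$ with $\{e_i\}$, $\{f_i\}$ orthonormal bases of $V$ and $W$ and $\sigma_1\ge\sigma_2\ge\cdots\ge0$, the induced families $\{e_{i_1}\wedge\cdots\wedge e_{i_d}\}_{i_1<\cdots<i_d}$ and $\{f_{i_1}\wedge\cdots\wedge f_{i_d}\}$ are orthonormal bases of $\Lambda^dV$ and $\Lambda^dW$, and $\Lambda^d(L|_V)$ is diagonal in them with entries $\sigma_{i_1}\cdots\sigma_{i_d}$. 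Hence $\|\Lambda^d(L|_V)\|=\max_{i_1<\cdots<i_d}\sigma_{i_1}\cdots\sigma_{i_d}=\sigma_1\cdots\sigma_d$, and this maximal value is attained on the single decomposable form $e_1\wedge\cdots\wedge e_d\in\Lambda^dV\subset\Lambda^d\Phi$. Thus $\|\Lambda^d(L|_V)\|\le M$ for every $V$, which yields $\|\Lambda^dL\|\le M$ and closes the chain of (in)equalities. The only places requiring genuine care are the density–continuity reduction of the second step and the bookkeeping that decomposables of $\Lambda^dV$ really inject into the decomposables of $\Lambda^d\Phi$; the spectral input needed is merely the elementary finite-dimensional singular value decomposition, so no infinite-dimensional spectral theory enters the argument.
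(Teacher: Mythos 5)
Your proof is correct. Note that the paper itself does not prove this lemma at all: it simply refers the reader to Temam's book, where the statement is established via the polar decomposition $L=U(L^*L)^{1/2}$ and the spectral theory of the non-negative self-adjoint operator $(L^*L)^{1/2}$ in the infinite-dimensional space $\Phi$, identifying $\|\Lambda^dL\|$ with the product $\alpha_1\cdots\alpha_d$ of the first $d$ singular values of $L$ defined by a max-min formula. Your route is genuinely different and arguably more elementary: you first observe that $\Lambda^dL$ sends decomposables to decomposables (so the middle supremum is trivially a lower bound), then exploit density of $\tilde\Lambda^d\Phi$ in $\Lambda^d\Phi$ to reduce the operator norm to a supremum over the finite-dimensional subspaces $\Lambda^dV$, $V\subset\Phi$ finite-dimensional, and finally diagonalize $\Lambda^d(L|_V)$ by the elementary finite-dimensional singular value decomposition, where the norm $\sigma_1\cdots\sigma_d$ is attained on the single decomposable $e_1\wedge\cdots\wedge e_d$. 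This buys a self-contained argument requiring no infinite-dimensional spectral theory, at the price of losing the explicit formula $\|\Lambda^dL\|=\alpha_1(L)\cdots\alpha_d(L)$ in terms of the singular values of $L$ itself, which Temam's approach provides and which is what is actually used downstream when the trace estimates \eqref{5.trace}--\eqref{5.qd} are derived. One small point you flagged but should write out if this were to be a complete proof: the inner product of two decomposable forms with entries in a subspace $W\subset\Phi$ depends only on the Gram matrix of the entries, so $\tilde\Lambda^dW$ embeds isometrically into $\Lambda^d\Phi$; this is exactly what justifies computing $\|\Lambda^d(L|_V)\|$ intrinsically in $\Lambda^dV\to\Lambda^dW$ and then comparing it with the supremum $M$ taken in $\Lambda^d\Phi$.
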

The proof of this lemma can be found e.g. in \cite{tem}. Thus,
geometrically $\|\Lambda^dL\|$ is the maximal expanding factor for $d$-dimensional volumes under the action of the operator $L$.
\par
Let us now assume that we are given a compact set $\Cal A\subset\Phi$ and a map $S:\Cal A\to\Cal A$. Assume that this map is uniformly Frechet (quasi)differentiable on $\Cal A$, i.e. that there exist linear continuous maps $S'(u_0):\Phi\to\Phi$, $u_0\in\Cal A$  such that
\begin{equation}\label{5.quasi}
\|S(u_1)-S(u_2)-S'(u_1)(u_1-u_2)\|_{\Phi}=o(\|u_1-u_2\|_{\Phi})
\end{equation}
uniformly with respect to $u_1,u_2\in\Cal A$. Assume also that the map $u_0\to S'(u_0)$ is continuous on $\Cal A$. Then we define an infinitesimal $d$-volume contraction factor $\omega_d(S,\Cal A)$ as follows:
$$
\omega_d(S,\Cal A):=\sup_{u_0\in\Cal A}\|\Lambda^dS'(u_0)\|_{\Cal L(\Lambda^d\Phi,\Lambda^d\Phi)}.
$$
This definition can be extended also for non-integer values of $s=s_0+\alpha$, $\alpha\in(0,1)$ via
$$
\omega_d(S,\Cal A):=\sup_{u_0\in\Cal A}\{\|\Lambda^dS'(u_0)\|_{\Cal L(\Lambda^d\Phi,\Lambda^d\Phi)}^{1-\alpha}\|\Lambda^{d+1}S'(u_0)\|_{\Cal L(\Lambda^{d+1}\Phi,\Lambda^{d+1}\Phi)}^{\alpha}\}.
$$
The main result of the theory is the following theorem.
\begin{theorem}\label{Th5.vc} Let $\Cal A$ be a compact set in a Hilbert space $\Phi$ which is strictly invariant with respect to a map $S:\Cal A\to\Cal A$. Assume also that the map $S$ is uniformly quasidifferentiable on $\Cal A$ and that the map $u_0\to S'(u_0)$ is continuous. Finally, let  $\omega_d(S,\Cal A)<0$ for some $d\in\R_+$. Then
$$
\dim_f(\Cal A,\Phi)<d.
$$
\end{theorem}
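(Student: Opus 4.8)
The plan is to estimate the Kolmogorov covering numbers $N_\eb(\Cal A,\Phi)$ directly, using the strict invariance $S\Cal A=\Cal A$ together with the fact that, to first order, $S$ sends a small ball into an ellipsoid whose $d$-dimensional ``content'' is contracted by the factor $\omega_d(S,\Cal A)$. Two preliminary reductions clear the way. First, since the singular values of $S'(u_0)$ are fixed while the exponent entering $\omega_d$ varies continuously with the index, the function $d\mapsto\omega_d(S,\Cal A)$ is continuous; hence the contraction hypothesis $\omega_d(S,\Cal A)<1$ persists for a slightly smaller index $d'<d$, and it suffices to prove the non-strict bound $\dim_f(\Cal A,\Phi)\le d'$. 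Renaming $d'$ as $d$, it remains to show $\dim_f(\Cal A,\Phi)\le d$ whenever $\omega_d(S,\Cal A)<1$; the strict inequality of the theorem then follows. Second, by the chain rule $(S^N)'(u_0)=S'(S^{N-1}u_0)\circ\cdots\circ S'(u_0)$ and the submultiplicativity $\|\Lambda^k(L_1L_2)\|\le\|\Lambda^kL_1\|\,\|\Lambda^kL_2\|$ of exterior powers — which, interpolated between $k=\lfloor d\rfloor$ and $k=\lfloor d\rfloor+1$, gives $\omega_d(L_1L_2)\le\omega_d(L_1)\,\omega_d(L_2)$ — one obtains $\omega_d(S^N,\Cal A)\le\omega_d(S,\Cal A)^N$. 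Since $\Cal A$ is invariant under every $S^N$ and the fractal dimension is unchanged, replacing $S$ by $S^N$ for large $N$ lets us assume $\theta:=\kappa_d\,\omega_d(S,\Cal A)<1$, where $\kappa_d$ is the purely geometric constant from the ellipsoid-covering estimate below.

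The analytic core is a local covering step. By the uniform quasidifferentiability \eqref{5.quasi} and the (uniform, on the compact $\Cal A$) continuity of $u_0\mapsto S'(u_0)$, for every $\delta>0$ there is $r_0>0$ such that for all $u_0\in\Cal A$ and $r\le r_0$ the set $S(\Cal A\cap B(u_0,r))$ is contained in the $\delta r$-neighbourhood of the ellipsoid $S(u_0)+S'(u_0)B(0,r)$, whose semi-axes are $\alpha_1(u_0)r\ge\alpha_2(u_0)r\ge\cdots$, the $\alpha_i(u_0)$ being the singular values of $S'(u_0)$. The key geometric lemma I would establish (or borrow from \cite{Hunt96,ChIl04}) is that, with $d_0:=\lfloor d\rfloor$, this (thickened) ellipsoid is covered by at most $\kappa_d\,\alpha_1(u_0)\cdots\alpha_{d_0}(u_0)\,\alpha_{d_0+1}(u_0)^{-d_0}$ balls of radius $\rho(u_0)\sim\alpha_{d_0+1}(u_0)\,r$. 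Weighting each covering ball by $\rho^{d}$ and using the definition of $\omega_d$, this yields the content-contraction estimate
\begin{equation*}
\#\{\text{covering balls}\}\cdot\rho(u_0)^{d}\le\kappa_d\,\alpha_1(u_0)\cdots\alpha_{d_0}(u_0)\,\alpha_{d_0+1}(u_0)^{\,d-d_0}\,r^{d}=\kappa_d\,\omega_d(S'(u_0))\,r^{d}\le\theta\,r^{d},
\end{equation*}
so $S$ contracts the $d$-content of each small ball by the fixed factor $\theta<1$.

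With this in hand the iteration is straightforward. Starting from a finite cover of $\Cal A$ by balls of radius $\eb\le r_0$, I apply $S$ to each piece and cover its image by the smaller balls of the lemma; by invariance $S\Cal A=\Cal A$ the union is again a cover of $\Cal A$, whose total $d$-content $\sum_j\rho_j^{d}$ is at most $\theta$ times that of the previous cover. Iterating $n$ times forces the $d$-content below $\theta^{n}$ times its initial value while the radii shrink geometrically; after reorganising the resulting balls into comparable scales, this gives $N_\eb(\Cal A,\Phi)\lesssim\eb^{-d}$ for all small $\eb$, that is $\dim_f(\Cal A,\Phi)\le d$. By the first reduction this in fact holds for some index strictly below the original $d$, which is the assertion of the theorem.

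The hard part will be the ellipsoid-covering lemma in the genuinely infinite-dimensional setting, and it is exactly here that earlier treatments needed extra hypotheses subsequently removed in \cite{ChIl04}. Two points are delicate. Since the semi-axes and the natural scale $\alpha_{d_0+1}(u_0)$ depend on the centre $u_0$, the covering radii $\rho(u_0)$ are not uniform, so one must pass to a common refined scale (or otherwise control their dispersion) before the per-step content bound can be iterated into a genuine estimate for $N_\eb$. Moreover, the $\delta r$ linearisation error lives in the ``thin'' directions $i>d_0$: the thickened ellipsoid admits the stated covering count only when the tail singular values decay — i.e.\ $S'(u_0)$ is compact, which in the dissipative-PDE applications is guaranteed by the same smoothing/compactness that produces the compact $\Cal A$ — and $\delta$ must be chosen small relative to $\alpha_{d_0+1}(u_0)$ uniformly in $u_0$, with the degenerate case $\alpha_{d_0+1}(u_0)=0$ handled separately. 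Securing all these choices uniformly over the compact set $\Cal A$ is the crux; once done, the content-contraction iteration above completes the proof.
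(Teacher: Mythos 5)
Your proposal follows essentially the same route as the paper: the paper's own ``proof'' is only a remark outlining the Douady--Oesterl\'e iteration --- cover $\Cal A$ by small balls, replace the image of each ball by an ellipsoid whose $d$-dimensional volume is contracted, re-cover and iterate --- and it defers the genuinely hard step (covering ellipsoids of varying shapes by balls of one \emph{common} radius, which is exactly what separates the fractal from the Hausdorff case) to Hunt \cite{Hunt96} and Chepyzhov--Ilyin \cite{ChIl04}, precisely as you do. Your preliminary reductions (passing to a slightly smaller $d'<d$, and to an iterate $S^N$ via submultiplicativity of $\omega_d$ so that the geometric constant is absorbed) are sound and you identify the same crux the paper flags; note only that the hypothesis ``$\omega_d(S,\Cal A)<0$'' in the statement is a typo for $\omega_d(S,\Cal A)<1$, which you silently and correctly repaired.
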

To the best of our knowledge, this theorem has been proved by Duady and Oesterle \cite{DO80} in the finite-dimensional case for the Hausdorff dimension and has been extended to the infinite-dimensional case in \cite{Il83}, see also \cite{CF85}. The case of fractal dimension is more delicate and has been initially treated with some rather annoying extra conditions, see \cite{CF85}. These conditions have been removed by Hunt \cite{Hunt96} for finite-dimensional case, see also \cite{BlI99} for $C^2$ diffeomorphisms in ininite-dimensional case. The result of the theorem in the form stated above appeared in \cite{ChIl04}.

\begin{remark} The proof of this theorem follows the general scheme presented in Theorem \ref{Th5.lad}. Namely, we start with the covering of $\Cal A$ by very small $\eb$-balls and obtain a better covering of the set $S(\Cal A)$ using our assumptions on $S$. Then we iterate this process in order to get the estimate for the dimension. Indeed, since $\eb$ is small and $S$ is smooth enough, the image $S(B_\eb(u_0,\Phi))$ is close to the ellipsoid $S'(u_0)B_\eb(u_0,\Phi)$ and, by the assumptions of the theorem, $d$-dimensional volume of this ellipsoid is strictly less than $d$-dimensional volume of the initial ball. In the case when the Hausdorff dimension is considered, it is more or less straightforward to cover every of such an ellipsoid by smaller balls of {\it different} radii in such a way that the corresponding $d$-dimensional Hausdorff measure will be contracted at every iteration step and this gives the result, see \cite{tem} for the details. The difficulties in the case of fractal dimension are related with the fact that, in contrast to the case of conditions \eqref{5.sq1}, different ellipsoids obtained as images of $\eb$-balls from the initial covering may have essentially different "shapes" (only the volumes are contracted), but we still need to cover them by balls of {\it the same} radius, say $\eb/K$. For this reason, much more delicate estimates are required.
\par
Note that the quasi-differentiability condition is automatically satisfied if the map $S:\Phi\to\Phi$ is $C^1$-smooth. However, in applications it is often easier to verify estimate \eqref{5.quasi} on the attractor only since the attractor is usually more smooth. This is the reason why quasidifferentiability is introduced. Note also that, in contrast to Frechet derivative, the linear operator $S'(u_0)$ satisfying \eqref{5.quasi} may be not unique.
\par
It is worth  mentioning that, in the applications to  PDEs, we normally use the theorem for integer values of $d$ only. However, the possibility to use non-integer $d$s is crucial in many applications to ODEs, e.g. to the Lorentz system.
\end{remark}
Theorem \ref{Th5.vc} can be reformulated in a more elegant way using the fact that the map $S$ can be replaced by any its iteration $S^n$. Indeed, it is not difficult to show using the chain rule that the sequence $\ln\omega_d(S^n,\Cal A)$ is subadditive, so the following limit:
$$
\bar\omega_d(S,\Cal A)=\lim_{n\to\infty}\(\omega_d(S^n,\Cal A)\)^{1/n}=\inf_{n\in\Bbb N}\(\omega_d(S^n,\Cal A)\)^{1/n}
$$
exists for every $d$. In the case of continuous time and a semigroup $S(t)$, we just replace $n\in\Bbb N$ by $t\in\R_+$ and $S^n$ by $S(t)$ everywhere. Let us also recall that the {\it uniform} Lyapunov dimension of the map $S$ on $\Cal A$ is defined as follows:
\begin{equation}\label{5.dim-L}
\dim_L(S,\Cal A):=\sup_{d\in\R_+}\{\bar\omega_d(S,\Cal A)\ge0\},
\end{equation}
see \cites{tem,Zel08} and references therein. Then Theorem \ref{Th5.vc} reads.

\begin{corollary} Let the assumptions of Theorem \ref{Th5.vc} hold. Then
\begin{equation}\label{5.est-fL}
\dim_f(\Cal A,\Phi)\le \dim_L(S,\Phi).
\end{equation}
\end{corollary}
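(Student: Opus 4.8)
The plan is to obtain \eqref{5.est-fL} as a direct consequence of Theorem \ref{Th5.vc}, applied not to $S$ itself but to its iterates $S^n$, together with the definition of $\dim_L(S,\Cal A)$ through the normalized contraction factor $\bar\omega_d$. (Throughout I read the threshold in Theorem \ref{Th5.vc} and in \eqref{5.dim-L} as the volume-contraction condition $\omega_d<1$, respectively $\bar\omega_d\ge1$, since $\omega_d$ is a supremum of operator norms and is therefore nonnegative.)

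First I would check that Theorem \ref{Th5.vc} is applicable to every iterate $S^n$, $n\in\Bbb N$. Strict invariance gives $S^n\Cal A=\Cal A$, and since $S$ maps $\Cal A$ into $\Cal A$, where it is uniformly quasidifferentiable with continuous quasiderivative, the chain rule shows that $S^n$ is uniformly quasidifferentiable on $\Cal A$ with $(S^n)'(u_0)=S'(S^{n-1}u_0)\circ\cdots\circ S'(u_0)$, and that $u_0\mapsto(S^n)'(u_0)$ is again continuous. Hence Theorem \ref{Th5.vc} yields the implication: if $\omega_d(S^n,\Cal A)<1$ for some $n$ and some $d\in\R_+$, then $\dim_f(\Cal A,\Phi)<d$.

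Second, I would establish that $\bar\omega_d(S,\Cal A)$ is the relevant asymptotic quantity. The submultiplicativity $\|\Lambda^d(AB)\|\le\|\Lambda^d A\|\,\|\Lambda^d B\|$ of exterior powers, combined with the chain rule, yields $\omega_d(S^{n+m},\Cal A)\le\omega_d(S^n,\Cal A)\,\omega_d(S^m,\Cal A)$, which is exactly the subadditivity of $n\mapsto\ln\omega_d(S^n,\Cal A)$ recorded before the statement; for a non-integer $d=d_0+\alpha$ the same inequality survives because the interpolated factor of a composition splits into the product of the interpolated factors of its two constituents. By Fekete's lemma the limit $\bar\omega_d(S,\Cal A)=\lim_{n\to\infty}(\omega_d(S^n,\Cal A))^{1/n}=\inf_n(\omega_d(S^n,\Cal A))^{1/n}$ exists, so $\bar\omega_d(S,\Cal A)<1$ forces $\omega_d(S^n,\Cal A)<1$ for at least one $n$.

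Finally I would combine the two. Fix any $d>\dim_L(S,\Cal A)$. Since $\dim_L(S,\Cal A)=\sup\{d':\bar\omega_{d'}(S,\Cal A)\ge1\}$, the number $d$ exceeds the supremum of this set and therefore cannot belong to it, i.e. $\bar\omega_d(S,\Cal A)<1$; note that this uses only the supremum and no monotonicity of $d\mapsto\bar\omega_d$. By the second step there is an $n$ with $\omega_d(S^n,\Cal A)<1$, and by the first step $\dim_f(\Cal A,\Phi)<d$. Letting $d\downarrow\dim_L(S,\Cal A)$ delivers \eqref{5.est-fL}. The only genuinely nontrivial verification, and the step I expect to be the main obstacle, is the applicability of Theorem \ref{Th5.vc} to the iterates, namely the uniform quasidifferentiability of $S^n$ with continuity of $u_0\mapsto(S^n)'(u_0)$; this is precisely where the hypothesis $S\Cal A=\Cal A$ is essential, so that the intermediate images stay on $\Cal A$ where the quasiderivative of $S$ is controlled, and where the bookkeeping for the interpolated non-integer contraction factor must be carried out with care. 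Everything else is a formal packaging of Theorem \ref{Th5.vc} and of the already-recorded existence of $\bar\omega_d$ as an infimum.
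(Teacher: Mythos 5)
Your proposal is correct and follows essentially the same route the paper intends: the text preceding the corollary presents it as an immediate reformulation of Theorem \ref{Th5.vc} via its applicability to the iterates $S^n$ (chain rule), the subadditivity of $\ln\omega_d(S^n,\Cal A)$ giving $\bar\omega_d$ as $\inf_n(\omega_d(S^n,\Cal A))^{1/n}$, and the definition \eqref{5.dim-L} of the Lyapunov dimension, which is exactly your three-step packaging. You also correctly read the paper's thresholds ``$\omega_d<0$'' and ``$\bar\omega_d\ge 0$'' as the intended multiplicative conditions $\omega_d<1$ and $\bar\omega_d\ge 1$, and your handling of the non-integer interpolated factor and of the final limit $d\downarrow\dim_L(S,\Cal A)$ supplies precisely the details the paper leaves implicit.
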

At the next step, we discuss how to evaluate or estimate the Lyapunov dimension. This can be done for dynamical systems with continuous time using the analogues of the Liouville formula for the evolution of volumes. Namely, let us assume that, for any $v_0\in\Phi$ and any $u_0\in\Cal A$ the (quasi)derivative $v(t):=S'(t)(u_0)v_0$ of the map $S(t):\Cal A\to\Cal A$ solves the corresponding equation of variations:
\begin{equation}\label{5.var}
\Dt v(t)=\Cal L(u(t))v(t),\ \ v\big|_{t=0}=v_0,\ \ u(t):=S(t)u_0,
\end{equation}
where $\Cal L(u): D\to\Phi$ are linear unbounded operators such that $D$ is dense in $\Phi$ and the operators $\Cal L(u)$ are bounded from above, i.e.
$$
(\Cal L(u)\eta,\eta)\le C\|\eta\|^2_{\Phi},\ \eta\in D,\ u\in\Cal A,
$$
where $C$ is independent of $\eta$ and $C$. The following lemma is a key technical tool in estimating the Lyapunov dimension.
\begin{lemma} Under the above assumptions, the following identity holds:
\begin{equation}\label{5.liu}
\frac 12\frac d{dt}\|v_1(t)\wedge\cdots\wedge v_d(t)\|^2_{\Lambda^d\Phi}=\operatorname{Tr}(Q(t)\circ\Cal L(u(t)\circ Q(t))\|v_1(t)\wedge\cdots\wedge v_d(t)\|^2_{\Lambda^d\Phi},
\end{equation}
where $v_i(t)$, $i=1,\cdots, d$ are solutions of the equation of variations \eqref{5.var}, $Q(t)$ is the orthoprojector to $d$-dimensional subspace spanned by vectors $\{v_i(t)\}_{i=1}^d$ and $\operatorname{Tr}$ is the trace of a matrix in $\R^d$.
\end{lemma}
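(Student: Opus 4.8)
The plan is to reduce the infinite-dimensional identity to a finite-dimensional computation on the span of the vectors $v_i(t)$, followed by a Jacobi-type determinant identity. Writing $V(t):=v_1(t)\wedge\cdots\wedge v_d(t)\in\Lambda^d\Phi$, I would first use that the inner product on $\Lambda^d\Phi$ is real and symmetric to get
$$
\frac12\frac d{dt}\|V(t)\|^2_{\Lambda^d\Phi}=\big(\Dt V(t),V(t)\big)_{\Lambda^d\Phi}.
$$
Since each $v_i$ solves the equation of variations $\Dt v_i=\Cal L(u(t))v_i$, the multilinearity of the wedge product yields the Leibniz rule
$$
\Dt V(t)=\sum_{i=1}^d v_1\wedge\cdots\wedge\big(\Cal L(u(t))v_i\big)\wedge\cdots\wedge v_d,
$$
so that combining the two displays gives $\tfrac12\frac d{dt}\|V\|^2=\sum_{i=1}^d\big(v_1\wedge\cdots\wedge\Cal L v_i\wedge\cdots\wedge v_d,\;v_1\wedge\cdots\wedge v_d\big)_{\Lambda^d\Phi}$.

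Next I would evaluate each summand using the determinant formula for inner products of decomposable forms recalled above. Denoting by $G=G(t)$ the Gram matrix $G_{jk}=(v_j,v_k)_\Phi$, the $i$-th summand equals $\det G^{(i)}$, where $G^{(i)}$ is obtained from $G$ by replacing its $i$-th row with $\big((\Cal L v_i,v_k)_\Phi\big)_{k=1}^d$. Since $\|V\|^2_{\Lambda^d\Phi}=\det G$, it then remains to prove the purely algebraic identity
$$
\sum_{i=1}^d\det G^{(i)}=\operatorname{Tr}\big(Q\circ\Cal L(u)\circ Q\big)\,\det G.
$$
To handle it on the (open, dense) set of times where the $v_i$ are linearly independent, so that $Q$ has rank $d$, I would fix an orthonormal basis $e_1,\dots,e_d$ of the range of $Q$, write $v_i=\sum_k A_{ik}e_k$ with $A$ invertible, and set $L_{ml}:=(\Cal L e_m,e_l)_\Phi$. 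A short computation gives $G=AA^{\top}$ and that the replaced rows are exactly the rows of $M:=ALA^{\top}$. A cofactor expansion of $\det G^{(i)}$ along its $i$-th row, together with $C_{ik}=(\operatorname{adj}G)_{ki}=\det(G)(G^{-1})_{ki}$, produces the Jacobi identity $\sum_i\det G^{(i)}=\det(G)\,\operatorname{Tr}(MG^{-1})$; and the similarity invariance of the trace gives $\operatorname{Tr}(MG^{-1})=\operatorname{Tr}(ALA^{\top}(AA^{\top})^{-1})=\operatorname{Tr}(ALA^{-1})=\operatorname{Tr}L=\sum_m(\Cal L e_m,e_m)_\Phi=\operatorname{Tr}(Q\Cal L Q)$, which is the claim. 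At the exceptional times where $V=0$ both sides vanish, since $\big(\Dt V,V\big)_{\Lambda^d\Phi}=0$ there, so the identity extends by this direct check.

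The main obstacle I expect is not the algebra but the analytic justification of the Leibniz step in $\Lambda^d\Phi$ when $\Cal L(u)$ is an unbounded operator. One must know that the quasiderivatives $v_i(t)$ genuinely lie in the domain $D$ of $\Cal L(u(t))$ and that $t\mapsto v_i(t)$ is differentiable in $\Phi$ with $\Dt v_i=\Cal L v_i$ in the strong sense, which is precisely what the hypothesis ``the quasiderivative solves \eqref{5.var}'' supplies, together with the continuity of the multilinear wedge map $(\Phi)^d\to\Lambda^d\Phi$ needed to differentiate the product componentwise. I would also record that $\operatorname{Tr}(Q\Cal L Q)$ is well defined despite the unboundedness of $\Cal L$, because $Q$ has finite rank and the chosen basis vectors $e_j$, being finite linear combinations of the $v_i\in D$, belong to $D$; hence $Q\Cal L Q$ is a finite-rank operator whose trace is the finite sum $\sum_{j}(\Cal L e_j,e_j)_\Phi$ appearing above.
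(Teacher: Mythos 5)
Your proof is correct, and it is essentially the argument the paper has in mind: the paper itself omits the proof of this lemma, referring instead to Temam's book \cite{tem}, where exactly this computation (differentiate $\|v_1\wedge\cdots\wedge v_d\|^2$ via the Leibniz rule, express each term as a Gram determinant with one row replaced, and reduce the resulting Jacobi-type identity to $\operatorname{Tr}(Q\Cal L Q)$ via an orthonormal basis of $\operatorname{span}\{v_i\}$) is carried out. Your two supplementary remarks — the strong differentiability of the $v_i$ in $D$ justifying the Leibniz step, and the well-definedness of $\operatorname{Tr}(Q\Cal L Q)$ because $\operatorname{range}(Q)\subset D$ makes $Q\Cal L Q$ finite rank — together with the direct check that both sides vanish whenever $V(t)=0$, correctly close the only analytic gaps in the formal computation.
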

This statement is the extension of the classical Liouville theorem to an infinite-dimensional case, see e.g. \cite{tem} for the details. To proceed further, we define the  $d$-dimensional trace of operators $\Cal L(u(t))$, namely,
\begin{equation}\label{5.trace}
\operatorname{Tr}_d(\Cal L(u(t)):=\sup\left\{\sum_{i=1}^d(\Cal L(u(t))\psi_i,\psi_i)\ :\ \psi_i\in D, \ (\psi_i,\psi_j)=\delta_{ij}\right\}.
\end{equation}
Then identity \eqref{5.liu} together with the obvious estimate
$$
\operatorname{Tr}(Q(t)\circ\Cal L(u(t))\circ Q(t))\le\operatorname{Tr}_d(\Cal L(u(t))
$$
gives us the estimate
$$
\(\omega_d(S(T),\Cal A)\)^{1/T}\le e^{2\sup_{u_0\in\Cal A}\{\frac1T\int_0^T\operatorname{Tr}_d(\Cal L(u(t)))\,dt\}}.
$$
This, in turn, gives the corollary of Theorem \ref{Th5.vc} which is suitable for applications, see \cites{ChVi02,tem} for more details.
\begin{corollary}\label{Cor5.vc-ap} Let the above assumptions hold. Let also
\begin{equation}\label{5.qd}
\bar q_d:=\inf_{T\ge0}\left\{\frac1T\sup_{u_0\in\Cal A}\int_0^T\operatorname{Tr}_d(\Cal L(S(t)u_0))\,dt\right\}.
\end{equation}
Assume that $\bar q_{d}<0$ for some $d\in\Bbb N$. Then
\begin{equation}
\dim_f(\Cal A,\Phi)\le\dim_L(S(t),\Cal A)< d.
\end{equation}
\end{corollary}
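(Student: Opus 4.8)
The plan is to deduce the statement from Theorem~\ref{Th5.vc} applied to a single time-$T_0$ map $S=S(T_0)$, converting the hypothesis $\bar q_d<0$ into a genuine $d$-volume contraction $\omega_d(S(T_0),\Cal A)<1$ by means of the Liouville identity \eqref{5.liu}. The elementary input I would record first is the bound $\operatorname{Tr}(Q\circ\Cal L(u)\circ Q)\le\operatorname{Tr}_d(\Cal L(u))$: choosing an orthonormal basis $\psi_1,\dots,\psi_d$ of the range of the rank-$d$ orthoprojector $Q$, the left-hand side equals $\sum_{i=1}^d(\Cal L(u)\psi_i,\psi_i)$, which is one of the admissible frames in the supremum \eqref{5.trace} defining $\operatorname{Tr}_d$.

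Next, for fixed $u_0\in\Cal A$ and vectors $v_{0,1},\dots,v_{0,d}$, I would insert this inequality into \eqref{5.liu} to obtain the scalar differential inequality $\tfrac12\tfrac{d}{dt}\|V(t)\|^2_{\Lambda^d\Phi}\le\operatorname{Tr}_d(\Cal L(u(t)))\,\|V(t)\|^2_{\Lambda^d\Phi}$ for $V(t)=v_1(t)\wedge\cdots\wedge v_d(t)$ and $u(t)=S(t)u_0$, where $v_i(t)$ solve the variation equation \eqref{5.var}. Integrating by Gronwall and taking the supremum over frames and over $u_0\in\Cal A$, together with the Lemma identifying $\|\Lambda^dS'(u_0)\|$ with the maximal $d$-volume expansion, gives
$$\omega_d(S(T),\Cal A)\le\exp\Big(\sup_{u_0\in\Cal A}\int_0^T\operatorname{Tr}_d(\Cal L(S(t)u_0))\,dt\Big),\qquad T\ge0.$$
Since $\bar q_d<0$, by the definition \eqref{5.qd} of the infimum there is $T_0>0$ with $\tfrac1{T_0}\sup_{u_0}\int_0^{T_0}\operatorname{Tr}_d(\Cal L(S(t)u_0))\,dt<0$, so the exponent is strictly negative and $\omega_d(S(T_0),\Cal A)<1$. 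The map $S(T_0)$ is strictly invariant on $\Cal A$ because $S(t)\Cal A=\Cal A$, and it inherits uniform quasidifferentiability with continuous derivative from the semigroup, its quasiderivative being the time-$T_0$ solution operator of \eqref{5.var} (continuous in $u_0$ by continuous dependence). Theorem~\ref{Th5.vc} then yields $\dim_f(\Cal A,\Phi)<d$ at once.

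To upgrade this to the chain $\dim_f(\Cal A,\Phi)\le\dim_L(S(t),\Cal A)<d$, I would invoke the already established inequality \eqref{5.est-fL} for the first part. For $\dim_L<d$ I would let $T\to\infty$ in the displayed bound: since $\bar\omega_d(S,\Cal A)=\inf_T(\omega_d(S(T),\Cal A))^{1/T}$ and the exponent has infimum $\bar q_d$, monotonicity of the exponential gives $\bar\omega_d(S,\Cal A)\le e^{\bar q_d}<1$. To place the Lyapunov threshold strictly below $d$, I would use that $s\mapsto\ln\bar\omega_s(S,\Cal A)$ is continuous, concave, and vanishes at $s=0$; its increments (the global Lyapunov exponents) are then non-increasing, so $\ln\bar\omega_d<0$ together with $\ln\bar\omega_0=0$ forces $\ln\bar\omega_s<0$ for all $s\ge d$ and, by continuity, for $s$ slightly below $d$ as well, whence $\dim_L(S(t),\Cal A)=\sup\{s:\bar\omega_s\ge1\}<d$.

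The main obstacle is precisely this concavity/monotonicity in the dimension $s$ of the \emph{global} contraction numbers $\bar\omega_s$. Pointwise in $u_0$ it reflects that $\operatorname{Tr}_s(\Cal L(u))$ is the sum of the top $s$ ordered eigenvalues of the symmetric part of $\Cal L(u)$, hence concave in $s$ under the interpolation built into the definition of $\omega_s$; the genuinely delicate part is that this concavity must survive the supremum over $u_0\in\Cal A$, the time integration, and the limit $T\to\infty$. This is the technical point treated in \cite{tem,ChIl04}, and I would either reproduce that argument or, more economically, note that the strict contraction $\omega_d(S(T_0),\Cal A)<1$ already delivers $\dim_f(\Cal A,\Phi)<d$ through Theorem~\ref{Th5.vc} with no reference to $\dim_L$, the Lyapunov-dimension bound being an additional refinement.
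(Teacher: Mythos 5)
Your overall route is exactly the paper's route: the paper's own ``proof'' of this corollary consists precisely of the two displays preceding it --- the Liouville identity \eqref{5.liu} combined with $\operatorname{Tr}(Q(t)\circ\Cal L(u(t))\circ Q(t))\le\operatorname{Tr}_d(\Cal L(u(t)))$, yielding the bound on $\bigl(\omega_d(S(T),\Cal A)\bigr)^{1/T}$ --- followed by an appeal to Theorem \ref{Th5.vc}, with the remaining details delegated to \cite{ChVi02} and \cite{tem}. Your first two paragraphs reconstruct this correctly (including the only sensible reading of the misprinted conditions ``$\omega_d(S,\Cal A)<0$'' and ``$\bar\omega_d\ge 0$'' as $\omega_d<1$ and $\bar\omega_d\ge1$), and your derivation of $\omega_d(S(T_0),\Cal A)<1$ and hence $\dim_f(\Cal A,\Phi)<d$ is sound.

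The one step you should not leave in its present form is the claim that $s\mapsto\ln\bar\omega_s(S,\Cal A)$ is concave. Pointwise in $u_0$ the map $s\mapsto\ln\omega_s(S'(u_0))$ is indeed concave, since $\omega_j(L)=\alpha_1(L)\cdots\alpha_j(L)$ in terms of the ordered singular values $\alpha_1\ge\alpha_2\ge\cdots$ of $L$, so its integer increments $\ln\alpha_j$ are non-increasing; but $\ln\omega_s(S,\Cal A)$ is a \emph{supremum} over $u_0$ of concave functions, and a supremum of concave functions need not be concave. This is precisely where the difficulty you flag lives, and note that your fallback (``prove only $\dim_f<d$'') would not deliver the stated conclusion $\dim_L(S(t),\Cal A)<d$. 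The gap is repairable without any concavity, using only inequalities that hold pointwise in the operator and therefore survive the supremum. For $s\ge d$: since $\alpha_{j+1}(L)^{j}\le\alpha_1(L)\cdots\alpha_j(L)$, one has $\omega_{j+1}(L)\le\omega_j(L)^{(j+1)/j}$, so $\theta:=\omega_d(S(T_0),\Cal A)<1$ forces $\omega_s(S(T_0),\Cal A)<1$, hence $\bar\omega_s<1$, for every $s\ge d$. For $s=d-1+\alpha<d$: the H\"older-type inequality $\sup_u\{A(u)^{1-\alpha}B(u)^{\alpha}\}\le(\sup_uA)^{1-\alpha}(\sup_uB)^{\alpha}$, the submultiplicativity $\|\Lambda^j(L_1L_2)\|\le\|\Lambda^jL_1\|\,\|\Lambda^jL_2\|$ along the chain rule, and the invariance $S(T_0)\Cal A=\Cal A$ give
\[
\omega_s(S(nT_0),\Cal A)\le \omega_{d-1}(S(nT_0),\Cal A)^{1-\alpha}\,\omega_d(S(nT_0),\Cal A)^{\alpha}\le M^{n(1-\alpha)}\,\theta^{n\alpha},
\]
where $M:=\omega_{d-1}(S(T_0),\Cal A)<\infty$ because $u_0\mapsto S'(T_0)(u_0)$ is continuous on the compact set $\Cal A$. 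Hence $\bar\omega_s<1$ whenever $(1-\alpha)\ln M+\alpha\ln\theta<0$, i.e. for all $s>d-1+\frac{(\ln M)_+}{(\ln M)_+-\ln\theta}$, and this threshold is strictly smaller than $d$. Together with the case $s\ge d$ this gives $\dim_L(S(t),\Cal A)<d$, and \eqref{5.est-fL} then completes the chain $\dim_f(\Cal A,\Phi)\le\dim_L(S(t),\Cal A)<d$, so the full statement of the corollary is recovered without invoking concavity of the uniform contraction factors.
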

\begin{remark} Corollary  \ref{Cor5.vc-ap} reduces the problem of estimating the fractal dimension  of the attractor to estimating the traces of linear operators related with the corresponding equation of variations. This bridges the theory of attractors with the classical topic of the operators theory, namely, estimating traces of various differential operators, see \cite{FLW22} and references therein. In particular, in relatively simple cases, such estimates can be obtained using the min-max principle, in more complicated cases (like Navier-Stokes equations or other hydrodynamical equations), an essential progress has been achieved using  Lieb--Thirring inequalities as well as other collective Sobolev inequalities, see \cites{tem,IlZ21,IKZ} and references therein.
\par
Note also that, although the definition of the Lyapunov dimension is independent of the concrete choice of the equivalent inner product in the space $\Phi$,  the clever choice of this inner product (which may depend on the point of the phase space), may essentially improve the estimates, see e.g. \cites{Ghi88,SZ16} for estimates related with damped Schroedinger and hyperbolic Cahn-Hilliard equations respectively) as well as \cite{LKKK16} for calculating the {\it exact} value of the Lyapunov dimension of the Lorentz attractor. We will not go into more details here and restrict ourselves to the example related with damped wave equations which is important for the discussion in the next subsection.
\end{remark}
\begin{example}\label{Ex5.hyp} Let us consider an abstract damped wave equation in a Hilbert space $\Phi$:
\begin{equation}\label{5.hyp}
\Dt^2 u+\gamma\Dt u+A u=F(u,\Dt u),\ \ \xi_u\big|_{t=0}=\xi_0,\ \ \xi_u(t):=\{u(t),\Dt u(t)\},
\end{equation}
where $A: D(A)\to \Phi$ is a positive self-adjoint linear operator with compact inverse, $\gamma>0$ is a given dissipation coefficient and $F$ is a given nonlinearity. We define a scale $\Phi^s:=D(A^{s/2})$, $s\in\R$, of Hilbert spaces associated with the operator $A$ and the associated scale of energy spaces $E^s:=\Phi^{s+1}\times\Phi^s$. For simplicity, we assume that $F$ is smoothing and bounded, i.e.
\begin{equation}\label{5.inf-good}
F\in C^\infty_b(E^{-m},\Phi^m),\ \ \forall m\in\Bbb N.
\end{equation}
Then it is straightforward to prove that problem \eqref{5.hyp} is globally well posed in the energy phase space $E:=E^0$, the corresponding solution semigroup $S(t):E\to E$ is dissipative with respect to the standard bornology of bounded sets in $E$ and possesses an attractor $\Cal A$ in $E$, see e.g. \cite{TZ03} for more details.
\par
Our main task here is to estimate the fractal dimension of this attractor and its dependence on a small parameter $\gamma$. Since we have extremely strong assumptions \eqref{5.inf-good} on the nonlinearity $F$, it is also immediate to verify that the solution semigroup $S(t)$ is $C^\infty$-smooth in $E$ with respect to the initial data and that the Frechet derivative $\xi_v(t):=S'(\xi_0)(t)\xi_{v_0}$ is defined as a solution of the following equation of variations:
\begin{equation}\label{5.hypvar}
\Dt^2 v+\gamma\Dt v+Av=F'_u(\xi_u(t))v+F'_{\Dt u}(\xi_u(t))\Dt v,\ \ \xi_v\big|_{t=0}=\xi_{v_0}, \ \xi_u(t):=S(t)\xi_{u_0},
\end{equation}
so we only need to estimate $d$-dimensional traces of the operator
\begin{equation}\label{5.cl-hyp}
\Cal L(\xi_u(t)):=\(\begin{matrix} 0&1\\-A+F'_u&-\gamma +F'_{\Dt u}\end{matrix}\)
\end{equation}
in the space $E$. However, the volume contraction scheme does not work properly in the initial inner product in $E$ induced by the Cartesian product and we need to modify it, namely, we set
$$
(\{v,v'\},\{w,w'\})_\gamma:=(Av,w)+(v',w')+\frac\gamma2((v,w')+(v',w)),\ \ \{v,v'\},\{w,w'\}\in E.
$$
Then this inner product is equivalent to the standard one if $\gamma>0$ is small enough and
$$
(\Cal L\xi_v,\xi_v)_\gamma=-\frac\gamma2\(\|v\|^2_{\Phi^1}+\|v'\|^2_{\Phi}\)+(F'_{\Dt u}v',v')+(F'_uv,v')+\frac{\gamma}2\((F'_uv,v)+(F'_{\Dt u}v',v)-\gamma(v,v')\).
$$
To estimate the terms in the right-hand side containing $F$, we use assumption \eqref{5.inf-good}. For instance, for the first term we have
$$
|(F'_{\Dt u}v',v')|\le \|F'_{\Dt u}v'\|_{\Phi^m}\|v'\|_{\Phi^{-m}}\le C\|v'\|_{\Phi^{-m}}^2=C(A^{-m}v',v')
$$
and estimating the remaining terms analogously, for sufficiently small $\gamma>0$, we arrive at
$$
(\Cal L(\xi_u(t))\xi_v,\xi_v)_\gamma\le (\Bbb L_m\xi_v,\xi_v)_\gamma,\ \ \  \Bbb L_m:=\(\begin{matrix}-\frac\gamma4+C_mA^{-m}&0\\0&-\frac\gamma4+C_mA^{-m}\end{matrix}\),
$$
where $m\in\Bbb N$ is arbitrary and the constant $C_m$ is independent of $\xi_v=\{v,v'\}$ and $\xi_u$, see \cite{TZ03}.  Then, using the min-max principle, we finally arrive at
$$
\operatorname{Tr}_d(\Cal L(\xi_u(t)))\le\operatorname{Tr}_d(\Bbb L_m)=2\(-\frac\gamma4 d+C_m\sum_{n=1}^d\lambda_n^{-m}\),
$$
where $\{\lambda_n\}_{n=1}^\infty$ are the eigenvalues of the operator $A$ enumerated in the non-decreasing order. If we now assume that
$$
\sum_{n=1}^\infty\lambda_n^{-m}<\infty
$$
for some big $m$ (which is always true if $A$ is a uniformly elliptic differential operator in a bounded domain),  estimate \eqref{5.qd} will give us that
$$
\bar q_d\le -\frac\gamma4 d+\bar C
$$
and then Corollary \ref{Cor5.vc-ap} gives us the final result that the fractal dimension of the attractor $\Cal A$ associated with the equation \eqref{5.hyp} possesses the following upper bound:
\begin{equation}\label{5.attr-frac}
\dim_f(\Cal A,E)\le\dim_L(S(t),\Cal A)\le \frac C\gamma,
\end{equation}
where $\gamma$ is small enough and $C$ is independent of $\gamma$, see \cite{TZ03} for details.
\end{example}

\subsection{Lower bounds for the dimension} Most part of lower bounds for
 the attractors dimensions available in the literature are based on the estimates of the instability index of a properly constructed equilibrium. Assume that we are given a dissipative semigroup $S(t):\Phi\to\Phi$ acting on a Banach space $\Phi$ with the standard bornology $\Bbb B$ which consists of all bounded sets of $\Phi$. Assume also that $u_0$ is an equilibrium of this semigroup and let us consider the unstable set of this equilibrium:
 \begin{equation}\label{5.uns}
 \Cal M_+(u_0):=\{v_0\in\Phi, \ \exists \ u\in\Cal K \ \text{\ such that\ }\ u(0)=u_0,\ \lim_{t\to-\infty}u(t)=u_0\},
 \end{equation}
 where $\Cal K$ is a set of complete bounded trajectories of $S(t)$. Note that, due to dissipativity, it is enough to construct a negative semi-trajectory $u(t)$, $t\le0$ with the above properties. Then it follows from the definition of an attractor that
 \begin{equation}\label{5.emb-atr}
 \Cal M_+(u_0)\subset\Cal A
 \end{equation}
 if the attractor $\Cal A$ exists. Thus, the fractal dimension of the attractor $\Cal A$ is estimated in terms of the dimension of the unstable set $\Cal M_+(u_0)$ which is usually a submanifold by itself or contains a submanifold of  a sufficiently big dimension.
\par
Indeed, let us assume that $S(t)$ is Frechet differentiable near the equilibrium point $u_0$ and the Frechet derivative $S'(u_0)$ is given by the equation of variations \eqref{5.var} and is uniformly continuous in a small neighbourhood of $u_0$. Assume also that the part of the spectrum $\sigma(\Cal L(u_0))$ belonging to the positive semi-plane $\Ree\lambda>0$ consists of finitely many eigenvalues of finite algebraic multiplicity. Denote by $\Cal N_+(u_0)\in\Bbb N$ the number of all such eigenvalues taking into account their multiplicity. Then the standard theorem about unstable manifolds, see e.g. \cite{BV92}, claims that the set $\Cal M_+(u_0)$ contains an $\Cal N_+(u_0)$-dimensional local submanifold which is generated by all complete trajectories which approach $u_0$ as $t\to-\infty$ exponentially fast. Thus, we have
\begin{equation}\label{5.lower-bound}
\dim_f(\Cal A,\Phi)\ge\dim_f(\Cal M_+(u_0),\Phi)\ge \Cal N_+(u_0).
\end{equation}
Thus, obtaining lower bounds for the dimension of an attractor is also reduced to the classical problem  of
of the spectral theory, namely, finding/estimating the number of unstable eigenvalues of a given differential operator. Of course, the above described scheme can be used not only in the case where $u_0$ is an equilibrium. It may be something more complicated, e.g. periodic or quasi-periodic or even chaotic orbit. The theory of unstable manifolds for such objects is well-developed nowadays, see e. g. \cites{BV92,KH95} and references therein. However, such objects are almost never used  in the literature for estimating the attractor dimension since it is extremely difficult to find them and estimate explicitly their instability index. An exception from this, which is based on the theory of homolcinic bifurcations and as believed has a general nature, will be considered in examples below.
\par
We also mention that the lower bounds for the Lyapunov dimension of the attractor can be obtained via equilibria and the obvious inequality
$$
\dim_L(S(t),\Cal A)\ge\dim_L(S(t),u_0).
$$
Important that the Lyapunov dimension $\dim_L(S(t),u_0)$ also can be estimated via the spectrum of the infinitesimal generator $\Cal L(u_0)$ of the linear semigroup $S'(t)(u_0)$. To this end, we need the following definition.
\begin{definition}\label{Def5.Lyap-exp} Let $\Phi$ be a Hilbert space and $\Cal L(u_0): D\to\Phi$ be a linear closed bounded from above operator in $\Phi$. Let also $\mu_\infty:=\sup\Ree\sigma_{ess}(\Cal L(u_0))$. Then we have only finite or countable number of eigenvalues $\lambda_j$ of $\Cal L(u_0)$ which belong to the semi-plane $\Ree\lambda>\mu_\infty$, which can be enumerated in the non-increasing order. Then we define the Lyapunov exponents $\mu_n$, $n\in\Bbb N$, as follows: $\mu_n=\Ree\lambda_n$ if there exists at least $n$ eigenvalues satisfying $\Ree\lambda>\mu_\infty$ and $\mu_n=\mu_\infty$ otherwise. Then the Lyapunov dimension of $\Cal L(u_0)$ is defined as follows
\begin{equation}
\dim_L(\Cal L(u_0)):=\sup_{d\in\R_+}\{\bar\mu_d\ge0\},
\end{equation}
where $\bar\mu_s=\sum_{n=1}^s\mu_n$ for integer $s$ and $\bar\mu_s=\bar\mu_{[s]}+(s-[s])\mu_{[s]+1}$, see e.g. \cite{tem} for more details.
\end{definition}
 The next proposition gives the desired estimate.
 \begin{proposition}\label{Prop5.a-l} Let the assumptions of Theorem \ref{Th5.vc} hold and let $u_0\in\Cal A$ be an equilibrium. Then the following estimates hold:
 \begin{equation}\label{5.est-main-dim}
 \begin{cases} 1.\ \ \Cal N_+(u_0)\le \dim_f(\Cal A,\Phi)\le\dim_L(S(t),\Cal A),\\  2. \ \dim_L(\Cal L(u_0))\le\dim_L(S(t),u_0)\le\dim_L(S(t),\Cal A).
 \end{cases}
 \end{equation}
 \end{proposition}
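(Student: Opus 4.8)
The statement is a chain of four inequalities, and the plan is to dispatch them one by one, noticing that three follow quickly from what has already been built in this section, so that the real content sits in a single spectral comparison. First, the upper bound $\dim_f(\Cal A,\Phi)\le\dim_L(S(t),\Cal A)$ in the first line is nothing but the corollary of Theorem \ref{Th5.vc} recorded in \eqref{5.est-fL}, so I would simply cite it. Next, the lower bound $\Cal N_+(u_0)\le\dim_f(\Cal A,\Phi)$ is exactly \eqref{5.lower-bound}: the unstable manifold theorem applied at the $\Cal N_+(u_0)$ eigendirections of $\Cal L(u_0)$ with $\Ree\lambda>0$ produces an $\Cal N_+(u_0)$-dimensional local submanifold inside $\Cal M_+(u_0)$, which lies in $\Cal A$ by \eqref{5.emb-atr}; since $\dim_f$ of a smooth $k$-dimensional submanifold equals $k$ and is monotone under inclusion, the bound is immediate.

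The right-hand inequality of the second line, $\dim_L(S(t),u_0)\le\dim_L(S(t),\Cal A)$, is pure monotonicity of the volume functional in the set carrying the supremum. Since $u_0\in\Cal A$, for every $d$ and every $t$ one has $\|\Lambda^dS'(t)(u_0)\|\le\sup_{u\in\Cal A}\|\Lambda^dS'(t)(u)\|$, i.e. $\omega_d(S(t),\{u_0\})\le\omega_d(S(t),\Cal A)$; this survives the sub-additive limit defining $\bar\omega_d$, and since $\dim_L$ in \eqref{5.dim-L} is a supremum over those $d$ for which $\bar\omega_d$ is still non-contracting, it is monotone in $\bar\omega_d$ and the inequality follows.

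The only inequality requiring genuine work is the left-hand part of the second line, $\dim_L(\Cal L(u_0))\le\dim_L(S(t),u_0)$, which compares the spectral Lyapunov dimension of the generator with the volume Lyapunov dimension of the linearized flow at the equilibrium. At an equilibrium the equation of variations \eqref{5.var} has the constant-coefficient operator $\Cal L:=\Cal L(u_0)$, so $S'(t)(u_0)=e^{t\Cal L}=:T(t)$ is the $C_0$-semigroup generated by $\Cal L$, and everything reduces to the lower bound on the exponential growth of exterior powers
\begin{equation*}
\lim_{t\to\infty}\frac1t\ln\|\Lambda^dT(t)\|_{\Cal L(\Lambda^d\Phi,\Lambda^d\Phi)}\ge\bar\mu_d,
\end{equation*}
with $\bar\mu_d=\sum_{n=1}^d\mu_n$ the sum of the top $d$ Lyapunov exponents of Definition \ref{Def5.Lyap-exp}. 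To obtain it I would take the generalized eigenvectors $e_1,\dots,e_d$ of $\Cal L$ attached to the eigenvalues of largest real part; they span a $T(t)$-invariant subspace $V$ on which $\det(T(t)|_V)$ has modulus exactly $e^{\bar\mu_d t}$, so that
\begin{equation*}
\|\Lambda^dT(t)\|\ge\frac{\|T(t)e_1\wedge\cdots\wedge T(t)e_d\|}{\|e_1\wedge\cdots\wedge e_d\|}=e^{\bar\mu_d t},
\end{equation*}
which is the desired bound for $d$ equal to a sum of algebraic multiplicities, the remaining (and non-integer) values following by the interpolation built into the definitions of $\bar\omega_d$ and $\bar\mu_d$. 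Feeding this into \eqref{5.dim-L}, every $d$ with $\bar\mu_d\ge0$ makes $\bar\omega_d$ non-contracting and hence satisfies $d\le\dim_L(S(t),u_0)$; taking the supremum over such $d$, which by Definition \ref{Def5.Lyap-exp} is precisely $\dim_L(\Cal L(u_0))$, closes the argument.

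The hard part will be realizing those top exponents that coincide with the essential-spectral bound $\mu_\infty$ rather than with a genuine discrete eigenvalue: when fewer than $d$ eigenvalues lie in $\{\Ree\lambda>\mu_\infty\}$, the block $\mu_1,\dots,\mu_d$ contains copies of $\mu_\infty$, and reproducing the factor $e^{\mu_\infty t}$ in the volume growth forces one to replace some of the $e_j$ by approximate eigenvectors drawn from the essential spectrum and to check that the resulting $d$-frame stays uniformly non-degenerate under $T(t)$. In the dissipative setting the essential spectrum of $\Cal L(u_0)$ typically sits in a half-plane $\{\Ree\lambda\le\mu_\infty<0\}$, so the threshold $\dim_L(\Cal L(u_0))$ is reached while only discrete eigenvalues are being summed and this difficulty is absent; for the general case I would instead appeal to the classical identification of the exponential volume-growth rate of $e^{t\Cal L}$ with $\bar\mu_d$ as developed for such operators in \cite{tem}. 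The residual bookkeeping—complex conjugate eigenvalues contributing their real part with the correct multiplicity, the interpolation matching the non-integer definitions, and the translation between the discrete map $S=S(t_0)$ and the continuous family $S(t)$—is routine and I would not carry it out in detail.
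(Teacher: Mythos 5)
Your treatment of three of the four inequalities coincides with the paper's: the first line of \eqref{5.est-main-dim} is exactly \eqref{5.lower-bound} combined with \eqref{5.est-fL}, and the right half of the second line is the monotonicity of $\omega_d$ under enlarging the set carrying the supremum; the paper likewise observes that only $\dim_L(\Cal L(u_0))\le\dim_L(S(t),u_0)$ remains to be proved. For that remaining inequality, your wedge-product computation on the span of generalized eigenvectors (determinant of the restricted semigroup equals $e^{t\,\mathrm{tr}}$) is a correct, more explicit rendering of the point-spectrum half of what the paper compresses into ``definitions plus the spectral mapping theorem, see \cite{Cle87}''.

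The genuine gap is your dismissal of the essential-spectrum case. The claim that when $\mu_\infty<0$ ``the threshold $\dim_L(\Cal L(u_0))$ is reached while only discrete eigenvalues are being summed'' is false: if $K$ is the number of eigenvalues with $\Ree\lambda>\mu_\infty$ and $\bar\mu_K>0$, then by Definition \ref{Def5.Lyap-exp} one has $\bar\mu_d=\bar\mu_K+(d-K)\mu_\infty$ for $d>K$, so $\dim_L(\Cal L(u_0))=K+\bar\mu_K/|\mu_\infty|>K$, and every $d$ between $K$ and this threshold involves copies of $\mu_\infty$. You have conflated ``$\bar\mu_d$ is maximal at some $d\le K$'' (true) with ``$\bar\mu_d\ge0$ only for $d\le K$'' (false). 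Worse, this is precisely the case the proposition exists for: in the continuation of Example \ref{Ex5.hyp}, the linearization has essential spectrum on the line $\Ree\lambda=-\gamma/2$ and only $O(1)$ unstable eigenvalues, and the bound $\dim_L(\Cal L(u_0))\ge C/\gamma$ is generated almost entirely by the $\sim C/\gamma$ copies of $\mu_\infty=-\gamma/2$; your construction only yields $\bar\omega_d\ge e^{\bar\mu_d}$ for $d\le K$, which gives nothing of order $\gamma^{-1}$. To cover $d>K$ one must extract volume growth at rate $e^{\mu_\infty}$ per extra direction from the essential spectrum of $T(t)=e^{t\Cal L(u_0)}$, and the tool for this is exactly the one the paper invokes and you never use: the one-sided spectral inclusion theorem for semigroups, $e^{t\sigma(\Cal L(u_0))}\subset\sigma\bigl(e^{t\Cal L(u_0)}\bigr)$ together with its essential-spectrum version, giving $r_{ess}(T(t))\ge e^{t\mu_\infty}$; one then combines the Riesz projection onto the $K$ discrete modes with Weyl (singular) sequences for the essential spectrum in the complementary invariant subspace to bound $\|\Lambda^d T(t)\|$ from below. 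Your fallback citation of \cite{tem} does not close this, since the statement with $\mu_\infty$-padding is exactly the nonstandard ingredient; note also that the paper's closing remark — the inequality can be strict in infinite dimensions because the spectral mapping theorem for the essential spectrum is only one-sided — signals that the essential spectrum is where the entire content of this inequality lies.
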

 Indeed, only the left-hand side of the second inequality is not proved yet. Its proof follows in a straightforward way from the definitions and the spectral mapping theorem, see \cite{Cle87} for more details. Note that, in contrast to the finite-dimensional case, this inequality may be strict if $\Phi$ is infinite-dimensional, since we have only one-sided spectral mapping theorem for the essential spectrum.
 \par
 We now turn to examples.
 \begin{example} We return once more to the simplest Example \ref{Ex1.1D-par}. We have already proved that the fractal dimension of the corresponding attractor $\Cal A$ is finite in $\Phi=L^2(0,\pi)$ and satisfies the upper bound \eqref{5.ent-est}. We now apply the volume contraction method and Proposition \ref{Prop5.a-l} to get more explicit upper and lower bounds. We first note that the corresponding solution semigroup is $C^\infty$-smooth here, see e.g. \cite{BV92}, so all smoothness assumptions are automatically satisfied and we only need to get good upper and lower bounds for the Lyapunov dimension and the instability indexes.
 \par
 The operator $\Cal L(u(t))$ for the equation of variations now reads $\Cal L(u)=\partial_x^2-3u^2+a$ and the eigenvalues of the operator $-\partial_x^2$ with the Dirichlet boundary conditions are $\lambda_n=n^2$. Therefore, by the min-max principle, for integer values of  $d$ we have
 $$
 \operatorname{Tr}_d(\Cal L(u))\le \operatorname{Tr}_d(\partial_x^2+a)=-\sum_{n=1}^d\lambda_n+ad=-\frac{d(d+1)(2d+1)}6+ad\ge -\frac{d^3}3-ad.
 $$
We recal that $u_0=0$ is an equilibrium and the first inequality in the above formula is attained at this equilibrium. This gives us the equiality
\begin{equation}\label{5.sharp-dl}
\dim_L(S(t),\Cal A)=\dim_L(S(t),0)=\dim_L(\Cal L(0))=\dim_L(\partial_x^2+a),
\end{equation}
where the second inequality is due to the fact that the spectral mapping theorem holds for self-adjoint operators. Note that the right-hand side of this equality is easy to compute explicitly, so the Lyapunov dimension of the attractor $\Cal A$ is not only estimated, but also explicitly computed. We will not write out this formula, but only mention that, for large $a$ we have
\begin{equation}
\dim_H(\Cal A,\Phi)\le\dim_F(\Cal A,\Phi)\le\dim_L(S(t),\Cal A)\sim \sqrt{3a}.
\end{equation}
Let us now look to the lower bounds for Hausdorff and fractal dimensions of the attractor $\Cal A$. We see that the instability index $\Cal N_+(0)\sim\sqrt{a}$. Moreover, since the considered equation possesses a global Lyapunov function, the corresponding attractor is a union of unstable sets of all equilibria,~i.e.
$$
\dim_H(S(t),\Cal A)\sim\max_{u_0\in\Cal R}\Cal N_+(u_0)=\Cal N_+(0)\sim \sqrt{a}.
$$
So, the Hausdorff dimension of the attractor also can be explicitly computed. In contrast to this, for the fractal dimension, we only have two-sided inequality
\begin{equation}\label{5.lower}
\sqrt{a}\sim \Cal N_+(0)\sim\dim_H(\Cal A,\Phi)\le\dim_f(\Cal A,\Phi)\le\dim_L(S(t),\Cal A)=\dim_L(\Cal L(0))\sim\sqrt{3a}.
\end{equation}
Note that even in this simplest example, the fractal dimension may a priori be larger than the Hausdorff one due to nontrivial intersections of stable and unstable manifolds of the equilibria, so we basically do not have anything more than two-sided bounds \eqref{5.lower}.
 \end{example}
 \begin{remark} Note that the positivity of the Lyapunov dimension $\dim_L(S(t),\Cal A)$ does not imply here the presence of chaotic dynamics of the considered system (in our example the system is gradient and the dynamics is trivial). This should not be misleading, since in contrast to the classical dynamics, where the Lyapunov exponents for {\it individual} trajectories (which exist for almost all trajectories with respect to the "physical measure") are usually considered, see e.g. \cite{KH95}, we consider here the so-called {\it uniform} Lyapunov exponents, which may be positive even for trivial gradient dynamics. Note also that the coincidence of the Lyapunov dymension of the whole attractor with the dimension of an equilibrium is not restricted to gradient systems. For instance, exactly the same happens (for some different reasons) for the case of chaotic Lorentz attractor considered in Example \ref{Ex1.lor}.
\par
We notice as well that, in the example considered above, the instability index $\Cal N_+(u_0)$ and the Lyapunov dimension $\Cal L(u_0)$ have the same asymptotic behavior as $a\to\infty$ (up to the non-essential multiplier $\sqrt3$), which allowed us to give sharp asymptotic behavior of $\dim_f(\Cal A,\Phi)$ as $a\to\infty$. This is somehow typical for parabolic equations but can be easily not true e.g. for hyperbolic equations.
\end{remark}
\begin{example}
 In particular, it is not true for the equation considered in Example \ref{Ex5.hyp}, to which we return now.  Indeed, due to assumptions \eqref{5.inf-good}, the eigenvalues $\nu_F$ of the linearized operator \eqref{5.cl-hyp} at any equilibrium $u_0$ are asymptotically  very close to the eigenvalues $\nu_0$ of the operator, which corresponds to $F=0$, namely,
$$
|\nu_{F,k}-\nu_{0,k}|\le C_N k^{-N}
$$
for all $N\in\Bbb N$, see \cite{TZ03}. The eigenvalues which correspond to $F=0$ are easy to compute:
$$
\nu^{\pm}_{0,k}=\frac{-\gamma\pm\sqrt{\gamma^2-4\lambda_k}}2.
$$
Thus, $\Cal N_+(u_0)\le C_N\gamma^{1/N}$, see \cite{TZ03} for any $N$ and, therefore, we cannot get sharp in $\gamma$ lower bound for \eqref{5.attr-frac} based on the instability index of any equilibrium. In contrast to this, if we look at the Lyapunov dimension of the equilibrium $u_0$, we see that the essential spectrum lies at the line $\Ree \lambda=-\frac\gamma2$ and consequently $\mu_d\ge\mu_\infty\ge-\frac\gamma2$ for all $d$. For this reason, if we find an equilibrium $u_0$ in such a way that it will have an unstable eigenvalue with $\Ree\lambda_1\sim O(1)>0$ as $\gamma\to0$, we will get the estimate
$$
\dim_L(\Cal L(u_0))\ge \frac C{\gamma}.
$$
Therefore, $\dim_L(S(t),\Cal A)\sim \gamma^{-1}$, so we can not get better upper bounds for the fractal dimension:
\begin{equation}\label{5.non-opt}
C_1\le \dim_f(\Cal A,E)\le C_2\gamma^{-1}.
\end{equation}
To overcome this difficulty and get sharp upper and lower bounds for the fractal dimension of $\Cal A$, a new method for lower bounds has been suggested in \cite{TZ03} based on the homoclinic bifurcation result proved in \cite{Tur96}. Roughly speaking, this result tells us that: if you are given a multi-dimensional system with an equilibrium $u_0=0$ and a homoclinic loop $u(t)$ to it such that $u(t)\sim e^{\mp\lambda}$ as $t\to\pm\infty$ for some $\lambda>0$ and you have sufficiently many  "intermediate" eigenvalues of the linearizarion matrix at $u_0=0$:
\begin{equation}\label{5.good-spec}
-\lambda<\Ree\lambda_1\le \cdots\le\Ree\lambda_N<\lambda,
\end{equation}
then, by arbitrarily smooth small perturbation of the system, we may born an invariant torus $\Bbb T$ whose dimension is proportional to the Lyapunov dimension of the equilibrium $u_0=0$. In other words, the dimension of an invariant manifold, which can be born near a homoclinic loop, is restricted by the Lyapunov dimension of the origin of this loop only (of course, under some natural extra assumptions on the loop).
\par
Finally, if we embed such a construction to the attractor $\Cal A$ of \eqref{5.hyp}, we end up with the desired sharp lower bound:
$$
\dim_f(\Cal A,\Phi)\ge C\gamma^{-1}.
$$
Exactly this approach has been realized in \cite{TZ03}. Namely, let us consider the following model decoupled system of ODEs
\begin{equation}\label{5.hyp-mod}
u_0''=u_0-u_0^3,\ \ u_n''+\gamma u_n'+\lambda_n u_n=0,\ \ n=1,\cdots,
\end{equation}
where $\gamma>0$ is small enough,  and $\gamma^2-4\lambda_n>0$. Then this system has a zero equilibrium and a homoclinic loop $\{u_0(t),0\}$ to it. Moreover, $u_0(t)\sim e^{-|t|}$ as $t\to\pm\infty$ and the remaining eigenvalues (which correspond to equations for $u_n$, $n=1,\cdots$ are
$$
\mu_n^{\pm}=\frac{-\gamma\pm\sqrt{\gamma^2-4\lambda_n}}2.
$$
Thus, $\Ree\mu_n^{\pm}=-\gamma/2$ and conditions \eqref{5.good-spec} are satisfied with $\lambda=1$ if $\gamma$ is small enough. It is proved in \cite{TZ03} that for every $m\in\Bbb N$ and every $\eb>0$ and $\gamma>0$ small enough, there exists a perturbation $F_i$, $\|F_i\|_{C^m}\le\eb$, such that the perturbed system
 \begin{equation}\label{5.perturb}
 u_0''=u_0-u_0'+F_0(u_0,u_0',u,\Dt u),\ \ u_n''+\gamma u_n'+\lambda_nu_n=F_n(u_0,u_0',u_1,u_1',\cdots),\ \ n=1,\cdots
 \end{equation}
 possesses a $C\gamma^{-1}$-dimensional invariant torus in a small neighbourhood of the unperturbed homoclinic loop. Moreover, only finitely many ($n_0=C\gamma^{-1}$) modes are really perturbed and the perturbation depends on $u_n$, $n\le n_0$ only.
 \par
 It remains to note that equations \eqref{5.perturb} can be easily embedded to the system of the form \eqref{5.hyp}, where the $C^m$-norm of the nonlinearity $F$ is uniformly bounded as  $\gamma\to0$. This shows that the upper bounds $\dim_f(\Cal A,E)\sim C\gamma^{-1}$ are indeed sharp.
 \end{example}
 \begin{remark} Note that, in general, we need a large number of parameters (of order proportional to $\dim_L(S(t),u_0)$) in order to be able to born the torus and use this scheme for constructing examples with big fractal dimension of the attractor. We expect that this method will be helpful for other types of equations of hyperbolic type, e.g. for damped Euler equations and their various regularizations.
 \end{remark}

\section{Inertial manifolds and finite-dimensional reduction}\label{s6}

In the previous section, we have constructed a reduced system of ODEs (IF of the initial PDE), which captures the limit dynamics on the attractor $\Cal A$, based on the finiteness of its fractal dimension and the Man\'e projection theorem, see \eqref{5.IF}. We have also pointed out the main drawback of such a reduction, namely, a drastic loss of smoothness. In this section, we discuss an alternative approach to the finite-dimensional reduction based on the center manifolds theory (or more general, on the theory of normally-hyperbolic invariant manifolds), which requires stronger assumptions on the considered DS, but gives much more appropriate construction of the IF suitable for applications. To the best of our knowledge, this approach has been suggested by Man\'e \cite{Man77} and has become very popular after the paper of Foias, Sell and Temam \cite{FST88}, see also \cites{An22,Mik91,Ro94,RT96,Zel14} and references therein.
\par
 Following this approach, the main object of the theory is not an attractor, but a globally stable  finite-dimensional invariant submanifold of the initial phase space, which is typically normally hyperbolic and, for this reason, is exponentially attractive. The reduced IF in this situation is
nothing else than the restriction of the initial PDE to this invariant manifold and is typically as smooth as the manifold. Keeping in mind possible applications to hydrodynamics and the hope to understand turbulence, this manifold has been referred as {\it inertial} manifold in relation with the so-called inertial scale in the conventional theory of turbulence, see e.g. \cite{firsh} and references therein. We start with the formal definition of an inertial manifold (IM).

\begin{definition}\label{Def6.IM} Let $S(t):\Phi\to\Phi$ be a DS acting in a Banach space $\Phi$. Then a strictly  invariant finite-dimensional Lipschitz submanifold $\Cal M$ of the phase space $\Phi$ is an IM for the DS $S(t)$ if it possesses an exponential tracking property (asymptotic phase) in the following form: for any semi-trajectory $u(t)=S(t)u_0$, $t\ge0$, of the considered DS, there exists a semitrajectory $\bar u(t)=S(t)\bar u_0$ with $\bar u_0\in\Cal M$ such that
\begin{equation}\label{5.track}
\|u(t)-\bar u(t)\|_\Phi\le Q(\|u_0\|_\Phi)e^{-\alpha t},\ \ t\ge0,
\end{equation}
where the positive constant $\alpha$ and a monotone function $Q$ are independent of $u_0$ and $t$.
\end{definition}
\begin{remark} In applications, an IM is often constructed as a graph of a Lipschitz function. Namely, let us assume that $\Phi=\Phi_+\oplus\Phi_-$ is presented as a direct sum of two Banach spaces (the corresponding projectors are denoted by $\Pi_+$ and $\Pi_-$ respectively) where $\dim\Phi_+=N$. Then $u(t)=\Pi_+u(t)+\Pi_-u(t)=u_+(t)+u_-(t)$. The variables $u_+(t)$ and $u_-(t)$ are treated as "slow" and "fast" ones respectively and the manifold $\Cal M$ slaves the fast variables $u_-(t)$ to the slow ones $u_+(t)$, i.e. $u_-(t)=M(u_+(t))$ for some at least Lipschitz function $M:\Phi_+\to\Phi_-$, so  $\Cal M$ is treated as a graph of this function:
\begin{equation}\label{6.graph}
\Cal M:=\{u_++M(u_+),\ \ u_+\in \Phi_+\}.
\end{equation}
We note from the very beginning that, in applications, one usually constructs an IM not for the initial PDE, but for the properly modified one (the so-called "prepared" equation), whose solutions have the same asymptotic behaviour, but it has better structure, e.g. its nonlinearity is globally Lipschitz continuous. Since an IM is a kind  of a (global) center manifold, it is not unique in general and we need to cut-off the nonlinearity properly to restore the uniqueness and the possibility to find it via the Banach contraction theorem.  In contrast to ODEs, this preparation procedure may be very delicate: sometimes it is enough just to cut-off the nonlinearity outside of the proper absorbing set making it globally Lipschitz (this usually works when the spectral gap conditions are satisfied), see \cite{FST88}, but in other cases, you may need to embed your initial PDEs into a larger system of PDEs, do some diffeomorphisms to restore the spectral gap conditions, change the leading order differential operator, etc., see \cites{MPS88,KZ15,KZ18,K18,KLSZ}. In addition, if you want to improve the smoothness of an IM, you need to change the vector field at the points of the attractor as well (in an accurate way in order not to affect the dynamics on the attractor, but kill the resonances), see \cite{KZ22}. We will not go into more details here and will always assume from now on that we already made some of these "preparations" and the nonlinearity is already globally Lipschitz.
\end{remark}

\subsection{Spectral gap conditions and inertial manifolds}
 We start our exposition with the classical theory of IMs, which usually deals with a semilinear equation of the form:
\begin{equation}\label{6.semPDE}
\Dt u+Au=F(u), \ \ u\big|_{t=0}=u_0\in\Phi.
\end{equation}
We assume for simplicity that $\Phi$ is a real Hilbert space, $A:D(A)\to\Phi$ is a positive self-adjoint linear operator with compact inverse and $F:\Phi\to\Phi$ is a given nonlinearity which is Lipschitz continuous with Lipschitz constant $L$:
\begin{equation}\label{6.lip}
\|F(u_1)-F(u_2)\|_{\Phi}\le L\|u_1-u_2\|_\Phi,\ \ u_1,u_2\in\Phi.
\end{equation}
From the Hilbert-Schmidt theorem, we know that the operator $A$ possesses a complete base of eigenvectors $\{e_i\}_{i=1}^\infty$ with the corresponding eigenvalues $\{\lambda_i\}_{i=1}^\infty$, which we enumerate in the non-decreasing order. The key result of the theory is the following theorem.
\begin{theorem}\label{Th6.IM-main} Let the nonlinearity $F$ satisfy \eqref{6.lip} and let $N\in\Bbb N$ be such that the following spectral gap condition be satisfied:
\begin{equation}\label{6.sg}
\lambda_{N+1}-\lambda_N>2L.
\end{equation}
Then equation \eqref{6.semPDE} possesses an $N$-dimensional  IM which can be presented as
a graph of a Lipschitz continuous function $M:\Phi_+\to\Phi_-$, where $\Phi_+$ is spanned by the first $N$ eigenvectors of $A$ and $\Phi_-$ is an orthogonal complement of $\Phi_+$. Moreover, if $F\in C^{1+\eb}(\Phi,\Phi)$, for a sufficiently small $\eb>0$, then the corresponding map $M$ is also $C^{1+\eb}$-smooth. We also mention that the attraction exponent $\alpha$ in the definition of an IM can be chosen as $\alpha=\frac{\lambda_N+\lambda_{N+1}}2$.
\end{theorem}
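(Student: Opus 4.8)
The plan is to construct $\Cal M$ as a Lipschitz graph over $\Phi_+$ by the Lyapunov--Perron method, using the spectral gap to run a contraction in a weighted space of backward trajectories, and to extract the \emph{sharp} constant $2L$ from a cone (strong squeezing) inequality rather than from the cruder weighted kernel estimates. After the preparation that renders $F$ globally Lipschitz, I would split \eqref{6.semPDE} by the projectors $\Pi_\pm$, writing $u=u_++u_-$ with $u_\pm=\Pi_\pm u$, so that $A$ acts with eigenvalues $\le\lambda_N$ on $\Phi_+$ and $\ge\lambda_{N+1}$ on $\Phi_-$. Fixing the intermediate exponent $\theta:=\frac12(\lambda_N+\lambda_{N+1})$, I would seek, for each $p_0\in\Phi_+$, a mild solution $u(t)$ on $(-\infty,0]$ with $\Pi_+u(0)=p_0$ lying in the weighted space $C_\theta$ with norm $\|u\|_\theta:=\sup_{t\le0}e^{\theta t}\|u(t)\|_\Phi<\infty$, characterized by the Duhamel equation
\[
u(t)=e^{-At}p_0+\int_0^t e^{-A(t-s)}\Pi_+F(u(s))\,ds+\int_{-\infty}^t e^{-A(t-s)}\Pi_-F(u(s))\,ds .
\]
The kernel bounds $\|e^{-A\tau}\Pi_+\|\le e^{-\lambda_N\tau}$ for $\tau\le0$ and $\|e^{-A\tau}\Pi_-\|\le e^{-\lambda_{N+1}\tau}$ for $\tau\ge0$ make the right-hand side a self-map of $C_\theta$ whose Lipschitz constant is controlled by $L\big(\frac1{\theta-\lambda_N}+\frac1{\lambda_{N+1}-\theta}\big)$. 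Under the gap condition this is strictly less than one, so Banach's fixed point theorem yields a unique $u=u^{p_0}$; I would then define $M(p_0):=\Pi_-u^{p_0}(0)$ and $\Cal M:=\{p_0+M(p_0):p_0\in\Phi_+\}$, which is automatically finite-dimensional.

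To pin down the sharp gap and the Lipschitz bound on $M$, I would prove the cone invariance separately. For two solutions $u_1,u_2$ the difference $v=u_1-u_2$ solves $\Dt v+Av=g$ with $g=F(u_1)-F(u_2)$ and $\|g\|_\Phi\le L\|v\|_\Phi$. Differentiating the cone functional $V(v):=\|\Pi_+v\|^2_\Phi-\|\Pi_-v\|^2_\Phi$ and estimating the cross terms by Cauchy--Schwarz applied to the pairs $(\|\Pi_+g\|,\|\Pi_-g\|)$ and $(\|\Pi_+v\|,\|\Pi_-v\|)$ gives, on the boundary $\{V=0\}$ where $\|\Pi_+v\|=\|\Pi_-v\|$, the inequality $\frac12\frac{d}{dt}V\ge\big[(\lambda_{N+1}-\lambda_N)-2L\big]\|\Pi_+v\|^2$. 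Thus $V$ is strictly increasing on the cone boundary precisely when $\lambda_{N+1}-\lambda_N>2L$, which is the stated condition and is what makes the naive factor of $2$ (rather than $4$ or $2\sqrt2$) the correct one. This cone property yields at once that two distinct points of $\Cal M$ cannot share the same $\Pi_+$-coordinate and that $M$ is Lipschitz with constant one, while strict invariance of $\Cal M$ follows from the time-translation invariance of the class of backward $C_\theta$-bounded solutions together with the uniqueness of the fixed point.

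The exponential tracking property I would establish by a second, forward, Lyapunov--Perron problem. Given an arbitrary $u_0$ with forward orbit $u(t)=S(t)u_0$, I would look for $\bar u_0\in\Cal M$ such that $w=u-\bar u$ satisfies $\|w(t)\|_\Phi\le Q(\|u_0\|_\Phi)e^{-\theta t}$: one sets up an integral equation on $[0,\infty)$ in the weight $e^{\theta t}$ in which the slow datum $\Pi_+\bar u(0)$ is the unknown to be tuned so that $\bar u$ lands on $\Cal M$ while the fast component is determined by an absolutely convergent forward integral. The same gap estimate furnishes a contraction, and the squeezing inequality \emph{outside} the cone (where $V<0$ and the fast modes dominate) guarantees that the residual difference is exponentially slaved, so the attraction rate is exactly $\alpha=\theta=\frac12(\lambda_N+\lambda_{N+1})$, as claimed. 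Finally, for the regularity statement I would differentiate the fixed-point equation in $p_0$: the formal derivative $D_{p_0}u^{p_0}$ solves a linear integral equation of the same type with $F$ replaced by its Fréchet derivative along $u^{p_0}$, and the uniform (fiber) contraction principle upgrades continuity of $M$ to $C^1$, with the $C^{1+\eb}$ Hölder bound coming from an estimate of the modulus of continuity of this derivative in which the gap must absorb an extra spectral factor.

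The hard part will be the tracking step and the smoothness: the tracking requires carefully controlling the transverse (fast) dynamics so that \emph{every} trajectory, not just those on $\Cal M$, synchronizes with a manifold trajectory at the rate $\theta$, and it is here that the interplay between the cone invariance and the forward fixed point must be handled with care. The $C^{1+\eb}$ claim is the other delicate point: the limitation to a small $\eb$ (and the breakdown of higher smoothness without a much stronger gap) is genuine and is caused by resonances among the leading eigenvalues, so the differentiation argument must be arranged to extract only the regularity that the gap condition actually supports.
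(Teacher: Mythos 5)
Your construction has a genuine gap at its foundation: the contraction claim in the weighted sup-norm space $C_\theta$ is arithmetically false under the hypothesis \eqref{6.sg}. With $\theta=\frac12(\lambda_N+\lambda_{N+1})$ one has $\theta-\lambda_N=\lambda_{N+1}-\theta=\frac12(\lambda_{N+1}-\lambda_N)$, so the constant you exhibit, $L\bigl(\frac1{\theta-\lambda_N}+\frac1{\lambda_{N+1}-\theta}\bigr)$, equals $\frac{4L}{\lambda_{N+1}-\lambda_N}$; this is below one only when $\lambda_{N+1}-\lambda_N>4L$, not $2L$ (and no other choice of $\theta$ helps, since the sum is minimized at the midpoint). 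Even the best sup-norm estimate, which exploits the orthogonality of $\Phi_\pm$ to replace the sum by the combination $L\sqrt{(\theta-\lambda_N)^{-2}+(\lambda_{N+1}-\theta)^{-2}}$, gives $\frac{2\sqrt2\,L}{\lambda_{N+1}-\lambda_N}$ and hence requires $\lambda_{N+1}-\lambda_N>2\sqrt2\,L$. So under \eqref{6.sg} alone your Banach fixed point -- on which the definition of $M$, the invariance, and the tracking argument all rest -- is not established. This is precisely the issue the paper's proof is designed to avoid: following Miklavcic, the backward problem \eqref{6.IM-eq} is solved in the \emph{Hilbert} space $L^2_{e^{\theta t}}(\R_-,\Phi)$ rather than in $C_{e^{\theta t}}(\R_-,\Phi)$. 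There, Parseval in time gives the scalar solution operators the exact norm $\frac1{|\lambda_n-\theta|}$, and Parseval in space combines them into $\|\Cal L_\theta\|=\max\bigl\{\frac1{\theta-\lambda_N},\frac1{\lambda_{N+1}-\theta}\bigr\}$ -- a maximum, not a sum -- so the composed map has Lipschitz constant $\frac{2L}{\lambda_{N+1}-\lambda_N}<1$ exactly under \eqref{6.sg}. The paper explicitly remarks that working in $C_{e^{\theta t}}$ costs an extra factor $\sqrt2$ on the right-hand side of \eqref{6.sg}.

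Your cone computation in the second paragraph is correct and does single out the sharp constant $2L$, but in your write-up it is used only to prove injectivity of $\Pi_+$ on $\Cal M$ and the Lipschitz bound on $M$, i.e.\ properties of a manifold whose existence you have not yet secured. To make the strong squeezing property carry the existence proof one needs a different construction -- e.g.\ a graph-transform/limiting argument applied to the images $S(t)\Phi_+$, with cone invariance guaranteeing that these are uniformly Lipschitz graphs -- and this is not sketched in your proposal; as written, everything downstream hangs on the failed contraction. A secondary issue of the same origin concerns the attraction exponent: the decay of the fast component outside the cone comes with the rate $\lambda_{N+1}-\sqrt2\,L$ from the natural energy estimate, and this dominates $\theta=\frac12(\lambda_N+\lambda_{N+1})$ only when the gap exceeds $2\sqrt2\,L$, so the claimed rate $\alpha=\theta$ is also not justified under \eqref{6.sg} by the tools you describe, whereas in the $L^2$-weighted framework it follows directly from the same operator-norm bound that gives the contraction.
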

\begin{proof}[Idea of the proof] The construction of the manifold is based on perturbation arguments where the nonlinearity $F$ is considered as a perturbation. Indeed, for $F\equiv0$, we have $\Cal M=\Phi_+$, all trajectories on this manifold grow as $t\to-\infty$ not faster than $e^{-\theta t}$ where $\theta\in(\lambda_N,\lambda_{N+1})$ and this is a determining property for the manifold $\Cal M$.
\par
This observation guesses the way how to construct the desired IM $\Cal M$, namely, we need to solve the problem
\begin{equation}\label{6.IM-eq}
\Dt u+Au=F(u),\ \ t\le0,\ \ \Phi_+u\big|_{t=0}=u_+\in\Phi_+
\end{equation}
backward in time in the weighted space $L_{e^{\theta t}}(\R_-,\Phi)$ where $\theta\in(\lambda_N,\lambda_{N+1})$ is chosen in an optimal way ($\theta=\frac{\lambda_{N}+\lambda_{N+1}}2$ in our case). Then the desired map $M$ will be found as follows: $M: u_+\to\Pi_-u(0)$, see \cite{Zel14} for more details. Note that the idea to use the space $L^2$ in time here belongs to Miklavcic \cite{Mik91}. The usage of more natural at first glance space $C_{e^{\theta t}}(\R_-,\Phi)$ leads to the extra multiplier $\sqrt{2}$ in the right-hand side of \eqref{6.sg}.
\par
Following \cites{Mik91,Zel14} (see also \cite{FST88}), we solve equation \eqref{6.IM-eq} by Banach contraction theorem and the most important step here is to find the norm of the solution operator $\Cal L_\theta: h\to v$ for the following linear problem on the whole line $t\in\R$:
\begin{equation}\label{6.lin}
\Dt v+Av=h(t),\ \  h\in L^2_{e^{\theta t}}(\R,\Phi)
\end{equation}
in the weighted space $L^2_{e^{\theta t}}(\R,H)$, namely, to verify that
\begin{equation}\label{6.main-est}
\|\Cal L_\theta\|_{\Cal L(L^2_{e^{\theta t}},L^2_{e^{\theta t}})}=\max\bigg\{\frac1{\theta-\lambda_N},\frac1{\lambda_{N+1}-\theta}\bigg\}.
\end{equation}
The proof of this estimate is very elementary since the problem is reduced to the analogous one for the scalar ODEs
$$
v_n'+\lambda_n v_n=h_n(t),
$$
which are equations on the Fourier amplitudes for \eqref{6.lin}. An elementary calculation shows that
 \begin{equation}\label{6.ode-est}
 \|\Cal L_{\theta,n}\|_{\Cal L(L^2_{e^{\theta t}},L^2_{e^{\theta t}})}=\frac1{|\lambda_n-\theta|}
 \end{equation}
 and the Parseval equality gives us that $\|\Cal L_\theta\|=\max_{n\in\Bbb N}\|\Cal L_{\theta,
 n}\|$ (here we see the advantage of using the weighted $L^2$-spaces). This gives the desired estimate \eqref{6.main-est}.
 \par
 The rest of the proof is also straightforward. We just invert the linear part of \eqref{6.IM-eq} (after the proper restriction to the negative semi-axis $t\le0$) and apply the Banach contraction theorem. The optimal exponent $\theta=\frac{\lambda_N+\lambda_{N+1}}2$ gives us the value $\frac2{\lambda_{N+1}-\lambda_N}$ for the norm of $\Cal L_\theta$, so the Lipschitz constant of the composition $\Cal L_\theta$ with $F$ will not exceed $\frac{2L}{\lambda_{N+1}-\lambda_N}<1$ due to the spectral gap condition, so the constructed map is indeed a contraction. The exponential tracking property is also an almost immediate corollary of \eqref{6.main-est} and the Banach contraction theorem, see \cite{Zel14} for the missed details.
\end{proof}
\begin{remark} The spectral gap condition \eqref{6.sg} can be generalized to the case where the operator consumes smoothness. Namely, assume instead of \eqref{6.lip} that the operator $F$ is globally Lipschitz as the map from $\Phi$ to $\Phi^{-s}:=D(A^{-s/2})$, for some $s\in(0,2)$ with the same Lipschitz constant $L$. Then the analogue of \eqref{6.sg} reads
\begin{equation}\label{6.sg1}
\frac{\lambda_{N+1}-\lambda_N}{\lambda_N^{s/2}+\lambda_{N+1}^{s/2}}>L
\end{equation}
and this condition is sharp (moreover, $s<0$ is also possible under some natural extra assumptions), see \cite{Zel14}. In contrast to this, very few is known about sharp spectral gap conditions for the case where the operator $A$ is not self-adjoint. For instance, in the model case of a coupled system of two equations in $H=\Phi\times\Phi$ where the leading operator $\Bbb A$ has Jordan blocks:
$$
\Bbb A=\(\begin{matrix}1&1\\0&1\end{matrix}\)A
$$
and $F: H\to H$ is globally Lipschitz, the sharp spectral gap conditions read
\begin{equation}\label{6.sg-kwa}
\frac{(\lambda_{N+1}-\lambda_N)^2}
{\lambda_{N+1}+\lambda_N+2\sqrt{\lambda_N^2-\lambda_N\lambda_{N+1}+\lambda_{N+1}^2}}>L,
\end{equation}
see \cite{KZ221}. Such systems naturally appear after the so-called Kwak transform applied to, say, Navier-Stokes equations, see \cite{kwak92}. We see that  \eqref{6.sg-kwa} differs drastically from the self-adjoint case \eqref{6.sg} and are close to \eqref{6.sg1} with $s=1$. Thus, the presence of a Jordan block in the leading linear part of the equation is somehow equivalent to consuming one unit of smoothness by the nonlinearity (with self-adjoint linear part). Exactly this fact has been overseen in the erroneous construction of Kwak \cite{kwak92} (see also \cites{kwak92a,IO84,TW93}) of an IM for 2D Navier-Stokes problem.
\par
We mention here the paper \cite{CKZ19} (see also \cite{CGV05} and \cite{MS89} for weaker results) where the hyperbolic relaxation
$$
\eb\Dt^2 u+\Dt u+Au=F(u),\ \ \eb>0
$$
of \eqref{6.semPDE} is considered. It is shown there that the sharp spectral gap condition for this problem coincides with \eqref{6.sg} (where $\lambda_n$ are the eigenvalues of $A$) and is independent of $\eb$ if it is small enough. Inertial manifolds for elliptic boundary problems in cylindrical domains are studied in \cites{Ba95,Mie94}. We also mention that the necessary conditions for the existence of IMs are often formulated in the spirit of the theory of DS in terms of  invariant cones. We will not present the details here and refer the interested reader to \cites{An22,Ro94,Zel14} for more details.
\end{remark}
Let us discuss some examples, where the classical theory is applicable (more examples can be found in the survey \cite{Zel14}, see also \cite{tem} and references therein). We start with the system of reaction-diffusion equations in a bounded domain $\Omega$ of $\R^d$:
\begin{equation}\label{6.RDS}
\Dt u=a\Dx u-f(u), \ \ u=(u^1,\cdots,u^m)
\end{equation}
endowed with the proper boundary conditions. We assume that the diffusion matrix $a$ is self-adjoint and positive definite and $f$ is globally Lipschitz. Then the operator $A:=-a\Dx$ is self-adjoint and positive (non-negative in the case of periodic or Neumann boundary conditions) in $\Phi=[L^2(\Omega)]^m$ and, due to the Weyl theorem, we have
\begin{equation}\label{6.weyl}
\lambda_n\sim C_m n^{2/d},
\end{equation}
so the validity of spectral gap conditions strongly depends on the dimension $d$. When $d=1$, we have infinitely many $N$s such that $\lambda_{N+1}-\lambda_N\ge cN$ for some positive $c$, so we have spectral gaps of any size and, therefore, for any Lipschitz constant $L$ of $f$, we may find $N$ satisfying the spectral gap condition and this guarantees the existence of an IM.
\par
The case $d=2$ is more interesting. In this case, the Weyl theorem can guarantee only that $\lambda_{N+1}-\lambda_N\ge c$ for infinitely many values of $N$, so we may apply Theorem  \ref{Th6.IM-main} only if $L$ is small enough. However, we may still have spectral gaps of any size in the spectrum of the Laplacian despite that $\lambda_n\sim Cn$. For instance, for $\Omega=[-\pi,\pi]^2$ with periodic boundary conditions, we have infinitely many values of $N$ satisfying
$$
\lambda_{N+1}-\lambda_N\ge C\log\lambda_N,
$$
see \cite{Ri82}, so an IM exists for the case of periodic boundary conditions. The ideal situation from the point of view of IMs is the case of Laplace-Beltrami operator on the $d$-dimensional sphere $\Bbb S^d$, where the equation $\lambda_{N+1}-\lambda_N\ge c\lambda_N^{1/2}$ has infinitely many solutions  in any space dimension~$d$.  Thus, reaction-diffusion equations of the form \eqref{6.RDS} on a sphere always have IMs in any space dimension. Note that, to the best of our knowledge, the problem of existence of spectral gaps of any size for 2D Laplacian in a bounded domain $\Omega\subset\R^2$ is completely open. On the one hand, we do not know any examples where such gaps do not exist and, on the other hand, we do not know any reasonably general classes of domains which possess this property.
\par
In contrast to this, in dimension three or higher, spectral gaps of any size exist in very exceptional cases only (like sphere), so the classical IMs theory is not very helpful for 3D reaction-diffusion equations. Nevertheless, spectral gap conditions are still satisfied for higher order equations like the Swift-Hohenberg one:
$$
\Dt u+(\Dx+1)^2u=f(u),
$$
since for bi-Laplacian the Weyl theorem gives $\lambda_n\sim c n^{4/3}$ and this guarantees the existence of spectral gaps of any size.
\par
Let us now consider equations with nonlinearities decreasing the regularity. The classical example here is the Kuramoto-Sivashinski equation in 1D:
$$
\Dt u+\partial_x^4 u-a\partial_x^2 u+\partial_x(u^2)=0,\ \ u\big|_{t=0}=u_0, \ a>0
$$
in $\Omega=[-\pi,\pi]$ endowed with periodic boundary conditions. Since we have the conservation law here: $\<u\>:=\frac1{2\pi}\int_{-\pi}^\pi u(x)\,dx$, we need to consider this equation in the phase space $\Phi:=L^2(-\pi,\pi)\cap\{\<u\>=0\}$. It is well-known, see \cites{tem,Good94} that this equation generates a dissipative semigroup $S(t)$ in $\Phi$ which possesses a smooth absorbing set. We set
$A:=(\partial_x^2-a/2)^2+1$ and $F(u)=a^2/4 u+u-\partial_x(u^2)$. After the proper cut-off, the nonlinearity $F$ becomes globally Lipschitz as the map from $H$ to $H^{-1}(\Omega)=D(A^{1/4})$, therefore, we need to check the spectral gap condition \eqref{6.sg1} with $s=1/2$. We know that $\lambda_n\sim c n^4$, therefore,
$$
\frac{\lambda_{N+1}-\lambda_N}{\lambda_{N}^{1/4}+\lambda_{N+1}^{1/4}}\sim c_1N^2
$$
and we have spectral gaps of arbitrarily large size. Thus, this equation possesses an IM.
\par
One more example, which is interesting for what follows, is given by 1D system of reaction-diffusion advection equations:
\begin{equation}\label{6.rda}
\Dt u-\partial_x^2 u+u=f(u,\partial_x u), \ \ u=(u^1,\cdots, u^m),\ u\big|_{t=0}=u_0
\end{equation}
endowed with Dirichlet, Neumann or periodic boundary conditions. We assume that $f$ is smooth and both  $f'_u$ and $f'_{\partial_x u}$ are uniformly bounded. Here $A=-\partial_x^2+1$ and the nonlinearity $f$ consumes one unit of smoothness, so $s=1$. Since $\lambda_n\sim c n^2$, the corresponding spectral gap condition reads
$$
\frac{\lambda_{n+1}-\lambda_n}{\lambda_{n}^{1/2}+\lambda_{n+1}^{1/2}}\sim c>L.
$$
Thus, we will have an IM via the classical theory if the Lipschitz constant $L$ of $f$ is small enough. A bit more accurate analysis shows, see \cites{Man77,K17}, that the size of $f'_u$ is not essential and only $L_1:=\sup_{u,v\in\R}|f'_v(u,v)|$ should be small. As we will see below, this observation is crucial for the recent IM theory for these equations which will be discussed at the next subsection.
\par
We now turn to the smoothness of IMs. We first note that, in the case where $\Cal M$ is a graph of a function $M:\Phi_+\to\Phi_-$ and $\Phi_+$ is a spectral subspace of the operator $A$, the IF \eqref{5.IF} for equation \eqref{6.semPDE} is essentially simplified:
\begin{equation}\label{6.IF}
\Dt u_++Au_+=\Pi_+F(u_++M(u_+),\ \ u_+\in\Phi_+\sim\R^N,
\end{equation}
and we see that the regularity of the reduced equations is determined by the smoothness of the map $M$. Theorem \ref{Th6.IM-main} guarantees that these reduced equations are $C^{1+\eb}$-smooth for some {\it small} $\eb>0$ if the spectral gap conditions are satisfied and $F$ is smooth. This regularity cannot be improved in general since, similarly to the theory of center manifolds (or, more general, theory of normally hyperbolic invariant manifolds),  there are obstacles to further regularity of $\Cal M$, see e.g. counterexample of Sell \cite{CLS92} related with resonances. The nature of these obstacles can be explained as follows. Let us formally differentiate equation \eqref{6.IM-eq} in order to get the equation for the Frechet derivative of the map $M$:
\begin{equation}\label{6.IM-1der}
\Dt v+Av=F'(u(t))v,\ \ \Pi_+v\big|_{t=0}=\xi\in\Phi_+,
\end{equation}
where $M'(u_+)\xi:=\Pi_-v(0)$. Since $\|F'(u(t))\|\le L$, the spectral gap conditions still allow us to uniquely solve this equation in the space $L^2_{e^{\theta t}}(\R_-,\Phi)$ and define the map $M'(u_+)$. A bit more accurate analysis shows that the obtained map will be H\"older continuous with small positive H\"older exponent $\eb>0$ and the function $M$ is $C^{1+\eb}$, see e.g. \cites{Zel14,KZ22}.
\par
The situation changes drastically when we differentiate equation \eqref{6.IM-1der} once more and look at the second derivative:
\begin{equation}\label{6.IM-2der}
\Dt w+Aw-F'(u(t))w=F''(u(t))[v_\xi(t),v_\eta(t)]:=h_{\xi,\eta}(t),\ \ \Pi_+w\big|_{t=0}=0,
\end{equation}
where $v_\xi$ and $v_\eta$ are the solutions of \eqref{6.IM-1der} with the initial data $\xi$ and $\eta$ respectively. The problem here is that $v_\xi,v_\eta\in L^2_{e^{\theta t}}(\R_-,\Phi)$ for some
\begin{equation}\label{6.cond1}
\lambda_N+L<\theta<\lambda_{N+1}-L
\end{equation}
(at least we can guarantee the existence of a solution of \eqref{6.IM-1der} only for such values of $\theta$ based on the Banach contraction theorem). Therefore, $h_{\xi,\eta}\in L^2_{e^{2\theta}}(\R_-,\Phi)$ (since we have the parabolic smoothing property the product of two solutions belonging to the weighted $L^2$ will be also in $L^2$ with the appropriate weight). But in order to solve \eqref{6.IM-2der}, we need the exponent $2\theta$ to satisfy \eqref{6.cond1} and this is possible only if
\begin{equation}\label{6.cond2}
\lambda_{N+1}-2\lambda_N>3L.
\end{equation}
Analogously, if we want the IM to be $C^s$ for some $s>1$, we need the following spectral gap:
\begin{equation}\label{6.cond3}
\lambda_{N+1}-s\lambda_N>(s+1)L.
\end{equation}
This condition is actually sharp and there is an example of Sell \cite{CLS92} of an equation of the form \eqref{6.semPDE} which possesses a $C^{2-\eb}$-smooth IMs, but does not possess any $C^2$-smooth IM, see also \cite{KZ22} and references therein.
\par
Note that there is a principal difference between conditions \eqref{6.sg} and \eqref{6.cond2}, namely, the first condition requires the existence of gaps of arbitrarily large size in the spectrum of $A$ and can be satisfied at least for some elliptic operators in bounded domains (e.g. in the case of low space dimension). In contrast to this, condition \eqref{6.cond2} requires {\it exponentially} big lacunaes in the spectrum (e.g. $\lambda_n=a^n$ with $a>2$), which is difficult to expect in the case of elliptic operators. In  fact, we do not know any examples of such operators $A$ for which inequality \eqref{6.cond2} is solvable with respect to $N$ for any Lipschitz constant $L$. Thus, IMs constructed via Theorem \ref{Th6.IM-main} are {\it never} $C^2$-smooth in general if more or less realistic applications are considered. The only exception is the case of local bifurcations and associated local center manifolds, where $\lambda_N\sim0$ and $L$ can be chosen arbitrarily small by decreasing the size of the neighbourhood. Then condition \eqref{6.cond3} allows us to construct invariant manifolds of any finite smoothness.
\par
Nevertheless, there is a possibility to overcome (at least partially) the smoothness problem for IMs by increasing the dimension of the manifold and further cut-off of the nonlinearity $F$, namely, the following result is proved in \cite{KZ22}.
\begin{theorem}\label{Th6.ext} Let the assumptions of Theorem \ref{Th6.IM-main} hold and let in addition $F\in C^\infty(\Phi,\Phi)$ and the following stronger version of spectral gap conditions hold:
\begin{equation}\label{6.sg3}
\limsup_{N\to\infty}(\lambda_{N+1}-\lambda_N)=\infty.
\end{equation}
Let also $\Cal M_1$ be a $C^{1+\eb}$-smooth IM which corresponds to the first $N$ which satisfies \eqref{6.sg}. Then, for every $m\in\Bbb N$ and any small positive $\delta$, there exists a $C^m$-smooth modification $F_m$ of the initial nonlinearity $F$ such that
\par
1) The manifold $\Cal M_1$ remains an IM for the modified problem
\begin{equation}\label{6.mod}
\Dt u+Au=F_m(u),\ \ u\big|_{t=0}=u_0.
\end{equation}
\par
2)  Equation \eqref{6.mod} possesses a $C^m$-smooth IM $\Cal M_m$ such that $\Cal M_1$ is a normally hyperbolic exponentially stable invariant submanifold of $\Cal M_m$.
\par
3) The nonlinearity $F_m$ is $\delta$-close to $F$ in the $C^1$-norm.
\end{theorem}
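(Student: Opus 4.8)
The plan is to give up on smoothing $\Cal M_1$ itself — by the resonance obstruction encoded in \eqref{6.cond3} together with Sell's counterexample, there is in general no hope of making the original $N$-dimensional manifold even $C^2$ — and instead to realize $\Cal M_1$ as a (still only $C^{1+\eb}$) normally hyperbolic submanifold of a genuinely $C^m$ inertial manifold $\Cal M_m$ of strictly larger dimension $N_m>N$. First I would use the strengthened gap assumption \eqref{6.sg3} to fix an index $N_m\ge N$ for which the gap $\lambda_{N_m+1}-\lambda_{N_m}$ is as large as desired. Since the modification below keeps the nonlinearity globally Lipschitz with a controlled constant, Theorem \ref{Th6.IM-main} then guarantees that the modified equation \eqref{6.mod} possesses an $N_m$-dimensional inertial manifold $\Cal M_m$, represented as a graph \eqref{6.graph} over the span of the first $N_m$ eigenvectors of $A$. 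Because the tangential rates along $\Cal M_1$ are controlled by $\lambda_N$, whereas the modes numbered $N+1,\dots,N_m$ contract at rates at least $\lambda_{N+1}$, this same large gap is precisely what will furnish the normal hyperbolicity and exponential stability of $\Cal M_1$ inside $\Cal M_m$.

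The heart of the construction is the choice of $F_m$, which I would build to satisfy three competing requirements at once. First, $F_m$ must coincide with $F$ together with its first derivative along $\Cal M_1$: keeping $F_m\big|_{\Cal M_1}=F\big|_{\Cal M_1}$ leaves the equation restricted to $\Cal M_1$ untouched, so that $\Cal M_1$ remains strictly invariant and, since the perturbation is $C^1$-small and the gap \eqref{6.sg} is strict, retains its exponential tracking property \eqref{5.track} and thus stays an inertial manifold; matching the first derivative transversally preserves the linearization that produces the normal hyperbolicity. Second, the higher-order transverse jet $F_m'',\dots,F_m^{(m)}$ in the directions normal to $\Cal M_1$ should be prescribed so as to annihilate the resonant source terms: the obstruction to a second derivative of the slaving map came exactly from the right-hand side $h_{\xi,\eta}=F''(u(t))[v_\xi,v_\eta]$ of \eqref{6.IM-2der} living in a too-heavy weighted space, and analogously for orders up to $m$, so forcing these transverse derivatives to vanish near the relevant dynamics removes the source terms and lets the differentiated graph-transform equations be solved in admissible weighted $L^2_{e^{\theta t}}$-spaces. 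Third, the difference $F_m-F$ must be supported in a thin transverse tube and be $\delta$-small in the $C^1$-norm.

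The tool that makes these requirements compatible is the Whitney extension theorem: prescribing on the closed set $\Cal M_1$ (it suffices to work near the compact attractor $\Cal A\subset\Cal M_1$, whose bounded complete trajectories are smooth in time) the Taylor data "equal to $F$ to first order, with the chosen vanishing transverse higher-order jet" yields a $C^m$ function realizing this jet, which I would then glue to $F$ through a smooth cut-off localized in the normal directions. Finiteness of $\dim\Cal M_1$ and compactness of $\Cal A$ keep the tube thin enough that the $C^1$-perturbation stays within $\delta$ and the global Lipschitz constant stays below the threshold dictated by the gap at $N_m$. I expect the main obstacle to be exactly the reconciliation of the first two requirements: one must reshape the transverse nonlinearity drastically enough to kill all resonances up to order $m$ (the genuine source of non-smoothness), yet leave the first jet along $\Cal M_1$ — and hence the whole dynamics on the attractor together with the normal contraction rates — completely unchanged, all while staying $\delta$-close in $C^1$. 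Checking that the Whitney extension can be arranged to respect these constraints, and that the resulting graph-transform equations for the derivatives of order $2,\dots,m$ of the slaving map are then contractions in the appropriate weighted spaces, is where the real work lies.
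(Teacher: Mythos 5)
There is a genuine gap, and it sits at the heart of your construction: the claim that prescribing a vanishing \emph{transverse} higher-order jet for $F_m$ along $\Cal M_1$ annihilates the source terms in \eqref{6.IM-2der}. It does not. The source term $h_{\xi,\eta}(t)=F''(u(t))[v_\xi(t),v_\eta(t)]$ is evaluated on solutions $v_\xi,v_\eta$ of the equation of variations \eqref{6.IM-1der}, i.e.\ on vectors \emph{tangent} to the manifold whose slaving map is being differentiated, taken along the backward trajectory $u(t)$; derivatives of $F$ in directions normal to $\Cal M_1$ do not enter these terms at all. Worse, your first requirement freezes exactly the data you would need to change: if $F_m=F$ and $F_m'=F'$ on $\Cal M_1$, then differentiating the identity $D(F_m-F)\equiv0$ along any $C^1$ curve lying in $\Cal M_1$ gives $D^2(F_m-F)(x)[v,\,\cdot\,]=0$ for every $x\in\Cal M_1$ and every $v$ tangent to $\Cal M_1$, so \emph{all} second derivatives with at least one tangential slot coincide with those of $F$, and only the purely normal-normal ones are at your disposal. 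Consequently the resonant part of $h_{\xi,\eta}$ generated by the dynamics on $\Cal M_1$ itself --- precisely the data behind Sell-type counterexamples --- survives every modification compatible with your requirement 1. With a non-vanishing source lying only in $L^2_{e^{2\theta t}}(\R_-,\Phi)$, solvability of \eqref{6.IM-2der} forces the relevant exponent (at least $2\theta$) to sit inside \emph{another} spectral gap located above $\lambda_{N_m+1}$; the single large gap you fixed at $N_m$ buys nothing there. Hence the $C^m$-smoothness of your $\Cal M_m$ is unproved, and by Sell's counterexample it can genuinely fail.

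This is exactly the point where the paper's proof goes the other way around. It does not try to kill the sources; it keeps them and solves the $k$-th derivative equation in a weighted space whose exponent lies in the $k$-th spectral gap (the second gap for the second derivative, the third for the third, and so on) --- this, not the existence of one huge gap, is the role of assumption \eqref{6.sg3}. The resulting $m$-jet along $\Cal M_1$ is then fed into the Whitney extension theorem to produce the \emph{manifold} $\Cal M_m$ (the hard part being to arrange the jet so that the Whitney compatibility conditions hold), and only afterwards is $F_m$ defined as a correction of $F$, transverse to $\Cal M_m$, making $\Cal M_m$ invariant; that final step is what is $C^1$-small and harmless for $\Cal M_1$. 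In short: Whitney is applied to the manifold, not to the nonlinearity, and the successive gaps --- rather than annihilation of the resonances --- are what make the higher derivatives of the slaving map solvable. Your proposal shares the correct skeleton (enlarge the dimension instead of smoothing $\Cal M_1$, use Whitney, localize the modification in a thin tube), but its core mechanism is incompatible with keeping the first jet of $F$ on $\Cal M_1$, which is non-negotiable for conclusions 1) and 3).
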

\begin{proof}[Idea of the proof] Thanks to \eqref{6.sg3}, we have infinitely many  spectral gaps suitable for constructing IMs. The first of them is used to construct the manifold $\Cal M_1$. The key idea is to use {\it the second} spectral gap for solving equation \eqref{6.IM-2der} for the second derivative, then {\it the third} gap is used for finding the third derivative, etc. Then, for every $u\in\Cal M_1$, the above procedure will give us an $m$-jet which should correspond to the $m$-smooth extension $\Cal M_m$ of the manifold $\Cal M_1$. Such a jet can be constructed in many ways and the most difficult part of the proof is to fix it in such a way that the compatibility conditions of the Whitney extension theorem are satisfied. Then we get the desired manifold $\Cal M_m$ by this theorem and after that it is already not difficult to define the correction $F_m$ of the initial nonlinearity $F$ in such a way that $\Cal M_m$ will be an invariant manifold for the modified system \eqref{6.mod}, see \cite{KZ22} for the details.
\end{proof}
\begin{remark} Note the construction of Theorem \ref{Th6.ext} replaces the requirement \eqref{6.cond3} of the existence of one "huge" spectral gap (which is almost never satisfied) by the existence of many relatively small gaps which is satisfied if we are able to satisfy \eqref{6.sg} for any Lipschitz constant $L$ (so, it is not a big extra restriction).
\par
Note also that this theorem allows us to interpret the $C^{1+\eb}$-smooth IF \eqref{6.IF} (which is a system of ODEs in $\R^{N_1}$) as the reduced equations on the normally hyperbolic invariant manifold for the extended $C^m$-smooth system of ODEs in $\R^{N_m}$ which is the IF related with the IM $\Cal M_m$. Of course, such an extension does not exist for a general non-smooth system of ODEs and is strongly related with the fact that the considered system of ODEs is obtained as a reduction of a {\it smooth} infinite-dimensional system \eqref{6.semPDE}.
\end{remark}
\subsection{Beyond the spectral gap conditions} We now turn to the case where the spectral gap condition \eqref{6.sg} is violated. We start our exposition with the sharpness of conditions of Theorem \ref{Th6.IM-main}. Assume that \eqref{6.sg} is not satisfied for some fixed $N\in\Bbb N$, namely, that $\lambda_{N+1}-\lambda_N<2L$. Let us consider the following linear version of system \eqref{6.semPDE}:
\begin{equation}
\frac d{dt} u_n+\lambda_n u_n=0,\  n\ne N,N+1,\  \frac d{dt}u_N+\lambda_Nu_N=Lu_{N+1},\  \frac d{dt}u_{N+1}+\lambda_{N+1}u_{N+1}=-Lu_N.
\end{equation}
Then, for the corresponding $F(u)=Fu$, we have $\|F\|_{\Cal L(\Phi,\Phi)}=L$, so indeed $L$ is a Lipschitz constant for the perturbation $F(u)$. Then the eigenvalues of the perturbed system associated with the invariant subspace spanned by $e_N$ and $e_{N+1}$
$$
\mu_N^\pm=\frac{\lambda_{N+1}+\lambda_N}2\pm\sqrt{\(\frac{\lambda_{N+1}-\lambda_N}2\)^2-L^2}
$$
are complex conjugate with non-zero imaginary part. For this reason, we cannot decouple $u_N(t)$ and $u_{N+1}(t)$ and an invariant $N$-dimensional manifold (linear subspace) with the base $\Phi_+$ cannot exist. The same example shows that we also do not have a normally hyperbolic invariant subspace of dimension $N$ if the spectral gap condition is violated for $N$, see \cites{Ro00,Zel14} for more details.
\par
Let us now assume that \eqref{6.sg} are violated for any $N\in\Bbb N$, i.e. that
\begin{equation}\label{6.no-sg}
\sup_{N\in\Bbb N}(\lambda_{N+1}-\lambda_N)<2L.
\end{equation}
Then it is natural to find examples where we do not have an IM for any dimension $N$. Such examples are often based on the following lemma, see \cites{Ro00,Zel14} and its generalizations.
\begin{lemma}\label{Lem6.odd-even} Assume that the nonlinearity $F$ in equation \eqref{6.semPDE} belongs to $C^1$ and let $u_\pm\in\Phi$ be two equilibria of this equation. Let $\Cal L_{u_\pm}:=-A+F'(u_{\pm})$ be the linearization of \eqref{6.semPDE} at $u=u_{\pm}$. Assume that the spectrum $\sigma(\Cal L_{u_+})$ consists of complex conjugate eigenvalues with non-zero imaginary parts and that the spectrum of $\sigma(\Cal L_{u_-})$ contains one simple real and positive eigenvalue and the rest of it consists of complex conjugate eigenvalues with non-zero imaginary parts.
Then problem \eqref{6.semPDE} does not possess any finite-dimensional $C^1$-smooth IM.
\end{lemma}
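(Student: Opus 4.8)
The plan is to argue by contradiction and extract a purely linear, parity-type obstruction at the two equilibria. Suppose $\Cal M$ is a $C^1$-smooth inertial manifold of some dimension $N\in\Bbb N$; as usual we may take $\Cal M$ to be the closed graph \eqref{6.graph} of a $C^1$ map over the $N$-dimensional spectral subspace $\Phi_+$, so that $\dim\Cal M=N$ at every point. The whole proof then reduces to computing the parity of $N$ in two different ways, using the tangent spaces $T_{u_+}\Cal M$ and $T_{u_-}\Cal M$, and showing that $N$ would have to be simultaneously even and odd.

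First I would show that both equilibria lie on $\Cal M$ and that each tangent space is invariant under the corresponding linearization. Applying the exponential tracking property \eqref{5.track} to the constant trajectory $u(t)\equiv u_\pm$ produces $\bar u(t)=S(t)\bar u_0\in\Cal M$ with $\bar u(t)\to u_\pm$; since $\Cal M$ is closed and strictly invariant, $u_\pm\in\Cal M$. Because $F\in C^1$, the semiflow $S(t)$ is differentiable in the initial data with $DS(t)(u_*)=e^{t\Cal L_{u_*}}$ at an equilibrium $u_*$. Differentiating the invariance relation $S(t)\Cal M=\Cal M$ at $u_*$ gives $e^{t\Cal L_{u_*}}T_{u_*}\Cal M=T_{u_*}\Cal M$ for all $t\ge0$, so $T_{u_*}\Cal M$ is a finite-dimensional subspace invariant under $\Cal L_{u_*}$. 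As $\Cal L_{u_*}$ is a real operator and $T_{u_*}\Cal M$ is a real subspace, $T_{u_*}\Cal M$ is a direct sum of real generalized eigenspaces whose set of associated eigenvalues is closed under complex conjugation.

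The parity count then follows. At $u_+$ every eigenvalue of $\Cal L_{u_+}$ is non-real and occurs in a conjugate pair, so the real generalized eigenspace attached to each pair is even-dimensional; hence any conjugation-closed sum of such eigenspaces, in particular $T_{u_+}\Cal M$, is even-dimensional, and $N$ is even. At $u_-$ the only real eigenvalue is the simple positive one, and I claim its eigenvector is tangent to $\Cal M$. Indeed, the local unstable manifold $W^u_{loc}(u_-)$ lies in $\Cal M$: for $v_0\in W^u_{loc}(u_-)$ the backward orbit $v(-s)\to u_-$ stays bounded, and applying \eqref{5.track} to the forward trajectory issuing from $v(-s)$ yields a point of $\Cal M$ within $Qe^{-\alpha s}$ of $v_0$; letting $s\to\infty$ and using that $\Cal M$ is closed (and that $Q$ stays bounded along the orbit) gives $v_0\in\Cal M$. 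Consequently $E^u(u_-)=T_{u_-}W^u_{loc}(u_-)\subset T_{u_-}\Cal M$, so the simple real eigendirection is tangent, while all remaining eigenvalues are complex pairs; therefore $T_{u_-}\Cal M$ is odd-dimensional and $N$ is odd. This contradicts the even parity obtained at $u_+$, so no $C^1$-smooth finite-dimensional inertial manifold can exist.

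The routine ingredients (differentiability of $S(t)$, the generalized-eigenspace decomposition of a finite-dimensional real invariant subspace, and the tangency of $W^u_{loc}$ to $E^u$) are standard. The step I expect to be the main obstacle is the clean justification that the unstable eigendirection at $u_-$ is genuinely tangent to $\Cal M$, that is, that $W^u_{loc}(u_-)\subset\Cal M$: this is exactly where the normal hyperbolicity encoded in the tracking estimate \eqref{5.track} must be used, together with closedness of $\Cal M$ and the uniform control of $Q$ along the backward orbit. Once this tangency is secured the even/odd dichotomy is immediate; exhibiting an actual equation under \eqref{6.no-sg} with a pair of equilibria realizing the prescribed spectra is then a separate, constructive matter treated in the examples that follow the lemma.
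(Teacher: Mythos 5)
Your proposal is correct and follows essentially the same route as the paper's proof: both arguments reduce to the parity of $\dim\Cal T_{u_\pm}\Cal M$, using invariance of the tangent planes under $\Cal L_{u_\pm}$, realness of the equation to force even dimension at $u_+$, and the inclusion of the unstable manifold of $u_-$ in $\Cal M$ to force odd dimension at $u_-$. The only (harmless) difference is that you derive $W^u_{loc}(u_-)\subset\Cal M$ directly from the exponential tracking estimate \eqref{5.track} and closedness of $\Cal M$, whereas the paper invokes the chain $\Cal M\supset\Cal A\supset\Cal M^+(u_-)$; your write-up also fills in the routine steps (equilibria lie on $\Cal M$, tangent-space invariance) that the paper's sketch leaves implicit.
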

\begin{proof}[Sketch of the proof] Indeed, assume that such a manifold $\Cal M$ exists. Let us consider its tangent planes $\Cal T_{u_\pm}\Cal M$ at $u=u_{\pm}$. Since the manifold is invariant, the planes $\Cal T_{u_{\pm}}\Cal M$ are invariant with respect to $\Cal L_{u_\pm}$. Therefore,
$$
\sigma(\Cal L_{u_\pm}\big|_{\Cal T_{u_\pm}})\subset \sigma(\Cal L_{u_\pm}).
$$
 Since the equation is real-valued and the spectrum of $\Cal L_{u_+}$ does not contain real eigenvalues, we conclude that $\dim\Cal M=\dim\Cal T_{u_+}\Cal M$ is even. On the other hand, since the IM always contains an attractor and the attractor always contains an unstable manifold, the direction of a simple real eigenvalue of $\Cal L_{u_-}$ must be in $\Cal T_{u_-}\Cal M$. Then the analogous arguments show that $\dim\Cal M=\dim\Cal T_{u_-}\Cal M$ is odd. This contradiction proves that the IM $\Cal M$ can not exist.
 \end{proof}
 \begin{corollary}\label{Cor6.ne} Let $\lambda_1<L$ and assumption \eqref{6.no-sg} be satisfied. Then there exists a smooth bounded and globally Lipschitz nonlinearity $F$ with Lipschitz constant $L$ such that equation \eqref{6.semPDE} does not possess any $C^1$-smooth finite-dimensional IM.
 \end{corollary}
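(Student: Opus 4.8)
The plan is to satisfy the hypotheses of Lemma \ref{Lem6.odd-even} by exhibiting a single admissible nonlinearity $F$ with two equilibria $u_+$ and $u_-$ whose linearizations have, respectively, no real eigenvalue and exactly one simple real positive eigenvalue; the non-existence of a $C^1$ inertial manifold is then immediate from that lemma. The elementary building block is the rotation example already used above to show sharpness of the spectral gap condition: for a pair of consecutive modes $(n,n+1)$ the $2\times2$ block $-\mathrm{diag}(\lambda_n,\lambda_{n+1})+LJ$, with $J=\bigl(\begin{smallmatrix}0&1\\-1&0\end{smallmatrix}\bigr)$, has eigenvalues $\mu_n^\pm$ with nonzero imaginary part precisely because $\lambda_{n+1}-\lambda_n<2L$, which holds for \emph{every} $n$ by \eqref{6.no-sg}. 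Thus a skew-symmetric coupling of norm $L$ turns any consecutive pair of modes into a genuine focus.

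First I would fix the two local linear models. For $u_+$, pair the modes as $(1,2),(3,4),\dots$ and set $B_+:=L\bigoplus_k J_{2k-1,2k}$, so that $\Cal L_{u_+}=-A+B_+$ is block-diagonal with all blocks foci; hence $\sigma(\Cal L_{u_+})$ consists entirely of complex-conjugate pairs and contains no real eigenvalue, while $\|B_+\|_{\Cal L(\Phi,\Phi)}=L$. For $u_-$, I would leave the first mode uncoupled and exploit the assumption $\lambda_1<L$: set $B_-e_1:=c\,e_1$ with $c\in(\lambda_1,L]$, so that $\Cal L_{u_-}$ acquires the simple real eigenvalue $c-\lambda_1>0$ in the $e_1$-direction, and couple the remaining modes in pairs $(2,3),(4,5),\dots$ by $LJ$, producing only foci; again $\|B_-\|=\max\{c,L\}=L$. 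Since the bounded perturbation of $-A$ keeps a compact resolvent, both spectra are discrete and equal to the union of the block spectra, so $u_\pm$ have exactly the structure required by Lemma \ref{Lem6.odd-even}.

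The remaining and principal task is to realize both prescribed Jacobians by one genuinely admissible $F$, i.e.\ a smooth, bounded, globally $L$-Lipschitz map. I would place $u_+=0$ and $u_-$ at a well-separated point of $D(A)$, declare $F$ to equal the affine maps $u\mapsto B_+u$ near $u_+$ and $u\mapsto Au_-+B_-(u-u_-)$ near $u_-$ (so that $u_\pm$ become equilibria of \eqref{6.semPDE} with the chosen linearizations), and then extend to all of $\Phi$. Because the ball of radius $L$ in the operator norm is convex, any map whose Jacobian stays in this ball is automatically $L$-Lipschitz; the extension can therefore be produced by a Kirszbraun/McShane-type extension of the $L$-Lipschitz datum defined on the union of the two balls, followed by mollification (which does not increase the Lipschitz constant) and a far-field truncation securing boundedness.

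The hard part is precisely this gluing: one must verify that the two affine pieces \emph{together} form an $L$-Lipschitz map on the union of the well-separated neighbourhoods before extending, i.e.\ that the offset $Au_-$ and the distance $\|u_--u_+\|$ are chosen compatibly so that the global Lipschitz constant does not exceed $L$. The spectral and parity facts are by contrast elementary. Once $F$ is in hand, Lemma \ref{Lem6.odd-even} forces any $C^1$ inertial manifold to have simultaneously even dimension (from $u_+$) and odd dimension (from $u_-$), a contradiction; the construction then is essentially the one in \cites{Ro00,Zel14}.
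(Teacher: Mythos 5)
Your proposal follows essentially the same route as the paper: both invoke Lemma \ref{Lem6.odd-even} after producing two equilibria whose linearizations are exactly the block-diagonal operators \eqref{6.+} and \eqref{6.-} (skew $2\times 2$ couplings of strength $L$ on consecutive modes, which are foci precisely because of \eqref{6.no-sg}, with the first mode left uncoupled and real-unstable at $u_-$ thanks to $\lambda_1<L$). The paper likewise omits the explicit gluing of the two affine pieces into a single smooth, bounded, globally $L$-Lipschitz nonlinearity, deferring it to \cite{Zel14}, so your deferral of that step matches the paper's own level of detail.
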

 \begin{proof}[Sketch of the proof] According to the previous lemma, we only need to construct $F$ in equation \eqref{6.semPDE} in such a way that it will possess two equilibria $u=u_\pm$ such that the linearizations near $u_+$ and $u_-$ have the following forms:
 \begin{equation}\label{6.+}
 \frac d{dt}v_{2n-1}+\lambda_{2n-1}v_{2n-1}=Lu_{2n},\  \frac d{dt}v_{2n}+\lambda_{2n}v_{2n}=-Lu_{2n-1},\ n\in\Bbb N
 \end{equation}
 and
 \begin{equation}\label{6.-}
 \frac d{dt}v_{1}+\lambda_{1}v_{1}=Lu_{1},\ \frac d{dt}v_{2n}+\lambda_{2n}v_{2n}=Lu_{2n+1},\ \frac d{dt}v_{2n+1}+\lambda_{2n+1}v_{2n+1}=-Lu_{2n},\ n\in\Bbb N
 \end{equation}
 respectively. Then all assumptions of the previous corollary will be satisfied. The construction of such nonlinearity $F$ is straightforward and we omit it here, see \cite{Zel14} for the details.
 \end{proof}
 \begin{remark} Note that the fractal dimension of the global attractor $\Cal A$ which corresponds to equation \eqref{6.semPDE} is always finite if $F$ is bounded and  globally Lipschitz. Thus, according to the common paradigm, the corresponding dynamics on the attractor should be "finite-dimensional". The result of Corollary \ref{Cor6.ne} does not essentially contradict this heuristic principle, it just tells us that the dimension of the inertial "manifold" should be different in different parts of the phase space. For instance, instead of an IM, one may try to consider an "inertial CW-complex" with finite-dimensional dynamics on it. As we will see below this actually also does not work and the reduced dynamics may be essentially infinite-dimensional despite  the finiteness of the fractal dimension.
 \end{remark}
 The further progress in understanding the obstacles to existence of IMs and their consequences  is related with the recently discovered connections of the IM theory and the Floquet theory for infinite-dimensional differential equations, see \cites{Ku93,EKZ13,Zel14,KZ18} and references therein. Namely, let us consider the following linear time-periodic parabolic equation in a Hilbert space $\Phi$:
 \begin{equation}\label{6.per}
 \Dt v+Av=L(t)v,\ \ v\big|_{t=0}=v_0,
 \end{equation}
  where $L(t)$ is $T$-periodic in time linear operator satisfying $\|L(t)\|_{\Cal L(\Phi,\Phi)}\le L$. Roughly speaking, we fix the operator $L(t)$ in such a way that \eqref{6.per} will be close to \eqref{6.+} and \eqref{6.-} on the first and second half-periods respectively. Moreover, it can be done in such a way that the period map $\Cal P:u_0\to u(t)$ will have a form of  an infinite Jordan cell:
  \begin{equation}\label{6.cell}
  \cdots \Cal Pe_{2n}=\mu_{2n}e_{2n-2},\ \cdots, \Cal Pe_2=\mu_0e_1, \ \Cal Pe_1=\mu_1e_3,\cdots, \Cal Pe_{2n+1}=\mu_{2n+1}e_{2n+3},\cdots,
  \end{equation}
  where $\mu_n\sim e^{-\alpha |n|}$ for some positive $\alpha$. In particular, the map $\Cal P$ is compact and $\sigma(\Cal P)=\{0\}$, so the corresponding equation \eqref{6.per} does not possess any Floquet multipliers and any its solution decays super-exponentially to zero:
  \begin{equation}\label{6.sup}
  \|v(t)\|_{\Phi}\le C\|v_0\|_\Phi e^{-\alpha t^2/2},
  \end{equation}
  see \cites{EKZ13,Zel14} for more details. The next step is to realize operator $L(t)$ in the form of $L(u_0(t))$, where $u_0(t)$ is a time periodic solution of an ODE which is interpreted as an equation for the "zero-mode" of the abstract equation \eqref{6.semPDE}. This will give us a super-exponentially attracting periodic orbit $u(t):=\{u_0(t),0\}$ in the system of the form \eqref{6.semPDE} which belongs to the attractor. It only remains to modify the equations properly outside of this periodic orbit in order to find another periodic orbit $\bar u(t)$ belonging to the attractor which converges super-exponentially fast to $u(t)$:
  \begin{equation}\label{6.quad}
  \|u(t)-\bar u(t)\|_\Phi\le Ce^{-\alpha t^2/2},\ \ u,\bar u\in\Cal A,
  \end{equation}
  see \cites{EKZ13,Zel14} for more details. The presence of such two trajectories on the attractor clearly excludes the existence of  Lipschitz continuous IMs of any finite dimension. Moreover, the proper modification of the construction excludes also the possibility to embed the attractor $\Cal A$ in any finite-dimensional Lipschitz of even $\log$-Lipschitz continuous (not necessarily invariant) submanifolds. Namely, the following result is proved in \cite{EKZ13}.

  \begin{theorem}\label{Th6.no-IM} Let the assumptions of Corollary \ref{Cor6.ne} be satisfied. Then there exists a smooth and bounded nonlinearity $F$ with global Lipschitz constant $L$ such that there exist two solutions $u(t)$ and $\bar u(t)$ of equation \eqref{6.semPDE} belonging to the attractor and satisfying \eqref{6.quad}. Moreover, the corresponding attractor can not be embedded into any finite-dimensional $\log$-Lipschitz submanifold of $\Phi$. In particular, no IM exists for this equation.
  \end{theorem}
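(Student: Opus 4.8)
The plan is to realize the super-exponential contraction mechanism through Floquet theory, exactly along the lines sketched before the statement, and then to leverage it against any candidate log-Lipschitz chart. I would first construct a $T$-periodic linear operator $L(t)$ with $\|L(t)\|_{\Cal L(\Phi,\Phi)}\le L$ whose associated monodromy (period) map $\Cal P$ for equation \eqref{6.per} is the infinite weighted backward shift \eqref{6.cell}. The input making this possible is precisely the failure of the spectral gap \eqref{6.no-sg} together with $\lambda_1<L$: on the first half-period I couple the pairs $(e_{2n-1},e_{2n})$ as in \eqref{6.+}, producing a quarter-turn rotation inside each $2\times2$ block (the coupling strength $L$ suffices to overcome the small spectral separation $\lambda_{2n}-\lambda_{2n-1}<2L$), and on the second half-period I couple the shifted pairs $(e_{2n},e_{2n+1})$ as in \eqref{6.-}. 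Composing the two quarter rotations shifts each basis vector down by one index and multiplies it by a diffusion factor $\mu_n\sim e^{-\alpha|n|}$ coming from the surviving $e^{-\lambda_n T/2}$. Hence $\Cal P$ is compact with $\sigma(\Cal P)=\{0\}$, so \eqref{6.per} carries no Floquet multiplier and every solution satisfies the super-exponential bound \eqref{6.sup}; this I would check directly by iterating the shift, each step losing another factor $\mu_n$, so that the $n$-fold product produces the Gaussian rate $e^{-\alpha t^2/2}$.

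Next I would close the loop nonlinearly. I choose a low-dimensional ODE for a "zero mode" $u_0(t)$ possessing a stable periodic orbit, and I build a smooth, bounded, globally $L$-Lipschitz $F$ whose linearization along $u(t):=\{u_0(t),0\}$ reproduces exactly the periodic operator $L(t)$ above. Then $u(t)$ is a genuine $T$-periodic solution of \eqref{6.semPDE} lying on the attractor $\Cal A$, and by \eqref{6.sup} it is super-exponentially attracting. A second complete bounded trajectory $\bar u(t)\in\Cal A$ converging to $u(t)$ at the Gaussian rate \eqref{6.quad} is then obtained by modifying $F$ away from the orbit $u(t)$ — crucially without altering its linearization on the orbit, so that \eqref{6.sup} is preserved — in such a way that a second orbit is trapped inside the super-exponentially contracting tube. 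Since any two distinct trajectories lying on the graph of a finite-dimensional Lipschitz map project to distinct solutions of the induced Lipschitz inertial form, and backward uniqueness for such an ODE forbids convergence faster than exponential, the existence of the pair satisfying \eqref{6.quad} already rules out every finite-dimensional Lipschitz IM, and in particular every IM in the sense of Definition \ref{Def6.IM}.

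The main obstacle, and the heart of the argument, is the first step: engineering an admissible $L(t)$ whose monodromy is \emph{exactly} the quasinilpotent infinite Jordan cell \eqref{6.cell} under the norm constraint $\|L(t)\|\le L$ and the gap-failure hypothesis, and then realizing this $L(t)$ as the honest linearization of a smooth globally Lipschitz nonlinearity along a periodic orbit. Everything downstream (the two-trajectory statement, the preservation of \eqref{6.sup}) is comparatively routine bookkeeping once this periodic Floquet block is available. Finally, for the stronger claim that $\Cal A$ admits no finite-dimensional \emph{log-Lipschitz} embedding, a single contracting pair is insufficient, since a slowly-varying log-Lipschitz modulus can still tolerate the Gaussian collapse of one orbit pair. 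Here I would exploit the full family of solutions near $u(t)$, whose Fourier profiles are transported toward higher modes by the backward shift \eqref{6.cell} (the "decaying traveling waves in Fourier space"), to manufacture, at a sequence of scales, configurations of attractor points whose mutual $\Phi$-distances are incompatible with injectivity of any fixed finite-dimensional projection under a log-Lipschitz modulus, following \cite{EKZ13}. Controlling these superimposed blocks without violating the global Lipschitz bound on $F$ constitutes the remaining technical difficulty.
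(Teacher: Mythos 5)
Your proposal follows essentially the same route as the paper's own treatment: the half-period construction built from \eqref{6.+} and \eqref{6.-} yielding the quasinilpotent Jordan-cell monodromy \eqref{6.cell} and the super-exponential decay \eqref{6.sup}, the realization of $L(t)$ as the linearization of a smooth globally Lipschitz $F$ along a zero-mode periodic orbit, the modification of $F$ away from that orbit to produce the second trajectory satisfying \eqref{6.quad}, and the reduction of the log-Lipschitz non-embeddability to the Fourier-traveling-wave structure of \cite{EKZ13}. You also correctly make explicit a point the paper passes over quickly: the Gaussian pair alone rules out Lipschitz IMs (backward Gronwall for the induced inertial form) but is compatible with a log-Lipschitz inertial form, so the stronger non-embedding claim genuinely requires the additional geometric argument from \cite{EKZ13} rather than the two-trajectory argument.
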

\begin{remark} Since under the assumptions of Theorem \ref{Th6.no-IM} the fractal dimension of the attractor $\Cal A$ is finite, due to the Man\'e projection theorem, we have a H\"older continuous IF on the attractor as well as its embedding to a H\"older continuous submanifold of $\Phi$. Moreover, the H\"older exponent can be made as close to one as we want by increasing the dimension of the manifold, see \cite{Rob11} and  references therein. Nevertheless, super-exponential attraction of trajectories is not observed in classical dynamics generated by smooth ODEs and hardly be interpreted as a finite-dimensional phenomenon. Thus, we see some kind of infinite-dimensional limit dynamics on the attractor of the finite fractal dimension. This phenomenon is not properly understood yet and definitely deserves further investigation.
\end{remark}
\begin{remark}\label{Rem6.per} Note that it is relatively easy to construct counterexamples to the Floquet theory on the level of {\it abstract} linear parabolic equations of the form \eqref{6.per}, for instance, constructing the desired operator $L(t)$ in the Fourier base of the operator $A$. The situation becomes much more complicated if we want \eqref{6.per} to be a true parabolic PDE or system of parabolic PDEs. The existence of counterexamples to the Floquet theory on the level of parabolic PDEs in bounded domains has been an open problem till recently. This problem is affirmatively solved in \cite{KZ18}, where the smooth space-time periodic $m\times m$-matrices $a(t,x)$ and $b(t,x)$ are constructed  for $m\ge4$  in such a way that all solutions of the corresponding linear reaction-diffusion-advection problem:
\begin{equation}\label{6.rda-lin}
\Dt v-\partial_x^2v=a(t,x)v+b(t,x)\partial_x v,\ \ v\big|_{t=0}=v_0,\ \ x\in(-\pi,\pi),\ v=(v^1,\cdots, v^m),
\end{equation}
endowed with periodic boundary conditions, decay super-exponentially in time:
$$
\|v(t)\|_\Phi\le \|v_0\|_\Phi e^{-\alpha t^3},\ \ t\ge0,\ \ \alpha>0.
$$
Moreover, an example of a system of semilinear reaction-diffusion-advection equations of the form
\begin{equation}\label{6.rda-s}
\Dt u-\partial_x^2u=f(u)+g(u)\partial_x u,\ \ x\in(-\pi,\pi),\ \ u=(u^1,\cdots,u^m),\ \ u\big|_{t=0}=u_0
\end{equation}
with $m\ge8$, periodic boundary conditions and with $C^\infty$-smooth function $f:\R^m\to\R^m$ and $g:\R^m\to M(m\times m)$, which does not possess any finite-dimensional IM and satisfies all of the assertions of Theorem \ref{Th6.no-IM}, is given in \cite{KZ18} based on the counterexample \eqref{6.rda-lin}.
\end{remark}
We now turn to examples, where the IM still exists despite the fact that the spectral gap condition is not satisfied. We have seen above that these conditions are sharp on the level of abstract parabolic equations, however, there is still a possibility to relax them if some specific sub-class of such equations is considered. For instance, the first such  example is due to Sell and Malet-Paret \cite{MPS88} where the IM for a scalar semilinear heat equation on the 3D torus $\Omega=[-\pi,\pi]^3$ has been constructed. We return to this example later, but prefer to consider first an alternative method related with finding the proper transformations or/and embedding of the initial system to a larger one for which the spectral gap conditions will be satisfied. We illustrate this approach on the example of the reaction-diffusion-advection system \eqref{6.rda-s} endowed with {\it Dirichlet} boundary conditions. Namely, let us do the change of an independent variable $u=a(t,x)w$, where the matrix $a(t,x)$ will be specified later. Then we arrive at the transformed equation:
\begin{multline}\label{6.RDS-trans}
\Dt w-\partial_x^2w=\{a^{-1}(2\partial_xa+g(aw)a)\partial_xw\}+\\+\{a^{-1}(\partial_x^2a-\Dt a+g(aw)\partial_xa)w+a^{-1}f(aw)\}:=\Cal F_1(w)+\Cal F_2(w).
\end{multline}
We see that the operator $\Cal F_2$ does not depend explicitly on $\partial_xw$ and this dependence is presented in $\Cal F_1$ only, so the naive idea would be to kill the term $\Cal F_1$ by the proper choice of the matrix $a=a(u)$, for instance, fixing it as a solution of a matrix ODE:
$$
\frac12\frac d{dx}a+g(u)a=0,\ \ a\big|_{x=-\pi}=Id.
$$
However, this naive idea will not work since the terms $\Dt a$ and $\partial_x^2a$ will implicitly depend on $\partial_xw$ and the operator $\Cal F_2$ will be not bounded from $\Phi:=H^1_0(-\pi,\pi)$ to $\Phi$. Fortunately, as we already mentioned discussing equation \eqref{6.rda}, we need not to kill the operator $\Cal F_1$ completely, it is sufficient  to make its Lipschitz
constant (as the map from $\Phi$ to $L^2(-\pi,\pi)$ small enough) and this guesses the proper transformation: we fix $a=a(u)$ as a solution of the following problem
\begin{equation}\label{6.gt}
\frac12\frac d{dt}a+g(P_Ku)a=0,\ \ a\big|_{x=-\pi}=Id,
\end{equation}
 where $P_K$ is some smoothing operator in $x$, for instance, we can fix it as the spectral projector to the first $K$ eigenvectors of $-\partial_x^2$ in $\Omega=(-\pi,\pi)$ with Dirichlet boundary conditions and fix $K$ being big enough. It is shown in \cite{K17} that $u=a(aw)w$ thus defined is indeed a diffeomorphism of the phase space $\Phi$, the Lipschitz norm of $\Cal F_1:\Phi\to L^2(-\pi,\pi)$ can be made arbitrarily small by choosing $K$ big enough and the map $\Cal F_2:\Phi\to\Phi$ is globally Lipschitz continuous. Thus, the proper version of spectral gap conditions is satisfied and we have the following result, see \cite{K17} for the details.
 \begin{theorem}\label{Th6.rda-D} Let the functions $f$ and $g$ be smooth and have finite supports. Then equation \eqref{6.rda-s} endowed with Dirichlet boundary conditions possesses an IM.
 \end{theorem}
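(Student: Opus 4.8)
The plan is to follow the transformation strategy already sketched before the statement: reduce \eqref{6.rda-s} to an equation to which the smoothness-consuming version of the spectral gap theorem applies. First I would carry out the standard preparation. Since $f$ and $g$ have finite support, they are globally bounded and globally Lipschitz, so \eqref{6.rda-s} with Dirichlet conditions generates a dissipative semigroup $S(t)$ on $\Phi:=H^1_0(-\pi,\pi)$, and parabolic smoothing together with the boundedness of $f,g$ yields a compact absorbing set $\Cal B_1$ bounded in $H^s(-\pi,\pi)$ for every $s$; in particular the trajectories we care about are \emph{uniformly} smooth in space. Cutting $f$ and $g$ off outside a neighbourhood of $\Cal B_1$, I may assume the prepared nonlinearity is globally Lipschitz and that the relevant solutions stay in a fixed smooth ball. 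I set $A:=-\px^2+1$ with Dirichlet conditions, so that $\lambda_n\sim cn^2$ and the quantity $\frac{\lambda_{N+1}-\lambda_N}{\lambda_N^{1/2}+\lambda_{N+1}^{1/2}}$ from \eqref{6.sg1} stays bounded below along a sequence $N\to\infty$.

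Second, I would introduce the change of variable $u=a\,w$, where the matrix-valued $a=a(t,x)$ solves the linear $x$-ODE \eqref{6.gt} driven by the smoothed field $P_K u=P_K(aw)$, $P_K$ being the orthoprojector onto the first $K$ eigenvectors of $-\px^2$. Because $a$ depends on $u$ only through the finite-dimensional, smooth projection $P_K u$, the matrix $a$ together with $\px a$, $\px^2 a$ and $\Dt a$ can be expressed through $u$, $\px u$ and the equation itself (using that $P_K$ commutes with $-\px^2$), and hence remain controlled on $\Phi$. The first task here is to check that $w\mapsto u=a(P_K(aw))\,w$ is a genuine diffeomorphism of $\Phi$: this is an implicit/fixed-point argument solving the coupled relation for $a$ and $w$ by contraction once $K$ is fixed. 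Substituting into \eqref{6.rda-s} and multiplying by $a^{-1}$ produces exactly the split form \eqref{6.RDS-trans}, $\Dt w-\px^2 w=\Cal F_1(w)+\Cal F_2(w)$, where all dependence on $\px w$ is concentrated in $\Cal F_1$ through the coefficient $a^{-1}(2\px a+g(aw)a)$.

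Third, and this is where the choice \eqref{6.gt} pays off, I would estimate the two pieces. The ODE for $a$ is arranged so that the coefficient of $\px w$ collapses, up to the smoothing discrepancy, to a conjugate of a multiple of $g(aw)-g(P_K(aw))=g(u)-g(P_K u)$; since $u$ ranges over the uniformly smooth set produced in the preparation step, $\sup\|(1-P_K)u\|_{L^\infty}\to0$ as $K\to\infty$, and the Lipschitz constant $L_1$ of $\Cal F_1\colon\Phi\to L^2$ can therefore be made as small as we like by enlarging $K$. As already noted for \eqref{6.rda}, only this advection-slot constant needs to be small. By contrast $\Cal F_2$ involves no $\px w$ and maps $\Phi\to\Phi$ in a globally Lipschitz way, so its possibly large Lipschitz constant is harmless, corresponding to order $s=0$. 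With $L_1$ small, the refined spectral gap condition \eqref{6.sg1} (with $s=1$ for $\Cal F_1$ and the classical gap \eqref{6.sg} for $\Cal F_2$) holds for suitable $N$, and Theorem \ref{Th6.IM-main} yields an $N$-dimensional IM for the $w$-equation. Pulling it back through the diffeomorphism $w\mapsto u$ gives the desired IM for \eqref{6.rda-s}, the exponential tracking property being transported since the diffeomorphism and its inverse are Lipschitz on bounded sets.

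The hard part will be the estimate in the third step, namely making the Lipschitz norm of $\Cal F_1\colon\Phi\to L^2$ quantitatively small while simultaneously keeping $\Cal F_2\colon\Phi\to\Phi$ globally Lipschitz. These two demands pull in opposite directions: cancelling the advection coefficient exactly (the naive choice $2\px a+g(u)a=0$) destroys the boundedness of $\Dt a$ and $\px^2 a$, hence of $\Cal F_2$, whereas leaving the coefficient untouched keeps $L_1$ of order one. The role of the smoothing $P_K$ and of the uniform spatial regularity of the absorbing set is precisely to thread between these obstructions. Verifying that $a$, $a^{-1}$, $\px^2 a$ and $\Dt a$ are all globally Lipschitz on $\Phi$, so that $\Cal F_2$ genuinely does not consume smoothness, is the most delicate bookkeeping, and for it I would rely on the detailed estimates of \cite{K17}.
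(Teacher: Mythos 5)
Your proposal follows essentially the same route as the paper's proof: the change of variables $u=a\,w$ with $a$ determined by the smoothed transport equation \eqref{6.gt}, the verification that this is a diffeomorphism of $\Phi$, the observation that the coefficient of $\px w$ reduces to (a conjugate of) $g(u)-g(P_Ku)$ so that the Lipschitz constant of $\Cal F_1\colon\Phi\to L^2$ can be made arbitrarily small by enlarging $K$ while $\Cal F_2\colon\Phi\to\Phi$ stays globally Lipschitz, and then the application of the spectral gap theorem in the smoothness-consuming form \eqref{6.sg1}. The paper's own argument is exactly this scheme, with the same delicate estimates on $a$, $a^{-1}$, $\px^2 a$ and $\Dt a$ delegated, as in your write-up, to \cite{K17}.
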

\begin{remark} As we see from Remark \ref{Rem6.per} and Theorem \ref{Th6.rda-D}, the existence or non-existence of an IM for the reaction-diffusion-advection problems strongly depends on the type of boundary conditions. This is related with the fact that the used transform $u=a(t,x)w$ naturally preserves the Dirichlet boundary conditions, but does not preserve the Neumann or periodic ones. In the case of Neumann boundary conditions, there is a nice trick which allows us to overcome the problem, namely, we use that $v=\partial_xu$ satisfies Dirichlet boundary conditions and embed system \eqref{6.rda-s} into a larger system
$$
\Dt u-\partial_x u=f(u)+g(u)v,\ \partial_x u\big|_{x=\pm\pi}=0,\ \ \Dt v-\partial_x^2v=f'(u)v+g'(u)[v,v]+g(u)\partial_x v,\ \ v\big|_{x=\pm\pi}=0.
$$
Here we have only one dangerous term $g(u)\partial_xv$ and this term can be made small using the transformation of $v$ component only where the Dirichlet boundary conditions are preserved. This gives us the analogue of Theorem \ref{Th6.rda-D} for the case of Neumann boundary conditions, see \cite{KZ18} for more details. Note that, for the case of periodic boundary conditions, the analogue of this trick can not exist since we have a counterexample where an IM does not exist, see Remark \ref{Rem6.per}. We emphasize that the counterexample is constructed for systems $m\ge8$ only and it is proved in \cite{KZ18} that, in the case of a scalar equation $m=1$, we still have an IM.
\par
We also mention that the general case of equation \eqref{6.rda} can be also treated in a similar way by differentiating the equation in time and embedding the problem to a larger system of equations of the form \eqref{6.rda-s}, see \cite{KZ18} for more details. Thus, the assumption of smallness of the derivative $\partial_{u_x}f$ can be removed in the case of Dirichlet or Neumann boundary conditions, but can not be relaxed in general for systems with periodic boundary conditions.
\end{remark}
\begin{example} Let us consider a particular example of forced Burgers equation:
\begin{equation}\label{6.bur}
\Dt u-\nu\partial_x^2 u=\partial_x(u^2)+g(x), \ x\in(-\pi,\pi),\ u\big|_{t=0}=u_0
\end{equation}
endowed with Dirichlet boundary conditions (Neumann and periodic cases are similar, but we need to take care about the spatial mean value $\<u\>$). We assume, say, that $g\in \Phi:= L^2(-\pi,\pi)$ and $\nu>0$. Then equation \eqref{6.bur} is globally well-posed and dissipative in $\Phi$. Indeed, multiplying \eqref{6.bur} by $u$ and integrating by parts (which kills the nonlinear term), we end up with the energy identity:
$$
\frac12\frac d{dt}\|u(t)\|^2_\Phi+\nu\|\partial_x u(t)\|^2_{L^2}=(g,u(t)),
$$
which gives us the desired dissipativity, see \cite{tem} for more details. Moreover, using the parabolic smoothing property, we may construct an absorbing set for the corresponding solution semigroup $S(t)$ which is a bounded set in $H^2(-\pi,\pi)\subset C^1[-\pi,\pi]$. Thus, the semigroup $S(t)$ possesses an attractor $\Cal A$ in the phase space $\Phi$ endowed with the standard bornology of bounded sets in $\Phi$. In addition, we may cut-off the nonlinearity outside of the absorbing set and write out the problem \eqref{6.bur} in the form of \eqref{6.rda-s} with smooth nonlinearities with finite support. This gives us the existence of an IM for this equation, see \cite{K17} for more details.
\par
We however note that equation \eqref{6.bur} is "too simple" to have any non-trivial dynamics and its attractor always consists of a single equilibrium:
\begin{equation}\label{6.a-triv}
\Cal A:=\{G\},\ \ \nu G''=(G^2)'+g,\ \ G(\pm\pi)=0.
\end{equation}
Moreover, this equation does not generate any dynamical instability and every trajectory of it is Lyapunov stable and even asymptotically stable. Indeed, let $u_1(t)$ and $u_2(t)$ be two solutions of \eqref{6.bur} and $v(t):=u_1(t)-u_2(t)$. Then $v$ solves
$$
\Dt v-\partial_{x}^2 v=\partial_x((u_1+u_2)v), \ v\big|_{t=0}=v_0.
$$
Multiplying this equation by $\sgn(v(t))$ and using the Kato inequality, see \cite{Cle87}, we end up with
\begin{equation}\label{6.l1-lyap}
\frac d{dt}\|v(t)\|_{L^1}+\nu|\partial_xv(-\pi)|+\nu|\partial_xv(\pi)|\le 0.
\end{equation}
Thus, the quantity $\|u_1(t)-u_2(t)\|_{L^1}$ is not increasing along the trajectories. In particular, if we assume that $u_1,u_2\in\Cal A$, we infer from \eqref{6.l1-lyap} that
$$
v\big|_{x=\pm\pi}=\partial_x u\big |_{x=\pm\pi}=0
$$
and the Carleman type estimates give us that $v(t)\equiv0$, see \cite{RL}. Since, by the standard arguments,  equation \eqref{6.bur} possesses at least one equilibrium, we have that this equilibrium is unique and \eqref{6.a-triv} holds. Moreover, the standard compactness arguments show that this equilibrium is exponentially stable, i.e.
\begin{equation}
\|u_1(t)-G\|_\Phi\le C_\nu\|u_1(0)-G\|_\Phi e^{-\alpha_\nu t}
\end{equation}
for some positive $C_\nu$ and $\alpha_\nu$.
\par
Thus, constructing IMs and studying the limit dynamics on the attractor does not look very interesting for the Burgers equation \eqref{6.bur} (although this equation still may demonstrate a non-trivial {\it intermediate} behavior which may be interesting from the hydrodynamical point of view, see \cites{Bir01,Kuk99} and references therein, but this phenomenon is more appropriate to study by using the concept of an exponential attractor considered in the next section). Alternatively, we may return to the original Burgers model of turbulence, where the Burgers equation is coupled with an ODE:
\begin{equation}\label{6.bur-mod}
\frac d{dt} U+\nu U=P-\int_{-\pi}^\pi u^2(t,x)\,dx,\ \ \Dt u-\nu\partial_x^2 u+\partial_x(u^2)=Uu
\end{equation}
or two component Burgers equations
\begin{multline}\label{6.bur-2}
\frac d{dt} U+\nu U=P-\int_{-\pi}^\pi (u^2(t,x)+v^2(t,x))\,dx,\\ \Dt u-\nu\partial_x^2 u+\partial_x(u^2-v^2)=U(u-v),\ \Dt v-\nu\partial_x^2 v-\partial_x(2uv)=U(u+v),
\end{multline}
where the function $U(t)$ depends only on $t$ and the components $u$ and $v$ are endowed with Dirichlet boundary conditions, see \cite{Burgers} for the details. In contrast to  the single Burgers equation \eqref{6.bur}, we now have the instability if the parameter $P>0$ is large enough (this parameter plays the role of the external forces), so the attractor as well as the dynamics on it becomes non-trivial. On the other hand, we still have an energy identity and dissipativity for these equations as well as the existence of a smooth absorbing set. For instance, the energy identity for equation \eqref{6.bur-2} reads
$$
\frac12\frac d{dt}\(U^2+\|u\|^2_{L^2}+\|v\|^2_{L^2}\)+
\nu\(U^2+\|\partial_xu\|^2_{L^2}+\|\partial_xv\|^2_{L^2}\)=PU.
$$
Moreover, the general theory of IMs for  1D reaction-diffusion-advection problems is applicable here (we may make the terms containing $\partial_xu$ and $\partial_x v$ small enough to satisfy the spectral gap conditions by the transform described above). This, in turn, allows us to construct IMs for both equations \eqref{6.bur-mod} and \eqref{6.bur-2}. We  mention that this result was stated in \cite{IO84}, however, the proof given there is based on the erroneous idea of Kwak, so the correct proof becomes available only recently due to \cite{K17}.  We also mention that some particular cases of a general method suggested in \cite{K17} were considered in \cites{Vuk1,Vuk2,Vuk3}
\end{example}
\begin{remark} We recall that the 1D reaction-diffusion-advection equations can be considered as simplified models for 2D Navier-Stokes equations, so proving or disproving the existence of IMs for them was a longstanding open problem of a big theoretical and practical interest. A number of weaker results in this direction has been obtained. We mention here only the  Romanov theory, see \cite{Ro96} which allowed us to construct the Man\'e projectors with Lipschitz inverse on the attractors for some particular cases of these equations, see \cites{Ro00,Zel14,K21}. A more or less complete answer on this question which is obtained in \cites{K17,KZ18} and is discussed above is somehow unexpected and surprising. Indeed, periodic boundary conditions are used in hydrodynamics mainly because  they are "simpler" than more physical Dirichlet boundary conditions. The situation with reaction-diffusion-advection equations guesses that such an approach may be essentially wrong and in reality the periodic boundary conditions may be the "most complicated".
\end{remark}
We conclude our exposition of IMs by considering the spatial averaging method suggested by Sell and Mallet-Paret and its recent generalizations. We recall that in order to construct an IM for equation \eqref{6.semPDE}, we need to solve equation \eqref{6.IM-eq} backward in time in the properly chosen functional space and the main step here is to solve the corresponding equation of variations \eqref{6.IM-1der}. Moreover, looking on estimate \eqref{6.ode-est}, we see that the main impact to the norm \eqref{6.main-est} is given by the {\it intermediate} modes which correspond to eigenvalues $\lambda_N-k<\lambda_n<\lambda_{N+1}+k$. Let $\Cal I_{k,N}$ be a spectral orthoprojector on these modes. Then the most dangerous part of the operator $F'(u(t))$ in \eqref{6.IM-1der} is exactly its intermediate part $\Cal I_{k,N}\circ F'(u(t))\circ\Cal I_{k,N}$ for the properly chosen $k$ and $N$. The key assumption of the method is that this part is close to a scalar operator.

\begin{definition} We say that the nonlinearity $F\in C^1_b(\Phi,\Phi)$ and the operator $A$ satisfy the spatial averaging condition if there exist a bounded measurable function $a:\Phi\to\R$ and a positive number $\kappa$ such that, for every $\eb>0$ and every $k\in\R_+$, there exist infinitely many values of $N\in\Bbb N$ satisfying
\begin{equation}\label{6.sp-av}
\|\Cal I_{k,N}F'(u)\Cal I_{k,N}-a(u)Id\|_{\Cal L(\Phi,\Phi)}\le\eb,\ \ \lambda_{N+1}-\lambda_N>\kappa.
\end{equation}
Then the function $a(u)$ is referred as a spatial averaging of the operator $F'(u)$.
\end{definition}
\begin{theorem}\label{Th6.IM-sp} Let the operator $A:D(A)\to\Phi$, $A=A^*>0$ with compact inverse and the nonlinearity $F(u)$ satisfy the spatial average condition. Then equation \eqref{6.semPDE} possesses an IM (actually, infinitely many of them) which is a graph of a Lipschitz function over the properly chosen spectral subspace. Moreover, if $F$ and its spatial average are more regular, the corresponding IM is $C^{1+\eb}$-smooth for some small $\eb>0$.
\end{theorem}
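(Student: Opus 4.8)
The plan is to follow the Perron (backward-solution) scheme used in the proof of Theorem \ref{Th6.IM-main}, constructing the manifold as the graph of the map $M:u_+\mapsto\Pi_-u(0)$ obtained by solving \eqref{6.IM-eq} backward in time over the spectral subspace $\Phi_+$ spanned by the first $N$ eigenvectors of $A$, where $N$ is one of the (infinitely many) indices furnished by the spatial averaging condition \eqref{6.sp-av}. Since $F\in C^1_b$, it is already globally Lipschitz with $L=\sup_u\|F'(u)\|$, so no further cut-off is needed. The obstruction is that the gap condition \eqref{6.sg} fails, so the weighted-$L^2$ estimate \eqref{6.main-est} no longer makes the relevant map a contraction. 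I would therefore replace that estimate by a direct \emph{strong cone condition} for the equation of variations \eqref{6.IM-1der}: fixing $\theta=\tfrac{\lambda_N+\lambda_{N+1}}2$ and letting $P$, $Q=\mathrm{Id}-P$ be the spectral projectors of $A$ onto the modes with $\lambda_n\le\lambda_N$ and $\lambda_n\ge\lambda_{N+1}$ respectively, I would track the cone functional $V(v):=\|Qv\|^2-\|Pv\|^2$ along solutions of \eqref{6.IM-1der}.

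The heart of the argument is the differential inequality for $V$. Writing $F'(u)=a(u)\,\mathrm{Id}+B(u)$ with $B(u):=F'(u)-a(u)\,\mathrm{Id}$ gives
\begin{equation}\label{6.cone-ineq}
\tfrac12\tfrac{d}{dt}V(v)=-(AQv,Qv)+(APv,Pv)+a(u)V(v)+(QB(u)v,Qv)-(PB(u)v,Pv).
\end{equation}
The crucial point — and the reason the method tolerates an arbitrarily large averaged part $a$ — is that the scalar term enters only through the factor $a(u)V$, so it is removed entirely by passing to $W(t):=e^{-2\int_0^t a(u(s))\,ds}V(v(t))$, whose sign coincides with that of $V$; thus the cone structure is independent of $\|a\|_\infty$. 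Using $\|Qv\|=\|Pv\|$ on $\{V=0\}$ and the gap $\lambda_{N+1}-\lambda_N>\kappa$, the dissipative part obeys $-(AQv,Qv)+(APv,Pv)\le-(\lambda_{N+1}-\lambda_N)\|Qv\|^2\le-\kappa\|Qv\|^2$. It then remains to dominate the $B$-terms: splitting $Q$ and $P$ according to the band $\Cal I_{k,N}$, the purely intermediate contribution is bounded by $\|\Cal I_{k,N}B(u)\Cal I_{k,N}\|\le\eb$ from \eqref{6.sp-av}, while the couplings to the far modes (eigenvalues $\le\lambda_N-k$ or $\ge\lambda_{N+1}+k$) are absorbed by their strong (anti)dissipation through Young's inequality, leaving a remainder of order $(L+\|a\|_\infty)^2/k$. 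The quantifiers of \eqref{6.sp-av} are used in exactly this order: $\kappa$ is fixed, then $k$ is taken large enough to absorb the far couplings, then $\eb$ is taken small so that the intermediate remainder is below $\kappa/4$, and only then is $N$ selected — one of the infinitely many realizing $\lambda_{N+1}-\lambda_N>\kappa$ together with the $\eb$-bound for that $k$. This produces the strong cone condition.

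From the strong cone condition, the existence of an $N$-dimensional Lipschitz IM, presented as a graph over $\Phi_+$ together with the exponential tracking property with exponent $\alpha\approx\theta$, follows from the standard invariant-cone / graph-transform machinery (equivalently, the cone condition makes the backward Perron operator of \eqref{6.IM-eq} a contraction in the cone-adapted metric); see \cite{Zel14}. Because \eqref{6.sp-av} supplies infinitely many admissible $N$, each produces its own IM over the corresponding $\Phi_+$, giving infinitely many of them. For the smoothness claim I would differentiate \eqref{6.IM-eq}, obtaining for the tangent map the same linear problem \eqref{6.IM-1der} with coefficient $F'(u(t))$, so the very same estimate \eqref{6.cone-ineq} controls the derivative and, by a fiber-contraction argument as in Theorem \ref{Th6.IM-main}, yields $M\in C^{1+\eb}$ for a small $\eb>0$ once $F$ and $a$ are correspondingly more regular. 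The main obstacle throughout is precisely the averaged scalar part on the weakly dissipative intermediate band, which no choice of $k$ or of the spectral gap can make small; the spatial averaging method circumvents it through the exact elimination of $a(u)V$ by the integrating factor $e^{-2\int a}$, leaving only the genuinely small, non-scalar intermediate coupling to be handled by \eqref{6.sp-av}.
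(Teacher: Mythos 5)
Your proposal is correct and is essentially the paper's own argument: the same three pillars --- elimination of the scalar averaged part by the trajectory-dependent exponential factor (the paper's change of variables $v_n=e^{\int_0^t a(u(s))\,ds}w_n$, identical in effect to your integrating factor for $V$), largeness of $k$ to absorb the couplings with the far modes, and smallness of $\eb$ in \eqref{6.sp-av} for the intermediate band, used with the same order of quantifiers --- followed by the invariant-cone machinery for the nonlinear passage, which is exactly the ``traditional'' route the paper defers to via \cite{MPS88} and \cite{Zel14}. The only cosmetic difference is that you encode everything at once in the strong cone inequality for $V(v)=\|Qv\|^2-\|Pv\|^2$, whereas the paper's sketch first phrases the linear analysis in the weighted space $L^2_{e^{\theta t}}$ with a Lyapunov metric handling the far modes; these are equivalent packagings of the same estimates.
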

\begin{proof}[Idea of the proof] Let us discuss how to solve the equation of variations \eqref{6.IM-1der} only, the full proof can be found in \cite{MPS88}, see also \cites{Zel14,CSZ23}. Let us look at the solution operator $\Cal L_\theta$ of equation \eqref{6.lin} with $\theta=\frac{\lambda_{N+1}+\lambda_N}2$. Then, due to \eqref{6.ode-est}, we have
$$
\|\Cal L_{\theta}\circ (Id-\Cal I_{k,N})\|_{\Cal L(\Phi,\Phi)}\le \frac1k,
$$
and this part of the operator $\Cal L_\theta$ can be made small by fixing $k$ large enough and taking the standard Lyapunov's metric
$$
\|u\|^2_{\Phi,\beta}:=\|(Id-\Cal I_{k,N})u\|^2_{\Phi}+\beta\|\Cal I_{k,N}u\|_{\Phi}^2,
$$
where the parameter $\beta$ is chosen properly, so, we only need to take care about the intermediate modes, see \cite{Zel14} for the details. For these modes we use the spatial averaging condition which allows us to reduce the problem to equations of the form
$$
\frac d{dt}v_n+\lambda_n v_n=a(u(t))v_n+\text{"small"}.
$$
Finally, the scalar term $a(u(t))v_n$ can be removed doing the change of independent variables $v_n(t)=e^{\int_0^sa(u(s))\,dx}w_n(t)$. Important that this transform is the same for all (intermediate) modes. This reduces the problem to
$$
\frac d{dt} w_n+\lambda_nw_n=\text{"small"}
$$
and this problem can be solved using that $\lambda_{N+1}-\lambda_N>\kappa$.
\par
This finishes the proof of solvability for equation of variations. The passage to the nonlinear case is a bit more complicated here since the weight $e^{\int_0^ta(u(s))\,ds}$ depends on the trajectory $u(t)$ which is also unknown from the very beginning. By this reason, the methods related with invariant cones are traditionally used here, see \cites{MPS88,Zel14,CSZ23} for the details, although this passage can be also done by perturbation arguments and Banach contraction theorem, see \cite{K18}.
\end{proof}
In applications, the operator $A$ is usually minus Laplacian with {\it periodic} boundary conditions mainly in 3D case, say, in $\Phi=L^2(\Omega)$, $\Omega=[-\pi,\pi]^3$ (see \cites{MPS88,kwe99} for applications to irrational tori in 2D or/and equilateral triangles). In this case, the eigenvalues $\lambda_n$ are naturally parameterized  by the points of a lattice $\Bbb Z^3$
$$
\lambda_{\vec n}=m^2+l^2+k^2,\ \ \vec n=(m,l,k)\in\Bbb Z^3
$$
and the number theory related with distributions of sums of squares naturally comes into play. Namely, assume in addition that the operator $F'(u)$ is a point-wise multiplication
$$
F'(u)v:=\psi(x)v(x)
$$
on a sufficiently smooth function $\psi=\psi_u(x)$ (or sums of combinations of point-wise multiplication and differentiation). Then, in the Fourier base, the action of $F'(u)$ becomes a convolution
\begin{equation}\label{6.conv}
(F'(u)v)_{\vec n}:=\sum_{\vec n_1\in\Bbb Z^3}\psi_{\vec n-\vec n_1}v_{\vec n_1}
\end{equation}
and we may use the following lemma to verify the spatial averaging condition.
\begin{lemma}
  Let
\begin{equation}
\Cal C^k_N:=\{\vec l\in\Bbb Z^3\,:\, N-k\le|\vec l|^2\le N+k\},\ \ \Cal B_r:=\{\vec l\in\Bbb Z^3\,:\,
|\vec l|\le r\}.
\end{equation}
Then, for every $k>0$ and $r>0$, there exist infinitely many $N\in\Bbb N$ such that
\begin{equation}\label{6.spav}
\(\Cal C^k_{N}-\Cal C^k_N\)\cap \Cal B_r=\{0\}.
\end{equation}
\end{lemma}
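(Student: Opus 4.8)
The plan is to argue by contradiction and to reduce the statement to a density-zero estimate for the integers represented by finitely many binary quadratic forms. Suppose $\vec a,\vec b\in\Cal C^k_N$ are distinct with $\vec m:=\vec a-\vec b\in\Cal B_r$, so $0<|\vec m|^2\le r^2$. Expanding $|\vec a|^2-|\vec b|^2=2\,\vec m\cdot\vec b+|\vec m|^2$ and using that both $|\vec a|^2$ and $|\vec b|^2$ lie in $[N-k,N+k]$, I would record the two constraints $|\vec b|^2\in[N-k,N+k]$ and $\vec m\cdot\vec b=c$, where $c:=\tfrac12\big(|\vec a|^2-|\vec b|^2-|\vec m|^2\big)=\vec m\cdot\vec b$ is an integer with $|c|\le k+r^2/2$. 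Thus every ``close pair'' in the shell forces the inner point $\vec b$ to lie on one of the finitely many lattice planes $\{\vec b:\vec m\cdot\vec b=c\}$, indexed by the $O(r^3)$ vectors $\vec m$ with $0<|\vec m|\le r$ and the $O(k+r^2)$ admissible integers $c$, all of which pass within bounded distance of the origin.

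First I would fix one such pair $(\vec m,c)$ for which $\vec m\cdot\vec b=c$ is solvable, so that the admissible $\vec b$ form a coset $\vec b_0+\Lambda_{\vec m}$ of the rank-two lattice $\Lambda_{\vec m}:=\vec m^{\perp}\cap\Bbb Z^3$ (rank two and with positive definite induced form precisely because $\vec m\neq0$). On this coset $\vec b\mapsto|\vec b|^2$ is an inhomogeneous positive definite binary quadratic form in the two lattice parameters, and the key number-theoretic input is that such a form represents a set $V_{\vec m,c}\subset\Bbb N$ of \emph{density zero} (Landau's theorem for sums of two squares and its extension to definite binary forms). Consequently the set of ``bad'' $N$, i.e.\ those admitting a close pair, is contained in the $k$-neighbourhood of the finite union $\bigcup_{\vec m,c}V_{\vec m,c}$, and so is itself of density zero.

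It then remains to guarantee that $\Cal C^k_N$ is non-empty for the good values of $N$, since otherwise $\Cal C^k_N-\Cal C^k_N=\varnothing\neq\{0\}$. Here I would restrict attention to $N\equiv 1\pmod 8$: such an $N$ is not of the excluded form $4^a(8b+7)$, hence is a sum of three squares by the Gauss--Legendre three-square theorem, so the sphere $|\vec l|^2=N$ already contributes a point and $0\in\Cal C^k_N-\Cal C^k_N$. Since the progression $\{N:N\equiv1\pmod 8\}$ has positive density while the bad set has density zero, infinitely many $N$ are simultaneously of this residue and free of close pairs; for each such $N$ one has $(\Cal C^k_N-\Cal C^k_N)\cap\Cal B_r=\{0\}$, as required. (For general $k\ge 1$ one may instead use that any window of length $\ge 3$ contains an integer with residue in $\{1,2,3,5,6\}\pmod 8$, hence a sum of three squares.)

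The main obstacle is the number-theoretic core of the second step: establishing (or correctly invoking) that an inhomogeneous positive definite binary quadratic form represents only a density-zero set of integers. The homogeneous case is classical, but the shift by a coset representative must be handled, for instance by completing the square and counting integers represented on a shifted lattice; this is where the genuine analytic content lies, everything else being elementary bookkeeping with the identity $|\vec a|^2-|\vec b|^2=2\,\vec m\cdot\vec b+|\vec m|^2$ and the finiteness of the index set $\{(\vec m,c)\}$. A secondary point requiring care is the verification that each section $\Lambda_{\vec m}$ has rank two with positive definite induced form, which however follows immediately from $\vec m\neq0$.
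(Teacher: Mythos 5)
Your proposal is, in substance, the proof this lemma actually has: the paper itself does not prove it but defers to Mallet-Paret and Sell \cite{MPS88}, and their argument is exactly your reduction --- a nonzero small difference $\vec m=\vec a-\vec b$ between shell points forces $\vec b$ onto one of finitely many lattice planes $\vec m\cdot\vec b=c$ with $|\vec m|\le r$, $|c|\le k+r^2/2$; the squared norms of lattice points on such planes form a density-zero set by the Landau--Bernays theorem for positive definite binary quadratic forms; and the Gauss--Legendre three-square theorem gives a positive-density set of $N$ (e.g. $N\equiv 1\pmod 8$) for which the shell is non-empty, so infinitely many $N$ survive.

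The one step you flag as the ``main obstacle'' --- density zero for the \emph{inhomogeneous} form obtained on the coset $\vec b_0+\Lambda_{\vec m}$ --- closes more cleanly than by completing the square in lattice coordinates. Use the Lagrange identity $|\vec m\times\vec b|^2=|\vec m|^2|\vec b|^2-(\vec m\cdot\vec b)^2$: on the plane $\vec m\cdot\vec b=c$ the vector $\vec u:=\vec m\times\vec b$ is an \emph{integer} vector lying in $\Lambda_{\vec m}=\vec m^{\perp}\cap\Bbb Z^3$, and $|\vec m|^2|\vec b|^2=c^2+|\vec u|^2$. Since $|\vec u|^2$ is a value of the \emph{homogeneous} positive definite integral binary quadratic form induced on $\Lambda_{\vec m}$, your set $V_{\vec m,c}$ is contained in $\{(c^2+v)/|\vec m|^2:\ v\in \operatorname{Val}(Q_{\vec m})\}$, and density zero follows directly from the classical homogeneous case (counting function $O(x/\sqrt{\log x})$); no shifted-lattice counting is needed, and rescaling by $|\vec m|^2$, translating by $c^2$, and taking the $k$-neighbourhood and the finite union over $(\vec m,c)$ all preserve density zero. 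With this substitution your argument is complete and coincides with the proof in \cite{MPS88}.
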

The proof of this lemma is given in \cite{MPS88}. This lemma shows that the intermediate part of the operator $F'(u)v$ contains the leading term $\psi_0 v$ and all other term contain $\psi_{\vec n}$ with $|\vec n|>\rho$ (which are small if $\psi$ is smooth enough). Thus, the spatial averaging condition will be satisfied with $a(u)=\psi_0=\<\psi_u\>$ and this justifies the interpretation of $a(u)$ as a spatial averaging in a general theory, see \cites{MPS88,Zel14} and references therein for more details.
\par
This scheme was suggested by Sell and Malet-Paret \cite{MPS88} to construct an IM for a {\it scalar} reaction-diffusion equation \eqref{6.RDS} in 3D with periodic boundary conditions. Clearly, the spectral gap conditions are not satisfied there and this was historically the first example of an IM beyond the spectral gap conditions. They also gave a counterexample showing that in 4D this method will not work and that the IM may not exist (at least in the class of normally hyperbolic IMs, see \cite{MPSS93}). The corresponding counterexample for a {\it system} of two reaction-diffusion equations in 3D (also with periodic boundary conditions) where an IM does not exist (again, in the class of normally-hyperbolic IMs) is given by Romanov \cite{Ro00}.    The IM for Cahn-Hilliard equations in 3D periodic domains are constructed by a similar method in \cite{KZ15}.
\par
As we have already mentioned, the spatial averaging method does not work in general for systems of PDEs. Indeed, in this case $a(u)$ will be not a scalar operator, but a matrix operator and this is not enough to solve the simplified equation of variations for the intermediate modes. One of the exceptions is the case where $a(u)$ is a zero matrix, then everything should work similarly to the scalar case. However, there was a non-trivial problem here related with the cut-off of the initial equation transforming it to the so-called "prepared" form. Usually the spatial averaging method requires very non-trivial cut-off procedure and it was not clear how to preserve the condition $\<a(u)\>=0$ under this procedure. The progress here is mainly related with the paper of Kostianko \cite{K18}, where such a cut-off procedure has been suggested using the proper modification of the nonlinearity $F(u)$ in the Fourier basis. In turn, this construction allowed us to use spatial averaging method for IMs for some hydrodynamical problems. For instance, the existence of an IM for the modified Leray-$\alpha$ model and hyperviscous Navier-Stokes equations on a 3D torus was proved in \cite{K18} and \cite{GG18} respectively and more general problems of the form
\begin{equation}\label{6.NS}
\begin{cases}
\Dt u+(u,\Nx\bar u)+(-\Dx)^{1+\gamma} u+\Nx p=g, \ \ u\big|_{t=0}=u_0,\\
\divv u=0,\ \ \bar u=(1-\alpha\Dx)^{-\bar\gamma}u,
\end{cases}
\end{equation}
where $\gamma,\bar\gamma\ge0$ and $\gamma+\bar\gamma=\frac12$ are considered in \cite{CSZ23}.
\par
Let us now return to the case of systems where $a(u)$ is not a scalar matrix and consider the following version of a cross-diffusion system:
\begin{equation}\label{6.cGL}
\Dt u-(1+i\omega)(\Dx u-u)=f(u,\bar u),\ \ u\big|_{t=0}=u_0,
\end{equation}
where now $u(t,x):=\Ree u(t,x)+i\Imm u(t,x)$ is a complex valued unknown function $\bar u$ is a complex conjugate and $f$ is a given smooth function with finite support and $\omega\ne0$ is a real cross diffusion parameter. We consider this equation on a 3D torus $\Omega=[-\pi,\pi]^3$. The typical example of such an equation is a complex Ginzburg-Landau equation:
\begin{equation}\label{6.cGL1}
\Dt u-(1+i\omega)(\Dx u-u)=\varphi(|u|^2)u.
\end{equation}
As usual, we assume that this equation possesses a smooth absorbing set, so without loss of generality we may assume that $\varphi$ is smooth and has a finite support.
\par
Applying the method of spatial averaging to the equation of variation related with problem \eqref{6.cGL}, we end up with the following problem for the intermediate modes:
\begin{equation}\label{6.cGL-sp}
\frac d{dt}v_n+(1+i\omega)(\lambda_n+1)v_n=\<f'_u\>v_n+\<f'_{\bar u}\>\bar v_n+\text{"small"}.
\end{equation}
We see that the first term in the right-hand side is a nice scalar operator, but the second one ($\<f'_{\bar u}\>\bar v_n$) is not, so the sole spatial averaging is {\it not sufficient} for constructing an IM (the corresponding example with $\omega=0$ is given in \cite{Ro00}). The key idea here is (following  \cite{K21}) to combine the spatial averaging method with the {\it temporal} averaging with respect to rapid in time oscillations generated by the large dispersion  term $i\omega(\lambda_n+1)v_n$ in equations \eqref{6.cGL-sp}. This average kills (makes small) the non-scalar term  $\<f'_{\bar u}\>\bar v_n$ in \eqref{6.cGL-sp} and allows us to complete the proof of the existence of an IM for equation \eqref{6.cGL} for the case $\omega\ne0$, see \cite{K21} and \cite{KSZ22} for the missed details.

\section{Exponential attractors and perturbation theory}\label{s7}
We have seen that IMs give us a perfect way of constructing  the finite-dimensional reduction for the limit dissipative dynamics. However, an IM requires much more restrictive conditions to exist than an attractor.
In this section, we discuss an intermediate concept between attractors and inertial manifolds introduced
in \cite{EFNT94}, namely, the concept of an {\it exponential} attractor, which, on the one hand, is almost as common as usual attractors and, on the other hand, allows us to overcome the most principal drawbacks of usual (global) attractors. We start with an illustrating example.
\begin{example}\label{Ex7.inter} Let us consider the following 1D real Ginzburg-Landau equation with non-zero boundary conditions:
\begin{equation}\label{7.cgl}
\Dt u=\nu^2\partial_x^2 u-u^3+u,\ \ u\big|_{x=1}=u\big|_{x=-1}=1,\ \ u\big|_{t=0}=u_0,
\end{equation}
where $\Omega=(-1,1)$ and $\nu>0$ is a small parameter. Obviously, the solution semigroup associated with this equation is dissipative and possesses a smooth absorbing set. We claim that the attractor $\Cal A$ of the corresponding solutions semigroup $S(t)$ in the phase space $\Phi:=L^2(\Omega)$, endowed with the standard bornology of bounded sets (the bornology of all subsets of $\Phi$ is also possible here and gives the same attractor), consists of a single equilibrium
\begin{equation}\label{7.triv}
\Cal A=\{u\equiv 1\}.
\end{equation}
Indeed, $u=1$ is a unique equilibrium of this problem no matter how small $\nu$ is. This follows by multiplication of the equation for the equilibria by $u'_x$ and integrating over $x$:
$$
\nu^2u'_x(x)^2=\nu^2u'_x(-1)^2+\frac12(u(x)^2-1)^2.
$$
 Thus, on the one side, $u'(x)^2$ is non-decreasing and, on the other hand $u'(1)^2=u'(-1)^2$ which is possible only if $u(x)\equiv1$. Another observation is that equation \eqref{7.cgl} possesses a global Lyapunov function:
 $$
 \frac d{dt}\(\nu^2\|u'_x\|^2_{\Phi}-\frac12((u^2-1)^2,1)\)=-2\|\Dt u\|^2_{\Phi}
 $$
 and therefore, any trajectory must converge to the set of equilibria and  \eqref{7.triv} holds.
 \par
 Thus, the limit dynamics is trivial for any $\nu>0$. It is just a single exponentially stable equilibrium. Moreover, linearizing our equation near $u=1$, we see that the attraction rate to the equilibrium is not slower than $e^{-2t}$ (and does not become worse as $\nu\to0$). So, we may conclude that, for any (bounded) set $B\subset\Phi$,
 \begin{equation}\label{7.false}
  \operatorname{dist}_\Phi(S(t)B,\Cal A)\le Ce^{-2t},
\end{equation}
 where the constant $C$ is actually independent of $B$, but still may depend on $\nu$.
\par
However, for small $\nu>0$, equation \eqref{7.cgl} possesses an interesting metastable dynamics related with slow evolution of multi-kinks. For instance, if you start from the profile which is close to
$$
u_0(x)=1-2H(1/3-|x|),
$$
where $H(x)$ is a standard Heaviside function, the corresponding solution will be close to $u_0(x)$ for an extremely long time, the life-span of this "almost" equilibrium $T\sim e^{-1/\nu}$, see \cites{ER98,MieZ09} for more details about the evolution of multi-kinks via the center manifold reduction. Thus, for the constant $C$ in \eqref{7.false}, we have an estimate
\begin{equation}\label{7.false1}
C\ge e^{e^{c/\nu}}
\end{equation}
for some positive $c$ which is independent of $\nu$.
\par
Thus, the behavior of trajectories of \eqref{7.cgl} for small $\nu$ is described mainly by the evolution of metastable states and the attractor $\Cal A$ becomes non-observable in experiments and has only a restricted theoretical interest.
\end{example}
\begin{remark} The above described situation where the limit behavior is trivial, but there is a rich and interesting metastable  dynamics, is somehow typical for dissipative systems and is observed in many other situations, for instance in 1D Burgers equation \eqref{6.bur}, see \cites{Bir01,Kuk99} and references therein. Similar situation also arises for stochastic equation \eqref{4.sde} for small values of $\eb>0$ where the corresponding random attractor is trivial (we even have uniform estimate \eqref{4.un-exp} for the Lyapunov exponent), but it takes a very long time (depending on $\eb$ in a way similar to \eqref{7.false1}) to reach this attractor. To avoid such problems, it is crucial to control the rate of attraction to the considered attractor in terms of physical parameters. Unfortunately, there are no ways to get this control on the level of usual (global) attractors in a more or less general situation. On the other hand, we often have such a control for the attraction rate to an IM, which allows to capture the metastable dynamics as well. Capturing this dynamics in more general cases, where an IM may not exist, is one of the main sources of motivation for the theory of {\it exponential} attractors. Actually, in the examples discussed above, we have an exponential rate of attraction to the corresponding global attractors, so the most important advantage of the theory of exponential attractors (at least in these cases) is {\it exactly} the ability to get a reasonable control (not like \eqref{7.false1}) for the exponential attraction rate in terms of physical parameters.
\end{remark}
The key idea of an {\it exponential} attractor is to add the metastable states to the global attractor in such a way that, on the one hand, to get an effective control for the rate of attraction and, on the other hand, to preserve the finite-dimensionality (in terms of fractal dimension), which allows us to use the Man\'e projection theorem.
\begin{definition}\label{Def7.exp} Let $S(t):\Phi\to\Phi$ be a DS acting in a metric space $\Phi$ with a fixed bornology $\Bbb B$. A set $\Cal M$ is an exponential attractor for the DS $S(t)$ if
\par
1) The set $\Cal M$ is compact in $\Phi$ and has a finite fractal dimension:
\begin{equation}
\dim_f(\Cal M,\Phi)<\infty;
\end{equation}
\par
2) It is semi-invariant, i.e. $S(t)\Cal M\subset\Cal M$, $t\ge0$;
\par
3) It attracts exponentially the images of $B\in\Bbb B$:
\begin{equation}
\dist_\Phi(S(t)B,\Cal M)\le Q_Be^{-\alpha t}
\end{equation}
for some positive constants $\alpha$ and $Q_B$.
\end{definition}
We recall some basic facts of the perturbation theory for global attractors before discussing the constructions of exponential attractors. We start with the upper semicontinuity which is based on the following fact from general topology.
\begin{proposition} Let $\Phi$ and $\frak A$ be two Hausdorff topological spaces and let $\Bbb A$ be a compact set in $\Phi\times\frak A$. Let also $\Pi_1,\Pi_2$ be the  projectors of $\Bbb A$ to the first and second  component of the Cartesian product and $\Cal A_\alpha:=\Pi_1\Pi_2^{-1}(\alpha)$, $\alpha\in\frak A$.
Then the family of sets $\{\Cal A_\alpha\}_{\alpha\in\frak A}$ is upper semicontinuous at every $\alpha_0\in\frak A$, namely, for every neighbourhood $\Cal O(\Cal A_{\alpha_0})$ of the set $\Cal A_{\alpha_0}$ in $\Phi$, there exists a neighbourhood $\Cal O(\alpha_0)$ in $\frak A$ such that
\begin{equation}\label{7.up-cont}
\Cal A_\alpha\subset\Cal O(\Cal A_{\alpha_0}),\ \ \forall\alpha\in\Cal O(\alpha_0).
\end{equation}
\end{proposition}
The proof of this proposition can be found, e.g. in \cite{BV92} for the case where $\Phi$ and $\frak A$ are metric spaces. The general case can be done analogously. Note also that in the metric case the upper semicontinuity can be rewritten in the equivalent way using the Hausdorff semi-distance:
\begin{equation}\label{7.metr}
\lim_{\alpha\to\alpha_0}\dist_{\Phi}(\Cal A_\alpha,\Cal A_{\alpha_0})=0.
\end{equation}
In applications, we usually take $\Bbb A:=\cup_{\alpha\in\frak A}\Cal A_\alpha\times\{\alpha\}$, where $\Cal A_\alpha$ are the attractors of DS $S_\alpha(t):\Phi\to\Phi$ depending on a parameter $\alpha\in\frak A$. Then, to verify the upper semicontinuity, we just need to consider an arbitrary sequence $u_n^0\in \Cal A_{\alpha_n}$ such that $\alpha_n\to\alpha_0$ and extract from it a subsequence $u_{n_k}^0$ which is convergent to some $u^0_{\alpha_0}\in\Cal A_{\alpha_0}$. In turn, in order to check this property, we may use  the representation formula for $\Cal A_{\alpha_n}$, so we only need to consider a sequence of complete bounded solutions $u_{\alpha_n}(t)\in\Cal K_{\alpha_n}$ and extract a subsequence $u_{\alpha_n}(t)$ which is convergent to $u_{\alpha_0}(t)\in\Cal K_{\alpha_0}$. This is usually true under the minimal assumptions on the DS $S_\alpha(t)$. We emphasize that this method {\it does not require} to verify the closeness of individual semi-trajectories of the perturbed and non-perturbed DS (which is often much more difficult task especially in the case of singular perturbations) and applicable, e.g. for trajectory attractors, see \cites{BV92,ChVi02,MZ08} and references therein.
\par
In contrast to the upper semicontinuity, the lower semicontinuity of attractors $\Cal A_\alpha$, $\alpha\in\frak A$ can be easily broken,  see Example \ref{Ex1.simple}. Recall, that in the metrizable case, the lower semicontinuity of the family $\{\Cal A_{\alpha}\}_{\alpha\in\frak A}$ reads
\begin{equation}\label{7.metr-low}
\lim_{\alpha\to\alpha_0}\dist_\Phi(\Cal A_{\alpha_0},\Cal A_\alpha)=0,\ \ \alpha_0\in\frak A.
\end{equation}
The continuity of the family of the attractors $\{\Cal A_\alpha\}_{\alpha\in\frak A}$ means that both \eqref{7.metr} and \eqref{7.metr-low} hold simultaneously, i.e.
\begin{equation}\label{7.metr-cont}
\lim_{\alpha\to\alpha_0}\dist_\Phi^{sym}(\Cal A_\alpha,\Cal A_{\alpha_0})=0,\ \ \alpha_0\in\frak A,
\end{equation}
where $\dist_\Phi^{sym}(U,V):=\max\{\dist_\Phi(U,V),\dist_\Phi(V,U)\}$ is a symmetric Hausdorff distance between sets. The next standard proposition shows that the continuity of attractors is determined by the rate of attraction.

\begin{proposition}\label{Prop7.low} Let $\Phi$ and $\frak A$ be subsets of normed spaces and let $S_\alpha(t):\Phi\to\Phi$ be a family of DS on $\Phi$ satisfying
\begin{equation}\label{7.cont}
\|S_{\alpha_1}(t)u_0-S_{\alpha_2}(t)u_0\|_\Phi\le C\|\alpha_1-\alpha_2\|_{\frak A}e^{Kt}
\end{equation}
for some positive constants $C$ and $K$ which are independent of $\alpha_i\in\frak A$ and $u_0\in\Phi$. Assume also that $\frak A$ is compact, $S_\alpha(t)$ are continuous for every fixed $t$ and $\alpha$, and, for every $\alpha\in\frak A$,  the DS $S_\alpha(t)$ possesses an attractor $\Cal A_\alpha$ with respect to the bornology of all subsets of $\Phi$. Then,
\par
1) The family $\{\Cal A_\alpha\}_{\alpha\in\frak A}$ is uniformly (w.r.t $\alpha\in\frak A$) continuous (i.e. \eqref{7.metr-cont} holds uniformly with respect to $\alpha_0$) if and only if
\begin{equation}\label{7.un}
\lim_{t\to\infty}\sup_{\alpha\in\frak A}\dist_\Phi(S_\alpha(t)\Phi,\Cal A_\alpha)=0.
\end{equation}
\par
2) If, in addition, the following stronger version of \eqref{7.un} (exponential attraction) holds:
\begin{equation}\label{7.exp1}
\dist_\Phi(S_\alpha(t)\Phi,\Cal A_\alpha)\le Qe^{-\lambda t},
\end{equation}
where positive constants $Q$ and $\lambda$ are independent of $t$ and $\alpha$, then the family of attractors $\Cal A_\alpha$ is uniformly H\"older continuous:
\begin{equation}\label{7.hol}
\dist_\Phi^{sym}(\Cal A_{\alpha_1},\Cal A_{\alpha_2})\le (Q+C)\|\alpha_1-\alpha_2\|^{\frac{\lambda}{K+\lambda}}_{\frak A}.
\end{equation}
\end{proposition}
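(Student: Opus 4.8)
The plan is to base both parts on a single \emph{drift inequality} that compares the attractors of two nearby systems. Fix $\alpha_1,\alpha_2\in\frak A$ and $t\ge 0$, and write $\delta:=\|\alpha_1-\alpha_2\|_{\frak A}$ and $g_\alpha(t):=\dist_\Phi(S_\alpha(t)\Phi,\Cal A_\alpha)$. Since each $S_\alpha(t)$ is continuous, Proposition \ref{Prop1.inv} gives strict invariance $S_{\alpha_1}(t)\Cal A_{\alpha_1}=\Cal A_{\alpha_1}$, so for any $u_0\in\Cal A_{\alpha_1}$ there is $w\in\Cal A_{\alpha_1}$ with $S_{\alpha_1}(t)w=u_0$. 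Then $\dist_\Phi(u_0,\Cal A_{\alpha_2})\le\|S_{\alpha_1}(t)w-S_{\alpha_2}(t)w\|_\Phi+\dist_\Phi(S_{\alpha_2}(t)w,\Cal A_{\alpha_2})$, and using \eqref{7.cont} together with $w\in\Phi$ and taking the supremum over $u_0\in\Cal A_{\alpha_1}$ yields
\begin{equation}\label{7.drift}
\dist_\Phi(\Cal A_{\alpha_1},\Cal A_{\alpha_2})\le C\delta\, e^{Kt}+g_{\alpha_2}(t),\qquad t\ge0.
\end{equation}
This is the master estimate: the first term controls the closeness of the two flows on the (bounded) common set over a finite time, while the second is the attraction rate of the target system, and we will choose $t$ to balance them.

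For part 2), I would simply optimize \eqref{7.drift} in $t$ using the exponential rate \eqref{7.exp1}, i.e. $g_{\alpha_2}(t)\le Qe^{-\lambda t}$. Choosing $t=\frac1{K+\lambda}\ln\frac1\delta$ (legitimate once $\delta\le1$) gives $e^{Kt}=\delta^{-K/(K+\lambda)}$ and $e^{-\lambda t}=\delta^{\lambda/(K+\lambda)}$, so both terms become $C\delta^{\lambda/(K+\lambda)}$ and $Q\delta^{\lambda/(K+\lambda)}$, whence $\dist_\Phi(\Cal A_{\alpha_1},\Cal A_{\alpha_2})\le(Q+C)\delta^{\lambda/(K+\lambda)}$. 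Interchanging $\alpha_1,\alpha_2$ gives the same bound for the reverse semidistance, and hence \eqref{7.hol} for $\dist_\Phi^{sym}$. The complementary range $\delta>1$ is harmless since all $\Cal A_\alpha$ lie in a common bounded absorbing set, so the symmetric distance is bounded there and the inequality holds after absorbing into the constant; I would dispatch this in one line rather than belabour it.

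For the ``if'' direction of part 1), I would run the same argument with no assumed rate: given $\eps>0$, first use \eqref{7.un} to pick $t$ with $\sup_\alpha g_\alpha(t)<\eps/2$, and then note that for this now-\emph{fixed} $t$ the term $C\delta e^{Kt}$ in \eqref{7.drift} is $<\eps/2$ once $\delta$ is small. Since the threshold on $\delta$ depends only on $\eps$ and $t$ (not on the base point), \eqref{7.drift} and its symmetric counterpart deliver uniform continuity of $\{\Cal A_\alpha\}$ in the sense that \eqref{7.metr-cont} holds uniformly in $\alpha_0$.

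The ``only if'' direction is where the real work lies and is the step I expect to be the main obstacle, because in \eqref{7.drift} the drift factor $e^{Kt}$ blows up and one cannot simply send $t\to\infty$. My plan is to exploit the monotonicity of the orbit of the whole space: from the semigroup property $S_\alpha(t)\Phi=S_\alpha(t-s)\big(S_\alpha(s)\Phi\big)\subset S_\alpha(s)\Phi$ for $t\ge s$, the family $t\mapsto S_\alpha(t)\Phi$ is nested decreasing, hence $g_\alpha(t)$ is non-increasing; moreover $g_\alpha(t)\to0$ for each fixed $\alpha$ because $\Cal A_\alpha$ attracts all subsets of $\Phi$ and is compact. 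Now fix $\eps>0$ and $\alpha_0$, choose $T_0=T_0(\alpha_0)$ with $g_{\alpha_0}(T_0)<\eps/3$, and evaluate \eqref{7.drift}-type estimates \emph{at the single time} $T_0$: writing $g_\alpha(T_0)\le \dist_\Phi\big(S_\alpha(T_0)\Phi,S_{\alpha_0}(T_0)\Phi\big)+g_{\alpha_0}(T_0)+\dist_\Phi(\Cal A_{\alpha_0},\Cal A_\alpha)$ and bounding the first term by $C\|\alpha-\alpha_0\|_{\frak A}e^{KT_0}$ and the third by the assumed continuity of $\alpha\mapsto\Cal A_\alpha$, we get $g_\alpha(T_0)<\eps$ on a neighbourhood $\Cal O(\alpha_0)$. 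Monotonicity then upgrades this to $g_\alpha(t)<\eps$ for \emph{all} $t\ge T_0(\alpha_0)$. Finally I would cover the compact space $\frak A$ by finitely many such neighbourhoods, set $T$ to be the maximum of the corresponding $T_0$'s, and conclude $\sup_{\alpha\in\frak A}g_\alpha(t)<\eps$ for $t\ge T$, which is precisely \eqref{7.un}. The two ingredients that make this go through — the decreasing-nestedness that converts a bound at one time into a bound at all later times, and compactness of $\frak A$ to pass from local to global — are exactly the points I would be most careful to state cleanly.
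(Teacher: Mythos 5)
Your proof of part 2) is correct and coincides with the paper's own argument: the paper likewise takes $u_{\alpha_1}\in\Cal A_{\alpha_1}$, pulls it back by time $T$ using the strict invariance of the attractor, pushes the preimage forward under $S_{\alpha_2}(T)$, estimates the two resulting terms via \eqref{7.cont} and \eqref{7.exp1}, and optimizes with exactly the choice $T=\frac1{\lambda+K}\ln\frac1{\|\alpha_1-\alpha_2\|_{\frak A}}$; your "master drift inequality" is just this estimate written once and reused. (Your one-line dismissal of the range $\delta>1$ is fine, and in fact is most cleanly justified by \eqref{7.exp1} at $t=0$, which gives $\dist_\Phi(\Phi,\Cal A_\alpha)\le Q$, so the bound \eqref{7.hol} holds trivially there with the same constant; the paper silently ignores this case.)

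Where your proposal goes beyond the paper is part 1): the paper declares it "more straightforward" and refers to the literature, giving no proof, whereas you supply a complete argument. Your "if" direction is the natural specialization of the drift inequality at a fixed time, and your "only if" direction is correct and is the genuinely nontrivial piece: the two ingredients you isolate — the nestedness $S_\alpha(t)\Phi\subset S_\alpha(s)\Phi$ for $t\ge s$ (hence monotonicity of $g_\alpha$), which converts an estimate at the single time $T_0(\alpha_0)$ into one for all later times, and compactness of $\frak A$ to pass from a neighbourhood of each $\alpha_0$ to a uniform time $T$ — are exactly what is needed, and the triangle-inequality decomposition of $g_\alpha(T_0)$ through $S_{\alpha_0}(T_0)\Phi$ and $\Cal A_{\alpha_0}$ is valid for the non-symmetric Hausdorff semidistance. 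Note also that this direction only uses pointwise continuity of $\alpha\mapsto\Cal A_\alpha$, which is weaker than the assumed uniform continuity, so the equivalence closes without circularity.
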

\begin{proof} We consider here only the second part of the statement, the first one is more straightforward, see e.g. \cites{Hal88,BV92,LK04,HOR15}. Indeed, let $u_{\alpha_1}\in\Cal A_{\alpha_1}$. Then, by the invariance of the attractor, for any $T>0$, there exists $v_{\alpha_1}\in\Cal A_{\alpha_1}$ such that $S_{\alpha_1}(T)v_{\alpha_1}=u_{\alpha_1}$. Let us consider the point $\bar u_{\alpha_1}:=S_{\alpha_2}(T)v_{\alpha_1}$. Then, due to \eqref{7.exp1}, we have
$$
\dist_\Phi(\bar u_{\alpha_1},\Cal A_{\alpha_2})\le Qe^{-\lambda T}
$$
and, due to \eqref{7.cont}, we know that
$$
\|u_{\alpha_1}-\bar u_{\alpha_1}\|_\Phi\le C\|\alpha_1-\alpha_2\|_{\frak A}e^{KT}.
$$
Therefore, using that $u_{\alpha_1}\in\Cal A_{\alpha_1}$ is arbitrary, we have
$$
\dist_\Phi(\Cal A_{\alpha_1},\Cal A_{\alpha_2})\le Qe^{-\lambda T}+C\|\alpha_1-\alpha_2\|_{\frak A}e^{KT}.
$$
Fixing finally $T=\frac1{\lambda+K}\ln\frac1{\|\alpha_1-\alpha_2\|_{\frak A}}$, we arrive at
$$
\dist_\Phi(\Cal A_{\alpha_1},\Cal A_{\alpha_2})\le (Q+C) \|\alpha_1-\alpha_2\|_{\frak A}^\kappa,\ \kappa:=\frac\lambda{K+\lambda}.
$$
Swaping  $\alpha_1$ and $\alpha_2$, we get the desired estimate \eqref{7.hol}.
\end{proof}
\begin{remark} Assumption \eqref{7.cont} may look too restrictive since it requires the uniformity with respect to all $u_0\in\Phi$, but it is actually necessary for $u_0\in\Cal A:=\cup_{\alpha\in\frak A}\Cal A_{\alpha}$ only. The unifrom attraction property is also necessary for the set $\Cal A$ only, so the assumption that the bornology on $\Phi$ consists of all subsets of $\Phi$ is not restrictive at all. We may also formulate the natural non-uniform analogue of Proposition \ref{Prop7.low} for the upper and lower semicontinuity of attractors $\Cal A_\alpha$ at a single point $\alpha=\alpha_0$ as well as its analogues for non-autonomous case, see \cite{Hal88} for more details.
\par
Crucial for us is that the continuity of attractors $\Cal A_\alpha$ is determined by the rate of attraction and even the qualitative bounds for the closeness of the perturbed and non-perturbed attractors are automatically obtained if the rate of attraction is under  control. Unfortunately, the problem of getting this control looks unsolvable on the level of usual (global) attractors in a more or less general situation. This makes the attractor somehow unobservable, namely, no matter how long you observe the system and how precise your measurments/simulations, you cannot guarantee that the reconstructed approximate  attractor is close to the precise one and this is an extra source of motivation to consider exponential attractors. Note also that the H\"older continuity \eqref{7.hol} is the best what we can expect and it cannot be improved till Lipschitz continuity even when ideal objects like Morse-Smale or uniformly hyperbolic attractors are considered, see \cite{KH95} and  references therein.
\end{remark}
\begin{remark} We mention also an interesting result, which shows that the lower continuity of attractors $\Cal A_\alpha$ holds for "typical" values of the parameter $\alpha\in\frak A$, see \cite{HOR15} and \cite{WC22}. This result tells us that, if we are given a family of DSs $S_\alpha(t)$ (acting on a separable, complete  and bounded metric space $\Phi$), which depends continuosly on both $t$ and $\alpha\in\frak A$, where $\frak A$ is a compact metric space, then the corresponding attractors $\Cal A_\alpha$ (if exist) are both upper and lower semicontinuous for every $\alpha$ belonging to the residual set in $\frak A$ (i.e. for every $\alpha\in\frak A$ except of a countable union of nowhere dense sets). Moreover, if we are given a Borel probability measure $\mu$ on $\frak A$, then, for every $\eb>0$, there is a closed set $\frak A_\eb$ satisfying $\mu(\frak A\setminus\frak A_\eb)\le\eb$, such that the family $\Cal A_\alpha$ of attractors is uniformly continuous on $\frak A_\eb$ in the sense of the symmetric Hausdorff distance. Due to the first part of Proposition \ref{Prop7.low}, this, in turn, gives us the uniform attraction rate to the attractors $\Cal A_\alpha$ with respect to $\alpha\in\frak A_\eb$.
\par
Indeed, let us consider the space $B\Phi$, which consists of all closed non-empty subsets of $\Phi$ with the symmetric Hausdorff distance as a metric. Then, as known, $B\Phi$ is a complete separable metric space and the continuity of the family $\Cal A_\alpha$ can be interpreted as a continuity of the map $f:\frak A\to B\Phi$ given by $f(\alpha):=\Cal A_\alpha$. The idea of the proof consists of approximating the function $f(\alpha)$ by $f_n(\alpha):=S_\alpha(n)\Phi$. It is not difficult to see that the continuity assumptions posed on $S_\alpha(t)$ imply that the functions $f_n$ are continuous and the existence of attractors $\Cal A_\alpha$ ensures that $f_n(\alpha)\to f(\alpha)$ point-wise. Thus, by the Bair theorem, the set of discontinuities of $f(\alpha)$ is a countable union of nowhere dense sets. The second statement is an immediate corollary of the Egoroff theorem.
\par
It is worth mentioning that these results can not replace the exponential attractors since they do not give any reasonable way to find the set $\frak A_\eb$ explicitly or compute it as well as  any way to control the uniform rate of attraction to $\Cal A_\alpha$ for $\alpha\in\frak A_\eb$ in terms of physical parameters.
\end{remark}
The result of Proposition \ref{Prop7.low} can be partially extended to exponential attractors.
\begin{proposition}\label{Prop7.exp-bad} Let the assumptions of Proposition \ref{Prop7.low} hold and let the DS $S_\alpha(t)$ possess exponential attractors $\Cal M_\alpha$, $\alpha\in\frak A$, which satisfy the attraction property \eqref{7.exp1} uniformly with respect to $\alpha\in\frak A$. Then, the folowing estimate holds:
\begin{equation}\label{7.hol-exp}
\max\{\dist_\Phi(\Cal A_{\alpha_1},\Cal M_{\alpha_2}),\dist_\Phi(\Cal A_{\alpha_2},\Cal M_{\alpha_1})\}\le (Q+C)\|\alpha_1-\alpha_2\|^{\frac{\lambda}{K+\lambda}}_{\frak A}.
\end{equation}
\end{proposition}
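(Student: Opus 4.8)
The plan is to reproduce the argument of the second part of Proposition \ref{Prop7.low} almost verbatim, the only modification being that the target set is now the exponential attractor $\Cal M_{\alpha_2}$ rather than the global attractor $\Cal A_{\alpha_2}$. The key structural observation is that the backward-in-time reconstruction used there relies on the \emph{strict invariance} of the global attractor, which holds for $\Cal A_{\alpha_1}$ by Proposition \ref{Prop1.inv}, whereas $\Cal M_{\alpha_2}$ enters only through its \emph{exponential attraction} property (the analogue of \eqref{7.exp1} with $\Cal M$ in place of $\Cal A$). This is precisely why the statement is asymmetric: we compare the strictly invariant global attractor of one parameter with the merely semi-invariant exponential attractor of the other, and we should not expect a symmetric Hausdorff bound between the two exponential attractors themselves.

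Concretely, I would fix $T>0$ and an arbitrary point $u_{\alpha_1}\in\Cal A_{\alpha_1}$. By strict invariance of $\Cal A_{\alpha_1}$ there is $v_{\alpha_1}\in\Cal A_{\alpha_1}$ with $S_{\alpha_1}(T)v_{\alpha_1}=u_{\alpha_1}$. Setting $\bar u_{\alpha_1}:=S_{\alpha_2}(T)v_{\alpha_1}\in S_{\alpha_2}(T)\Phi$, the uniform exponential attraction of $\Cal M_{\alpha_2}$ gives $\dist_\Phi(\bar u_{\alpha_1},\Cal M_{\alpha_2})\le Qe^{-\lambda T}$, while the Lipschitz dependence \eqref{7.cont} on the parameter yields $\|u_{\alpha_1}-\bar u_{\alpha_1}\|_\Phi=\|S_{\alpha_1}(T)v_{\alpha_1}-S_{\alpha_2}(T)v_{\alpha_1}\|_\Phi\le C\|\alpha_1-\alpha_2\|_{\frak A}e^{KT}$. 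Combining the two bounds by the triangle inequality and using that $u_{\alpha_1}\in\Cal A_{\alpha_1}$ is arbitrary,
$$
\dist_\Phi(\Cal A_{\alpha_1},\Cal M_{\alpha_2})\le Qe^{-\lambda T}+C\|\alpha_1-\alpha_2\|_{\frak A}e^{KT}.
$$

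The final step is the optimization over $T$: balancing the two terms by choosing $T=\frac1{\lambda+K}\ln\frac1{\|\alpha_1-\alpha_2\|_{\frak A}}$ produces the H\"older bound $(Q+C)\|\alpha_1-\alpha_2\|_{\frak A}^{\lambda/(K+\lambda)}$. Interchanging the roles of $\alpha_1$ and $\alpha_2$ gives the companion estimate for $\dist_\Phi(\Cal A_{\alpha_2},\Cal M_{\alpha_1})$, and taking the maximum of the two yields \eqref{7.hol-exp}.

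There is no genuine analytic obstacle here; the entire content lies in correctly identifying which set must be strictly invariant and which need only attract. The one point requiring care—and the reason I would not claim a symmetric distance—is that the semi-invariance $S(t)\Cal M\subset\Cal M$ does not allow solving the evolution backward inside $\Cal M$, so the preimage $v_{\alpha_1}$ must be produced in the global attractor $\Cal A_{\alpha_1}$. A secondary, purely bookkeeping matter is to restrict to $\|\alpha_1-\alpha_2\|_{\frak A}\le1$ (so that the optimal $T$ is nonnegative), the complementary regime being absorbed into the constant $Q+C$, so that the stated inequality holds for all admissible parameters.
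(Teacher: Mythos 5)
Your proposal is correct and follows essentially the same route as the paper: the paper's own proof of this proposition is declared to repeat word for word the argument of Proposition \ref{Prop7.low}, which is exactly what you do — backward reconstruction via strict invariance of $\Cal A_{\alpha_1}$, forward propagation with $S_{\alpha_2}$, the uniform exponential attraction of $\Cal M_{\alpha_2}$, and optimization over $T$. Your observation about why the estimate must pair the strictly invariant global attractor with the merely semi-invariant exponential attractor also matches the remark the paper makes immediately after the proposition.
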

The proof of this fact repeats word by word the arguments given in the proof Proposition \ref{Prop7.low} and for this reason is omitted.
\begin{remark} The robustness of exponential attractors with respect to perturbations has been formulated in the original work \cite{EFNT94} exactly in the form of \eqref{7.hol-exp}. The reason why we have to replace $\Cal M_\alpha$ by $\Cal A_\alpha$ in \eqref{7.hol-exp} is that the exponential attractor is only semi-invariant, so starting from $u_\alpha\in \Cal M_\alpha$, we may fail to find $v_\alpha\in\Cal M_\alpha$ (or in the set satisfying the uniform attraction property) such that $S_\alpha(T)v_\alpha=u_\alpha$. In  fact, we will be able to do so if $u_\alpha\in S_\alpha(t)\Cal M_\alpha$ for $t$ is large enough, so the following shifted version of H\"older continuity holds:
\begin{equation}\label{7.hol-gen}
\max\{\dist_\Phi(S_{\alpha_1}(t)\Cal M_{\alpha_1},\Cal M_{\alpha_2}),\dist_\Phi(\Cal S_{\alpha_2}(t)\Cal A_{\alpha_2},\Cal M_{\alpha_1})\}\le (Q+C)\|\alpha_1-\alpha_2\|^{\frac{\lambda}{K+\lambda}}_{\frak A}. \ \ t\ge T,
\end{equation}
where $T=T(\alpha_1,\alpha_2):=\frac1{\lambda+K}\ln\frac1{\|\alpha_1-\alpha_2\|_{\frak A}}$, see \cite{EFNT94}. Remarkable that this shifted H\"older continuity holds for {\it any} choice of exponential attractors $\Cal M_\alpha$ satisfying the uniform attraction property. In contrast to this, if we want to have the full analogue of estimate \eqref{7.hol} for exponential attractors, we need to construct them in a special way taking care about the closeness of the sets $\Cal M_{\alpha_1}\setminus S_{\alpha_1}(t)\Cal M_{\alpha_1}$ and $\Cal M_{\alpha_2}\setminus S_{\alpha_2}(t)\Cal M_{\alpha_2}$ for $t\le T$, see \cites{PRSE05,MZ08} and Theorem \ref{Th7.2lad} below.
\end{remark}
We state below the  transitivity of exponential attraction which is one of the key tools in the theory of exponential attractors.
\begin{proposition}\label{Prop7.trans} Let $\Phi$ be a metric space and $S(t)$ be a DS on it which is Lipschitz continuous:
\begin{equation}\label{7.l-cont}
d(S(t)u_1,S(t)u_2)\le Ce^{Kt}d(u_1,u_2),\ \ u_1,u_2\in\Phi, \ t\ge0,
\end{equation}
where the constants $C$ and $K$ are independent of $t$ and $u_i\in\Phi$. Assume also that there are 3 subsets $\Cal M_i\subset\Phi$, $i=1,2,3$ such that
$$
\dist_\Phi(S(t)\Cal M_1,\Cal M_2)\le C_1e^{-\lambda_1 t},\ \ \dist_\Phi(S(t)\Cal M_2,\Cal M_3)\le C_2e^{-\lambda_2 t}.
$$
Then,
\begin{equation}\label{7.trans-exp}
\dist_\Phi(S(t)\Cal M_1,\Cal M_3)\le C'e^{-\lambda' t},
\end{equation}
where $C'=CC_1+C_2$ and $\lambda'=\frac{\lambda_1\lambda_2}{K+\lambda_1+\lambda_2}$.
\end{proposition}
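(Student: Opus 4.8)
The plan is to prove the estimate by a two-step chaining argument, splitting the evolution time $t$ into $t=t_1+t_2$ with $t_1,t_2\ge 0$ to be optimized only at the very end. The idea is to first let $\Cal M_1$ evolve for time $t_1$, so that its image lands within $C_1e^{-\lambda_1 t_1}$ of $\Cal M_2$; then to propagate the resulting proximity forward for the remaining time $t_2$ by the Lipschitz bound \eqref{7.l-cont}; and finally to use that the evolved point of $\Cal M_2$ lands within $C_2e^{-\lambda_2 t_2}$ of $\Cal M_3$, gluing the two estimates together with the triangle inequality.

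First I would fix an arbitrary $u\in\Cal M_1$ and a small $\eps>0$. By the first attraction hypothesis and the definition of the Hausdorff semidistance, $\inf_{v\in\Cal M_2}d(S(t_1)u,v)\le\dist_\Phi(S(t_1)\Cal M_1,\Cal M_2)\le C_1e^{-\lambda_1 t_1}$, so there is $v\in\Cal M_2$ with $d(S(t_1)u,v)\le C_1e^{-\lambda_1 t_1}+\eps$ (the $\eps$ is needed since the infimum need not be attained). Applying $S(t_2)$, using the semigroup identity $S(t_2)S(t_1)=S(t)$ and \eqref{7.l-cont}, I get $d(S(t)u,S(t_2)v)\le Ce^{Kt_2}\bigl(C_1e^{-\lambda_1 t_1}+\eps\bigr)$. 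Since $v\in\Cal M_2$, the second attraction hypothesis yields $w\in\Cal M_3$ with $d(S(t_2)v,w)\le C_2e^{-\lambda_2 t_2}+\eps$. The triangle inequality then gives $d(S(t)u,w)\le CC_1e^{Kt_2-\lambda_1 t_1}+C_2e^{-\lambda_2 t_2}+(Ce^{Kt_2}+1)\eps$. Letting $\eps\to0$ and taking the supremum over $u\in\Cal M_1$ produces
\[
\dist_\Phi(S(t)\Cal M_1,\Cal M_3)\le CC_1e^{Kt_2-\lambda_1 t_1}+C_2e^{-\lambda_2 t_2}.
\]

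The remaining point is to choose the split $t=t_1+t_2$ so as to balance the two exponentials, and this is the only place requiring a little care rather than a genuine obstacle. I would impose $Kt_2-\lambda_1 t_1=-\lambda_2 t_2$, which together with $t_1+t_2=t$ forces $t_2=\frac{\lambda'}{\lambda_2}t$ with common decay rate $\lambda'=\frac{\lambda_1\lambda_2}{K+\lambda_1+\lambda_2}$; since $0\le\lambda'\le\lambda_2$ (the latter reducing to $0\le K+\lambda_2$), both $t_1$ and $t_2$ are nonnegative as required. With this choice both exponents equal $-\lambda' t$ and the estimate collapses to $\dist_\Phi(S(t)\Cal M_1,\Cal M_3)\le(CC_1+C_2)e^{-\lambda' t}$, which is exactly \eqref{7.trans-exp} with $C'=CC_1+C_2$. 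I expect no serious difficulty: the argument uses only the definition of the semidistance, the semigroup property, and Lipschitz continuity, and is completely robust. The sole subtleties are the $\eps$-approximation of the infima and the admissibility check $t_1,t_2\ge0$ of the balancing split, both of which are routine.
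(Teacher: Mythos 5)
Your proof is correct and is essentially the argument the paper relies on (the paper defers to the reference [FMGZ], where exactly this time-splitting proof is given, and describes it as the same balancing of a growing Lipschitz factor $Ce^{Kt_2}$ against exponential decay that underlies the derivation of \eqref{7.hol}). The $\eps$-approximation of the infima, the split $t_1=\frac{K+\lambda_2}{K+\lambda_1+\lambda_2}t$, $t_2=\frac{\lambda_1}{K+\lambda_1+\lambda_2}t$, and the resulting constants $C'=CC_1+C_2$, $\lambda'=\frac{\lambda_1\lambda_2}{K+\lambda_1+\lambda_2}$ all check out.
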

The proof of estimate \eqref{7.trans-exp} is based on the arguments which are similar to the derivation of \eqref{7.hol} and can be found in \cite{FMGZ}.

\subsection{Exponential attractors via squeezing properties} We now turn to theorems which guarantee the existence of exponential attractors with nice properties. These theorems are based on various forms of the squeezing property and  are very close to the ones given in subsection \ref{s5.sq}. Namely, the iterative $\eb$-nets which are constructed in order to estimate the fractal dimension of an attractor are exactly the "metastable" states which we need to add to the attractor in order to get the exponential attraction. We demonstrate the main idea on the analogue of Theorem \ref{Th5.lad} which was proved in \cite{EMZ00} (see also \cite{PRSE05} and \cite{MZ08} for more details).
\begin{theorem}\label{Th7.lad1} Let $\Phi$ and $\Phi_1$ be two Banach spaces and the embedding $\Phi_1\subset\Phi$ be compact. Assume also that we are given a bounded subset $\Cal B$ of $\Phi_1$ and a map $S:\Cal B\to\Cal B$ satisfying the squeezing property
\begin{equation}\label{7.repeat}
\|S(u_1)-S(u_2)\|_{\Phi_1}\le L\|u_1-u_2\|_\Phi,\ \ u_1,u_2\in\Cal B.
\end{equation}
Then the discrete DS $S(n):=S^n$, $n\in\Bbb N$, possesses an exponential attractor $\Cal M$ with the following properties:
\par
1) $\Cal M$ is compact in $\Phi_1$ and is semi-invariant: $S\Cal M\subset\Cal M$;
\par
2) Its fractal dimension in $\Phi_1$ is finite and satisfies the estimate
\begin{equation}\label{7.frac-est}
\dim_f(\Cal M,\Phi_1)\le \Bbb H_{\frac1{4L}}(\Phi_1\hookrightarrow\Phi);
\end{equation}
\par
3) $\Cal M$ attracts exponentially the set $\Cal B$ and
\begin{equation}\label{7.exp2}
\dist_{\Phi_1}(S(n)\Cal B,\Cal M)\le R_02^{-n},\ n\in \Bbb N,
\end{equation}
where $R_0$ is such that $\Cal B\subset B_{R_0}(0,\Phi)$.
\end{theorem}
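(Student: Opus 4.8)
The plan is to recycle the iterative covering scheme from the proof of Theorem \ref{Th5.lad}, observing that the centers of the $\eb$-nets constructed there are precisely the ``metastable'' states one must adjoin to the attractor in order to gain exponential attraction. The only genuinely new ingredient, compared with the dimension estimate, is that the successive nets have to be threaded together coherently, so that the resulting set is semi-invariant rather than merely totally bounded.

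First I would record two elementary facts. By the squeezing property \eqref{7.repeat} and the continuity of the embedding $\Phi_1\subset\Phi$, the map $S$ is Lipschitz continuous from $\Cal B$ into $\Cal B$ in the $\Phi_1$-norm; and since $S\Cal B\subset\Cal B$, the sets $S^k\Cal B$ are nested, $S^{k+1}\Cal B\subset S^k\Cal B$. Fix $R_0$ with $\Cal B\subset B_{R_0}(u_0,\Phi_1)$ for some $u_0\in\Cal B$ and set $N:=2^{\Bbb H_{\frac1{4L}}(\Phi_1\hookrightarrow\Phi)}$, the minimal number of $\tfrac1{4L}$-balls of $\Phi$ covering the unit ball of $\Phi_1$. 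I would then construct inductively finite sets $E_k\subset S^k\Cal B$ such that $E_k$ is an $R_02^{-k}$-net of $S^k\Cal B$ in $\Phi_1$ and $|E_k|\le N^k$: given $E_k$, cover each $\Phi_1$-ball $B_{R_02^{-k}}(v,\Phi_1)$, $v\in E_k$, by at most $N$ balls of $\Phi$ of radius $\tfrac{R_02^{-k}}{4L}$ (possible by compactness of the embedding), double their radii and shift their centers into $S^k\Cal B$ to obtain centers $w$ with $\|\cdot\|_\Phi$-radius $\tfrac{R_02^{-k}}{2L}$, and take the next net to consist of the images $S(w)$. Exactly as in Theorem \ref{Th5.lad}, \eqref{7.repeat} converts a $\tfrac{R_02^{-k}}{2L}$-approximation in $\Phi$ at level $k$ into an $R_02^{-k-1}$-approximation in $\Phi_1$ at level $k+1$, so $E_{k+1}$ is an $R_02^{-k-1}$-net of $S^{k+1}\Cal B$.

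The decisive point I would insist on is arranging the refinement so that $S(E_k)\subset E_{k+1}$: since each $v\in E_k$ already lies in $S^k\Cal B$, it can be retained among the covering centers $w$ at level $k$, whence its image survives into $E_{k+1}$, while keeping $|E_{k+1}|\le N|E_k|$. With this coherence I would define
\begin{equation}
\Cal M:=\big[\textstyle\bigcup_{k\ge0}E_k\big]_{\Phi_1}.
\end{equation}
Semi-invariance $S\Cal M\subset\Cal M$ then follows from $S(E_k)\subset E_{k+1}$ together with the $\Phi_1$-continuity of $S$, and the exponential attraction \eqref{7.exp2} is immediate: since $E_n\subset\Cal M$ and $E_n$ is an $R_02^{-n}$-net of $S^n\Cal B$ in $\Phi_1$, we get $\dist_{\Phi_1}(S^n\Cal B,\Cal M)\le R_02^{-n}$.

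For the dimension bound \eqref{7.frac-est} and compactness I would exploit the nesting $S^k\Cal B\subset S^n\Cal B$ for $k\ge n$. Fixing $\eb$ with $R_02^{-n-1}<\eb\le R_02^{-n}$, every point of every $E_k$ with $k\ge n$ lies in $S^n\Cal B$, hence within $R_02^{-n}\le2\eb$ in $\Phi_1$ of some point of $E_n$; therefore $E_0\cup\cdots\cup E_n$ is a $2\eb$-net of $\Cal M$ of cardinality at most $\sum_{j\le n}N^j\le CN^n$. This gives $N_\eb(\Cal M,\Phi_1)\le CN^n$ with $n\sim\log_2(R_0/\eb)$, i.e. $\dim_f(\Cal M,\Phi_1)\le\log_2N=\Bbb H_{\frac1{4L}}(\Phi_1\hookrightarrow\Phi)$; the same finite $\eb$-nets show that $\Cal M$ is totally bounded, and being closed in the Banach space $\Phi_1$ it is compact. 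I expect the main obstacle to be this last feature of the induction: securing the coherence $S(E_k)\subset E_{k+1}$ while keeping the cardinality growth at the sharp rate $N$ per step, since retaining the old centers must be done carefully so as not to force extra covering balls and thereby inflate the entropy constant appearing in \eqref{7.frac-est}.
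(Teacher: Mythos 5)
Your proof is correct and follows essentially the same scheme as the paper: iteratively refined coverings threaded so that $S(E_k)\subset E_{k+1}$, the exponential attractor defined as $\Cal M:=\big[\bigcup_k E_k\big]_{\Phi_1}$, and the dimension bound extracted from the nesting $E_k\subset S^n\Cal B$ for $k\ge n$, exactly as in the paper's proof. The only (harmless) difference is the bookkeeping used to enforce semi-invariance: the paper sets $E_n:=V_n\cup S(E_{n-1})$ and accepts the slightly larger count $\#E_n\le CN^{n+1}$, whereas you retain each old center $v\in E_k$ in the radius-doubling step so that $|E_k|\le N^k$ stays sharp — both variants yield the same bound $\dim_f(\Cal M,\Phi_1)\le\log_2 N=\Bbb H_{\frac1{4L}}(\Phi_1\hookrightarrow\Phi)$.
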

\begin{proof}[Sketch of the proof] Indeed, arguing as in the proof of Theorem \ref{Th5.lad}, for every $n\in\Bbb N$, we construct an $R_02^{-n}$ net in $V_n\subset S(n)\Cal B$ such that $V_n\subset S(n)\Cal B$ and $\#V_n\le N^{n}$ where $N$ is the same as in the proof of Theorem \ref{Th5.lad}. To keep the semi-invariance, we define $E_1=V_1$ and $E_n=V_n\cup S(E_{n-1})$ for $n=2,3,\cdots$. Then, $E_n\subset S(n)\Cal B$, $\#E_n\le CN^{n+1}$ and $S(E_n)\subset E_{n+1}$. Let us finally define
\begin{equation}
\Cal M'=\cup_{n\in\Bbb N}E_n,\ \ \Cal M:=[\Cal M']_{\Phi_1}.
\end{equation}
Then the semi-invariance as well as exponential attraction \eqref{7.exp2} are immediately satisfied and we only need to check estimate \eqref{7.frac-est} for the fractal dimension. Let $\eb_n=R_02^{-n}$. Then all of the sets $E_k$ for $k\ge n$ are subsets of $S(n)\Cal B$ and, therefore, the $\eb_n$-balls centered at points of $V_n$ cover them. So, the set $\cup_{k=1}^nE_n$ is an $\eb_n$-net of $\Cal M$ and, therefore,
\begin{multline*}
\limsup_{n\to\infty}\frac{\Bbb H_{\eb_n}(\Cal M,\Phi_1)}{\log_2\frac1{\eb_n}}\le \lim_{n\to\infty}\frac{\log_2 C+\log_2(\sum_{k=1}^n\#E_n)}{n-\log_2R_0}\le\\\le \lim_{n\to\infty}\frac{2\log_2C+(n+2)\log_2N}{n-\log_2R_0}=\log_2N,
\end{multline*}
which gives the desired estimate \eqref{7.frac-est} and finishes the proof of the theorem.
\end{proof}
Let us now assume that we are given a continuous DS $S(t)$ in $\Phi$ such that $S=S(1)$ satisfies all of the assumptions of Theorem \ref{Th7.lad1}. Then, we first can construct a discrete exponential attractor $\Cal M_d$ (i.e. the attractor for the semigroup $S(n)$, $n\in\Bbb N$) and after that extend it to the continuous one via
\begin{equation}\label{7.ext}
\Cal M:=[\cup_{t\in[1,2]}S(t)\Cal M_d]_{\Phi}.
\end{equation}
Indeed, all the properties of an exponential attractor, except of the control of fractal dimension,  automatically follow from the analogous properties of $\Cal M_d$, but to control the fractal dimension in $\Phi$, we need an extra H\"older continuity assumption:
\begin{equation}\label{7.t-hol}
\|S(t_1)u_1-S(t_2)u_2\|_\Phi\le C\(|t_1-t_2|+\|u_1-u_2\|_\Phi\)^\alpha,\
\ t_1,t_2\in[1,2],\ u_1,u_2\in\Cal B,
\end{equation}
for some positive $C$ and $\alpha$. Then passing from discrete to  continuous exponential attractor can increase the fractal dimension in $\Phi$ by the additive factor $\alpha^{-1}$ at most. Moreover, using that $S(1)\Cal M$ is also an exponential attractor if $\Cal M$ is, we may get the finiteness of the fractal dimension in $\Phi_1$ as well (due to \eqref{7.repeat}, the fractal dimension of $S(t)\Cal M$ in $\Phi_1$ is controlled by the fractal dimension of $\Cal M$ in $\Phi$).

\begin{remark} In applications $\Cal B$ is usually a compact absorbing or an exponentially attracting set of the semigroup $S(t)$. Then, due to the transitivity of exponential attraction, we conclude that the constructed attractor $\Cal M$ attracts not only the set $\Cal B$, but all bounded sets of the phase space $\Phi$. Note also that for semigroups $S(t)$ generated by PDEs, the uniform H\"older continuity in time stated in \eqref{7.t-hol} typically holds only if $\Cal B$ is more smooth than the initial phase space $\Phi$. This is not a problem when the parabolic PDEs are considered since, due to the instantaneous smoothing property, we may find a compact (and more regular) {\it absorbing} set $\Cal B$. However, in more general cases (e.g. for damped wave equations), we only have an asymptotic smoothing property, so $\Cal B$ must be a compact exponentially {\it attracting} set and the usage of the transitivity of exponential attraction becomes unavoidable, see \cites{FMGZ,MZ08} and the references therein.
\end{remark}
At the next step, we consider, following  \cite{PRSE05}, the analogue of Theorem \ref{Th5.2lad}, which also includes the H\"older continuity of exponential attractors with respect to perturbations. To this end, we need the following definition.
\begin{definition} Let $\Phi_1\subset\Phi$ be two Banach spaces such that the embedding is compact and let a bounded set $\Cal B$ of $\Phi_1$, $\eb>0$, $\kappa\in[0,1)$ and $L>0$ be given. Then, a map $S:\Cal O_\eb(\Cal B)\to\Cal B$ belongs to the class $\Bbb S_{\eb,\kappa,L}(\Cal B)$ if the squeezing property \eqref{5.sq2} is satisfied for all $u_1,u_2\in \Cal O_\eb(\Cal B)$. The distance between two maps $S_1,S_2\in\Bbb S_{\eb,\kappa,\delta}(\Cal B)$ is defined as follows:
$$
\|S_1-S_2\|_{\Bbb S}:=\sup_{u\in\Cal O_\eb(\Cal B)}\|S_1(u)-S_2(u)\|_{\Phi_1}.
$$
\end{definition}
\begin{theorem}\label{Th7.2lad} Let $S\in\Bbb S_{\eb,\delta,L}(\Cal B)$ and let $S(n)=S^n$ be a discrete DS on $\Cal B$ associated with this map. Then this semigroup possesses an exponential attractor $\Cal M_S\subset \Cal B$ which satisfies the following properties:
\par
1) $\Cal M_S$ is a compact semi-invariant set in $\Cal B$ whose fractal dimension satisfies the analogue of  estimate \eqref{5.est1};
\par
2) The following exponential attraction property holds:
\begin{equation}
\dist_{\Phi_1}(S(n)\Cal B,\Cal M_S)\le \eb\(\frac{1+\kappa}2\)^n;
\end{equation}
3) If $S_1,S_2\in\Bbb S_{\eb,\kappa,L}(\Cal B)$, then
\begin{equation}\label{7.hhol}
\dist_{\Phi_1}^{sym}(\Cal M_{S_1},\Cal M_{S_2})\le C\|S_1-S_1\|_{\Bbb S}^{\theta}
\end{equation}
for some positive $C$ and $\theta$ depending only on $\eb,\kappa, L$ and the spaces $\Phi$ and $\Phi_1$.
\end{theorem}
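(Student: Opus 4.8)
The plan is to obtain properties 1) and 2) by building the discrete exponential attractor for the map $S=S(1)$ through the same iterated covering scheme already used in Theorems \ref{Th5.2lad} and \ref{Th7.lad1}, adapted to the squeezing property \eqref{5.sq2}. Set $\mu:=\tfrac{1+\kappa}2\in(0,1)$. Starting from a covering of $\Cal B$ by sets of $\Phi_1$-diameter $\le\eb$, I cover each such set of diameter $d$ by sets of $\Phi$-diameter $\le\tfrac{1-\kappa}{2L}d$; by \eqref{5.sq2} their $S$-images have $\Phi_1$-diameter $\le\kappa d+L\cdot\tfrac{1-\kappa}{2L}d=\mu d$. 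The number of sets needed per covering element is bounded by $2^{\Bbb H_{\frac{1-\kappa}{4L}}(\Phi_1\hookrightarrow\Phi)}$, independently of the scale, so iterating produces $\eb\mu^n$-coverings $V_n$ of $S^n\Cal B$ with $\#V_n$ growing like that factor to the $n$. Defining $E_1=V_1$, $E_n=V_n\cup S(E_{n-1})$ and $\Cal M_S:=[\cup_nE_n]_{\Phi_1}$ gives semi-invariance, the exponential attraction $\dist_{\Phi_1}(S(n)\Cal B,\Cal M_S)\le\eb\mu^n$, and the fractal dimension bound \eqref{5.est1} exactly as in Theorem \ref{Th7.lad1}. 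Throughout I will record, for each net point, a base point $b\in\Cal B$ with $v=S^{n}(b)$; since $S$ maps $\Cal B$ (indeed $\Cal O_\eb(\Cal B)$) into $\Cal B$, every element of $E_n$ has the form $S^n(b)$ with $b$ in a finite base set $\hat C_n=\cup_{k\le n}\hat B_k$.

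For property 3) the core is a balance between finite-time error growth and the exponential attraction rate. Write $\rho:=\|S_1-S_2\|_{\Bbb S}$ and let $\Lambda:=\kappa+LC_e$ be the $\Phi_1$-Lipschitz constant of any $S\in\Bbb S_{\eb,\kappa,L}(\Cal B)$, where $C_e$ is the norm of $\Phi_1\hookrightarrow\Phi$ (WLOG $\Lambda\ge1>\mu$). A telescoping argument $a_j:=S_1^{\,n-j}(S_2^{\,j}(b))$, using that all iterates remain in $\Cal B\subset\Cal O_\eb(\Cal B)$, yields
\begin{equation*}
\|S_1^n(b)-S_2^n(b)\|_{\Phi_1}\le\rho\sum_{j=0}^{n-1}\Lambda^{\,n-1-j}\le C\rho\Lambda^{\,n},\qquad b\in\Cal B.
\end{equation*}
I then fix the integer $K=\big\lfloor\ln(\eb/\rho)/\ln(\Lambda/\mu)\big\rfloor$, which equalizes the growing error $\rho\Lambda^K$ and the decaying attraction scale $\eb\mu^K$ up to constants, both being comparable to $C\rho^\theta$ with $\theta:=\ln(1/\mu)/\ln(\Lambda/\mu)\in(0,1)$. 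For a point $m_1=S_1^{n_0}(b)\in E_{n_0}^{(1)}$ with $n_0>K$, writing $b'=S_1^{\,n_0-K}(b)\in\Cal B$ and combining the telescoping bound at level $K$ with exponential attraction of $S_2$ gives $\dist_{\Phi_1}(m_1,\Cal M_{S_2})\le C\rho\Lambda^K+\eb\mu^K\le C\rho^\theta$, and symmetrically for $m_2\in E_{n_0}^{(2)}$, $n_0>K$.

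The main obstacle is the short-time regime $n_0\le K$: here the attraction scale $\eb\mu^{n_0}$ of the opposite attractor is coarse and does \emph{not} beat $\rho^\theta$, so early net points cannot be controlled by attraction alone. The remedy, and the delicate part of the construction, is to make the two attractors share their early structure by using \emph{common} base points. Concretely I build $\Cal M_{S_2}$ by transporting the $S_1$-construction for $n\le K$: for the recorded base sets $\hat C_n$ of $S_1$, set $E_n^{(2)}:=S_2^n(\hat C_n)$, which automatically satisfies the recursion $E_n^{(2)}=V_n^{(2)}\cup S_2(E_{n-1}^{(2)})$ with $V_n^{(2)}=S_2^n(\hat B_n)$ and is semi-invariant. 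The choice of $K$ ensures $C\rho\Lambda^{n}\le\eb\mu^{n}$ for $n\le K$, so each $V_n^{(2)}$ remains a genuine $O(\eb\mu^n)$-net of $S_2^n\Cal B$ and property 2) is preserved (after harmless constant adjustment), while for $n>K$ the construction reverts to the intrinsic $S_2$-refinement. Then any $m_1=S_1^{n_0}(b)\in E_{n_0}^{(1)}$ with $n_0\le K$ has the companion $m_2=S_2^{n_0}(b)\in E_{n_0}^{(2)}\subset\Cal M_{S_2}$ at distance $\le C\rho\Lambda^{n_0}\le C\rho\Lambda^{K}\le C\rho^\theta$, and the same identification handles the reverse direction. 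Together with the $n_0>K$ estimate this yields $\dist_{\Phi_1}^{sym}(\Cal M_{S_1},\Cal M_{S_2})\le C\rho^\theta$, i.e. \eqref{7.hhol}. The passage from the discrete attractor to a continuous one, if needed, is then standard via \eqref{7.ext} and the time-Hölder condition \eqref{7.t-hol}.
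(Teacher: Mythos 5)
Your construction of $\Cal M_S$ and your proof of properties 1) and 2) follow the paper's route exactly (the iterated covering scheme of Theorems \ref{Th5.2lad} and \ref{Th7.lad1}, with net points recorded as images $S^n(b)$ of base points), and your treatment of \eqref{7.hhol} has the same two--regime structure as the paper's argument: for $n>K$ you balance the Lipschitz growth $\rho\Lambda^n$ against the attraction rate $\eb\mu^n$ (this is precisely the mechanism behind the shifted estimate \eqref{7.hol-gen}), and for $n\le K$ you arrange closeness of the nets themselves, which is exactly the condition $\dist_{\Phi_1}^{sym}(E_n(S_1),E_n(S_2))\le K^n\|S_1-S_2\|_{\Bbb S}$ that the paper singles out as the delicate extra requirement; your exponent $\theta=\ln(1/\mu)/\ln(\Lambda/\mu)$ is also the correct one.

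There is, however, a genuine gap in how you secure the small-$n$ regime. You build $\Cal M_{S_2}$ by transporting the base sets $\hat C_n$ recorded during the construction of $\Cal M_{S_1}$, and your cutoff $K$ depends on $\rho=\|S_1-S_2\|_{\Bbb S}$. Consequently the set you call $\Cal M_{S_2}$ is a function of the \emph{pair} $(S_1,S_2)$, not of $S_2$ alone: paired with a third map $S_3$, your recipe assigns to $S_2$ a different set. What you actually prove is ``for every pair $S_1,S_2$ there \emph{exist} exponential attractors whose symmetric distance is $\le C\rho^\theta$'', which is strictly weaker than the theorem: estimate \eqref{7.hhol} is asserted for a single assignment $S\mapsto\Cal M_S$ defined on all of $\Bbb S_{\eb,\kappa,L}(\Cal B)$, and this single-valuedness is the whole point of the result, as the remark following the theorem stresses (``a single valued H\"older continuous branch of the function $S\to\Cal M_S$''). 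The missing idea is to make the refinement \emph{canonical} rather than transported: fix once and for all, independently of $S$, an initial $\eb$-net of $\Cal B$ in $\Phi_1$ and, at each scale $\eb\mu^{n}$, a fixed rescaled system of offsets (a $\frac{1-\kappa}{4L}$-net, in the $\Phi$-metric, of the unit ball of $\Phi_1$), and define the points of $E_n(S)$ by applying $S$ to ``previous net point $+$ fixed offset''; these arguments lie in $\Cal O_\eb(\Cal B)$, which is exactly why the class $\Bbb S_{\eb,\kappa,L}(\Cal B)$ requires $S$ to be defined there. Corresponding net points of any two maps then carry the same address, and a one-line induction gives $\dist_{\Phi_1}^{sym}(E_n(S_1),E_n(S_2))\le(\Lambda+1)^n\|S_1-S_2\|_{\Bbb S}$ for \emph{all} $n$ and \emph{all} pairs simultaneously, after which your own two-regime balance closes the proof. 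This uniform construction is what the paper's proof sketch presupposes and what is carried out in detail in \cite{PRSE05}.
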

\begin{proof}[Idea of the proof] The attractors $\Cal M_S$ can be constructed using the $\eb\(\frac{\kappa+1}2\)^n$-nets $E_n=E_n(S)$ similarly to Theorem \ref{Th7.lad1} (see also Theorem \ref{Th5.2lad}), so we only need to explain how to get the H\"older continuity \eqref{7.hhol}. We actually need to estimate the distance between $E_n(S_1)$ and $\Cal M_{S_2}$ for all $n\in\Bbb N$. For big values of $n$, we use \eqref{7.hol-gen} and get the desired estimate without any extra care. In contrast to this, for relatively small values of $n$, we need an extra assumption that the sets $E_n$ are constructed in such a way that
\begin{equation}
\dist_{\Phi_1}^{symm}(E_n(S_1),E_n(S_2))\le K^n \|S_1-S_2\|_{\Bbb S}
\end{equation}
for some $K$ which is independent of $n$,  $S_1$ and $S_2$. The details can be found in \cite{PRSE05}.
\end{proof}
\begin{remark} Recall that, similarly to inertial manifolds, exponential attractors are non-unique, so the problem of choosing  of an "optimal" exponential attractor becomes crucial for both theory and applications. The theorem stated above gives us a single valued H\"older continuous branch of the function $S\to\Cal M_S$ for wide class of nonlinear maps $S\in\Bbb S_{\eb,\kappa,L}(\Cal B)$.
\end{remark}

We now turn to the non-autonomous DS and start with the uniform (deterministic) case. We say that a discrete cocycle $S_\xi(n):\Cal O_\eb(\Cal B)\to\Cal B$, $n\in\Bbb N$ over the DS $T(n):\Psi\to\Psi$ belongs to the class $\Bbb S_{\eb,\kappa,L}(\Cal B)$ if $S_\xi(1)\in\Bbb S_{\eb,\kappa,L}(\Cal B)$ for all $\xi\in\Psi$. Then the non-autonomous analogue of Theorem \ref{Th7.2lad} reads.

\begin{theorem}\label{Th7.3lad} Let the cocycle $S_\xi(n)$ belong to the class $\Bbb S_{\eb,\kappa,L}(\Cal B)$ and let $U_\xi(m,n)$, $m\ge n$, $\xi\in\Psi$, be the corresponding dynamical processes on $\Cal B$. Then, there exists a family $\Cal M_S(\xi)$, $\xi\in\Psi$, of compact sets in $\Cal B$ (the  non-autonomous exponential attractor) which satisfies the following properties:
\par
1) Their fractal dimensions are finite and uniformly bounded: $\dim_f(\Cal M_S(\xi),\Phi_1)\le C$, $\xi\in\Psi$.
\par
2) They are semi-invariant: $S_\xi(1)\Cal M_S(\xi)\subset \Cal M_S(T(1)\xi)$, $\xi\in\Psi$, and the following uniform exponential attraction property holds:
\begin{equation}\label{7.un-exp}
\dist_{\Phi_1}(S_\xi(n)\Cal B,\Cal M_S(T(n)\xi))\le Qe^{-\alpha n},\ \ \xi\in\Psi,\ \ n\in\Bbb N,
\end{equation}
for some positive constants $Q$ and $\alpha$ depending only on $\Phi,\Phi_1$, $\Cal B$, $\eb$, $\kappa$ and $L$.
\par
3) For any two cocycles $S_\xi(n)$ and $\hat S_\xi(n)$ over the same DS $T(n):\Psi\to\Psi$ belonging to the class $\Bbb S_{\eb,\kappa,L}(\Cal B)$, the following uniform H\"older continuity holds:
\begin{equation}\label{7.n-hol}
\dist_{\Phi_1}^{sym}(\Cal M_{S}(\xi),\Cal M_{\hat S}(\xi))\le C\sup_{n\in\Bbb N}\{e^{-\beta n}\|S_{T(-n)\xi}(1)-\hat S_{T(-n)\xi}(1)\|_{\Bbb S}^\theta\},
\end{equation}
where positive constants  $C$, $\beta$ and $\theta$ depend only on $\Phi,\Phi_1$, $\Cal B$, $\eb$, $\kappa$ and $L$.
\end{theorem}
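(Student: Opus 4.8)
The plan is to run, fibrewise in $\xi$, the pullback version of the squeezing iteration that underlies the autonomous Theorems \ref{Th7.lad1} and \ref{Th7.2lad}, keeping every combinatorial constant uniform in $\xi$ (which is possible precisely because $\kappa$ and $L$ are $\xi$-independent). Recall that the cocycle identity factorizes the pullback maps as $U_\xi(0,-n)=S_{T(-1)\xi}(1)\circ S_{T(-2)\xi}(1)\circ\cdots\circ S_{T(-n)\xi}(1):\Cal B\to\Cal B$. Starting from a single ball of radius $R$ containing $\Cal B$ in $\Phi_1$, I would apply one step of the covering refinement of Theorem \ref{Th5.2lad}: cover each $\Phi_1$-ball of radius $\delta$ by $N:=N_{\frac{1-\kappa}{4L}}(B_1(0,\Phi_1),\Phi)$ balls of $\Phi$-radius $\frac{(1-\kappa)\delta}{4L}$ (possible by the compact embedding, with $N$ independent of $\delta$ and of the base point), then push the centres forward by the relevant $S_{T(-k)\xi}(1)$. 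By \eqref{5.sq2} the image of each such ball has $\Phi_1$-radius at most $\frac{1+\kappa}{2}\delta$, so after $n$ steps I obtain a covering of $U_\xi(0,-n)\Cal B$ by at most $N^n$ balls of $\Phi_1$-radius $\eps_n:=R\big(\frac{1+\kappa}{2}\big)^n$, whose centres form a net $V_n(\xi)\subset U_\xi(0,-n)\Cal B$ with $\#V_n(\xi)\le N^n$.

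To produce a family that is semi-invariant across fibres I would set $E_1(\xi):=V_1(\xi)$ and define recursively $E_n(\xi):=V_n(\xi)\cup S_{T(-1)\xi}(1)E_{n-1}(T(-1)\xi)$, and then put $\Cal M_S(\xi):=[\cup_{n\ge1}E_n(\xi)]_{\Phi_1}$. The recursion is arranged so that $E_{n+1}(T(1)\xi)\supset S_\xi(1)E_n(\xi)$, which yields the required semi-invariance $S_\xi(1)\Cal M_S(\xi)\subset\Cal M_S(T(1)\xi)$ after passing to the closure (the $\Phi_1$-continuity of $S_\xi(1)$ being contained in \eqref{5.sq2}). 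Since $S_\xi(n)\Cal B=U_{T(n)\xi}(0,-n)\Cal B$ is $\eps_n$-covered by $V_n(T(n)\xi)\subset\Cal M_S(T(n)\xi)$, the uniform exponential attraction \eqref{7.un-exp} follows with $\alpha=\ln\frac{2}{1+\kappa}$, and the entropy counting of Theorem \ref{Th7.lad1} (using $\#E_n(\xi)\le CN^{\,n+1}$ uniformly in $\xi$) gives the uniform bound $\dim_f(\Cal M_S(\xi),\Phi_1)\le C$, in agreement with the non-autonomous estimate of Theorem \ref{Th5.3lad}.

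For the uniform H\"older continuity (3) I would estimate $\dist_{\Phi_1}(E_n(S;\xi),\Cal M_{\hat S}(\xi))$ and its symmetric counterpart by splitting into small and large $n$, as in Theorem \ref{Th7.2lad} and via the shifted H\"older/transitivity mechanism of \eqref{7.hol-gen} and Proposition \ref{Prop7.trans}. For large $n$ the uniform attraction \eqref{7.un-exp} dominates and contributes a term of order $e^{-\alpha n}$. For small $n$ I would build the refining nets so that they depend on the cocycle in a controlled way, namely $\dist^{sym}_{\Phi_1}(E_n(S;\xi),E_n(\hat S;\xi))\le K^n\sup_{1\le k\le n}\|S_{T(-k)\xi}(1)-\hat S_{T(-k)\xi}(1)\|_{\Bbb S}$ with $K$ depending only on $\kappa,L,\Phi,\Phi_1$; the factor $K^n$ records the accumulated Lipschitz constants of the $n$ maps composed in $U_\xi(0,-n)$. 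Optimizing the cut-off between the two regimes converts the growth $K^n$ and the decay $e^{-\alpha n}$ into the power $\theta$ and the exponential weight $e^{-\beta n}$ appearing in \eqref{7.n-hol}.

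The genuinely delicate point is this small-$n$ comparison: it must respect the fibrewise recursion while simultaneously tracking the perturbation of the cocycle at every base point $T(-k)\xi$ entering $U_\xi(0,-n)$, which is exactly why the final bound carries the weighted form $\sup_{n}e^{-\beta n}\|S_{T(-n)\xi}(1)-\hat S_{T(-n)\xi}(1)\|_{\Bbb S}^\theta$ rather than a plain $\|S-\hat S\|^\theta$. Making the net selection $\xi\mapsto V_n(\xi)$ coherent across fibres and jointly controllable under perturbation of the cocycle is the main technical burden; once the uniform-in-$\xi$ constants are in place, all remaining estimates reproduce the autonomous arguments of Theorems \ref{Th7.lad1} and \ref{Th7.2lad} almost verbatim. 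The full details can be found in \cite{ShZ13}.
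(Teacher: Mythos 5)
Your proposal is correct and follows essentially the same route as the paper, which constructs the attractors fibrewise from the pullback nets $E_n(\xi)$ exactly as in Theorem \ref{Th7.2lad} (deferring details to \cite{PRSE05}): the same covering refinement with uniform-in-$\xi$ constants, the same recursion $E_n(\xi)=V_n(\xi)\cup S_{T(-1)\xi}(1)E_{n-1}(T(-1)\xi)$ for semi-invariance, and the same small-$n$/large-$n$ splitting with the $K^n$-controlled net comparison for the weighted H\"older estimate \eqref{7.n-hol}. No gaps beyond the level of detail the paper itself leaves to the reference.
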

 The proof of this result is almost identical  to the proof of Theorem \ref{Th7.2lad} and is given in \cite{PRSE05}.
 \begin{remark} As usual, passage from discrete to continuous time require some uniform H\"older regularity in time for the considered cocycle (the analogue of estimate \eqref{7.t-hol}). In this case, the desired exponential attractor $\Cal M_S(\xi)$ can be defined via
 \begin{equation}\label{7.ext1}
 \Cal M_S(\xi):=[\cup_{t\in[1,2]}S_{\xi}(t)\Cal M_S^d(T(-t)\xi)]_{\Phi_1}.
 \end{equation}
 This construction extends all properties of the discrete exponential attractor stated in Theorem \ref{Th7.3lad} to the case of continuous time, see \cite{PRSE05} for the details. In particular if the cocycle $S_\xi(t)$ is autonomous, periodic or almost-periodic in time, the same will be true for the constructed non-autonomous exponential attractors. It also can be checked that, under some further natural assumptions, the function $t\to\Cal M_S(T(t)\xi)$ will be H\"older continuous in time.
 \par
 As we already mentioned, the assumption of H\"older continuity in time may be rather restrictive since it usually requires extra regularity in space (although in most part of applications it is not a big problem due to the transitivity of an exponential attraction), so it would be interesting to relax it. The attempts to do so are related with the usage of more straightforward and naive (than \eqref{7.ext1}) extension, namely, the continuous attractor is defined via $\Cal M_S(t):=\Cal M_S(T(t)\xi)$ for  $t=n\in\Bbb Z$  and, for $t=n+s$, $n\in\Bbb Z$, $s\in[0,1)$, we set $\Cal M_S(t):=S_{T(n)\xi}(s)\Cal M_S^d(n)$. In this case, we indeed will have semi-invariance, exponential attraction and uniform bounds for the fractal dimension without H\"older continuity in time, but the obtained object will be no more consistent with the autonomous case (where it will give an artificial time-periodic attractor), it will be not continuous in time (artificial jumps in integer points), etc. Since such an object hardly can be considered as a satisfactory version of a non-autonomous exponential attractor, the problem of removing/relaxing the H\"older continuity in time remains open.
 \end{remark}
\begin{remark} We emphasize that the {\it non-autonomous} exponential attractor $\Cal M_S(\xi)$, $\xi\in\Psi$, constructed in Theorem \ref{Th7.3lad} {\it is not} just a pullback attractor, it attracts also {\it forward} in time with {\it a uniform} exponential rate. This demonstrate one of the main advantages of exponential attractors for non-autonomous equations, namely, they allow us to settle the problem with forward attraction which looks unsolvable on the level of pullback attractors, see Example \ref{Ex4.bad} and, in contrast to uniform attractors, the constructed object remains finite-dimensional. For this reason, the denomination "pullback" exponential attractors, which is used by some authors (see e.g., \cite{CarS13}), looks confusing for us and we prefer to refer to them as {\it non-autonomous} exponential attractors.  Also, in contrast to uniform attractors, such exponential attractors do not violate the causal principle. Indeed, estimate \eqref {7.n-hol} shows us that the exponential attractor $\Cal M_S(\xi)$  depends on $S_{T(-n)\xi}(1)$, $n\in\Bbb N$, only and does not depend on the future ($S_{T(n)\xi}(1)$ with $n\ge0$). Moreover the impact of the past to the present attractor $\Cal M_S(\xi)$ decays exponentially fast with respect to the time passed (in a complete agreement with our intuition). Note that this property is also violated on the level of pullback attractors.
\par
The uniform analogues of exponential attractors which are independent of time and  where the finite-dimensionality is replaced by the proper estimate for the Kolmogorov $\eb$-entropy  also appear in the literature, see \cites{EMZ03,YKSZ23} and  references therein. Such constructions are usually based on the straightforward generalizations of Theorem \ref{Th5.un-lad}, so we will not give more details here. An alternative possibility, where the infinite-dimensional exponential attractors may appear, is the theory of dissipative PDEs in {\it unbounded} domains, where the global attractor is usually infinite-dimensional and we need to use Kolmogorov's entropy in order to control the size of the attractor, see \cites{EMZ04,MZ08} and references therein.
\end{remark}
We now turn to the  non-uniform (random) case where we have an ergodic measure $\mu$ for the underlying DS $T(t):\Psi\to\Psi$ and the set $\Cal B=\Cal B(\xi)\in\Phi_1$ as well as the squeezing factor $L=L(\xi)$ in \eqref{5.sq3} are  random variables. Namely, analogously to Theorem \ref{Th5.3lad}, we say that the discrete measurable cocycle $S_\xi(n)$ belongs to the class $\Bbb S_{\eb,\kappa,L}(\Cal B)$ if $S_\xi(1):\Cal O_\eb(\Cal B(\xi))\to \Cal B(T(1)\xi)$, $\xi\in\Psi$, and satisfy the squeezing property \eqref{5.sq3} for all $u_1,u_2\in\Cal O_\eb(\Cal B(\xi))$. Then the following result holds.

\begin{theorem}\label{Th7.5lad} Let $T(n):\Psi\to\Psi$, $n\in\Bbb Z$, be a DS on the Polish space $\Psi$ which possesses a Borel ergodic probability measure $\mu$. Let the cocycle $S_\xi(n)$ belong to the class $\Bbb S_{\eb,\kappa,L}(\Cal B)$ for some deterministic constants $\eb>0$, $\kappa\in[0,1)$, random constant $L=L(\xi)$,   $\Cal B(\xi)$ be a bounded random set in $\Phi_1$ and let the function $\xi\to \|\Cal B(\xi)\|_{\Phi_1}$ be tempered. We also assume that \eqref{5.sob-ent} is satisfied and $\Bbb E(L^\theta)<\infty$. Then there exists a compact random set $\Cal M_S(\xi)\subset \Cal B(\xi)$ such that
\par
1) The sets $\Cal M_S(\xi)$ are semi-invariant: $S_\xi(n)\Cal M_S(\xi)\subset \Cal M_S(T(n)\xi)$ for all $n\in\Bbb N$ and almost all $\xi\in\Psi$ and their fractal dimensions are bounded for almost all $\xi\in\Psi$ by a deterministic constant.
\par
2) The uniform attraction property
\begin{equation}\label{7.exp-un}
\dist_{\Phi_1}(S_\xi(n)\Cal B(\xi),\Cal M_S(T(n)\xi))\le Ce^{-\beta n}, \ n\in\Bbb N
\end{equation}
holds for almost every $\xi\in\Psi$ for some deterministic positive constants $C$ and $\beta$ which are independent of $\xi$ and $n$.
\par
5) Let $\hat S_\xi(n)$ be another cocycle belonging to the class $\Bbb S_{\eb,\kappa,L}(\Cal B)$ such that
\begin{equation}\label{7.delta}
\|S_\xi(1)-\hat S_\xi(1)\|_{\Bbb S}\le K(\xi)\delta
\end{equation}
for some deterministic $\delta$ and random $K(\xi)$ such that $\Bbb E(K^\theta)<\infty$. Then, there exists a random variable $P(\xi)$ which is finite almost everywhere and is independent of $\delta$ and the concrete choice of $S_\xi$ and $\hat S_\xi$, and a deterministic constant $\gamma>0$ such that
\begin{equation}\label{7.bad-H}
\dist_{\Phi_1}^{sym}(\Cal M_S(\xi),\Cal M_{\hat S}(\xi))\le P(\xi)\delta^\gamma.
\end{equation}
\end{theorem}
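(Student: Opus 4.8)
The plan is to reduce everything to the discrete case and to build the random exponential attractor by the very same iterated $\eb$-net construction used in Theorems \ref{Th7.lad1}--\ref{Th7.3lad}, the only difference being that the radii of the nets and the number of balls added at each step are now governed by the random squeezing factor $L(T(-k)\xi)$ read along the pullback orbit, in the spirit of the random dimension estimate of Theorem \ref{Th5.3lad}. Thus the combinatorics of the construction are identical to the uniform case, and the entire novelty lies in replacing the uniform bounds by almost sure bounds obtained from the Birkhoff ergodic theorem together with the integrability hypothesis $\Bbb E(L^\theta)<\infty$ and the temperedness of $\xi\mapsto\|\Cal B(\xi)\|_{\Phi_1}$. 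The passage from discrete to continuous time is carried out afterwards, as usual, via \eqref{7.ext1} under a (random) H\"older-in-time assumption, so I concentrate on the discrete cocycle $S_\xi(n)$.

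First I would fix $\xi$ and construct, recursively in $k$, coverings $V_k(\xi)\subset S_{T(-k)\xi}(k)\Cal B(T(-k)\xi)$ of radius $\eb\left(\frac{1+\kappa}2\right)^k$ in $\Phi_1$: starting from the $(k-1)$-st covering one covers the $\Phi_1$-ball around each of its centres by $\Phi$-balls of a suitably smaller radius --- here the compact embedding $\Phi_1\subset\Phi$ and the entropy estimate \eqref{5.sob-ent} enter --- and then applies $S_{T(-k)\xi}(1)$ and the squeezing property \eqref{5.sq3}. The number of balls added at step $k$ is then bounded by $2^{\Bbb H_{\frac{1-\kappa}{4L(T(-k)\xi)}}(\Phi_1\hookrightarrow\Phi)}$. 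Combining the $V_k(\xi)$ into nested sets $E_n(\xi)$ subject to $S_\xi(1)E_n(\xi)\subset E_{n+1}(T(1)\xi)$ (exactly as in the proof of Theorem \ref{Th7.lad1}) and setting $\Cal M_S(\xi):=\left[\bigcup_{n\in\Bbb N}E_n(\xi)\right]_{\Phi_1}$ gives semi-invariance by construction, while the uniform exponential attraction \eqref{7.exp-un} holds with the deterministic rate $\beta=\ln\frac2{1+\kappa}$ precisely because $\kappa$ --- hence the contraction factor of the net radii --- is deterministic.

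The dimension bound is where the ergodic theorem does the work. Since $\#E_n(\xi)\le C\prod_{k=1}^n 2^{\Bbb H_{\frac{1-\kappa}{4L(T(-k)\xi)}}(\Phi_1\hookrightarrow\Phi)}$, the $\eb_n$-entropy of $\Cal M_S(\xi)$ in $\Phi_1$ is controlled, just as in Theorem \ref{Th7.lad1}, by $\frac1n\sum_{k=1}^n\Bbb H_{\frac{1-\kappa}{4L(T(-k)\xi)}}(\Phi_1\hookrightarrow\Phi)$ divided by $\log_2\frac2{1+\kappa}$. Using \eqref{5.sob-ent} this time-average is dominated by $\frac{C}{n}\sum_{k=1}^n\left(\frac{4L(T(-k)\xi)}{1-\kappa}\right)^\theta$, which by the Birkhoff ergodic theorem converges, for $\mu$-almost every $\xi$, to the deterministic quantity $C\left(\frac4{1-\kappa}\right)^\theta\Bbb E(L^\theta)<\infty$; temperedness of $\|\Cal B(\cdot)\|_{\Phi_1}$ guarantees $\frac1n\log_2\|\Cal B(T(-n)\xi)\|_{\Phi_1}\to0$, so the starting radius does not spoil the exponent. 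This yields the almost sure deterministic dimension bound, the random counterpart of \eqref{5.r-est}.

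Finally, the H\"older continuity \eqref{7.bad-H} is obtained by comparing the two nets $E_n(S,\xi)$ and $E_n(\hat S,\xi)$ exactly as in Theorems \ref{Th7.2lad} and \ref{Th7.3lad}: for $n$ larger than some crossover index the difference is dominated by the uniform exponential attraction and the transitivity of exponential attraction (Proposition \ref{Prop7.trans}), while for smaller $n$ one propagates the defect \eqref{7.delta} through the cocycle, picking up the product of random Lipschitz constants $\prod_{k}L(T(-k)\xi)$ and the factor $K(\xi)$. The main obstacle, and the genuinely random part of the argument, is precisely that this crossover index is now a random variable $n(\xi)$ and that the accumulated product of Lipschitz and propagation constants up to $n(\xi)$ must be shown to be almost surely finite. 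I would control it by applying the Borel--Cantelli lemma to the tail events $\{L(T(-k)\xi)^\theta\ge k\}$ (whose probabilities are summable because $\Bbb E(L^\theta)<\infty$), together with $\Bbb E(K^\theta)<\infty$, which lets one absorb all of these quantities into a single random prefactor $P(\xi)$, finite for almost all $\xi$ and independent of $\delta$ and of the particular cocycles. Collecting the bounds gives \eqref{7.bad-H} with a deterministic exponent $\gamma$.
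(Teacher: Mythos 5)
Your overall strategy is the intended one: the paper itself does not prove this theorem but refers to \cite{ShZ13} (where the case $\kappa=0$ is treated, the general case being declared "analogous"), and that proof is precisely the iterated-net construction of Theorems \ref{Th7.lad1}--\ref{Th7.3lad} combined with the Birkhoff-ergodic bookkeeping of Theorem \ref{Th5.3lad}, plus a Borel--Cantelli/moment argument for the H\"older part --- exactly the skeleton you describe. There are, however, two genuine gaps in your execution.

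First, your construction starts from a covering $V_0(\xi)$ of $\Cal B(\xi)$ by finitely many $\Phi_1$-balls of the \emph{deterministic} radius $\eb$. Such a covering need not exist: $\Cal B(\xi)$ is only bounded in $\Phi_1$, and bounded sets of $\Phi_1$ are totally bounded in $\Phi$ (compact embedding) but not in $\Phi_1$. The construction must therefore begin either from a single ball of the random radius $R(\xi)=\|\Cal B(\xi)\|_{\Phi_1}$, or by covering $\Cal B(T(-1)\xi)$ with $\Phi$-balls and applying $S_{T(-1)\xi}(1)$. When $\kappa=0$, one application of the squeezing property collapses the radius to the deterministic scale $\sim\eb$ no matter how large $R$ is, and this is exactly what makes the deterministic constant $C$ in \eqref{7.exp-un} attainable; when $\kappa>0$, the $\Phi_1$-radius contracts only by the factor $\kappa$ per step, so a \emph{random} number of iterations of order $\log(R(\xi)/\eb)$ is needed before the net radii become deterministic, and this transient has to be absorbed (using temperedness of $R$) before \eqref{7.exp-un} can be claimed with $C$ and $\beta$ independent of $\xi$. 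Your assertion that the deterministic radii, and hence the deterministic attraction prefactor, follow automatically from $\kappa$ being deterministic is not justified as written; this is precisely the point where the case $\kappa>0$ requires an argument beyond the one in \cite{ShZ13}.

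Second, you never address measurability. The theorem asserts that $\Cal M_S(\xi)$ is a compact \emph{random} set, i.e. that $\xi\mapsto\Cal M_S(\xi)$ is a measurable set-valued map, and \eqref{7.bad-H} requires $P(\xi)$ to be a random variable. Your construction makes, for each fixed $\xi$, an arbitrary choice of nets at every step; without a measurable selection of these coverings (or an explicit covering procedure depending measurably on $\xi$), the resulting family need not be measurable --- the same kind of issue the paper emphasizes for random attractors in the discussion following Theorem \ref{Th4.main2}. This point is handled carefully in \cite{ShZ13} and cannot be omitted in the random setting.
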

The proof of this theorem in the particular case $\kappa=0$ is given in \cite{ShZ13}. The general case is completely analogous and so is omitted.

\begin{remark} We note that, similarly to the deterministic case, the obtained random attractor {\it is not} just a pullback exponential attractor, but also a forward attractor and actually the exponential rate of attraction is {\it uniform}. Moreover, we have a {\it deterministic} rate of attraction to it, which can be controlled in terms of physical parameters of the considered system (as was pointed out in the original paper \cite{ShZ13}, where exponential attractors for random DS have been introduced). The attempts to relax this uniform attraction rate and to allow the constants $C$ and $\beta$ in \eqref{7.exp-un} to be random were also made later (see \cite{Zhou17} and references therein), but this did not lead to any essential simplifications of the assumptions on the cocycle $S_\xi(n)$, which may compensate the drawbacks related with the loss of the uniform/deterministic  attraction property. For instance, the random attractor $\Cal A(\xi)$ constructed in Example \ref{Ex4.sode} becomes "pullback exponential" if we allow the constant $C=C(\xi)$ in the exponential attraction property to be random and the control over the rate of attraction as well as H\"older continuity with respect to  $\eb\to0$ will be lost.
\par
As in the deterministic case, the passage from discrete to continuous exponential attractors require some H\"older continuity in time and the random version of transitivity of exponential attraction, see \cite{ShZ13}.
\par
We finally mention that,   in the H\"older continuity estimate \eqref{7.bad-H}, we may guarantee only that the random variable $P(\xi)$ is finite almost-everywhere. In order to verify that it has finite moments of some order, we need to control the rate of convergence in the Birkhoff ergodic theorem. This problem is rather delicate and requires not only the finiteness of the moments of any order for   $L(\xi)$ and $K(\xi)$, but also some kind of exponential mixing (in applications to stochastic equations we need the corresponding Ornstein-Uhlenbeck process to be exponentially mixing). We will not give more details here and send the interested reader to \cites{GAS11,Hipp, pri11,Kach, KuSh12,ShZ13} and also to references therein.
\end{remark}
We conclude this subsection by the model example of a reaction-diffusion system perturbed by white noise.
\begin{example}\label{Ex7.sde} Let $\Omega\subset\R^d$ be a bounded domain of $\R^d$ with a smooth boundary. Let us consider the following reaction-diffusion system in $\Omega$:
\begin{equation}\label{7.srds}
\Dt u=a\Dx u-f(u)+\delta \Dt \eta(t),\ \ u=(u_1,\cdots,u_n), \ \ u\big|_{t=0}=u_0,\ \ u\big|_{\partial\Omega}=0.
\end{equation}
It is assumed that $a$ is a constant diffusion matrix satisfying the condition $a+a^*>0$, the nonlinearity $f\in C^2(\R^n,\R^n)$ satisfies the dissipativity conditions of the form
\begin{equation}
1. \ f(u).u\ge -C+c|u|^{p+1},\ \ 2. \ f'(u)\ge-K,\ \ 3.\ |f'(u)|\le C(1+|u|^{p-1})
\end{equation}
for some positive constants $C$, $c$ and $K$ and the exponent $p$ satisfying $0\le p\le\frac{d+2}{d-2}$ for $d\ge 2$. Let $\{\lambda_i\}_{i=1}^\infty$ be the eigenvalues of the minus Laplacian in $\Omega$ enumerated in the non-decreasing order and $\{e_i\}_{i=1}^\infty$ be the corresponding eigenvectors. We assume that the two-sided Wiener process $\eta(t)$ in a Hilbert space $\Phi:=L^2(\Omega)$ is given by
\begin{equation}\label{7.Wiener-H}
\eta(t):=\sum_{i=1}^\infty \beta_i\eta_i(t)e_i,
\end{equation}
where $\{\eta_i(t)\}_{i=1}^\infty$ are the standard independent scalar Wiener processes and the deterministic vectors $\beta_i\in\R^n$ satisfy
$$
\sum_{i=1}^\infty\lambda_i^3|\beta_i|^2<\infty.
$$
Finally, $\delta\in[0,1]$ is a given parameter. Note that $\delta=0$ corresponds to the standard deterministic reaction-diffusion system and positive $\delta\ll1$ gives its stochastic perturbation.
\par
Following the standard scheme (see e.g. \cites{Da92, KuSh12} and references therein), we associate with equation \eqref{7.srds} a solution cocycle $S^\delta_\xi(t):\Phi\to\Phi$ over the standard DS on the canonical probability space $(\Psi,\Cal F,\mu)$. Namely, analogously to Example \ref{Ex4.sode}, we
consider the space $C_0(\R)$ endowed with the locally compact topology, the scalar Wiener measure $\bar \mu$ and the DS $(\theta(h)\bar\xi)(t):=\bar\xi(t+h)-\bar\xi(h)$. Then, $\Psi:=C_0(\R)^{\Bbb N}$ endowed with the Tikhonov topology, $T(h):\Psi\to\Psi$ and $\mu$ are the Cartesian products of semigroup $\theta(h)$ and the measures $\bar\mu$ respectively. It is well-known that the obtained measure $\mu$ is ergodic, see e.g. \cite{GAS11}. The solution cocycle $S^\delta_\xi(t):\Phi\to\Phi$ is constructed similarly to Example \ref{Ex4.sode}, subtracting the corresponding Ornstein-Uhlenbeck process,  see \cite{ShZ13} for more details.
\par
Moreover, it is straightforward to verify (again similarly to Example \ref{Ex4.sode}) that this cocycle possesses a tempered absorbing set $\Cal B_\xi$ in $\Phi_1:=H^1_0(\Omega)$ with respect to the bornology of tempered random sets. In addition, this absorbing set is uniform with respect to the bornology of bounded deterministic subsets of $\Phi$. Thus, it is enough to construct the random exponential attractors $\Cal M^\delta(\xi)$ for the set $\Cal B(\xi)$ only, so we may use the result of Theorem \ref{Th7.5lad} to construct the desired attractors. We also note that the trajectories of the Wiener process are H\"older continuous in time for almost all $\xi\in\Psi$, so the passage from discrete to continuous attractors does not cause any problems and we only need to check the assumptions of Theorem \ref{Th7.5lad} which is actually done in \cite{ShZ13}. Thus, we have the following result proved in \cite{ShZ13}.
\begin{theorem}\label{Th7.srds} Let the assumptions stated above hold. Then the solution cocycle $S^\delta_\xi(t):\Phi\to\Phi$, $\delta\in[0,1]$, associated with problem \eqref{7.srds} possesses a random family $\Cal M^\delta(\xi)$, $\delta\in[0,1]$, of exponential attractors which are
\par
1) Semi-invariant and have finite fractal dimensions in $\Phi_1$. These dimensions are bounded by a deterministic constant independent of $\delta$ for almost all $\xi$.
\par
2) Possess a uniform exponential attraction property for deterministic bounded sets of $\Phi$, i.e. there are positive constant $\alpha$ and monotone function $Q$ such that for every bounded subset $B$ of $\Phi$, we have
$$
\dist_{\Phi}(S_\xi^\delta(t)B,\Cal M^\delta(T(t)\xi))\le Q(\|B\|_{\Phi})e^{-\alpha t}, \ t\ge0
$$
for almost all $\xi$.
\par
3. There exists an almost everywhere  finite random variable $P(\xi)$ and a positive deterministic constant $\gamma$ which are independent of $\delta$ such that
\begin{equation}
\dist_{\Phi}^{sym}(\Cal M^{\delta_1}(\xi),\Cal M^{\delta_2}(\xi))\le P(\xi)|\delta_1-\delta_2|^\beta.
\end{equation}
Moreover, in the case $\delta=0$, the corresponding exponential attractor $\Cal M^0(\xi)$ is independent of $\xi$ and is the standard exponential attractor for the limit autonomous and deterministic reaction-diffusion system.
\end{theorem}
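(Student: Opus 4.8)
The plan is to reduce everything to the abstract random exponential attractor result, Theorem \ref{Th7.5lad}, by verifying its hypotheses for the time-one map of the solution cocycle, uniformly in $\delta\in[0,1]$. First I would follow the construction of Example \ref{Ex4.sode} pathwise: writing $u=\delta v_\xi+w$, where $v_\xi$ solves the linear Ornstein--Uhlenbeck problem $\Dt v_\xi=a\Dx v_\xi-v_\xi+\Dt\eta$ driven by the Wiener process \eqref{7.Wiener-H}, subtracts the stochastic forcing and turns \eqref{7.srds} into a random PDE with nonlinearity $f(\delta v_\xi+w)$ and $H^1_0$-valued, tempered, H\"older-in-time coefficients. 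The summability assumption $\sum_i\lambda_i^3|\beta_i|^2<\infty$ guarantees that $v_\xi$ and its spatial derivatives are tempered for $\mu$-almost all $\xi$, so that the standard dissipative energy estimate for $w$ (obtained by testing against $w$ and against $\Dx w$, exactly as in \eqref{3.dis3}) produces a tempered random absorbing set $\Cal B(\xi)$ which is bounded in $\Phi_1=H^1_0(\Omega)$ and, crucially, absorbs all bounded deterministic subsets of $\Phi=L^2(\Omega)$ at a uniform rate. All of these estimates can be made uniform in $\delta\in[0,1]$ because $\delta$ enters only through $\delta v_\xi$, which is dominated by $|\delta|\le1$ times a tempered random variable.

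Next I would verify the discrete squeezing property \eqref{5.sq3} for $S^\delta_\xi(1)$ restricted to a neighbourhood of $\Cal B(\xi)$. For two solutions $u_1,u_2$ the difference $\bar u=u_1-u_2$ solves a linear equation with potential $-\int_0^1 f'(su_1+(1-s)u_2)\,ds$; using parabolic smoothing together with the bound $f'(u)\ge-K$ and the polynomial growth of $f'$, one gains regularity from $\Phi$ to $\Phi_1$ on the unit time step and obtains \eqref{5.sq3} with a deterministic $\kappa\in[0,1)$ and a squeezing factor $L(\xi)$ depending on $\sup_{t\in[0,1]}\|u_i(t)\|$ and on the tempered OU data. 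The moment condition then reduces to checking $\Bbb E(L^\theta)<\infty$ for the exponent $\theta$ coming from the Sobolev entropy bound \eqref{5.sob-ent} (which holds for $\Phi_1\hookrightarrow\Phi$ by \eqref{5.ent}); this follows from the Gaussian tails of the Ornstein--Uhlenbeck process, which yield finite moments of every order for the relevant norms of $v_\xi$. With \eqref{5.sob-ent}, $\Bbb E(L^\theta)<\infty$, and temperedness of $\|\Cal B(\xi)\|_{\Phi_1}$ all in hand, Theorem \ref{Th7.5lad} gives the discrete random exponential attractors $\Cal M^\delta_d(\xi)$ satisfying properties 1) and 2). Since the trajectories of $\eta$ are almost surely H\"older in time, the extension formula \eqref{7.ext1} upgrades these to continuous-time attractors $\Cal M^\delta(\xi)$ without loss of the bounds, using the transitivity of exponential attraction (Proposition \ref{Prop7.trans}) to pass from attraction of $\Cal B(\xi)$ to attraction of arbitrary bounded deterministic sets.

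For the H\"older dependence on $\delta$ (property 3), I would compare the two cocycles $S^{\delta_1}_\xi$ and $S^{\delta_2}_\xi$. Both lie in the same class $\Bbb S_{\eb,\kappa,L}(\Cal B)$, and a Gronwall estimate for the difference of the corresponding $w$-equations gives \eqref{7.delta} with $\delta=|\delta_1-\delta_2|$ and a random constant $K(\xi)$ built from the same tempered OU quantities, so that $\Bbb E(K^\theta)<\infty$. The random H\"older estimate \eqref{7.bad-H} with an almost-everywhere finite $P(\xi)$ is then precisely the content of part 5) of Theorem \ref{Th7.5lad}. Finally, when $\delta=0$ the forcing disappears, the cocycle $S^0_\xi(t)=S^0(t)$ is independent of $\xi$, and the construction collapses to the autonomous deterministic case, so $\Cal M^0(\xi)$ may be taken $\xi$-independent and coincides with a standard exponential attractor for the reaction--diffusion system furnished by Theorem \ref{Th7.lad1}.

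The main obstacle is the \emph{uniformity in $\delta$} of all constants --- the squeezing factor $L(\xi)$, the moment $\Bbb E(L^\theta)$, and the constant $K(\xi)$ in \eqref{7.delta} --- together with the requirement that a \emph{single} random variable $P(\xi)$ serve all pairs $(\delta_1,\delta_2)$ and remain finite almost everywhere. The delicate point is that $L(\xi)$ and $K(\xi)$ must be dominated by tempered random variables that do not degenerate as $\delta$ ranges over $[0,1]$; this hinges on absorbing the $\delta v_\xi$ contributions into the dissipative and smoothing estimates with $\delta$-independent constants, which in turn relies on the strong summability $\sum_i\lambda_i^3|\beta_i|^2<\infty$ to supply enough spatial regularity of the Wiener process. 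Controlling higher moments of $P(\xi)$ (not required here) would additionally demand a convergence rate in the Birkhoff ergodic theorem and exponential mixing of the Ornstein--Uhlenbeck process, as noted after Theorem \ref{Th7.5lad}.
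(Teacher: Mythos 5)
Your proposal is correct and follows essentially the same route as the paper: subtract the Ornstein--Uhlenbeck process as in Example \ref{Ex4.sode}, establish a tempered random absorbing set in $\Phi_1=H^1_0(\Omega)$ that is uniform over deterministic bounded sets of $\Phi$, verify the hypotheses of the abstract random exponential attractor theorem (Theorem \ref{Th7.5lad}) uniformly in $\delta$, and use the almost-sure H\"older continuity in time of the Wiener trajectories together with transitivity of exponential attraction to pass from discrete to continuous time. The paper itself only sketches this and defers the verification of the squeezing and moment conditions to the cited work \cite{ShZ13}; your filled-in details (parabolic smoothing for the squeezing property, Gaussian tails for the moment bounds, Gronwall for the $\delta$-comparison) are consistent with that reference and with the machinery developed in the survey.
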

\end{example}
\begin{remark} We see that the properly constructed exponential attractors remain robust with respect to white noise perturbations as well. This demonstrates an advantage of exponential attractors in comparison to global ones, where  the limit deterministic attractor is not robust with respect to noise even in the ideal situation of regular attractors considered below, see Example \ref{Ex4.sode} (see also \cite{CF98a} and \cite{CDLM17} for the discussion related with stochastic bifurcation theory). We also mention that the conditions posed on the stochastic reaction-diffusion system \eqref{7.srds} are far from being optimal and can be essentially relaxed, which is however beyond  the scope of this survey.
\end{remark}

\subsection{Regular attractors} To conclude this section, we briefly consider the class of DS whose global attractors have an exponential rate of attraction, so they may be simultaneously considered as exponential ones. This usually happens when the considered DS possesses a global Lyapunov function and all of the equilibria are hyperbolic. Then any complete trajectory on the attractor is a heteroclinic orbit between these equilibria and the attractor is a finite collection of smooth finite-dimensional unstable submanifolds of the equilibria. Following \cite{BV83} such attractors are called {\it regular} and this is probably the only more or less general class of attractors, where we are able to understand their structure, see \cites{BV92,CL09,Hal04,HR89} for more details. In our exposition, we will mainly follow \cite{VZCh13}.
\par
We restrict ourselves  to considering only the case of discrete time (passing from discrete to continuous time is straightforward, see \cite{VZCh13} for the details), so we assume that we are given a map $S:\Phi\to\Phi$ in a Banach space $\Phi$ and construct a discrete semigroup via $S(n):=S^n$ in $\Phi$. We pose the following assumptions on the map $S$:
\par
{\it Assumption A:} The map $S\in C^1(\Phi,\Phi)$ and its Frechet derivative $S'(u)$ is uniformly continuous on bounded sets of $\Phi$; the map $S$ is injective and $\ker S'(u)=\{0\}$ for all $u\in\Phi$.
\par
{\it Assumption B:} The set of equilibria $\Cal R_0$ of the map $S$ is finite: $\Cal R_0:=\{u_1,\cdots,u_N\}$ and every of the equilibria is hyperbolic, the latter means that the spectrum of $S'(u_i)$ does not intersect with the unit circle for any $u_i\in\Cal R_0$.
\par
{\it Assumption C:} The semigroup $S(n):\Phi\to\Phi$ possesses a global attractor $\Cal A$ in $\Phi$ endowed with the standard bornology of bounded subsets of $\Phi$.
\par
{\it Assumption D:} Any trajectory $u(n)=S(n)u_0$ stabilizes to one of the equilibria from $\Cal R_0$ and there are no homoclinic structures, i.e. if $u_1,\cdots u_k\in l^\infty(\Bbb Z)$ are complete bounded trajectories of $S(n)$ such that
$$
\lim_{n\to-\infty}\|u_i(n)-v_i\|_\Phi=0,\ \ \lim_{n\to+\infty}\|u_i(n)-v_{i+1}\|_\Phi=0,\ \ i=1,\cdots k
$$
for some equilibria $v_1,\cdots,v_{k+1}\in\Cal R_0$, then necessarily all of $v_i$ are {\it different}.
\par
In applications, the key Assumption  D usually follows from the existence of a global Lyapunov function. We recall that a continuous function $L:\Phi\to\R$ is called a global Lyapunov functional if the function $n\to L(S(n)u_0)$ is non-increasing along the trajectories and the equality $L(S u_0)=L(u_0)$ implies that $u_0\in\Cal R_0$. Together with Assumption C and the finiteness of the set $\Cal R_0$, this implies not only the validity of D, but also the fact that any complete bounded trajectory is a heteroclinic orbit between two equilibria, see \cites{BV92,VZCh13} for more details. In turn, this gives the following description of the attractor $\Cal A$:
\begin{equation}\label{7.man}
\Cal A=\cup_{i=1}^N\Cal M^+(u_i),
\end{equation}
where $\Cal M^+(u_i)$ is an unstable set of the equilibrium $u_i$, i.e. the set of all initial data $u_0$ for which there exists a complete trajectory $u(n)$ converging to $u_i$ as $n\to-\infty$.
\par
We now use the hyperbolicity Assumption B to verify that the sets $\Cal M^+(u_i)$ are actually finite-dimensional manifolds in $\Phi$ with the dimensions equal to the  instability index $\ind(u_i)$ which is the algebraic number of eigenvalues of $S'(u_0)$ outside of the unit circle (it is easy to verify that this numbers are all finite due to the existence of a compact global attractor). The proof of this fact can be done similarly to our verification of the existence of inertial manifolds, so we will not present it here. Note only that we first construct the manifolds {\it locally} in a small neighbourhood of the equilibria and then extend them to the global submanifolds using the injectivity assumption of A and the absence of homolcinic structures. This gives us the fact that all $\Cal M^+(u_i)$ are finite-dimensional submanifolds of $\Phi$ which are diffeomorphic to $\R^{\ind(u_i)}$, see \cite{VZCh13} for the details.
\par
Finally, we recall that, similarly to inertial manifolds, the manifolds $\Cal M^+(u_i)$ possess an exponential tracking property, i.e. any trajectory of $S(n)$ is attracted exponentially fast to some trajectory on $\Cal M^+(u_i)$ until it remains in a small neighbourhood of $u_i$. Moreover, due to Assumption D, any trajectory of $S(n)$ spends a finite time $T_\delta$ outside of the $\delta$-neighbourhood of $\Cal R_0$ and $T_\delta$ is uniform with respect to all trajectories starting from a bounded set. These two facts give us the exponential attraction to $\Cal A$ with the rate of convergence controlled by the number of equilibria and their hyperbolicity constants, see \cite{VZCh13} for details. We summarize the obtained results in the following theorem.

\begin{theorem}\label{Th7.reg} Let the map $S:\Phi\to\Phi$ satisfy Assumptions A--D. Then the attractor $\Cal A$ of the associated semigroup $S(n):\Phi\to\Phi$ possesses the  description \eqref{7.man}, where $\Cal M^+(u_i)$ are the $\ind(u_i)$-dimensional unstable  submanifolds of the equilibria $u_i$. Moreover, any trajectory  belonging to the attractor is a heteroclinic orbit between two equilibria and the rate of attraction to $\Cal A$ is exponential, i.e., there exists a positive constant $\alpha$ and a monotone function $Q$ such that, for every bounded subset $B\subset\Phi$,
\begin{equation}\label{7.exp-reg-bad}
\dist_\Phi(S(n)B,\Cal A)\le Q(\|B\|_\Phi)e^{-\alpha n},\ \ n\in\Bbb N.
\end{equation}
\end{theorem}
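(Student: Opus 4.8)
The plan is to prove the three assertions—the geometric representation \eqref{7.man}, the smooth structure of the pieces $\Cal M^+(u_i)$, and the exponential attraction \eqref{7.exp-reg-bad}—in that order, each building on the previous. I would first apply the representation formula of Proposition \ref{Prop2.rep} to the continuous discrete semigroup $S(n)$ (its compact attracting set is furnished by Assumption C), identifying $\Cal A$ with the set of values at $n=0$ of all complete bounded trajectories $u\in l^\infty(\Bbb Z)$. For any such $u$, the $\omega$- and $\alpha$-limit sets are nonempty, compact, connected and strictly invariant subsets of $\Cal A$. Invoking the global Lyapunov function that underlies Assumption D—along which $L(u(n))$ is monotone and bounded, hence constant on any invariant limit set, which therefore lies in the set where $L$ is stationary, namely $\Cal R_0$—and using the finiteness (hence total disconnectedness) of $\Cal R_0$ together with connectedness of the limit sets, each limit set reduces to a single equilibrium. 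Thus every complete bounded trajectory is a heteroclinic orbit between two points of $\Cal R_0$, and collecting, for each $u_i$, the initial data of all complete trajectories converging backward to $u_i$ gives $\Cal M^+(u_i)$ and the decomposition $\Cal A=\cup_{i=1}^N\Cal M^+(u_i)$.

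For the manifold structure I would first build, for each hyperbolic $u_i$, a local unstable manifold $\Cal M^+_{loc}(u_i)$ by a Lyapunov--Perron/Banach-contraction scheme entirely parallel to the inertial-manifold construction of Section \ref{s6}: one splits $\Phi$ into the $S'(u_i)$-invariant unstable subspace of dimension $\ind(u_i)$ and its complement, and solves the fixed-point problem for backward-decaying trajectories in a suitably weighted space, the contraction being guaranteed by the fact that the spectrum of $S'(u_i)$ avoids the unit circle (Assumption B). This yields a $C^1$ graph of dimension $\ind(u_i)$ enjoying local exponential tracking. To globalize, I would set $\Cal M^+(u_i)=\cup_{n\ge0}S^n\big(\Cal M^+_{loc}(u_i)\big)$ and use Assumption A—$S$ injective with $\ker S'(u)=\{0\}$, i.e. an injective immersion—to propagate the submanifold structure under iteration, while the no-homoclinic clause of Assumption D guarantees that this set never returns to $u_i$ and so the immersed images do not fold back on themselves. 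The outcome is that each $\Cal M^+(u_i)$ is an embedded submanifold diffeomorphic to $\R^{\ind(u_i)}$.

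For the exponential attraction I would combine a local and a global ingredient. Locally, hyperbolicity near each $u_i$ produces an exponentially attracting local unstable manifold, so a trajectory entering a $\delta$-neighborhood of $u_i$ is shadowed at an exponential rate by a trajectory on $\Cal M^+(u_i)$. Globally, Assumption D provides a \emph{uniform} bound $T_\delta$ on the number of iterations that any trajectory issuing from a fixed bounded set spends outside the union of the $\delta$-neighborhoods of $\Cal R_0$; this $T_\delta$ is finite and bounded over bounded sets by a compactness argument using forward stabilization and the absence of homoclinic loops. A trajectory therefore visits the neighborhoods of a finite ordered chain of equilibria, being exponentially attracted to the relevant unstable manifold at each visit, and chaining these finitely many estimates by the transitivity of exponential attraction (Proposition \ref{Prop7.trans}) yields \eqref{7.exp-reg-bad}, with $\alpha$ controlled by the hyperbolicity exponents and $N$, and $Q$ absorbing the initial transient needed to reach the absorbing set.

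The step I expect to be the main obstacle is the global manifold construction: passing from the local unstable manifolds to genuinely embedded global submanifolds diffeomorphic to Euclidean space. Verifying that the iterates $S^n\big(\Cal M^+_{loc}(u_i)\big)$ never self-intersect and that the resulting set carries no singularities rests on a careful interplay of the injectivity and non-degeneracy of Assumption A with the no-homoclinic clause of Assumption D, and this is precisely the point where the specific structure of the problem, rather than soft compactness arguments, must be used. A close second is making the transit-time bound $T_\delta$ genuinely uniform over all trajectories starting in a prescribed bounded set, which is what ultimately upgrades the pointwise stabilization of Assumption D into the uniform exponential rate in \eqref{7.exp-reg-bad}.
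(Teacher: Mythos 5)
Your proposal follows, step by step, the same route the paper takes (the paper itself only sketches the argument, deferring details to the cited work of Vishik--Zelik--Chepyzhov): the heteroclinic decomposition \eqref{7.man} of $\Cal A$ via the representation formula, local unstable manifolds built by a Lyapunov--Perron scheme and globalized through iteration of $S$ using the injectivity of Assumption A together with the no-homoclinic clause of Assumption D, and the exponential rate \eqref{7.exp-reg-bad} obtained by combining local exponential tracking near each $u_i$ with the uniform transit-time bound $T_\delta$ and the transitivity of exponential attraction (Proposition \ref{Prop7.trans}).

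There is, however, one step in your first paragraph that fails as written. You invoke ``the global Lyapunov function that underlies Assumption D'', but no Lyapunov function is among the hypotheses A--D (the paper only remarks that D \emph{usually} comes from one in applications), so it is not available to you. More seriously, you claim that the $\omega$- and $\alpha$-limit sets of a complete bounded trajectory are \emph{connected}; for maps (discrete time) this is false --- for $S(x)=-x$ on $[-1,1]$ the $\omega$-limit set of any $x_0\ne 0$ is the two-point set $\{x_0,-x_0\}$ --- so the argument ``a connected subset of the finite set $\Cal R_0$ is a point'' does not go through. The repair: on the forward side nothing is needed, since Assumption D directly postulates that every trajectory stabilizes to a single equilibrium. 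On the backward side, note that the $\alpha$-limit set of a complete bounded orbit is a compact, strictly invariant, internally chain-transitive subset of $\Cal A$; by D every orbit inside it converges forward to one of the finitely many equilibria, and chain transitivity combined with the absence of homoclinic/heteroclinic cycles (the second clause of D) forces this set to be a single equilibrium. Alternatively, once the limit set is known to be a finite set of fixed points, a short continuity argument --- extracting one-step transitions between disjoint closed neighbourhoods of two putative limit equilibria and passing to the limit --- shows it is a singleton. With this one step fixed, your argument coincides with the paper's.
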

\begin{remark} Note that, despite the exponential attraction rate \eqref{7.exp-reg-bad} for the regular attractor, it still makes sense to construct an exponential attractor even in the case where the global  attractor is regular. The problem here is that the constant $\alpha$ and the function $Q$ are still not controllable in terms of physical parameters of the considered DS and we still may lose the important intermediate dynamics, see Example \ref{Ex7.inter}. Note also that although Assumption B is in a sense generic due to the Sard theorem,  the number of equilibria and their hyperbolicity constants can be explicitly found/extimated in very exceptional cases only, so even the attraction constant $\alpha$ in the exponential attraction rate is usually "not observable".
\end{remark}
We are now turn to the perturbation theory. To this end we assume that there is a family of cocycles $S_{\delta,\xi}(n):\Phi\to\Phi$ for some DS $T(n):\Psi\to\Psi$ depending on a parameter $\delta\in[0,1]$
(for simplicity, we assume that the set $\Psi$ and $T(n)$ are independent of $\delta$ although this is not essential). We assume that, for $\delta=0$, the corresponding maps $S_{\delta,\xi}(n)$ are independent of $\xi$ and $S_{0}(1)$ satisfies Assumptions A-D. Thus, at the limit $\delta=0$, we have a regular attractor. To specify the perturbation, we need two more assumptions:
\par
{\it Assumption E.} The maps $S_{\delta,\xi}(1)\in C^1(\Phi,\Phi)$ and the corresponding Frechet derivatives $S'_{\delta,\xi}(1)$ are uniformly continuous on  bounded sets of $\Phi$ (also uniformly with respect to $\xi$ and $\delta$). Moreover the maps $S_{\delta,\xi}(1)$ are injective for all $\xi$ and $\delta$ and $\ker{S'_{\delta,\xi}(1)}=\{0\}$ for all $\delta$, $\xi$ at any point of $\Phi$. Finally, we assume that
\begin{equation}
\||S_{\delta,\xi}(1)(v)-S_0(1)(v)\|_{\Phi}+\|S'_{\delta,\xi}(1)(v)-S'_0(1)(v)\|_{\Cal L(\Phi,\Phi)}\le C\delta,
\end{equation}
where the constant $C$ depends on the norm $\|v\|_\Phi$ only (and is independent of $\xi$ and $\delta$).
\par
{\it Assumption F.} The family of cocycles $S_{\delta,\xi}(n)$ possesses a compact uniformly attracting set $\Cal B\in\Phi$ with respect to the standard bornology of bounded sets in $\Phi$ (which is also uniform with respect to $\delta$).
\par
The perturbation theory of regular attractors is based on two relatively simple observations:
\par
 1) Since the cocycle $S_{\delta,\xi}(n)$ is close to the limit semigroup $S_0(n)$ and possesses a compact uniformly attracting set, the properties of the perturbed trajectories will be close to the properties of the non-perturbed ones. In particular, any perturbed trajectory will visit some $\eb$-neighbourhood of the equilibria set $\Cal R_0$ with $\eb=\eb(\delta)$ tending to zero as $\delta\to0$. Moreover, due to the absence of homoclinic structures for the limit DS, this trajectory is  unable to visit the proper neighbourhood of any $u_i\in\Cal R_0$ more than once and spends a uniformly bounded time outside of the $\eb$-neighbourhood of $\Cal R_0$, see \cite{VZCh13} for the details. This actually reduces the analysis to the {\it local} perturbation theory near the hyperbolic equilibria.
\par
2) Since the limit equilibria $u_i\in\Cal R_0$ are hyperbolic, the saddle structure survives under small non-autonomous perturbations. In particular, any equilibrium $u_i\in\Cal R_0$ generates an "equilibrium" $u_i(\xi)$ of the perturbed cocycle $S_{\delta,\xi}(n)$ if $\delta$ is small enough. This equilibrium is uniquely determined by the condition
\begin{equation}
\sup_{\xi\in\Psi}\|u_i(\xi)-u_i\|_\Phi\le C\delta.
\end{equation}
The constructed "equilibria" $u_i(\xi)$ remain hyperbolic and the corresponding unstable manifolds $\Cal M^+_\delta(\xi)$ are close to the unstable manifold $\Cal M^+_0$ of the limit problem as $\delta=0$ and possess the uniform exponential tracking property. In addition, due to the injectivity property these manifolds (which are initially defined for small neighbourhood of $u_i$ only) can be extended to the global unstable submanifolds which will be diffeomorphic to $\R^{\ind(u_i)}$. This allows us to establish the analogue of \eqref{7.man} for the non-autonomous (pullback) attractor $\Cal A_\delta(\xi)$ for the perturbed cocycle $S_{\delta,\xi}(n)$:
\begin{equation}\label{7.man-non}
\Cal A_\delta(\xi)=\cup_{i=1}^N\Cal M_\delta^+(\xi).
\end{equation}
Moreover, we also have the property that any complete bounded trajectory is a heteroclinic orbit between two "equilibria" $u_i(\xi)$ as well as uniform exponential attraction of bounded sets. Finally, arguing as in Proposition \ref{Prop7.low}, we establish the uniform H\"older continuity of the form
\begin{equation}\label{7.h-last}
\dist_\Phi(\Cal A_\delta(\xi),\Cal A_0)\le C\delta^\kappa
\end{equation}
for some positive $\kappa$ and $C$ which are independent of $\xi$ and $\delta$, see \cite{VZCh13} for the details. We summarize the results obtained there in the following theorem.

\begin{theorem}\label{Th7.last} Let the family of cocycles $S_{\delta,\xi}(n):\Phi\to\Phi$ over the DS $T(h):\Psi\to\Psi$ depending on the parameter $\delta\in[0,1]$ satisfy Assumptions A-F. Then there exists $\delta_0>0$ such that, for every $\delta\le\delta_0$, the non-autonomous (pullback) attractor $\Cal A_\delta(\xi)$ of the cocycle $S_{\delta,\xi}(n)$ is a finite union of the unstable manifolds $\Cal M^+_\delta(\xi)$ of the corresponding perturbed "equilibria" $u_i(\xi)$, $\xi\in\Psi$, of the limit hyperbolic equilibria $u_i\in\Cal R_0$ (i.e., \eqref{7.man-non} holds). Moreover, any complete bounded trajectory of the DP $U_{\delta,\xi}(m,n)$ associated with the cocycle $S_{\delta,\xi}(n)$ is a heteroclinic orbit between $u_i(T(n)\xi)$ and $u_j(T(n)\xi)$ for some $i\ne j$. The rate of attraction to the attractors $\Cal A_\delta(\xi)$ is uniform and exponential, i.e. there exists a positive constant $\alpha$ and a monotone function $Q$ such that, for every bounded set $B\subset\Phi$ and every $m\ge n$,
$$
\dist_\Phi(U_\xi(m,n)B, \Cal A_\delta(T(n)\xi))\le Q(\|B\|_\Phi)e^{-\alpha (m-n)},\ \ \delta\le \delta_0,\ \xi\in\Psi.
$$
Finally, the family of attractors $\Cal A_\delta(\xi)$ is H\"older continuous at $\delta=0$, i.e. \eqref{7.h-last} holds.
\end{theorem}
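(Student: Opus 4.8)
The plan is to follow the two observations stated just before the theorem and reduce everything to a local analysis near the perturbed hyperbolic equilibria, in complete parallel with the autonomous picture of Theorem \ref{Th7.reg}. The whole proof is a non-autonomous persistence argument organized around the kernel (the set of complete bounded trajectories), whose sections generate the pullback attractor $\Cal A_\delta(\xi)$.

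First I would establish persistence of the saddle structure. Fix a limit equilibrium $u_i\in\Cal R_0$. By Assumption B the spectrum of $S_0'(u_i)$ avoids the unit circle, so the linearized limit map carries an exponential dichotomy with splitting $\Phi=\Phi^i_+\oplus\Phi^i_-$, $\dim\Phi^i_+=\ind(u_i)$. Assumption E makes $S_{\delta,\xi}(1)$ and its derivative $O(\delta)$-close to $S_0(1)$ \emph{uniformly in} $\xi$, so by the roughness of exponential dichotomies the perturbed linearized cocycle keeps a uniform (in $\xi$ and $\delta$) dichotomy on a small neighbourhood. Solving a fixed-point problem for complete bounded orbits in an exponentially weighted sequence space (the discrete analogue of the weighted scheme used in the proof of Theorem \ref{Th6.IM-main}) then produces, for each $i$ and each $\xi$, a unique complete bounded trajectory $u_i(\xi)$ with $\sup_{\xi}\|u_i(\xi)-u_i\|_\Phi\le C\delta$; this is the non-autonomous ``equilibrium'' of the statement.

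Next I would build the unstable manifolds by the Lyapunov--Perron method: for each $u_i(\xi)$ solve the perturbed cocycle backward in time in a space whose exponential weight lies strictly between the contracting and expanding dichotomy rates, and read off the slaving graph over $\Phi^i_+$. This yields an $\ind(u_i)$-dimensional $C^1$ local manifold $\Cal M^+_{\delta,\mathrm{loc}}(\xi)$, $O(\delta)$-close to the unperturbed one and equipped with a uniform exponential tracking (asymptotic phase) property. The injectivity of $S_{\delta,\xi}(1)$ together with $\ker S'_{\delta,\xi}(1)=\{0\}$ (Assumption E) then lets me globalize: pushing the local manifold forward along the cocycle and invoking the absence of homoclinic structures gives a global submanifold $\Cal M^+_\delta(\xi)$ diffeomorphic to $\R^{\ind(u_i)}$.

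The main obstacle is the structural identity \eqref{7.man-non}, i.e.\ proving that the kernel consists \emph{precisely} of heteroclinics connecting the $u_i(\xi)$. In the limit this is immediate from the global Lyapunov functional, but the perturbed non-autonomous cocycle need not possess one, so the gradient-like behaviour must be recovered from closeness to the limit. Using the uniform compact attracting set of Assumption F and the $O(\delta)$-closeness, I would show that every perturbed complete bounded trajectory spends a uniformly bounded time outside any fixed neighbourhood of $\Cal R_0$ and, crucially, cannot revisit the same neighbourhood: absence of homoclinic loops is an open condition that persists for $\delta\le\delta_0$, ruling out recurrence. This reduces each complete orbit to a finite acyclic chain of saddle transitions, hence to a heteroclinic, and yields \eqref{7.man-non}. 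The delicate point is the simultaneous uniformity of all estimates in $\xi\in\Psi$ (possibly noncompact) and in $\delta$, for which Assumption E supplies exactly the uniform-in-$\xi$ bounds. Finally, the exponential attraction follows by chaining the local exponential attraction near each saddle with the bounded transit times via the transitivity of exponential attraction (Proposition \ref{Prop7.trans}), and the Hölder continuity \eqref{7.h-last} is obtained verbatim as in Proposition \ref{Prop7.low}(2): the Lipschitz dependence on $\delta$ from Assumption E plays the role of \eqref{7.cont} and the established uniform attraction plays the role of \eqref{7.exp1}, giving $\kappa=\lambda/(K+\lambda)$. All remaining routine estimates are as in \cite{VZCh13}.
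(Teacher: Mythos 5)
Your proposal is correct and follows essentially the same route as the paper: persistence of the hyperbolic saddle structure giving the perturbed ``equilibria'' $u_i(\xi)$ and their unstable manifolds with uniform exponential tracking, globalization via the injectivity of Assumption E, reduction of every complete bounded trajectory to a finite heteroclinic chain using closeness to the limit system together with the absence of homoclinic structures, exponential attraction by combining local tracking with uniformly bounded transit times, and H\"older continuity by the argument of Proposition \ref{Prop7.low}. The paper itself only sketches these two observations and defers the technical work to \cite{VZCh13}; your Lyapunov--Perron and dichotomy-roughness details are precisely the machinery that reference supplies.
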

\begin{remark} We see that the object obtained as a non-autonomous perturbation of a regular attractor gives us an example of non-autonomous {\it exponential} attractor introduced before. In particular, it is not only pullback, but also forward in time exponential attractor. This brings us an evidence that the used definition of the non-autonomous exponential attractor is correct and natural at least in the deterministic (uniform) case. Unfortunately, a reasonably general  analogue of Theorem \ref{Th7.last} does not seem to exist, for the random (non-uniform) case, see Example \ref{Ex4.sode}. This is closely related with the well-known problem of developing the center (stable/unstable) manifolds theory for random/stochastic DS. The theory of random exponential attractors developed in \cite{ShZ13} and discussed above  may be one of possible ways to handle this problem.
\end{remark}

\section{Determining functionals}\label{s8}
In this section, we discuss an alternative approach to the justification of the finite-dimensional reduction in dissipative PDEs, which is related with the concept of {\it determining functionals}. This approach was introduced in \cite{FP67} (see also \cite{Lad72}) for the case of 2D Navier-Stokes equations and Fourier modes and has been extended later to many other classes of dissipative systems and various classes of determining functionals, see \cites{6,7,8,9,14,15} and references therein.  In our exposition, we mainly follow the recent paper \cite{KAZ22}.

\begin{definition}\label{Def8.det} Let $S(t):\Phi\to\Phi$ be a DS acting in a Banach space $\Phi$. Then a finite system of continuous functionals $\Cal F:=\{\Cal F_1,\cdots,\Cal F_N\}$, $\Cal F_i:\Phi\to\R$ is called asymptotically determining if for any two trajectories $u_1(t):=S(t)u_1$ and $u_2(t):=S(t)u_2$, the convergence
$$
\lim_{t\to\infty}(\Cal F_i(u_1(t))-\Cal F_i(u_2(t)))=0, \ \ i=1,\cdots, N
$$
implies that $\lim_{t\to\infty}\|u_1(t)-u_2(t)\|_{\Phi}=0$.
\end{definition}
Thus, if $\Cal F$ is an asymptotically determining, the behaviour of any trajectory $u(t)$ as $t\to\infty$ is determined by the behaviour of finitely many quantities $\Cal F_1(u(t)),\cdots,\Cal F_N(u(t))$. At first glance, this may look as a kind of finite-dimensional reduction, but a more detailed analysis shows that it is not the case in general since, in contrast to inertial forms constructed via Man\'e projection theorem or inertial manifolds, the quantities $\{\Cal F_i(u(t))\}_{i=1}^N$ are not obliged to satisfy a system of ODEs. Actually, in many cases, they satisfy the system of {\it delay} differential equations, but the phase space of such systems remain infinite-dimensional, so despite the widespread misunderstanding, determining  functionals {\it do not} give any finite-dimensional reduction and are actually responsible for the reduction to a system of ODEs with delay.  Nevertheless, such a reduction is also interesting from theoretical point of view and has non-trivial applications in many related areas, for instance, for establishing the
controllability of an initially infinite dimensional system by finitely many
modes (see e.g. \cite{AT14}), verifying the uniqueness of an invariant measure for
random/stochasitc PDEs (see e.g. \cite{KuSh12}), etc. We also mention more recent
but  promising applications of determining functionals to data assimilation problems where the values of functionals $\Cal F_i((u(t))$ are interpreted as
the results of observations and the theory of determining functionals allows
us to build new methods of restoring the trajectory $u(t)$ by the results of
observations, see \cites{AT14, AOT13, OT08} and references therein.
\par
In the case where the considered DS possesses an attractor (with respect to the standard bornology of bounded sets in $\Phi$), we may use an alternative non-equivalent but closely related concept of separating functionals.
\begin{definition}\label{Def8.sep} Let the DS $S(t):\Phi\to\Phi$ acting on a Banach space $\Phi$ possess an attractor $\Cal A$ with respect to the standard bornology. Then a system $\Cal F:=\{\Cal F_i\}_{i=1}^N$ is separating on the attractor if for any two complete bounded trajectories $u_1(t)$ and $u_2(t)$ belonging to the attractor, the equality
$$
\Cal F_i(u_1(t))=\Cal F_i(u_2(t)),\ \ t\in\R,\ \ i=1,\cdots,N
$$
implies that $u_1(t)\equiv u_2(t)$.
\end{definition}
It is not difficult to show that (under the mild extra assumption that $S(t)$ are continuous for every $t$), any separating system $\Cal F$ of continuous functionals is automatically asymptotically determining, see e.g. \cite{KAZ22}. The converse is not true in general, the corresponding counterexample is also given in \cite{KAZ22} based on Example \ref{Ex1.det}. This simple observation is rather useful since usually the attractor is more regular and the separation property is easier to verify.
\par
In order to illustrate the standard approach to determining functionals, we consider the model example of an abstract semilinear parabolic equation
\begin{equation}\label{8.sem}
\Dt u+Au=F(u),\ u\big|_{t=0}=u_0
\end{equation}
in a Hilbert space $\Phi$, where $A:D(A)\to\Phi$ is a self-adjoint linear positive operator with the compact inverse and $F:\Phi\to\Phi$ is globally Lipschitz in $\Phi$ with the Lipschitz constant $L$. Let
$\{\lambda_i\}_{i=1}^\infty$ be the eigenvalues of the operator $A$ enumerated in a non-decreasing order and $\{e_i\}_{i=1}^\infty$ be the corresponding eigenvectors.

\begin{theorem}\label{Th8.det1} Let $N\in\Bbb N$ be such that  $\lambda_{N+1}>L$. Then the system functionals $\Cal F_i(u)=(u,e_i)$, $i=1,\cdots,N$ (which are the first $N$ Fourier modes) is asymptotically determining for DS generated by equation \eqref{8.sem}.
\end{theorem}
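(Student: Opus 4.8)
The plan is to reduce everything to a differential inequality for the high-frequency part of the difference of two trajectories and then to invoke a standard ODE stabilization lemma. Let $u_1(t)$ and $u_2(t)$ be two trajectories of \eqref{8.sem} and set $v(t):=u_1(t)-u_2(t)$. Writing $P_N$ for the orthoprojector onto $\mathrm{span}\{e_1,\dots,e_N\}$ and $Q_N:=\mathrm{Id}-P_N$, I decompose $v=p+q$ with $p:=P_Nv$ and $q:=Q_Nv$. The hypothesis $\Cal F_i(u_1(t))-\Cal F_i(u_2(t))=(v(t),e_i)\to0$ for $i=1,\dots,N$ says precisely that each coordinate of the finite-dimensional vector $p(t)$ tends to zero, hence $\|p(t)\|_\Phi\to0$ as $t\to\infty$. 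Since $\|v\|_\Phi^2=\|p\|_\Phi^2+\|q\|_\Phi^2$ by orthogonality, it remains only to prove that $\|q(t)\|_\Phi\to0$.

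First I would write the equation for $v$, namely $\Dt v+Av=F(u_1)-F(u_2)$, apply $Q_N$ (which commutes with $A$ since the $e_i$ are eigenvectors), and take the inner product of the resulting equation with $q$. Using that $q$ lives in the spectral subspace corresponding to eigenvalues $\ge\lambda_{N+1}$, so that $(Aq,q)\ge\lambda_{N+1}\|q\|_\Phi^2$, together with the global Lipschitz bound $\|F(u_1)-F(u_2)\|_\Phi\le L\|v\|_\Phi\le L(\|p\|_\Phi+\|q\|_\Phi)$ and the Cauchy--Schwarz inequality, I obtain
\begin{equation}
\frac12\frac d{dt}\|q\|_\Phi^2+\lambda_{N+1}\|q\|_\Phi^2\le L\|q\|_\Phi^2+L\|p\|_\Phi\|q\|_\Phi.
\end{equation}
A single application of Young's inequality to absorb the cross term then yields, with $\delta:=\lambda_{N+1}-L>0$ and $K:=L^2/\delta$,
\begin{equation}
\frac d{dt}\|q\|_\Phi^2+\delta\|q\|_\Phi^2\le K\|p(t)\|_\Phi^2.
\end{equation}
Here is exactly where the assumption $\lambda_{N+1}>L$ enters: it guarantees that the dissipation in the high modes strictly dominates the destabilizing effect of the nonlinearity, so that $\delta>0$.

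To finish, I would apply the Gronwall inequality to the last estimate, which gives
\begin{equation}
\|q(t)\|_\Phi^2\le e^{-\delta t}\|q(0)\|_\Phi^2+K\int_0^te^{-\delta(t-s)}\|p(s)\|_\Phi^2\,ds,
\end{equation}
and then invoke the elementary fact that if $g(s)\ge0$ satisfies $g(s)\to0$ as $s\to\infty$ then $\int_0^te^{-\delta(t-s)}g(s)\,ds\to0$: given $\eb>0$, split the integral at a time $T$ beyond which $g<\eb$, so that the tail is bounded by $\eb/\delta$ while the head decays like $e^{-\delta t}$. With $g(s)=K\|p(s)\|_\Phi^2\to0$ this forces $\|q(t)\|_\Phi\to0$, and combined with $\|p(t)\|_\Phi\to0$ we conclude $\|u_1(t)-u_2(t)\|_\Phi\to0$, which is the asserted determining property.

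The only genuinely delicate point, and the step I would treat most carefully, is the regularity needed to legitimize the energy computation: one must know that $t\mapsto\|q(t)\|_\Phi^2$ is absolutely continuous, that $q(t)\in D(A^{1/2})$ so that $(Aq,q)$ is well-defined, and that the energy identity underlying the first inequality actually holds, together with local integrability of the right-hand side. For the abstract semilinear parabolic problem \eqref{8.sem} with globally Lipschitz $F$ this follows from the standard well-posedness and parabolic smoothing theory, so I would simply cite it; the remaining computations are routine and require no hypothesis beyond the spectral gap $\lambda_{N+1}>L$.
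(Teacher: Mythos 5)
Your proposal is correct and follows essentially the same route as the paper's proof: the identical low/high mode decomposition, the identical energy estimate for the high-mode part $Q_N(u_1-u_2)$ exploiting the spectral gap $\lambda_{N+1}>L$, and the same Gronwall-type integration against the vanishing low-mode input. The only cosmetic difference is that you work with the squared norm and absorb the cross term by Young's inequality, while the paper divides through by $\|w\|$ to obtain the linear inequality $\frac{d}{dt}\|w\|_\Phi+\alpha\|w\|_\Phi\le L\|v\|_\Phi$; the two computations are equivalent.
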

\begin{proof} Indeed, let $u_1(t)$ and $u_2(t)$ be two solutions of equation \eqref{8.sem} such that
$P_N(u_1(t)-u_2(t))\to0$ as $t\to\infty$, where $P_N$ is a spectral orthoprojector related with the first $N$ Fourier modes and $Q_N=1-P_N$. Let $v(t):=P_N(u_1(t)-u_2(t))$ and $w(t):=Q_N(u_1(t)-u_2(t))$. Then, the last function solves the equation
\begin{equation}\label{8.good}
\Dt w+Aw=Q_N(F(u_1)-F(u_2)).
\end{equation}
Multiplying this equation by $w$ and using that $F$ is globally Lipschitz, we end up with
$$
\frac12\frac d{dt}\|w(t)\|^2_\Phi+\lambda_{N+1}\|w(t)\|^2_\Phi\le L\|w\|^2_\Phi+L\|w\|_\Phi\|v\|_\Phi.
$$
 Using the assumption $\lambda_{N+1}>L$, we arrive at
 $$
 \frac d{dt}\|w(t)\|_\Phi+\alpha\|w(t)\|_{\Phi}\le L\|v(t)\|_{\Phi}
 $$
 for some positive $\alpha$. Integrating this inequality and using that $\|v(t)\|_\Phi\to0$ as $t\to\infty$, we conclude that also $\|w(t)\|_{\Phi}\to0$ which finishes the proof of the theorem.
\end{proof}
\begin{remark} The result of the last theorem can be generalized in a straightforward way to more general classes of linear determining functionals. For instance, if a system $\Cal F:=\{\Cal F_1,\cdots,\Cal F_N\}$ of  linear continuous functionals satisfies the following inequality
$$
(L+\alpha)\|v\|^2_{\Phi}\le (Av,v)+C\sum_{n=1}^N|\Cal F_n(v)|, \ v\in D(A^{1/2}),
$$
for some positive $\alpha$ and $C$, then taking two trajectories $u_1(t)$ and $u_2(t)$ and $v(t):=u_1(t)-u_2(t)$, multiplying the equation for $v$ by $v$ in $\Phi$ and using the Lipschitz continuity of $F$ and the last assumption, we arrive at
\begin{equation}\label{8.det-lin}
\frac12\frac d{dt}\|v(t)\|^2_\Phi+\alpha\|v(t)\|^2_\Phi\le C\sum_{n=1}^N|\Cal F_n(v(t))|.
\end{equation}
Thus, the system $\Cal F$ is indeed asymptotically determining. This allowed one to construct determining nodes as well as much more general systems of linear determining functionals, see \cite{7} and references therein for more details.
\par
We also note that, due to \eqref{8.det-lin}, the complete trajectory $u(t)$, $t\in\R$, is determined in a unique way by the values $\xi_i(t):=\Cal F_i(u(t))$, i.e.
$$
u(t)=\frak F(\xi_1(t+\cdot),\cdots,\xi_N(t+\cdot)),\ \ \frak F:[C(-\infty,0;\R)]^N\to\Phi
$$
and we have the exponential decay of the delay kernel:
$$
\|\frak F(\xi)-\frak F(\bar\xi)\|_\Phi\le C\sup_{s\in\R_+}\{e^{\alpha s} \|\xi(-s)-\bar\xi(-s)\|_{\R^N}\}.
$$
Thus, returning back to the quantities $\xi_n(t):=\Cal F_n(u(t))$, we see that, in general, they do not satisfy a system of ODEs in $\R^N$, but  satisfy a system of {\it retarded} ODEs with an infinite delay and exponentially decaying delay kernel:
$$
\frac d{dt}\xi(t)=\Cal G(\xi(t+\cdot)),\ \ \xi(t)\in\R^N, \ \ \Cal G: [C(-\infty,0;\R)]^N\to\R^N.
$$
Thus, since the phase space for such a retarded system remains infinite-dimensional, determining functionals do not provide any finite-dimensional reduction, but reduce the initial PDE to a system of ODEs with delay (which is often referred as Lyapunov-Schmidt reduction).
\end{remark}
We now give a more precise look to the number of elements in the "optimal" system of determining functionals. To this end, we give the following definition.
\begin{definition}\label{Def8.dim} Let $S(t):\Phi\to\Phi$ be a DS in a Banach space $\Phi$. Then the determining  dimension $\dim_{det}(S(t),\Phi)$ is the minimal number $N$ of continuous functionals $\Cal F_1,\cdots,\Cal F_N:\Phi\to\R$ such that $\Cal F:=\{\Cal F_n\}_{n=1}^N$ is asymptotically determining.
\par
Note that we do not require the functionals $\Cal F_n$ to be {\it linear}. Although in applications determining systems often consist of linear functionals, the requirement of the linearity in a general theory looks artificial when the nonlinear DS is considered. In addition, the usage of nonlinear, e.g. quadratic, cubic, etc. functionals simplifies the theory and makes it more elegant.
\end{definition}
We recall that there exists a simple and natural lower bound for $\dim_{det}(S(t),\Phi)$ which is related with the embedding dimension of the set $\Cal R$ of equilibria points. Namely, by definition, $\dim_{emb}(\Cal R,\Phi)$ is the minimal number $M\in\Bbb N$ such that there exists a continuous injective map $\Cal F:\Cal R\to\R^M$. Then, obviously,
\begin{equation}\label{8.lower}
\dim_{det}(S(t),\Phi)\ge\dim_{emb}(\Cal R,\Phi).
\end{equation}
Indeed, any asymptotically determining system must, in particular, distinguish different equilibria. Mention also that $\dim_{emb}(\Cal R, \Phi)$ is a topological invariant and can be estimated using e.g. Lebesgue covering dimension.
\par
Another interesting observation is that, due to the Sard lemma, the set of equilibria $\Cal R$ is generically finite, see \cites{BV92, Rob11}, in particular, it will be so for equation \eqref{8.sem}. Therefore, in a generic situation, \eqref{8.lower} gives us $\dim_{det}(S(t),\Phi)\ge1$ for the lower bound. One of the most surprising results of the theory is that this estimate is sharp, i.e., there exists a dense set of smooth functionals such that each of them is asymptotically determining.

\begin{theorem}\label{Th8.det} Let the set $\Cal R$ of equilibria be finite. Then, the determining dimension of the solution semigroup $S(t):\Phi\to\Phi$ associated with equation \eqref{8.sem} is equal to one. Moreover, there is a dense/prevalent set of polynomial functionals on $\Phi$ such that each of them is a determining functional for $S(t)$.
\end{theorem}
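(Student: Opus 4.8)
The plan is to separate the statement into an easy lower bound and a substantial upper bound, and to reduce the latter, via the notion of separating functionals, to a Takens-type delay-embedding assertion for the flow restricted to the global attractor. First I would record the structural preliminaries. Since $F$ is globally Lipschitz and $A$ is self-adjoint positive with compact resolvent, equation \eqref{8.sem} generates a $C^1$ (indeed analytic) dissipative semigroup $S(t):\Phi\to\Phi$ which, by the parabolic smoothing property, satisfies a squeezing estimate of the form \eqref{5.sq1} on its absorbing set. Hence, by Theorem \ref{Th5.lad}, it possesses a global attractor $\Cal A$ of finite fractal dimension $d:=\dim_f(\Cal A,\Phi)<\infty$, and the restriction $S(t)\big|_{\Cal A}$ is an injective flow (backward uniqueness for \eqref{8.sem} being standard). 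The lower bound $\dim_{det}(S(t),\Phi)\ge 1$ then follows from \eqref{8.lower}: once $\Cal R$ contains at least two points, distinct equilibria give non-converging constant trajectories, so the empty system is not determining and $\dim_{emb}(\Cal R,\Phi)=1$.

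For the upper bound I would use the observation recorded after Definition \ref{Def8.sep} that every separating system of continuous functionals is automatically asymptotically determining. Thus it suffices to exhibit a single polynomial functional $\Cal F:\Phi\to\R$ that is \emph{separating} on $\Cal A$, i.e. such that the observation map $u_0\mapsto\big(t\mapsto\Cal F(S(t)u_0)\big)$ is injective on $\Cal A$. The key point is that separation by one functional observed along an entire trajectory is exactly a delay-embedding statement: fixing a time step $\tau>0$ and $m:=2d+1$, define the delay map
$$
D_{\Cal F}(u):=\big(\Cal F(u),\Cal F(S(\tau)u),\dots,\Cal F(S(m\tau)u)\big)\in\R^{m+1}.
$$
If $D_{\Cal F}$ is injective on $\Cal A$, then $\Cal F$ separates, because from $\Cal F(S(t)u_1)=\Cal F(S(t)u_2)$ for all $t$ we obtain $D_{\Cal F}(S(t)u_1)=D_{\Cal F}(S(t)u_2)$, whence $S(t)u_1=S(t)u_2$ for all $t$ and so $u_1\equiv u_2$.

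The heart of the proof is therefore to produce, within the class of polynomials, a prevalent (hence dense) set of $\Cal F$ for which $D_{\Cal F}$ is injective on $\Cal A$. Here I would invoke the Man\'e projection Theorem \ref{Th5.mane} to conjugate the dynamics on $\Cal A$ to a H\"older continuous system on a compact subset of $\R^{2d+1}$; this reduces the genericity question to a finite-dimensional probe space of polynomials on which prevalence can be tested, and then one applies the H\"older version of the Takens / Sauer--Yorke--Casdagli delay-embedding theorem (see \cites{Rob11,SIC91,Tak81}). This is precisely where the finiteness of $\Cal R$ is used: the unavoidable obstructions to injectivity of a delay map sit at the fixed points and short periodic orbits of the flow, and at equilibria injectivity simply forces $\Cal F$ to take distinct values on $\Cal R$, which a prevalent polynomial does exactly because $\Cal R$ is a finite set. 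Since a countable intersection of prevalent sets is prevalent, combining ``$\Cal F$ separates $\Cal R$'' with the embedding conclusion yields a prevalent, and a fortiori dense, family of admissible polynomial functionals, giving $\dim_{det}(S(t),\Phi)\le 1$.

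The main obstacle I anticipate is the control of periodic (and more generally recurrent) orbits of the flow on $\Cal A$, about which equation \eqref{8.sem} gives no a priori information. In the classical Takens theorem one must assume that periodic orbits of period comparable to $(m{+}1)\tau$ are finite in number and nondegenerate; in the present only H\"older continuous, infinite-dimensional setting this has to be replaced by a genericity argument that varies $\Cal F$ and the delay time $\tau$ simultaneously, choosing $\tau$ outside a measure-zero resonance set so that distinct base points of periodic orbits cannot be forced to share delay vectors. Making this step rigorous --- verifying that the periodic-orbit obstruction is prevalently avoided once $\Cal R$ is finite --- is the delicate technical core, and it is the part I would draw most directly from the Takens-theoretic machinery developed in \cite{KAZ22}.
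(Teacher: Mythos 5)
Your overall architecture coincides with the paper's: the lower bound from equilibria via \eqref{8.lower}, the reduction of ``asymptotically determining'' to ``separating on the attractor'' (the remark after Definition \ref{Def8.sep}), the Man\'e projection (Theorem \ref{Th5.mane}) of $\Cal A$ onto a finite-dimensional set carrying a H\"older continuous projected dynamics, and finally the prevalence (H\"older/Sauer--Yorke--Casdagli) version of the Takens delay embedding theorem applied to a polynomial observable. Up to that point your proposal is the paper's proof.

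The genuine gap is exactly the step you flag as the ``delicate technical core'' and then leave open: the control of the periodic orbits of the time-$\tau$ map $S(\tau)$. Your proposed fix --- varying $\tau$ outside a measure-zero ``resonance'' set so that distinct periodic points cannot be forced to share delay vectors --- is not the right idea and would not close the argument. The prevalence form of the delay embedding theorem requires quantitative dimension bounds on the sets of $p$-periodic points of $S(\tau)$ for every $p\le k$ (for instance, the set of $2$-periodic points must have box dimension strictly less than $1$); a single non-trivial closed orbit of the flow whose period resonates with $\tau$ already produces a one-dimensional circle of such points, and if the flow had a continuum of closed orbits with periods accumulating at zero, no choice of $\tau$ would repair this. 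The paper's resolution is different and much simpler: for the semilinear parabolic equation \eqref{8.sem} with globally Lipschitz $F$, non-constant periodic orbits of the flow have periods bounded below by a positive constant (a Yorke-type lower bound, see \cite{Rob11}). Hence, after fixing $k$ in terms of the fractal dimension of $\Cal A$, one chooses $\tau$ so small that $k\tau$ lies below this bound, whereupon \emph{every} periodic point of $S(\tau)$ of period at most $k$ is an equilibrium; since $\Cal R$ is finite by hypothesis, all hypotheses of the delay embedding theorem hold trivially. This parabolic small-period fact is the missing idea, and it is also where the finiteness of $\Cal R$ genuinely enters --- not merely through asking a prevalent polynomial to separate finitely many equilibrium values, as your sketch suggests.
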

\begin{proof}[Idea of the proof] The proof is based on the H\"older continuous infinite-dimensional version of the famous Takens delay embedding theorem, see \cites{SIC91,Tak81,Rob11} and references therein. To apply this theorem, we need to control the size of the set $\Cal R$ of equilibria (which is available due to our assumptions) as well as the size of the sets of periodic orbits of period
$\tau,2\tau,\cdots, k\tau$, where $k$ is large enough. This control is attained using the fact that if $\tau\ll1$, all of such periodic orbits must be equilibria, see \cite{Rob11} for the details. On the other hand, the fractal dimension of the attractor $\Cal A$ is finite and we have the one-to-one H\"older continuous Man\'e projection of this attractor to the finite-dimensional set $\bar{\Cal A}\subset\R^N$ with the projected DS $\bar S(t)$ on it.
\par
Thus, due to the proper version of the Takens delay embedding theorem, see e.g. \cite{Rob11}, there exists a prevalent set of continuous functionals $\Cal F:\Phi\to\R$ (polynomials of sufficiently big degree are enough) such that the map
$$
\Bbb F_k:\Cal A\to \R^k,\ \ \Bbb F_k(u_0):=(\Cal F(u_0),\Cal F(S(\tau)u_0),\Cal F(S((k-1)\tau)u_0))
$$
is one-to-one on the attractor $\Cal A$ if $k$ is large enough and $\tau>0$ is small enough. Thus, each of such functionals is separating trajectories on the attractor and, therefore, is asymptotically determining, see \cites{KAZ22, Rob11} for more details.
\end{proof}
\begin{remark} The result of Theorem \ref{Th8.det} also gives a reduction to a delayed ODE. Namely, as shown in \cite{KAZ22}, there exists a continuous function $\Theta:\R^k\to \R$ such that the observable $\xi(t):=\Cal F(u(t))$, where $u(t)$ is any complete trajectory on the attractor, solves the scalar ODE with the {\it finite} delay:
\begin{equation}\label{8.scalar}
\frac d{dt}\xi(t)=\Theta(\xi(t-\tau),\cdots,\xi(t-k\tau))
\end{equation}
and if the values of $\xi(t-\tau),\xi(t-2\tau),\cdots,\xi(t-k\tau)$ are known for some $t\in\R$, the corresponding point $u(t)\in\Cal A$ can be restored using the attractor reconstruction procedure provided by the Takens delay embedding theorem.
\end{remark}
The situation when the set $\Cal R$ is not-finite looks similar.
\begin{proposition}\label{Prop8.det} Let $S(t):\Phi\to\Phi$ be a DS associated with equation \eqref{8.sem} and let $\Cal R$ be its set of equilibria. Then, the determining dimension of $S(t)$ satisfies:
\begin{equation}\label{8.gen}
\dim_{emb}(\Cal R,\Phi)\le \dim_{det}(S(t),\Phi)\le \dim_{emb}(\Cal R,\Phi)+1.
\end{equation}
\end{proposition}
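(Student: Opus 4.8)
The lower bound $\dim_{emb}(\Cal R,\Phi)\le\dim_{det}(S(t),\Phi)$ is precisely \eqref{8.lower} and needs no further work: every equilibrium produces a constant trajectory, so for any two $u_1,u_2\in\Cal R$ the quantities $\Cal F_i(u_1)-\Cal F_i(u_2)$ are constant in time and must vanish when $\Cal F=\{\Cal F_1,\dots,\Cal F_N\}$ is asymptotically determining; hence the restriction of $\Cal F$ to $\Cal R$ is injective and embeds $\Cal R$ into $\R^N$, giving $N\ge\dim_{emb}(\Cal R,\Phi)$. The real content of the proposition is therefore the upper bound $\dim_{det}(S(t),\Phi)\le\dim_{emb}(\Cal R,\Phi)+1$, and the plan is to exhibit an explicit system of $m+1$ functionals, $m:=\dim_{emb}(\Cal R,\Phi)$, which is \emph{separating} on the attractor. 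By the remark following Definition~\ref{Def8.sep} (applicable since the maps $S(t)$ are continuous), such a system is automatically asymptotically determining.

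First I would split the $m+1$ functionals into a static block $\Cal G=(\Cal G_1,\dots,\Cal G_m):\Phi\to\R^m$ and a single dynamic functional $\Cal H:\Phi\to\R$. By the very definition of $\dim_{emb}$, the block $\Cal G$ can be chosen so that its restriction to $\Cal R$ is injective, and after an arbitrarily small perturbation it may be taken polynomial. Its only purpose is to resolve the equilibria, which are exactly the points at which the time-delay construction below degenerates: at a fixed point all delayed observations coincide, so separation there must come entirely from $\Cal G$. The functional $\Cal H$ is instead used to reconstruct the motion on the remainder of the attractor through its time history, in the spirit of the proof of Theorem~\ref{Th8.det}.

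Next I would use the finiteness of $\dim_f(\Cal A,\Phi)$ (guaranteed here by the smoothing/squeezing property of \eqref{8.sem}, cf. Theorem~\ref{Th5.lad}) together with the H\"older, prevalent version of the Takens delay embedding theorem, see \cites{SIC91,Tak81,Rob11}. Fix a sufficiently large integer $k$ (it suffices that $k>2\dim_f(\Cal A,\Phi)$) and a time step $\tau>0$ so small that every periodic point of the map $S(\tau)$ of period at most $k$ is an equilibrium; this is legitimate because trajectories on the compact attractor $\Cal A$ move with bounded speed, so a non-stationary periodic orbit has period bounded below by some $T_0>0$, and one takes $k\tau<T_0$. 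For such $k$ and $\tau$ the delay-coordinate map
\begin{equation*}
\Bbb F:\Cal A\to\R^{m+k},\qquad \Bbb F(u_0):=\big(\Cal G(u_0),\,\Cal H(u_0),\,\Cal H(S(\tau)u_0),\dots,\Cal H(S((k-1)\tau)u_0)\big),
\end{equation*}
is injective on $\Cal A$ for a prevalent (hence dense) set of functionals $\Cal H$: the prescribed component $\Cal G$ embeds the obstructing fixed-point set $\Cal R$, while the prevalent choice of $\Cal H$ handles its complement exactly as in Theorem~\ref{Th8.det}. Since prevalence is translation invariant, fixing the static block $\Cal G$ does not spoil the genericity statement for $\Cal H$.

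Finally I would close the argument: if $u_1(t),u_2(t)$, $t\in\R$, are complete bounded trajectories on $\Cal A$ with $\Cal G_j(u_1(t))=\Cal G_j(u_2(t))$ and $\Cal H(u_1(t))=\Cal H(u_2(t))$ for all $t$ and all $j$, then evaluating at the shifted times $t,t+\tau,\dots,t+(k-1)\tau$ yields $\Bbb F(u_1(t))=\Bbb F(u_2(t))$, whence $u_1(t)=u_2(t)$ for every $t$ by injectivity of $\Bbb F$. Thus $\{\Cal G_1,\dots,\Cal G_m,\Cal H\}$ is separating and uses only $m+1$ functionals, the extra coordinates of $\Bbb F$ being produced by the time evolution rather than by additional functionals. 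The main obstacle I anticipate is the delay embedding step with a \emph{partially prescribed} observable: one must verify that the fixed block $\Cal G$ removes precisely the fixed-point obstruction and that the dimensional control on short-period periodic sets required by the theorem reduces, for small $\tau$, to the already-available control on $\Cal R$. Managing this interplay between the two mechanisms, rather than any routine energy estimate, is where the genuine work lies.
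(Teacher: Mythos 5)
Your plan coincides with the paper's: the paper proves \eqref{8.gen} by repeating the delay-embedding argument of Theorem \ref{Th8.det} with an additional block of $m=\dim_{emb}(\Cal R,\Phi)$ functionals that embeds $\Cal R$, deferring the details to \cite{KAZ22}. Your lower bound is exactly \eqref{8.lower}, and your reduction of the upper bound to a separation property on $\Cal A$ (via the remark after Definition \ref{Def8.sep}) is also how the paper proceeds. Be aware, though, that the step you flag at the end as the main obstacle is not a side issue but the entire content of the upper bound: the standard Takens/Robinson theorem is genuinely \emph{inapplicable} here, because it requires the set of $p$-periodic points of $S(\tau)$ to have box-counting dimension less than $p/2$, and for small $\tau$ these sets all coincide with $\Cal R$, whose dimension may be arbitrarily large. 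So the phrase ``the prevalent choice of $\Cal H$ handles its complement exactly as in Theorem \ref{Th8.det}'' is an overstatement; what is needed is a modified prevalence argument for the augmented map $(\Cal G,\Cal H,\Cal H\circ S(\tau),\dots,\Cal H\circ S((k-1)\tau))$ in which the prescribed injective block $\Cal G$ compensates for the failure of the dimension condition on the periodic sets. That modified theorem is precisely what \cite{KAZ22} supplies, and the paper, like you, outsources it.

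One concrete error in your sketch: the claim that bounded speed on the compact attractor forces non-stationary periodic orbits to have periods bounded away from zero is false. The continuous planar field $\dot u=u^{\perp}|u|^{-1/2}$ has speed $|u|^{1/2}$, bounded near the origin, yet its periodic orbits have period $2\pi|u|^{1/2}$, arbitrarily small. The correct argument, which is the one used in \cite{Rob11}, is that the vector field $u\mapsto -Au+F(u)$ \emph{restricted to the attractor} of \eqref{8.sem} is Lipschitz (a consequence of the smoothing properties of the equation), and a Yorke-type theorem then bounds the period of any non-constant periodic orbit from below by $2\pi$ divided by this Lipschitz constant. With that replacement, and with the augmented delay-embedding theorem taken from \cite{KAZ22}, your argument becomes the paper's proof.
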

The proof of this proposition is analogous to the proof of Theorem \ref{Th8.det} and is given in \cite{KAZ22}. We expect that the left inequality \eqref{8.gen} is actually an equality, but up to the moment we did not check this. Moreover, the analogue of delay ODE \eqref{8.scalar} holds for this case as well, see \cite{KAZ22}.
\begin{remark} We see that the determining functionals are indeed responsible for the reduction of the initial PDE to ODEs {\it with delay} and are not related with any kind of {\it finite-dimensional} reduction. In addition, the minimal number of determining functionals (determining dimension) is related with the "size" of the set of equilibria $\Cal R$ of the considered DS only (and is not related with any dynamical properties of this DS). Although these results are formulated for semilinear parabolic system \eqref{8.sem} only, their close analogues remain true for much wider class of equations including Navier-Stokes system, damped wave equations, etc.
\end{remark}
We conclude this section by several illustrating examples. We start with the most studied case of one spatial dimension.
\begin{example}\label{Ex8.dir} Let us consider the following 1D semilinear heat equation
\begin{equation}\label{8.heat}
\Dt u=\nu\partial_x^2u-f(u)+g,\ \ x\in[0,\pi], \ \ \nu>0
\end{equation}
endowed with Dirichlet boundary conditions
$$
u\big|_{x=0}=u\big|_{x=\pi}=0.
$$
Assume also that $f\in C^1(\R,\R)$ satisfies some dissipativity conditions, say $f(u)u\ge-C$. Then, equation \eqref{8.heat} generates a dissipative DS in the phase space $\Phi=L^2(0,\pi)$ and this DS possesses a global attractor $\Cal A$ which is bounded at least in $C^2([0,\pi])$, see e.g. \cite{BV92},
\par
Moreover, the equilibria $\Cal R$ of this problem satisfy the second order ODE
$$
\nu u''(x)-f(u(x))+g=0,\ \ u(0)=u(\pi)=0.
$$
Thus, the map $u\to u'(0)$ gives a homeomorphic (and even smooth) embedding of $\Cal R$ to $\R$, so $\dim_{emb}(\Cal R,\Phi)=1$. Thus, we expect that $\dim_{det}(S(t),\Phi)=1$ (or at most $2$ according to Proposition \ref{Prop8.det}). The possible explicit form of the determining functional is well-known here: $\Cal F(u):= u\big|_{x=x_0}$, where $x_0>0$ is small enough, see \cite{kukavica}. Indeed, let $u_1(t),u_2(t)\in\Cal A$ be two complete trajectories of \eqref{8.heat} belonging to the attractor such that $u_1(t,x_0)\equiv u_2(t,x_0)$. Then the function $v(t)=u_1(t)-u_2(t)$ solves
\begin{equation}\label{8.dif}
\Dt v=\nu\partial_x^2v-l(t)v,\  v\big|_{x=0}=v\big|_{x=x_0}=0,
\end{equation}
where $l(t):=\int_0^1f'(su_1(t)+(1-s)u_2(t))\,ds$. Since the attractor $\Cal A$ is bounded in $C[-\pi,\pi]$, we know that $|l(t)|_{C[0,\pi]}\le L$ is also globally bounded. Multiplying equation \eqref{8.dif} by $v$, integrating in $x\in[0,x_0]$ and using the fact that the first eigenvalue of $-\partial_x^2$ with Dirichlet boundary conditions is $(\frac\pi{x_0})^2$, we get
$$
\frac12\frac d{dt}\|v(t)\|^2_{L^2}+\nu\(\frac\pi{x_0}\)^2\|v(t)\|^2_{L^2}-L\|v(t)\|^2_{L^2}\le0
$$
Fixing $x_0>0$ to be small enough that $\nu\(\frac\pi{x_0}\)^2>L$, applying the Gronwall inequality and using that $\|v(t)\|_{L^2}$ remains bounded as $t\to-\infty$, we conclude that $v(t)\equiv0$ for all $t\in\R$ and $x\in[0,x_0]$. Thus, the trajectories $u_1(t,x)$ and $u_2(t,x)$ coincide for all $t$ and all $x\in[0,x_0]$. Using now the arguments related with logarithmic convexity or Carleman type estimates which work for much more general class of equations, see \cite{RL}), we conclude that the trajectories $u_1$ and $u_2$ coincide. Thus, $\Cal F$ is separating on the attractor and therefore is asymptotically determining.
Being pedantic, we need to note that the functional $\Cal F(u)=u\big|_{x=x_0}$ is not defined on the phase space $H$, but on its proper subspace $C[0,\pi]$ (this is a typical situation for determining nodes, see \cite{OT08}). However, we have an instantaneous $\Phi\to C[0,\pi]$ smoothing property, so if we start from $u_0\in \Phi$, the value $\Cal F(u(t))$ will be defined for all $t>0$, so we just ignore this small inconsistency.
\end{example}
\begin{example}\label{Ex8.per} Let us consider the same equation \eqref{8.heat} on $[0,\pi]$, but endowed with {\it periodic} boundary conditions. In this case we do not have the condition $v(0)=0$, so the set $\Cal R$ of equilibria is naturally embedded in $\R^2$, not in $\R^1$, by the map $u\to (u\big|_{x=0},u'\big|_{x=0})$. Thus, we cannot expect that the determining dimension is one. Moreover, at least in the case when $g=const$, equation \eqref{8.heat} possesses a spatial shift as a symmetry and, therefore, any nontrivial equilibrium generates the whole circle of equilibria. Since a circle cannot be homeomorphically embedded in $\R^1$, the determining dimension must be at least $2$. We claim that it is indeed $2$ and the determining functionals can be taken in the form:
\begin{equation}\label{8.2-det}
\Cal F_1(u):=u\big|_{x=0},\ \ \Cal F_2(u)=u\big|_{x=x_0}.
\end{equation}
Indeed, arguing exactly as in the previous example, we see that the system $\Cal F=\{\Cal F_1,\Cal F_2\}$ is asymptotically determining if $x_0>0$ is small enough.
\end{example}
The next natural example shows that the determining dimension may be finite and small even if the corresponding global attractor is infinite-dimensional.
\begin{example}\label{Ex8.inf} Let us consider the semilinear heat equation \eqref{8.heat} on the whole line $x\in\R$. The natural phase space for this problem is the  {\it uniformly-local} space
\begin{equation}\label{8.ul}
\Phi=L^2_b(\R):=\big\{u\in L^2_{loc}(\R),\ \|u\|_{L^2_b}:=\sup_{x\in\R}\|u\|_{L^2(x,x+1)}<\infty\big\}.
\end{equation}
It is known that, under natural dissipativity assumption on $f\in C^1(\R)$ (e.g., $f(u)u\ge-C+\alpha u^2$ with $\alpha>0$), this equation generates a dissipative DS $S(t)$ in $\Phi$ for every $g\in L^2_b(\R)$. Moreover, this DS possesses the {\it locally compact} global attractor $\Cal A$ that  is a bounded in $\Phi$ and compact in $L^2_{loc}(\R)$ strictly invariant set which attracts bounded in $\Phi$ sets in the topology of $L^2_{loc}(\R)$, see \cite{MZ08} and references therein. Note that, in contrast to the case of bounded domains, the compactness and attraction property in $\Phi$ fail in general in the case of unbounded domains. It is also known that, at least in the case where equation \eqref{8.heat} possesses a spatially homogeneous exponentially unstable equilibrium (e.g., in the case where $g=0$ and $f(u)=u^3-u$), the fractal dimension of $\Cal A$ is infinite (actually it contains submanifolds of any finite dimension), see \cite{MZ08}.
\par
Nevertheless, the system of linear functionals \eqref{8.2-det} remains determining for this equations by {\it exactly} the same reasons as in Examples \ref{Ex8.dir} and \ref{Ex8.per}. Thus,
$$
\dim_{det}(S(t),\Phi)=2.
$$
One  functional is not determining in general by the reasons explained in Example \ref{Ex8.per}.
\end{example}
We now give an example of a non-dissipative and even {\it conservative}  system with determining dimension one.
\begin{example}\label{Ex8.wave} Let us consider the following 1D wave equation:
\begin{equation}\label{8.wave}
\partial^2_tu=\partial_x^2u,\ \ x\in(0,\pi),\ \ u\big|_{x=0}=u\big|_{x=\pi}=0,\ \ \xi_u\big|_{t=0}=\xi_0,
\end{equation}
where $\xi_u(t):=\{u(t),\Dt u(t)\}$. It is well-known that problem \eqref{8.wave} is well-posed in the energy phase space $E:=H^1_0(0,\pi)\times L^2(0,\pi)$ and the energy identity holds
\begin{equation}
\|\Dt u(t)\|_{L^2}^2+\|\partial_x u(t)\|^2_{L^2}=const.
\end{equation}
Moreover, the solution $u(t)$ can be found explicitly in terms of $\sin$-Fourier series:
\begin{equation}\label{8.sol}
u(t)=\sum_{n=1}^\infty (A_n\cos(nt)+B_n\sin(nt))\sin (nx),
\end{equation}
where $A_n=\frac2{\pi}(u(0),\sin (nx))$, $B_n=\frac2{\pi}(u'(0),\sin(nx))$. Crucial for us is that the function \eqref{8.sol} is an {\it almost-periodic} function of time with values in $H^1_0$. Let us consider now a linear functional on $\Phi=L^2(0,\pi)$, i.e,
\begin{equation}
\Cal Fu=(l(x),u)=\sum_{n=1}^\infty l_nu_n,\ \
\end{equation}
where $l_n$ and $u_n$ are the Fourier coefficients of $l\in \Phi$ and $u$ respectively. Then
\begin{equation}
\Cal Fu(t)=\sum_{n=1}^\infty l_nA_n\cos(nt)+l_nB_n\sin(nt)
\end{equation}
is a scalar almost periodic function. Since the Fourier coefficients of an almost-periodic function are uniquely determined by the function, we have
$$
\Cal Fu(t)\equiv0\ \ \text{ if and only if }\ \ l_nA_n=l_nB_n=0,
$$
see \cite{LZ} for details. Thus, if we take a generic function $l$ (for which $l_n\ne0$ for all $n\in\Bbb N$, $\Cal Fu(t)\equiv0$ will imply that $A_n=B_n=0$ and, therefore, $u(t)\equiv0$. Thus, $\Cal F$ is separating on the set of complete trajectories. It remains to note that, since any trajectory of \eqref{8.wave} is almost-periodic in $E$, the $\omega$-limit set of any trajectory exists and is compact in $E$.  Then,  as not difficult to check (see \cite{KAZ22}), $\Cal F$ is also asymptotically determining, so the determining dimension of this system is one.
\end{example}
\begin{remark} The principal difference between the case of one spactial dimension and the multidimensional case is that, in 1D case, any equilibrium $u_0\in\Cal R$ of such a PDE solves a system of ODEs, so the dimension of $\Cal R$ is restricted by the order of this system. This allows us in many cases to get sharp estimates for the determining dimension and even compute it explicitly. In particular,  Examples \ref{Ex8.dir} and \ref{Ex8.per} clearly show that it is independent of physical parameters of the DS considered as well as of the dimensions of the attractor. In contrast to this, in the multi-dimensional case, $u_0\in\Cal R$ usually solves an elliptic PDE and we do not have any good formulas for the dimension of $\Cal R$. The best what we can do in general is to use the obvious estimate
$$
\dim_{emb}(\Cal R,\Phi)\le\dim_{emb}(\Cal A,\Phi)\le 2\dim_f(\Cal A,\Phi)+1.
$$
Since the fractal dimension of the attractor $\Cal A$ usually depends on physical parameters (e.g. on the Grashoff number if the Navier-Stokes system is considered), this may produce an illusion that the number of determining functionals is also related with these parameters. Nevertheless, as Proposition \ref{Prop8.det} shows, it is still determined by the size of the equilibria set and is not related with the complexity of the dynamics on it. These arguments show also that, in contrast to 1D case, in multi-dimensional case, we really need to use some "generic" assumptions in order to kill pathological equilibria and get a reasonable result.
\end{remark}

\section{Appendix. Function Spaces}\label{a1}

The aim of this appendix is to introduce and discuss various classes of function spaces which are used throughout of the survey.  We start with the Lebesgue and Sobolev spaces.
\begin{definition}\label{Def9.Sob} Let $\Omega$ be a domain of $\R^d$ with a sufficiently smooth boundary. We denote by $L^p(\Omega)$, $1\le p\le\infty$, the Lebesgue space of functions whose $p$th power is Lebesgue integrable. For any $n\in\Bbb N$, we denote by $W^{n,p}(\Omega)$ the Sobolev space of distributions whose derivatives up to order $n$ inclusively belong to the space $L^p(\Omega)$. For non-integer positive $s=n+\alpha$, $n\in\Bbb Z_+$, $\alpha\in(0,1)$, the space $W^{s,p}(\Omega)$ is defined as a Besov space $B^s_{p,p}(\Omega)$ via the following norm
\begin{equation}\label{9.bes}
\|u\|_{W^{s,p}}^p:=\|u\|_{W^{n.p}(\Omega)}^p+\sum_{|\beta|=n}\int\int_{\Omega\times\Omega}\frac{|D^{\beta}u(x)-D^\beta u(y)|^p}{|x-y|^{d+p\alpha}}\,dx\,dy.
\end{equation}
We also denote by $W^{s,p}_0(\Omega)$ the closure of $C_0^\infty(\Omega)$ in the metric of $W^{s,p}(\Omega)$ and, for negative values of $s$, we define $W^{s,p}(\Omega)$ by the duality
$$
W^{s,p}(\Omega):=[W^{-s,q}_0(\Omega)]^*,\ \ \frac1p+\frac1q=1,
$$
see \cite{Tri78} and references therein for more details. We also use the notation $H^s(\Omega)$ for the spaces $W^{s,p}(\Omega)$ with $p=2$.
\end{definition}
Let now $V$ be a Banach (or more general locally convex) space and let $V^*$ be its dual space (the space of linear continuous functionals on $V$). Then the weak topology on $V$ is defined by the following system of seminorms on $V$: $p_l(x):=|lx|$, $l\in V^*$. On the level of sequences this means that $x_n\rightharpoondown x$ in $V$ if and only if $lx_n\to lx$ for all $l\in V^*$.  The weak-star topology on a dual space $V^*$ is defined analogously, see e.g. \cites{RR,Ru91} and references therein.
\par
The key result, which is widely used in the theory of attractors, is the Banach-Alaoglu theorem which claims that the closed unit ball in $V^*$ is {\it compact} in the weak-star topology (this result can be extended to locally convex or even linear topological spaces if we replace the unit ball by the {\it polar} $U^0\in V^*$ of any bounded set $U\subset V$, see \cite{RR} for details). We recall that a set $U$ is bounded in a linear topological space if it is absorbed by any neighbourhood of  zero and the polar $U^0$ is defined by
$$
U^0:=\{l\in V^*,\sup_{x\in U} |lx|\le 1\}.
$$
The analogous results hold for the weak topology if and only the space $V$ is {\it reflexive} ($V=V^{**}$). Most important for us is the fact that the Sobolev spaces $W^{s,p}(\Omega)$ are reflexive if and only if $1<p<\infty$, see \cite{Tri78}. We also recall that compactness and sequential compactness are different (unrelated) concepts in non-metrizable topological spaces, so the Banach-Alaoglu theorem (which is based on the Tikhonov compactness theorem) does not give a {\it sequential} weak-star compactness of the unit ball in a dual space. In order to get a sequential compactness, we need either to assume that either the space $V$ is separable (then the unit ball in a dual space is metrizable in a weak-star topology) or that the space $V$ is reflexive (then weak compactness and weak sequential compactness coincide due to the Eberlein-Smulian theory), see \cite{RR} for more details. We also recall that $L^\infty(\Omega)$ is a dual space to $L^1(\Omega)$, so there is a natural choice of the weak-star topology in $L^\infty(\Omega)$ which gives the compactness and sequential compactness of a unit ball. In contrast to this, the spaces $C(\bar\Omega)$ or $L^1(\Omega)$ are not duals to any Banach spaces and there is no reasonable topology in them which gives the compactness of their unit balls.
\par
 In order to consider evolutionary equations, we often use the spaces of functions $u(t)$ with values in some Banach space $V$. Here usually $t\in\R$ or its subset is interpreted as time and $V$ is the properly chosen Sobolev space.  For any $[a,b]\subset\R$ and any $1\le p\le\infty$, we define $L^p(a,b;V)$ as a space of Bochner measurable functions such that
 $$
 \|u\|_{L^p(a,b;V)}^p:=\int_a^b\|u(t)\|_V^p\,dt<\infty.
 $$
 The space of Bochner measurable {\it locally} integrable in power $p$ functions will
 be denoted by $L^p_{loc}(a,b;V)$. Sometimes, when we need to emphasize the properties of the functions near the finite endpoints $a$ or $b$, we will write, say, $L^p_{loc}([a,b],V)$ or $L^p_{loc}((a,b),V)$. The time Sobolev and Besov spaces $W^{s,p}(a,b;V)$ are defined analogously to \eqref{9.bes}, see \cite{LR02} and references therein.
\par
The uniformly local space $L^p_b(a,b;V)$ is defined as a subset of $L^p_{loc}(a,b;V)$ for which the following norm is finite:
$$
\|u\|_{L^p_b(a,b;V)}:=\sup_{s\in\R,[s,s+1]\subset[a,b]}\|u\|_{L^p(s,s+1;V)}
$$
and for the case $b-a<1$, we just set $\|u\|_{L^p_b(a,b;V)}:=\|u\|_{L^p(a,b;V)}$. The spaces $W^{s,p}_b(a,b;V)$ are defined analogously.
\par
We also need some more properties of the locally convex (Frechet) space $L^p_{loc}(\R,V)$. We assume that the space $V$ is reflexive and separable. Then, $[L^p(a,b;V)]^*=L^q(a,b;V^*)$ for $\frac1p+\frac1q=1$, $1\le p<\infty$. In particular, these spaces are reflexive and separable for any $1<p<\infty$ and, therefore, the Banach-Alaoglu theorem gives us the weak sequential compactness of the unit ball in $L^p(a,b;V)$. This, in turn, gives us the weak sequential compactness of any bounded weakly closed subset of $L^p_{loc}(\R,V)$. In particular, if this bounded set is convex, then the closeness in a strong topology is enough. For instance, the unit ball in $L^p_b(\R,V)$ is weakly sequentially compact in $L^p_{loc}(\R,V)$. We use this fact, in particular, for defining the hulls of translation bounded external forces.
\par
Recall that the weak topology in $L^p_{loc}(\R,V)$ can be obtained as a projective limit of spaces $L^p(-n,n;V)$ (endowed with weak topology) as $n\to\infty$. In particular, $u_n\rightharpoondown u$ in $L^p_{loc}(\R,V)$ if and only if for any $n\in\Bbb N$ and any $l\in L^q(-n,n;V^*)$, we have the convergence
$$
\int_{-n}^n\left<l(t),u_n(t)\right>\,dt\to   \int_{-n}^n\left<l(t),u(t)\right>\,dt.
$$

\end{document}